\xpatchcmd{\paragraph}{\normalfont}{{\normalfont\bfseries}}{}{}
\newcommand{\defterm}[1]{\emph{#1}}
\numberwithin{equation}{section}
\theoremstyle{plain}
	\newtheorem{thm}[equation]{Theorem}
	\newtheorem*{thm*}{Theorem}
 	\newtheorem*{thma*}{Theorem A}
	\newtheorem*{thmb*}{Theorem B}
	\newtheorem*{thmc*}{Theorem C}
	\newtheorem{cor}[equation]{Corollary}
	\newtheorem*{cor*}{Corollary}
	\newtheorem{prop}[equation]{Proposition}
	\newtheorem*{prop*}{Proposition}
	\newtheorem{lem}[equation]{Lemma}
	\newtheorem*{lem*}{Lemma}
	\newtheorem*{sublem*}{Sub-Lemma}
	\newtheorem*{ex*}{Exercise}
	\newtheorem*{claim*}{Claim}
	\newtheorem*{question*}{Question}
	\newtheorem{fact}[equation]{Fact}
	\newtheorem*{fact*}{Fact}
\theoremstyle{definition}
	\newtheorem{Def}[equation]{Definition}
	\newtheorem*{Def*}{Definition}
	\newtheorem{obs}[equation]{Observation}
	\newtheorem*{obs*}{Observation}
	\newtheorem{rmk}[equation]{Remark}
	\newtheorem*{rmk*}{Remark}
	\newtheorem{soln*}{Solution}
	\newtheorem*{note*}{Note}
	\newtheorem{eg}[equation]{Example}
	\newtheorem*{eg*}{Example}	
	\newtheorem*{construction*}{Construction}
	\newtheorem*{warning*}{Warning}
	\newtheorem*{conj*}{Conjecture}
	\newtheorem{notation}[equation]{Notation}
	\newtheorem*{notation*}{Notation}	
	\newtheorem*{recall*}{Recall}
    \newtheoremstyle{component}{}{}{}{}{\itshape}{.}{.5em}{\thmnote{#3}#1}
    \theoremstyle{component}
\newcommand{\nats}{\mathbb{N}}
\DeclareFontFamily{U}{min}{}
\DeclareFontShape{U}{min}{m}{n}{<-> udmj30}{}
\newcommand\yo{\!\text{\usefont{U}{min}{m}{n}\symbol{'207}}\!}
\newcommand{\id}{\mathrm{id}}
\newcommand{\Hom}{\mathrm{Hom}}
\newcommand{\Ob}{\operatorname{Ob}}
\newcommand{\inv}{^{-1}}
\newcommand{\Psh}{\mathsf{Psh}}
\DeclareMathOperator{\Nat}{Nat}
\newcommand{\Set}{\mathsf{Set}}
\newcommand{\Cat}{\mathsf{Cat}}
\newcommand{\sSet}{\mathsf{sSet}}
\newcommand{\calB}{\mathcal{B}}
\newcommand{\calC}{\mathcal{C}}
\newcommand{\calD}{\mathcal{D}}
\newcommand{\calS}{\mathcal{S}}
\newcommand{\Spaces}{\mathsf{Spaces}}
\newcommand{\Gaunt}{\mathsf{Gaunt}}
\newcommand{\sCat}{\mathsf{sCat}}
\newcommand{\globe}{\mathbb{G}}
\DeclareMathOperator{\sd}{sd}
\newcommand{\nd}{\mathrm{n.d.}}
\DeclareMathOperator{\Pow}{Pow}
\newcommand{\fin}{\mathrm{fin}}
\newcommand{\Catinc}{\mathsf{Cat}_N^{\mathrm{f}}}
\newcommand{\nerve}{\nu}
\newcommand{\genericinjlim}[1]{%
  \mathop{\mathpalette\varlim@{{#1}{\rightarrowfill@\textstyle}}}\nmlimits@
}
\def\varlim@#1#2{\varlim@@#1#2}
\def\varlim@@#1#2#3{%
  \vtop{\m@th\ialign{##\cr
    \hfil$#1\operator@font #2$\hfil\cr
    \noalign{\nointerlineskip\kern1.5\ex@}#3\cr
    \noalign{\nointerlineskip\kern-\ex@}\cr}}%
}
\newcommand{\genericprojlim}[1]{%
  \mathop{\mathpalette\varlim@{{#1}{\leftarrowfill@\textstyle}}}\nmlimits@
}
\def\varlim@#1#2{\varlim@@#1#2}
\def\varlim@@#1#2#3{%
  \vtop{\m@th\ialign{##\cr
    \hfil$#1\operator@font #2$\hfil\cr
    \noalign{\nointerlineskip\kern1.5\ex@}#3\cr
    \noalign{\nointerlineskip\kern-\ex@}\cr}}%
}
\renewcommand{\varinjlim}{\genericinjlim{lim}}
\renewcommand{\varprojlim}{\genericprojlim{lim}}
\let\tikz@lib@matrix@start@cell=\tikz@lib@matrix@normal@start@cell
\newcommand\ringring[1]{%
  {% make an Ord atom
   \mathop{\kern0pt #1}\limits^{% set a box over the variable
     \vbox to-1.85ex{
       \kern-2ex % lower the ring accents
       \hbox to 0pt{\hss\normalfont\kern.1em \r{}\kern-.45em \r{}\hss}%
       \vss % fill
     }% end of \vbox
   }% end of the superscript
  }% end of \mathop
}
\newcommand{\inc}{\mathrm{f}}
\newcommand{\Free}{F}
\DeclareMathOperator{\colim}{colim}
\newcommand{\Decomp}{\operatorname{Sd}}
\newcommand{\Decompat}[1]{\operatorname{Sd}_{#1}}
\newcommand{\myPoset}{\operatorname{At}}
\newcommand{\Decompbot}{\operatorname{Sd}^\triangleleft}
\newcommand{\Decompatbot}[1]{\operatorname{Sd}^\triangleleft_{#1}}
\newcommand{\lin}{\operatorname{Lin}}
\newcommand{\linbot}{\operatorname{Lin}^\triangleleft}
\newcommand{\DC}{\operatorname{DC}}
\newcommand{\DCbot}{\operatorname{DC}^\triangleleft}
\newcommand{\Lan}{\mathsf{Lan}}
\title{An $(\infty,n)$-Categorical Pasting Theorem}
\author{Timothy Campion}
\date{October 2023}
\begin{document}

\begin{abstract}
    We identify a reasonably large class of pushouts of strict $n$-categories which are preserved by the ``inclusion" functor from strict $n$-categories to weak $(\infty,n)$-categories. These include the pushouts used to assemble from its generating cells any object of Joyal's category $\Theta$, any of Street's orientals, any lax Gray cube, and more generally any ``regular directed CW complex." More precisely, the theorem applies to any \emph{torsion-free complex} in the sense of Forest -- a corrected version of Street's \emph{parity complexes}.

    This result may be regarded as partial progress toward Henry's conjecture that the pushouts assembling any non-unital computad are similarly preserved by the ``inclusion" into weak $(\infty,n)$-categories. In future work we shall apply this result to give new models of $(\infty,n)$-categories as presheaves on torsion-free complexes, and to construct the Gray tensor product of weak $(\infty,n)$-categories.

    This result is deduced from an \emph{$(\infty,n)$-categorical pasting theorem}, in the spirit of Power's 2-categorical and $n$-categorical pasting theorems, and the $(\infty,2)$-categorical pasting theorems of Columbus and of Hackney, Ozornova, Riehl, and Rovelli. This says that, when assembling a ``pasting diagram" from its generating cells, the space of ``composite cells which can be pasted together from all of the generators" is contractible.
\end{abstract}

\maketitle

\tableofcontents

% In this draft, I need to:
% \begin{enumerate}
%     % \item include some better background on torsion-free complexes
%     % \item tighten up proofs using the structure of torsion-free complexes
%     % \item highlight and clarify where I use that $\triangleleft_i$ is discrete for higher $i$
%     % \item determine if loop-freeness is actually needed
%     \item be more expository about the contractibility proof
%     % \item Maybe include some exposition about the case $k = n-1$ of the proof to clarify the idea, talk about how we're trying to reduce to the ``linear" case
%     % \item be more expository about the constructions $F_{n-1} P$, $F_\partial P$, and $F_\partial^+ P$
%     % \item maybe include some account of the ``follow-your-nose" approach to discovering $F_\partial^+ P$
%     % \item include some explicit statement about the fibers of $F_\partial^+ P \to  F P$, especially about identifying the interesting fiber with the poset $\Decomp(\mu)$ from the previous section
%     \item think about how to structure things to emphasize the parallelism to \cite{horr} and \cite{columbus}
%     % \item modify the introduction so that the first paragraph is a better indication of the contents of the paper
%     \item try to highlight ``model-independent" statements rather than  ``$L_{\Cat_N^\inc}$" statements
%     \item standardize $n$ vs. $N$
% \end{enumerate}

\section{Introduction}

Higher category theory is littered with different formalisms giving models for computing certain colimits of \emph{strict} $n$-categories of a ``combinatorial" flavor.\footnote{Among these formalisms, we mention \emph{computads} (introduced in dimension $2$ by \cite{street-computad} and in general under the name \emph{polygraph} by \cite{burroni}), \emph{parity complexes} \cite{street}, \emph{pasting schemes} \cite{johnson}, \emph{augmented directed complexes} \cite{steiner}, \emph{pasting schemes} \cite{power2} \cite{powern},
% Campbell's parity structures, 
\emph{pre-polytopes with labeled structures} \cite{nguyen}, \emph{polyplexes} \cite{henry-regular}, \emph{regular polygraphs} \cite{hadzihasanovic}, \emph{torsion-free complexes} \cite{forest}, \emph{framed topological spaces} \cite{dorn-douglas}, etc. The combinatorics of the strict $n$-categories which result is fearsome in general, and the proliferation of such theoretical frameworks reflects the enormity of the general problem of understanding these combinatorics.} The goal of this note is to leverage the achievements of these category theorists and apply their work directly to the study of weak $(\infty,n)$-categories. We have three closely related results, each of which has antecedents:

\begin{enumerate}
    \item[(A)] In works such as \cite{johnson}, \cite{power2}, and \cite{powern}, certain classes of ``pasting diagrams" are considered. It is shown that each ``pasting diagram" has a unique ``composite". We generalize such results from the strict to the weak setting in \cref{thm:sc-main} below.
    \item[(B)] In works such as \cite{street} (corrected in \cite{forest}) and \cite{steiner}, certain particularly nice computads\footnote{Recall that the \defterm{computads} \cite{street-computad}, or \defterm{polygraphs} \cite{burroni} the class of strict $\omega$-categories which can be built (from the empty category) by iteratively gluing on one cell $\globe_n$ at a time, along its boundary $\partial \globe_n$. Compare to the class of CW complexes -- those topological spaces which can be built (from the empty space) by gluing on one disk $D^d$ at a time along its boundary $S^{d-1}$. The ``particularly nice" computads include \defterm{parity complexes} \cite{street}, or \defterm{torsion-free complexes} \cite{forest}, and can be thought of as directed analogs of the \defterm{regular CW complexes} (those for which the attaching maps are injective), though admittedly in the directed setting the ``regularity" conditions are a bit more involved.} are considered, and explicit descriptions of the free strict $n$-categories thereon are given. We show in \cref{thm:mainthm} below that some of these free strict $n$-categories are also free as weak $(\infty,n)$-categories on the same data. Such results generalize results of type (A), since a ``pasting diagram" is really the same thing as a computad whose geometry is such that we can make sense of the notion of the ``composite" of the diagram. A type (B) description of such a computad should be explicit enough to tell us if the set of composites is a singleton (a type (A) result).
    \item[(C)] We can look for statements of the form ``certain pushouts of strict $n$-categories are preserved by the ``inclusion" $\sCat_n \to \Cat_n$ from strict $n$-categories to weak $(\infty,n)$-categories".\footnote{Note that the ``inclusion" functor $\sCat_n \to \Cat_n$ is not fully faithful. Happily, the inclusion $\sCat_n \to \Cat_n^\inc$ into \emph{flagged} $(\infty,n)$-categories \cite{ayala-francis} \emph{is} fully faithful. Since the localization $\Cat_n^\inc \to \Cat_n$ preserves colimits, we are able throughout the paper to work in the flagged setting, so the scare quotes need not concern us.} For example, the pushout $[2] = [1] \ast_{[0]} [1]$ is preserved by the inclusion $\sCat_1 \to \Cat_1$, essentially by definition; this generalizes to other Segal conditions and to spine inclusions in $\Theta$. See \cite{rezk}, \cite{bsp}, \cite{henry-regular} for examples of such results, with applications to modeling weak $(\infty,n)$-categories. We prove in \cref{cor:po} below a more general pushout-preservation statement which subsumes many of these known results.
\end{enumerate}

In the case $n = 2$, results of type (A) have been previously generalized to $(\infty,2)$-categories in \cite{columbus} and \cite{horr}. Here rather showing that a set of ``composites" of a given pasting diagram, is a singleton, it is shown that a certain space of composites is defined and shown to be contractible. We give a result of type (A) in the weak setting for arbitrary $n$:

\begin{thma*}[see \cref{thm:sc-main}]
    Let $P$ be a torsion-free complex and $\mu$ an cell in $P$. Suppose that $\mu$ is composite. Then the poset $\Decomp(\mu) = \mu \downarrow \Theta_{\nd} \downarrow \Free(\overline{\{\mu\}}) \setminus \{\mu\}$ is contractible.
\end{thma*}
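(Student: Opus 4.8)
The plan is to trade the abstract comma poset $\Decomp(\mu)$ for a concrete combinatorial poset attached to the face data of $\mu$, prove that poset contractible by an explicit deformation, and run the whole thing as an induction on $\dim\mu$.

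\textbf{Translating the comma poset.} An object of $\mu\downarrow\Theta_{\nd}\downarrow\Free(\overline{\{\mu\}})$ is a non-degenerate $\Theta$-cell $f\colon\theta\to\Free(\overline{\{\mu\}})$ together with a map $\globe_{\dim\mu}\to\theta$ whose image under $f$ is the generating cell $[\mu]$; passing to $\Decomp(\mu)$ deletes the tautological object $(\globe_{\dim\mu},\id)$. First I would unwind this using the explicit combinatorial presentation of the free strict $\omega$-category on a torsion-free complex (Forest, after Street and Steiner), in which a cell of $\Free(\overline{\{\mu\}})$ is encoded by a well-formed subset of the face poset of $\mu$ together with layering data. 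This should identify $\Decomp(\mu)$, up to equivalence of posets, with a poset of \emph{subdivisions} of $\mu$: coherent presentations of the source and target $(\dim\mu-1)$-cells of $\mu$, and recursively of their sources and targets, as iterated composites of cells of $\overline{\{\mu\}}$, ordered by refinement. This is exactly the role of the $\Decomp/\lin/\DC$ apparatus of the earlier sections, and I would expect equivalences $\Decomp(\mu)\simeq\lin(\mu)\simeq\DC(\mu)$ to successively simpler models. The hypothesis that $\mu$ is \emph{composite} is used here: it guarantees that $\overline{\{\mu\}}$ is a genuine pasting diagram with composite $[\mu]$, so that these posets are nonempty and the comparisons between them are equivalences.

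\textbf{Contractibility of the combinatorial model.} This is not automatic: the poset $\Decompbot(\mu)$ obtained by re-adjoining the tautological decomposition as an initial object is a cone, hence contractible for free, but $\Decomp(\mu)$ is the ``link'' of that apex, and deleting an apex can break contractibility, so a positive argument is needed. Working with the model $\DC(\mu)$ from the previous step, I would obtain it in one of two ways. The clean route is to show $\DC(\mu)$ is filtered: any two subdivisions admit a common refinement, obtained by overlaying their layering data, so the nonempty poset has weakly contractible nerve. Failing that, I would argue conical contractibility — constructing an order-preserving reduction $\rho\colon\DC(\mu)\to\DC(\mu)$ and natural transformations assembling a finite zig-zag from $\id_{\DC(\mu)}$ to the constant functor at a distinguished subdivision $\sigma_0$, namely the one induced by a fixed linear extension of the ambient torsion-free order (available because $\mu$ is composite), where $\rho$ pushes a layering one parity-allowed transposition toward $\sigma_0$ and terminates because torsion-free complexes are loop-free.

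\textbf{The induction and the main obstacle.} Both $\sigma_0$ and $\rho$ are defined by downward recursion, since subdividing $\mu$ forces one to subdivide its boundary $(\dim\mu-1)$-cells, whose own subdivision posets are contractible by the inductive hypothesis; I would package this by exhibiting $\Decomp(\mu)$ as an iterated Grothendieck construction over the face poset of $\partial\mu$ with fibres the posets $\Decomp$ of cells of $\partial\mu$, so that Quillen's Theorem A lets the contractibility of those fibres plus that of the indexing poset of boundary subdivisions (the latter again of the type treated above) give contractibility of the total space. The base case $\dim\mu\le 1$ is vacuous — there $\Decomp(\mu)$ is empty and $\mu$ is not composite. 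The hard part will be the translation step together with the inductive bookkeeping: verifying that the torsion-freeness axioms are exactly what is needed to (i) force every non-degenerate $\Theta$-cell over $[\mu]$ to come from a subdivision supported in $\overline{\{\mu\}}$, with no spurious cells; (ii) make the hypothesis ``$\mu$ composite'' produce the basepoint $\sigma_0$ and render $\rho$ well-defined and well-founded; and (iii) give the Grothendieck-construction decomposition the stated contractible fibres. Once the combinatorial model is nailed down, the homotopy-theoretic content of the remaining steps is comparatively formal.
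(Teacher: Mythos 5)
Your overall strategy (translate the comma poset into combinatorial posets of linear preorders and downward-closed sets, then use Quillen's Theorem A over boundary data) points in the right direction, but there are two genuine gaps. The first is the contractibility step. Your ``clean route'' — that any two subdivisions admit a common refinement, so the poset is filtered — is false: the common refinement of two incompatible layerings is the codiscrete preorder, which is exactly the excluded bottom element. Worse, when the relevant preorder $\myPoset_k(\mu)$ \emph{is} an equivalence relation, the poset $\lin(\myPoset_k(\mu))$ has the homotopy type of a sphere $S^{d-2}$ (\cref{rmk:disc-sphere}, \cref{rmk:eq-sphere}), so no deformation or zig-zag can exist either. This sphere phenomenon is the central obstruction, and your proposal shows no mechanism for avoiding it. The paper's workaround is \cref{lem:exists-min}: compositeness of $\mu$ guarantees a dimension $k$ where $\myPoset_i(\mu)$ is an equivalence relation for $i>k$ but not for $i=k$, and only at that $k$ does one get the isomorphism $\Decompat{\{k-1\}}(\mu)\cong\lin(\myPoset_k(\mu))$ with a non-equivalence-relation preorder, whence contractibility via $\sd(\DC(-))$ and the coreflection onto the minimal elements (\cref{lem:dist-lat'}). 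Relatedly, your proposed chain $\Decomp(\mu)\simeq\lin(\mu)\simeq\DC(\mu)$ is too coarse: only the thin slice $\Decompat{\{k-1\}}(\mu)$ admits such a description; the full $\Decomp(\mu)$ mixes compositions in all dimensions and must first be fibered down to that slice.

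The second gap is in the inductive bookkeeping. Fibering $\Decomp(\mu)$ over the face poset of $\partial\mu$ with fibers the posets $\Decomp$ of boundary cells cannot work as stated, because boundary cells may be atoms, whose $\Decomp$ is empty, and Quillen's Theorem A requires contractible (in particular nonempty) fibers. The paper instead fibers $\Decomp(\mu)$ over $\Decompatbot{[0,k-2]}(\mu)$ (decompositions using only low-dimensional compositions) via the functor that composes away all high-dimensional compositions; one must verify this is a \emph{cocartesian} fibration (\cref{lem:cocart}, \cref{lem:cocart''}), which genuinely uses the torsion-freeness axioms, and one needs a bespoke variant of Theorem A (\cref{lem:special}) to handle the fact that the interesting fiber sits over the deleted strict initial object $\mu$ of the base. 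All other fibers have initial objects, and the one remaining fiber $\Decompat{[k-1,n-1]}(\mu)$ is reduced by a second such fibration to $\Decompat{\{k-1\}}(\mu)$, closing the loop with the first point. So the architecture you need is a two-stage cocartesian-fibration reduction keyed to a carefully chosen $k$, not an induction on $\dim\mu$ over the face poset.
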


A torsion-free complex \cite{forest} is a particularly nice sort of computad, i.e. a particularly nice sort of strict $n$-category freely built from its cells, about which we shall say more below. The ``poset of decompositions" $\Decomp(\mu) = \mu \downarrow \Theta_{\nd} \downarrow \Free(\overline{\{\mu\}}) \setminus \{\mu\}$ is the poset of ``$\Theta$-shaped decompositions" of $\mu$ into ``smaller" cells. Its elements is the poset of subcategories $\theta$ of the free strict $n$-category $\Free(P)$ on $P$ contained in the ``support" $\Free(\overline{\{\mu\}})$ of $\mu$ which properly contain $\mu$ and are isomorphic to an $n$-category in Joyal's category $\Theta$. See \cref{def:decomp}. In the case $n = 2$, our poset $\Decomp(\mu)$ is not isomorphic to the spaces of composites considered in \cite{columbus} or \cite{horr}, but it plays a similar role.

Our type (B) result (\cref{thm:mainthm}) is formulated by induction on dimension. A computad is a strict $n$-category which can be built by iteratively taking pushouts, gluing on one cell at a time like a ``directed CW complex". Assuming that we understand certain $(d-1)$-computads in the weak setting, the gluing instructions for assmbling the $d$-skeleton of a computad from the $(d-1)$-skeleton by attaching new $d$-cells make just as much sense in the world of weak $(\infty,n)$-categories as they do in strict $n$-categories. So given such assembly instructions, we may assemble all the necessary pushouts either in weak $(\infty,n)$-categories or in strict $n$-categories, and we will have a canonical comparison map from the former construction to the latter. If this comparison map is an equivalence, then we are in business: our ``weakly free $n$-category" in fact has the property of being strict (and we may then repeat the process in the next dimension). We are in fact able to deduce this in certain cases as an application of Theorem A. More precisely, we show:

\begin{thmb*}[see \cref{thm:mainthm}]
    Let $P$ be a torsion-free complex. Then the following pushout in $\sCat_n$:
    \begin{equation*}
        \begin{tikzcd}
            P_d \times \partial \globe_d \ar[r] \ar[d] & P_d \times \globe_d \ar[d] \\
            \Free(P_{\leq d-1}) \ar[r] & \Free(P)
        \end{tikzcd}
    \end{equation*}
    is preserved by the ``inclusion" $\sCat_n \to \Cat_n$ from strict $n$-categories to weak $(\infty,n)$-categories.
\end{thmb*}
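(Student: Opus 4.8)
The plan is to reformulate the statement in the flagged, presheaf-theoretic model of $(\infty,n)$-categories, reduce it to the assertion that one specific inclusion of presheaves on $\Theta$ is inverted by a left Bousfield localization, and then verify that assertion by a cell-by-cell filtration whose attaching maps are controlled by \cref{thm:sc-main}.

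\textbf{Reduction to a presheaf-level statement.} Since $\Cat_n^\inc\to\Cat_n$ preserves colimits, it is enough to prove the pushout is preserved by the fully faithful inclusion $\iota^\inc\colon\sCat_n\to\Cat_n^\inc$. Model $\Cat_n^\inc$ as a left Bousfield localization $L\colon\Psh(\Theta)\to\Cat_n^\inc$ (at the spine inclusions, together with whatever completeness is retained in the flagged model); then $\iota^\inc$ is the strict nerve $\Nerve$, whose values are $L$-local, and $L$ preserves colimits. For a torsion-free complex $Q$ let $\underline{Q}\in\Psh(\Theta)$ be the presheaf obtained by gluing a globe-representable $\Theta[\globe_{\dim\mu}]$ along its boundary for each cell $\mu$ of $Q$ — the presheaf-theoretic shadow of $Q$ as a computad — together with its evident map to $\Nerve\Free(Q)$. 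A routine skeletal induction, each stage being a pushout along the monomorphism $\partial\Theta[\globe_j]\hookrightarrow\Theta[\globe_j]$, shows that this colimit is a homotopy colimit, that $\underline{(-)}$ is compatible with skeleta, that $\underline{Q}\hookrightarrow\Nerve\Free(Q)$ is a monomorphism, and that $\Nerve$ carries $P_j\times\globe_j$ and $P_j\times\partial\globe_j$ to $P_j\times\Theta[\globe_j]$ and $P_j\times\partial\Theta[\globe_j]$ (using that each object of $\Theta$ is connected, so $\Nerve$ preserves the relevant coproducts). Arguing by induction on $d$, so that the theorem at lower levels gives $\iota^\inc\Free(P_{\leq d-1})\simeq L\underline{P_{\leq d-1}}$, one computes the pushout of the square under $\iota^\inc$ to be $L\underline{P_{\leq d}}$, with its canonical comparison map to $\iota^\inc\Free(P)=\Nerve\Free(P_{\leq d})$ — here $\Free(P)=\Free(P_{\leq d})$, as the square exhibits $\Free(P)$ as built from $P_{\leq d-1}$ by attaching $d$-cells — identified with $L$ applied to the sub-presheaf inclusion $\underline{P_{\leq d}}\hookrightarrow\Nerve\Free(P_{\leq d})$. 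Thus the theorem reduces to the claim that \emph{this inclusion is an $L$-equivalence}.

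\textbf{The filtration.} Since $\Theta$ is an elegant Reedy category, $\Nerve\Free(P_{\leq d})$ is the canonical colimit of its non-degenerate cells, and $\underline{P_{\leq d}}$ is precisely the sub-presheaf generated by the images of the generators $\mu$ of $P$ (i.e. by the representables $\Theta[\globe_{\dim\mu}]$). I would adjoin the remaining non-degenerate cells one at a time, in an order in which a cell is adjoined only after every cell it properly decomposes into is already present; the loop-freeness and parity conditions defining torsion-free complexes \cite{forest}, \cite{street} — e.g. via a well-founded excision of extremals — supply such an order and ensure the filtration is exhaustive. Each stage is a pushout of a monomorphism $A_\alpha\hookrightarrow\Theta[\theta_\alpha]$, where $\theta_\alpha\in\Theta$ carries the newly adjoined configuration and $A_\alpha\subseteq\Theta[\theta_\alpha]$ is the union of $\partial\Theta[\theta_\alpha]$ with the faces already present; by left properness of the local model structure on $\Psh(\Theta)$ it suffices to show each $A_\alpha\hookrightarrow\Theta[\theta_\alpha]$ is an $L$-equivalence. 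But the poset of faces missing from $A_\alpha$ is, up to reindexing, the poset $\Decomp(\mu)$ of proper $\Theta$-shaped decompositions of the composite cell $\mu$ carried by $\theta_\alpha$, which is contractible by \cref{thm:sc-main}. This contractibility is exactly what is needed: $A_\alpha$ already presents the pasting diagram of $\mu$ and so pins its composite down up to a contractible space of choices, so adjoining $\theta_\alpha$ is invisible to $L$ (when $\Decomp(\mu)$ is a point this is literally a pushout of a spine inclusion; in general one uses the contractible poset $\Decomp(\mu)$ to filter the new faces by pushouts of spine inclusions). The inclusion $\underline{P_{\leq d}}\hookrightarrow\Nerve\Free(P_{\leq d})$ is then the transfinite composite of the resulting $L$-equivalences, hence an $L$-equivalence, proving the claim and the theorem.

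\textbf{Expected main difficulty.} I expect the real work to be in the last step: converting the contractibility of the abstract decomposition poset $\Decomp(\mu)$ into the concrete statement that $A_\alpha$ is Segal-equivalent to $\Theta[\theta_\alpha]$ relative to its boundary. This requires organizing the faces of each $\theta_\alpha$ that are new at stage $\alpha$ into an explicit filtration by pushouts of spine inclusions whose combinatorial skeleton is $\Decomp(\mu)$, and checking that the asserted squares are genuine pushouts. It is here — rather than in the formal reductions of the first step or the elegant-Reedy bookkeeping of the second — that the structural combinatorics of torsion-free complexes, in particular a precise account of which configurations become available at each stage, must be brought to bear.
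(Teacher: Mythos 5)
Your reduction to a presheaf-level statement is sound and matches the paper's setup: the paper works with $\Free_{n-1}P = \Free(P_{\leq n-1}) \cup P_n \subseteq \nerve\Free P$ and reduces Theorem B to showing that this inclusion is $L_{\Cat_N^\inc}$-acyclic, exactly as you reduce to $\underline{P_{\leq d}} \hookrightarrow \Nerve\Free(P_{\leq d})$. You have also correctly identified \cref{thm:sc-main} as the essential input. The gap is in the filtration step, and it is structural rather than a matter of bookkeeping. First, your claim that the poset of faces missing from $A_\alpha$ is ``up to reindexing'' $\Decomp(\mu)$ is not accurate: which faces are missing depends on the order chosen, and more importantly, once you have adjoined \emph{one} decomposition $\theta$ whose composite is the big cell $\mu$, the cell $\mu \in (\Nerve\Free P)(\globe_n)$ is now present; every subsequent decomposition $\theta'$ of $\mu$ must then be attached along a subobject $A_{\theta'} \subseteq \yo\theta'$ that contains both the spine of $\theta'$ \emph{and} the top cell $\mu$, but not the nondegenerate cell $\theta' \to \Free P$ itself. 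Showing $A_{\theta'} \hookrightarrow \yo\theta'$ is an $L$-equivalence is not a cobase change or composite of spine inclusions in any evident way --- by two-out-of-three against the spine inclusion it is equivalent to showing that freely adjoining a \emph{second candidate composite} of the spine of $\theta'$ is acyclic, which is essentially the uniqueness-of-composites statement you are trying to prove. Second, the contractibility of $\Decomp(\mu)$ is the contractibility of the nerve of a poset; to convert it into an equivalence of $\Theta$-presheaves one must glue in the higher simplices of that nerve (paths between redundant composites, homotopies between paths, etc.), and a filtration of \emph{set-valued} presheaves by monomorphisms, as you propose, has no room for them.

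The paper resolves exactly this difficulty by abandoning the cell-by-cell order and instead passing through a \emph{space-valued} presheaf $\Free_\partial^+ P$ (\cref{def:prefib}): one freely adjoins \emph{all} decompositions $\theta \to \Free P$ simultaneously and with multiplicity, together with the nerve of the category of inclusions among them. The map $\Free_\partial P \to \Free_\partial^+ P$ is shown acyclic by a skeletal filtration in the simplicial direction whose attaching maps are pushout-products of $\Free_\partial\theta \to \theta$ with $\partial\Delta[k] \to \Delta[k]$ (\cref{lem:fib-rep}), using the case $\Free P \in \Theta$ of the theorem (\cref{lem:theta-main}, which itself requires the suspension and wedge compatibility results of \cref{sec:weak} --- another ingredient your sketch omits) plus cartesianness of the model structure. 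Only then does \cref{thm:sc-main} enter, to show that the collapse $\Free_\partial^+ P \to \Free P$ is a \emph{levelwise} equivalence (\cref{prop:level}), the unique nontrivial fiber being $|\Decomp(\mu)|$. If you want to rescue your approach, you should expect to be forced into some version of this ``adjoin freely, then collapse'' maneuver; as written, the sentence ``in general one uses the contractible poset $\Decomp(\mu)$ to filter the new faces by pushouts of spine inclusions'' is precisely the unproved --- and, I believe, false as stated --- heart of the matter.
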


\noindent (Here $\Free(P)$ is the (free $n$-category on the) $d$-skeleton of $P$ and $P_d$ is the set of generating $d$-cells of $P$.) Finding that certain ``weak computads" and strict computads coincide, we now profit by knowing that type (B) results in the existing literature are equally valid in the weak setting.

Note that Theorem B is also an instance of a type (C) result: showing that certain pushouts are preserved by the ``inclusion" $\sCat_n \to \Cat_n$. Rearranging pushouts shows that such instances of type (C) results are really interchangeable with more general type (C) results as in \cref{cor:po}:

\begin{thmc*}[see \cref{cor:po}]
    Consider a pushout diagram of strict $n$-categories where the downward maps are folk cofibrations and the rightward maps are monic:
    \begin{equation*}
        \begin{tikzcd}
            A \ar[r,tail] \ar[d,hook] \ar[dr,phantom, "\ulcorner"{description,very near end}] &
            B \ar[d, hook] \\
            C \ar[r,tail] & D
        \end{tikzcd}
    \end{equation*}
    Suppose that $A,B,C,D$ are free on torsion-free complexes. Then this pushout is preserved by the ``inclusion" $\sCat_n \to \Cat_n$.
\end{thmc*}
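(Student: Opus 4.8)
The plan is to deduce \cref{cor:po} from \cref{thm:mainthm} (Theorem B) by ``rearranging pushouts'', expressing the given square as built out of elementary pushouts that Theorem B (together with the near-definitional Segal-type pushouts) already handles. Write $\iota\colon \sCat_n \to \Cat_n$ for the inclusion; since it factors as $\sCat_n \to \Cat_n^\inc \xrightarrow{L} \Cat_n$ with the first functor fully faithful and $L$ colimit-preserving, it is harmless to work in $\Cat_n^\inc$, and in particular each of $A,B,C,D$ has a finite skeletal filtration $\sk_0(\cdot) \to \cdots \to \sk_n(\cdot) = \Free(\cdot)$ by folk cofibrations. First I would arrange these four filtrations compatibly, so that for each $d$ the square of $d$-skeleta is again a pushout in $\sCat_n$ with downward folk cofibrations, monic rightward maps, and all four corners free on torsion-free complexes; commuting the pushout past the filtration (colimit of pushouts is the pushout of colimits) then reduces \cref{cor:po} to showing that each $d$-skeleton square is preserved by $\iota$.

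I would prove this by induction on $d$, the case $d = 0$ being immediate since on $0$-skeleta $\iota$ is essentially the identity and pushouts of sets are preserved. For the inductive step I would invoke \cref{thm:mainthm} — together with the routine corollary of it, via the pasting law, that one may attach any subset of the new $d$-cells and still have $\iota$ preserve the resulting pushout — to write each $\iota(\sk_d \cdot)$ as a pushout over $\iota(\sk_{d-1}\cdot)$ formed by attaching coproducts of $\partial\globe_d \to \globe_d$. The cells attached along the two horizontal maps of the square coincide (the new $d$-cells of $C$ over $A$ are exactly those of $D$ over $B$, and symmetrically for the vertical maps), so these four instances of Theorem B together with the inductive hypothesis in dimension $d-1$ fit into a commutative cube whose front face and four side faces are pushouts in $\Cat_n$; by the pasting law the back face, which is the dimension-$d$ square, is a pushout too.

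The step that is not formal, and that I expect to be the main obstacle, is arranging the folk cofibration $A \to C$ between free-on-torsion-free-complex categories into elementary moves without leaving that class of categories. Such a cofibration need not respect generating cells — already $\{0 < 2\} \hookrightarrow [2]$ sends the generating $1$-cell of the source to a composite — so I would present $A \to C$ as a transfinite composite of pushouts of the two kinds of generating folk cofibration, the cell attachments $\partial\globe_d \to \globe_d$ and the ``equate parallel $d$-cells'' maps $\partial\globe_{d+1} \to \globe_d$, arranged so that every intermediate category is again free on a torsion-free complex; producing such a presentation, and a choice of compatible skeletal filtrations (bearing in mind that $A \to B$ need not respect generators either), is where the combinatorics of torsion-free complexes, following \cite{forest}, does the real work. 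Granting it, the cell-attachment pushouts are handled by \cref{thm:mainthm} as above, and the ``equate-cells'' pushouts are of Segal type and preserved by $\iota$ essentially by definition, as for $[2] = [1] \ast_{[0]} [1]$ — here it is precisely the torsion-freeness hypothesis that excludes the coincidences which would otherwise let an identification of parallel cells create spurious higher homotopy, so that the strict and weak pushouts agree. The pushout along the monomorphism $A \to B$ is then folded in by one further application of the pasting law, completing the reduction to \cref{thm:mainthm}.
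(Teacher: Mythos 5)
Your overall strategy---reduce the folk cofibration $A \to C$ to generating cofibrations and then cancel pushout squares against \cref{thm:mainthm}---is the same as the paper's, but the paper's reduction is more direct and avoids the step where your argument goes wrong. The paper simply notes that folk cofibrations are retracts of transfinite composites of pushouts of the sphere inclusions $\partial\globe_d \to \globe_d$ \emph{alone}; since homotopy pushouts are stable under retracts, pasting, and filtered colimits (\cref{lem:hty-retract}, \cref{lem:hty-paste}, \cref{obs:filtered}), one reduces at once to the case $A \to C = \partial\globe_d \to \globe_d$. In that case $D = B \ast_{\partial\globe_d} \globe_d$, so the new cell is a maximal cell of the torsion-free complex presenting $D$ and may be taken to be the last cell glued on; cancelling the resulting pushout square against the squares of \cref{thm:mainthm} for $B$ and $D$ finishes the proof. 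In particular, the generator-compatibility issue you worry about (e.g.\ $\{0<2\} \hookrightarrow [2]$) is sidestepped: once $A \to C$ is a single sphere inclusion, one never needs the decomposition of $A \to C$ to interact with the generating cells of $A$ and $C$; only $B$ and $D$ need torsion-free-complex structures, and $D$'s is $B$'s plus one cell. Your dimension-wise skeletal induction and cube argument could be made to work but is heavier than necessary.

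The genuine error is your treatment of the ``equate parallel $d$-cells'' maps $\partial\globe_{d+1} \to \globe_d$. First, these are not generating folk cofibrations: in the folk model structure on strict $\omega$-categories the generating cofibrations are exactly the sphere inclusions $\partial\globe_d \to \globe_d$ (which is why the cofibrant objects are precisely the polygraphs), and the maps $\partial\globe_{d+1} \to \globe_d$ are not even monomorphisms, let alone cofibrations. Second, and more seriously, pushouts along such maps---which identify two parallel $d$-cells---are emphatically \emph{not} preserved by $\sCat_n \to \Cat_n$ ``essentially by definition''; they are precisely the kind of strict quotient whose failure to be homotopical is the whole difficulty of strictification (they are cousins of the maps $\globe_m \times S^d \to \globe_m$ inverted in passing from $\Cat_N^\inc$ to $\sCat_N$ in \cref{def:theta-space}, not of the spine inclusions). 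Your motivating example $[2] = [1] \ast_{[0]} [1]$ is a pushout along the cell attachment $\{1\} \hookrightarrow [1]$, not along an identification of parallel cells, so the analogy does not support the claim. Since free categories on torsion-free complexes are free, no identifications of parallel cells ever occur in building them, and this entire branch of your argument should be deleted rather than repaired: the decomposition of $A \to C$ into sphere inclusions (up to retract and transfinite composition) is all that is needed.
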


\subsection*{Torsion-free complexes} In all of this discussion, the ``nice computads" we use are the \defterm{torsion-free complexes}
(in the sense of \cite{forest}). These are a restricted type of computad similar to Street's parity complexes. They include, for example, all objects of Joyal's category $\Theta$, all of Street's orientals, all lax Gray cubes, and more generally (\cite[3.4.4.22]{forest-thesis}) all loop-free augmented directed complexes in the sense of \cite{steiner}. We give an overview of the theory of torsion-free complexes in \cref{sec:tf}. 
% In the above theorem, $P_n$ denotes the set of $n$-atoms of $P$, and $\Free(P)$ (resp. $\Free(P_{\leq n})$) is the free $n$-category on $P$ (resp. on the $(n-1)$-skeleton of $P$). We deduce inductively that torsion-free complexes, which \emph{a priori} are free in $\sCat_N$ on certain data, are also free in $\Cat_N$ on the same data. And if a torsion-free complex has a unique largest, ``composite" cell, then this is equally true when the torsion-free complex is considered as a weak $(\infty,N)$-category. We deduce from this the following:
% \begin{cor*}[See \cref{cor:po}]
%     Let
%     \begin{equation*}
%         \begin{tikzcd}
%             A \ar[r,tail] \ar[d,hook] \ar[dr,phantom, "\ulcorner"{description,very near end}] &
%             B \ar[d, hook] \\
%             C \ar[r,tail] & D
%         \end{tikzcd}
%     \end{equation*}
%     be a pushout diagram of strict $N$-categories where the downward maps are folk cofibrations and the rightward maps are monic. Suppose that $A,B,C,D$ are free on torsion-free complexes. Then this pushout is preserved by the ``inclusion" $\sCat_N \to \Cat_N$.
% \end{cor*}
We expect that these results will be adaptable to many other formalisms describing restricted classes of computads. For example, an analog of \cref{cor:po} in dimension 2 was proven in \cite{horr}, using Power's formalism of \emph{pasting schemes} (see also \cite{columbus}); our results in dimension 2 should be closely related, but we have not attempted to compare the formalism of \cite{power2} to that of \cite{forest}.

\subsection*{Flagged $(\infty,n)$-categories}
Theorems A,B,C are all in fact proven in the more general setting of \emph{flagged} weak $(\infty,n)$-categories \cite{ayala-francis}. For $n \in \nats \cup \{\omega\}$, the localization $\Cat_n^\inc \to \Cat_n$ preserves colimits, so the results for $(\infty,n)$-categories follow immediately. In the case $n = \omega$, this includes results for $(\infty,\infty)$-categories with either inductive or coinductive equivalences. Note that this distinction between inductive and coinductive equivalences is completely absent in the flagged setting. See \cref{rmk:to-infty} for some further discussion of this point.

\subsection*{Applications} In the forthcoming \cite{campion-gray} we will use the new supply of homotopy pushouts from Theorem C to construct new models of $(\infty,n)$-categories as presheaves on various sites of computads. Related presheaf models have been considered before (cf. \cite{bsp}). When considering such models, there is generally a trade-off to be made: working with a larger site may be convenient, but generally comes at the cost of a less explicit description of the relevant localization. For instance, presheaves on Joyal's category $\Theta_n$ can be localized explicitly at the spine inclusions to get flagged $(\infty,n)$-categories, but $\Theta_n$ is among the smallest sites one might consider. Larger sites, such as the site $\Upsilon_n$ considered in \cite{bsp}, lead to less explicit descriptions of the localization. The homotopy pushouts considered in the present paper will allow to explicitly describe these localizations when working with larger sites than has previously been feasible. This may be seen as progress toward programs such as Henry's program \cite{henry-regular} (related to the Simpson conjecture) which aims to prove a similar result to Theorem B for the class of \emph{nonunital polygraphs} (which properly includes torsion-free complexes). For our purposes the ability to work with a site which includes the Gray cubes will facilitate our approach to the Gray tensor product in \cite{campion-gray}.

\subsection*{Outline of Proofs} 
The proof of Theorem A occupies \cref{sec:cs}. Here we use Quillen's Theorem A a few times to reduce to a certain subposet $\Decompat{\{k\}}(\mu)$, leaning into the ability to describe cells as well-formed closed sets since we are working in a torsion-free complex.

The proof of Theorem B occupies \cref{sec:ws}, and proceeds as follows. It is harmless to assume that $P = P_{\leq d}$ is $d$-dimensional. We work in the model of $\Theta_n$-spaces. In fact, we can do a bit better and obtain these pushout results before Rezk completion -- i.e. we are working in the $\Theta_n$-space model for the $\infty$-category $\Cat_n^\inc$ of \emph{flagged} $(\infty,n)$-categories (\cite{ayala-francis}, see \cref{sec:weak} for more background). 
% We begin by observing that \cref{cor:po} is equivalent to \cref{thm:mainthm}. That is, it will suffice to show, when a torsion-free complex $\Free P$ is built by gluing on its $n$-cells to the $n$-skeleton, that the resulting pushouts are homotopy pushouts. 
We start by writing down a very non-fibrant model $\Free_{d-1} P$ in $\Psh(\Theta)$ for the desired pushout in $\Cat_n^\inc$, given by taking the $(d-1)$-skeleton $\Free (P_{\leq d-1})$ and gluing on the $d$-cells in the category of presheaves on $\Theta_n$. We wish to show that the canonical map $\Free_{d-1} P \to \Free P$ is carried to an equivalence by the localization $L_{\Cat_\omega^\inc}$.
% to flagged $(\infty,\infty)$-categories. 

First we prove this when $\Free P \in \Theta_n$. The proof in this case is quite different from the proof of the rest of the theorem and follows from some basic expected properties of wedge sums and suspensions, which we prove in \cref{sec:weak}. These results are similar to those of \cite{ozornova-rovelli-fundamental}, but easier in our case since we work in $\Theta_n$-spaces rather than in complicial sets.

We return now to the main body of the proof that $\Free_{d-1} P \to \Free P$ is an equivalence. Using induction on the number of atoms in $P$, we are free to glue in copies of $\Free P'$ for each proper subcomplex $P' \subset P$, resulting in a more-fibrant model $\Free_\partial P$ for the weak colimit. In fact, if $P$ does not have a unique maximal $d$-cell $\mu$ (a ``big cell") -- not contained in any $\Free P'$ for a proper subcomplex $P' \subset P$ -- then $\Free_\partial P = \Free P$ and we are done. 

Otherwise, we next need to glue in all of the $\Theta$-cells which compose to $\mu$. We do this with a ``pre-fibrant replacement" which we call $\Free_\partial^+ P$. The inclusion $\Free_\partial P \to \Free_\partial^+ P$ is seen to be an equivalence by taking its skeletal filtration and observing that each stage is obtained by gluing in extensions of the form $\theta \times \partial \Delta[k] \cup_{\Free_\partial \theta \times \partial \Delta[k]} \Free_\partial \theta \times \Delta[k] \to \theta \times \Delta[k]$, which are $L_{\Cat_\omega^\inc}$-acyclic by the case of Theorem B where $\Free P \in \Theta$ and the cartesianness of the model structure. The ``collapse map" $\Free_\partial^+ P \to \Free P$ is in fact a levelwise equivalence of presheaves on $\Theta$. This is seen by analyzing its fibers, most of which admit initial objects. The only one which doesn't is the fiber over the big cell $\mu \in \Free P(\globe_n)$. The fiber here is none other than the (classifying space of the) poset $\Decomp(\mu)$ of ``ways to compose $\mu$ from smaller cells", which is contractible by Theorem A.
% certain poset $\Decomp(\mu)$. We regard this poset as the ``space of ways to compose $\mu$ from smaller cells". We analyze $\Decomp(\mu)$ in \cref{sec:cs}. Here we show that $\Decomp(\mu)$ is contractible, using Quillen's Theorem A a few times to reduce to a certain subposet $\Decompat{\{k\}}(\mu)$, and leaning into the ability to describe cells as well-formed closed sets since we are working in a torsion-free complex.

As indicated above, Theorem C (\cref{cor:po}) follows immediately from Theorem B by cancellation of pushouts.

\subsection*{Outline of Paper} The outline of the paper is as follows. We begin in \cref{sec:tf} with an overview of the theory of torsion-free complexes, and discuss a few properties thereof which we will need later. \cref{sec:weak}, which provides background on weak $(\infty,n)$-category theory, may be read essentially independently of the rest of the paper. After reviewing some facts about $\Theta$-spaces, we prove (\cref{subsec:susp}) that the inclusion $\sCat_\omega \to \Cat_\omega^\inc$, from strict $\omega$-categoriess to flagged weak $(\infty,\infty)$-categories, preserves suspension, and also (\cref{subsec:wedge}) that it preserves certain wedge sums. These results may be compared to \cite{ozornova-rovelli-fundamental}, except that our results are much easier because the $\Theta_N$-space model is tailor-made for such compatibility. In \cref{sec:misc} we collect those observations going into the proof of Theorem A (\cref{thm:sc-main}) which are independent of the theory of torsion-free complexes. \cref{sec:cs} introduces the ``space of composites" $\Decomp(\mu)$ of a cell $\mu$ in a torsion-free complex $P$, and shows that it is contractible (Theorem A / \cref{thm:sc-main}). Finally, in \cref{sec:ws} we deduce from this Theorems B and C (\cref{thm:mainthm} and \cref{cor:po}), showing that the gluing pushouts defining the free category on a torsion-free complexes are preserved by the inclusion $\sCat_\omega \to \Cat_\omega^\inc$ (and thence by the composite $\sCat_\omega \to \Cat_\omega^\inc \to \Cat_\omega$ from strict $\omega$-categories to weak $(\infty,\infty)$-categories).

\begin{rmk*}
    In this introduction, we have generally used a lowercase $n$ for the category number and $d$ for the dimension of cells under consideration. In the main body of the paper, we change this convention: we will generally use a capital $N$ for the category number and a lowercase $n \leq N$ for the dimension of cells under consideration. The category number $N$ is generally immaterial to the discussion, as the inclusions $\Cat_N \to \Cat_M$, $\sCat_N \to \sCat_M$, $\Cat_N^\inc \to \Cat_M^\inc$ are all fully faithful and closed under limits and colimits for $N \leq M \leq \infty$. That is, all colimits may be computed at the smallest value of $N$ for which they make sense, which is always finite in this paper. Alternatively, one may compute all colimits at $N = \omega$ and forget about the category level altogether. We would advocate for the latter perspective, pointing out that although there are some subtleties lurking in the definition of $\Cat_\omega$ having to do with inductive versus coinductive equivalences, these subtleties are absent in the flagged setting. See \cref{rmk:to-infty} for some further discussion of this point.
\end{rmk*}

\subsection{Notation and Conventions}
In this introduction, we have generally used a lowercase $n$ for the category number and $d$ for the dimension of cells under consideration.
In the main body of the paper, we change this convention: we will generally use a capital $N$ for the category number and a lowercase $n \leq N$ for the dimension of cells under consideration.
Let $n \in \nats \cup \{\omega\}$. We write $\sCat_n$ for the strict 1-category of strict $n$-categories. We write $\Cat_n$ for the $(\infty,1)$-category of weak $(\infty,n)$-categories (with inductive equivalences when $n = \omega$), and $\Cat_n^\inc$ for the $(\infty,1)$-category of flagged $(\infty,n)$-categories \cite{ayala-francis}. We sometimes write $A \ast_B C$ for the pushout of $A \leftarrow B \to C$ computed in $\sCat_n$, standing in contrast to $A \cup_B C$ which we usually reserve for a pushout taken in a category of presheaves. A \defterm{space} for us should be taken to mean a simplicial set, and a poset or category will automatically be thought of as a simplicial set by taking its nerve. More notation appears in the body of the paper. In particular \cref{notation:strict} gives not-entirely-standard notation for the boundaries of cells in an $n$-category. We will often say that a poset or category is \defterm{contractible} if its classifying space is contractible. We write $\yo$ for the Yoneda embedding.

\subsection{Acknowledgements}
I would like to thank David Ayala, Gregory Arone, Alexander Campbell, Simon Forest, David Gepner, Philip Hackney, Amar Hadzihasanovic, Sina Hazratpour, Simon Henry, Sam Hopkins, Michael Johnson, Yuki Maehara, David Jaz Myers, Viktoriya Ozornova, Emily Riehl, Martina Rovelli, Maru Sarazola, Chris Schommer-Pries, Benjamin Steinberg, Chaitanya Leena Subramaniam, and Jonathan Weinberger for helpful discussions. I'm grateful for the support of the ARO under MURI Grant W911NF-20-1-0082.

\section{Torsion-free complexes}\label{sec:tf}
In this section we work with strict $\omega$-categories. The purpose of this section is to set up some basic facts surrounding the \emph{torsion-free complexes} of \cite{forest} and \cite{forest-thesis}. These are the most general class of ``parity complex" in the sense of Street (\cite{street}) with good $n$-categorical properties. We also mention some less-standard properties of torsion-free complexes: \cref{prop:theta-reg} shows that the free categories thereon are \defterm{$\Theta$-regular} and \defterm{hypercancellative}. The $\Theta$-regularity property will be a simplifying assumption in the definitions of the poset $\Decomp(\mu)$ (\cref{def:decomp'}) and the construction $\Free^+_\partial P$ (\cref{def:prefib}) below. Hypercancellativity will be useful in \cref{lem:wedge} below (feeding into \cref{lem:theta-main}). We also introduce the preorder $\myPoset_k(\mu)$ on $\mu_k$, which will be key to our understanding of $\Decompat{\{k\}}(\mu)$ in \cref{lem:forest-order} below. Finally, we prove a key well-foundedness property of $\myPoset_k(\mu)$ in \cref{lem:exists-min}, which allows for \cref{cor:forest-sd-ideal} to be applied in the proof of \cref{thm:sc-main} below.

\begin{notation}\label{notation:strict}
    Let $C$ be a strict $\omega$-category, and let $c \in C_n$ be an $n$-cell. 
    
    For $0 \leq i \leq n$, we write $\partial_i c$ for the $i$-dimensional target of $c$, and $\partial_{-i} c$ for the $i$-dimensional source of $c$. In this notation, $0 \neq -0$, and $\partial_n c = c = \partial_{-n} c$.

    If $a,b \in C_n$ are $n$-cells, then we write $b \circ_i a$ for the $i$-dimensional composite of $a,b$, i.e. the composite which requires $\partial_i a = \partial_{-i} b$.
\end{notation}

\begin{Def}[\cite{forest-thesis}]
A \defterm{hypergaph} $P$ comprises sets $(P_n)_{n \in \nats}$ along with functions $(-)^+, (-)^- : P_n \to \Pow_\fin(P_{n-1})$, where $\Pow_\fin(X)$ denotes the set of finite subsets of $X$. The elements of $P_n$ are called the \defterm{$n$-atoms} of $P$. The hypergraph $P$ is \defterm{of dimension $\leq N$} if $P_n = \emptyset$ for $n \geq N+1$.

A \defterm{torsion-free complex} is a hypergraph $P$ subject to certain axioms (T0)-(T4) which may be found in \cite[Section 1.7]{forest}. Among these is the \defterm{acyclicity axiom}, which says that for each $n \in \nats$, the relation $\triangleleft_{n-1}$ on $P_n$, defined by $x \triangleleft_{n-1} y$ if $x^+ \cap y^- \neq \emptyset$, is \emph{acyclic} in the sense that its transitive closure is irreflexive.
\end{Def}

\begin{Def}[\cite{forest-thesis}]\label{def:tf}
Let $P$ be a torsion-free complex. 

For $n \in \nats$, a \defterm{pre-$n$-cell} $c$ in $P$ comprises subsets $(c_{-0}, c_0, c_{-1}, c_1, \dots, c_{-(n-1)}, c_{n-1}, c_{-n}, c_n)$ where $c_i \subseteq C_{|i|}$, $c_{-n} = c_n$. These form a globular set where $\partial_{\epsilon i} c = (c_{-0},c_0, \dots, c_{-(j-1)}, c_{j-1}, c_{\epsilon i}, c_{\epsilon i})$.

For $n \in \nats$ and $S \subseteq P_n$, we write $S^+ = \cup_{s \in S} s^+$, $S^- = \cup_{s \in S} s^-$, $S^\pm = S^+ \setminus S^- \subseteq P_{n-1}$ and $S^\mp = S^- \setminus S^+ \subseteq P_{n-1}$. If $X,Y \subseteq P_{n-1}$, we say that $S$ \defterm{moves} $X$ to $Y$ if $S^\mp = X \setminus Y$ and $S^\pm = Y \setminus X$. (Note that it follows that $Y = (X \setminus S^{\mp}) \cup S^\pm$ and $X = (Y \setminus S^{\pm}) \cup S^{\mp}$.)

For $n \in \nats$ and $S \subseteq P_n$, we say that $S$ is \defterm{fork-free} if either $n = 0$ and $S$ is a singleton, or $n \geq 1$ and for $u \neq v \in S$ we have $u^- \cap v^- = u^+ \cap v^+ = \emptyset$.

A pre $n$-cell $c$ is said to be a \defterm{cell} if for each $c_{-i},c_i$ is fork-free, and for each $0 \leq i \leq n-1$, we have that $c_{i+1}$ and $c_{-(i+1)}$ both move $c_{-i}$ to $c_i$. The cells form a globular subset of pre-cells. To each $n$-atom $a$, there is an associated $n$-cell $\langle a \rangle$, where $\langle a \rangle_{\pm n} = \{a\}$, and $\langle a \rangle_{i-1} = \langle a \rangle_i^\pm$ and $\langle a \rangle_{-(i-1)} = \langle a \rangle_i^\mp$.

If $c,d$ are $n$-cells with $\partial_i c = \partial_{-i} d$, then $d \circ_i c$ is an $n$-cell, defined as follows:
\begin{equation*}
    (d \circ_i c)_{\epsilon j} =
    \begin{cases}
        d_{\epsilon j} = c_{\epsilon j} & j < i \\
        d_i & j = i, \epsilon = + \\
        c_{-i} & j = i, \epsilon = - \\
        c_{\epsilon j} \cup d_{\epsilon j} & j > i
    \end{cases}
\end{equation*}
(The axioms are such that the union appearing above is a disjoint union.) In this way, the cells of $P$ form a strict $\omega$-category which we call $\Free P$.

If $P$ is a torsion-free complex, a \defterm{subcomplex} $Q$ comprises subsets $Q_n \subseteq P_n$ for each $n \in \nats$ which is closed under $(-)^-$ and $(-)^+$. In this case $Q$ itself forms a torsion-free complex. We write $P_{\leq n}$ for the torsion-free complex given by throwing away all atoms in degrees $\geq n+1$. More generally, for any $S \subseteq \cup_n P_n$, there is a smallest subcomplex containing $S$, which we denote $\bar S$. The construction $\Free$ is functorial in subcomplex inclusions.
\end{Def}

\begin{thm}[{\cite[Cor 3.3.3.5]{forest-thesis}}]\label{thm:forest-free}
    Let $P$ be a torsion-free complex and $n \in \nats$. Then for each $p \in P_n$, there is a canonical map $\partial \globe_n \to \Free(P_{\leq n-1})$, such that the square
    \begin{equation*}
        \begin{tikzcd}
            P_n \times \partial \globe_n \ar[r] \ar[d] & P_n \times \globe_n \ar[d] \\
            \Free(P_{\leq n-1}) \ar[r] & \Free(P_{\leq n})
        \end{tikzcd}
    \end{equation*}
    is a pushout in $\sCat_\omega$ (equally, in $\sCat_N$ for $N \geq n$).
\end{thm}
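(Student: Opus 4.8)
The plan is to verify directly the universal property that the square encodes: that $\Free(P_{\leq n})$ is freely generated over $\Free(P_{\leq n-1})$ by adjoining, for each $n$-atom $p \in P_n$, a single $n$-cell along its boundary. This is \cite[Cor 3.3.3.5]{forest-thesis}, and I would organize the argument along the lines of Street's treatment of orientals and parity complexes. Since a map out of $\partial\globe_n$ is the same datum as a parallel pair of $(n-1)$-cells, the canonical boundary map attached to $p$ is $\big(\partial_{-(n-1)}\langle p\rangle,\, \partial_{n-1}\langle p\rangle\big)$; that these are well-defined parallel $(n-1)$-cells of $\Free(P_{\leq n-1})$ is immediate from the cell axioms of \cref{def:tf} (fork-freeness and the movement conditions).

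The heart of the proof is a normal-form statement for the $n$-cells of $\Free P$: every $n$-cell $c$ with $c_n = \emptyset$ lies in the image of $\Free(P_{\leq n-1})$, and every $n$-cell $c$ with $c_n \neq \emptyset$ can be exhibited as an iterated $\circ_{n-1}$-composite of whiskerings (by cells of dimension $< n$) of the atomic cells $\langle p\rangle$, $p \in c_n$, over a base sequence of $(n-1)$-cells. I would prove this by excision of extremals: by the acyclicity axiom, the relation $\triangleleft_{n-1}$ restricted to the finite fork-free set $c_n$ is a strict partial order, so it has a minimal element $p$; one then shows the $(n-1)$-source of $c$ can be advanced past $p$, writing $c = c' \circ_{n-1} w$ (or its mirror image) where $w$ is a whiskering of $\langle p\rangle$ and $c'_n = c_n \setminus \{p\}$, and recurses on $|c_n|$. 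The composition formula of \cref{def:tf}, in which $\circ_j$ for $j < n$ acts by disjoint union on top atoms, handles the whiskering bookkeeping.

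Granting the normal form, uniqueness of the extension $\Free(P_{\leq n}) \to C$ of any cocone $(\Free(P_{\leq n-1}) \to C,\ \{\langle p\rangle \mapsto g_p\}_{p \in P_n})$ is automatic: the value on an arbitrary $n$-cell is forced by functoriality once the values on $\Free(P_{\leq n-1})$ and on the generators are fixed. Existence — that the rule ``decompose into generators, then compose in $C$" does not depend on the chosen decomposition — is the main obstacle, and is exactly the point where the torsion-free axioms of \cite{forest} do their work (Street's original parity-complex argument famously had a gap here). I would establish well-definedness by a confluence argument: any two excision orders on $c_n$ differ by transpositions of $\triangleleft_{n-1}$-incomparable atoms, and each such transposition is realized by a single application of the interchange law in $C$; acyclicity guarantees these swaps only ever relate incomparable atoms, while the movement axioms guarantee that the relevant source and target cells in $\Free(P_{\leq n-1})$ match up so that interchange applies. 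Hence all decompositions produce the same composite in $C$. Functoriality of the resulting extension is then a routine check against the composition formula, completing the verification of the universal property. (Alternatively, one could route through \cite{steiner}: a torsion-free complex yields a loop-free augmented directed complex by \cite[3.4.4.22]{forest-thesis}, for which Steiner's theorem supplies the desired presentation; but the direct argument above is cleaner and avoids the passage to chain complexes.)
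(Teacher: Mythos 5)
The paper does not prove this statement at all: it is imported verbatim from Forest's thesis (\cite[Cor 3.3.3.5]{forest-thesis}) and used as a black box, so there is no in-paper argument to compare yours against. Judged on its own terms, your sketch has the right overall shape --- the boundary map $\bigl(\partial_{-(n-1)}\langle p\rangle, \partial_{n-1}\langle p\rangle\bigr)$, a normal form by excision of $\triangleleft_{n-1}$-minimal atoms, and well-definedness as the crux --- but the confluence step is asserted rather than proven, and it is precisely the nontrivial content of Forest's result. The claim that any two excision orders differ by transpositions of $\triangleleft_{n-1}$-incomparable atoms each realized by ``a single application of the interchange law'' is essentially a restatement of what must be shown: the torsion phenomenon that broke Street's original argument does not live only in the top-dimensional ordering of $c_n$, but in the lower-dimensional decompositions of the whiskering cells, where two excision orders can force incompatible factorizations of the $(n-1)$-dimensional ``base sequence.'' Axioms (T3)--(T4) are designed to exclude exactly these configurations, but verifying that they do occupies a substantial part of Forest's thesis and cannot be dispatched by invoking acyclicity and the movement axioms in one sentence. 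So there is a genuine gap at the one point you yourself identify as the main obstacle.

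Your parenthetical fallback is also not available: the inclusion in \cite[Thm 3.4.4.22]{forest-thesis} (quoted in \cref{rmk:general}) goes the other way --- loop-free augmented directed complexes give torsion-free complexes, not conversely --- so a general torsion-free complex need not arise from a Steiner complex, and you cannot reduce to Steiner's presentation theorem. For the purposes of this paper the correct move is simply to cite Forest, as the author does.
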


Applying \cref{thm:forest-free} inductively, we see that there is a canonical computad associated to a torsion-free complex $P$, and that $\Free P$ agrees with the free strict $\omega$-category on this computad.

\begin{rmk}\label{rmk:general}
    Torsion-free complexes are quite general, so far as strict $n$-categories of a ``combinatorial" flavor go. For example, they include all loop-free Steiner complexes (\cite[Thm 3.4.4.22]{forest-thesis}). In particular, they include all objects of Joyal's category $\Theta$, all of Street's orientals, and all lax Gray cubes.
\end{rmk}

\begin{Def}
We say that a torsion-free complex $P$ \defterm{has a big cell} if $P$ is itself a finite closed well-formed set of $P$ in the sense of \cite[Section 1.5]{forest}. The finite closed well-formed sets of $P$ correspond bijectively to the cells of $\Free P$ (\cite[Theorem 3.1.21]{forest}), so this means that there is a \defterm{big cell} $\mu: \globe_n \to \Free P$, i.e. a cell which does not factor through $\Free P'$ for any proper subcomplex $P' \subset P$.
\end{Def}

\begin{eg}
If $\theta \in \Theta$, then $\theta = \Free(T)$ for a unique torsion-free complex $T$. Moreover, $T$ has a big cell.
\end{eg}

\begin{Def}\label{def:regularity}
Let $A$ be a strict $\omega$-category.

Say that $A$ is \defterm{finite} if its underlying set is finite.

Say that $A$ is \defterm{$\globe$-regular} if every nondegenerate map $\globe_n \to A$ is monic 
(equivalently, every $\circ_i$-endomorphism in $A$ is of dimension $\leq i$, for any $i \in \nats$).

Say that $A$ is \defterm{$\Theta$-regular} if every nondegenerate map $\theta \to A$ with $\theta \in \Theta$ is monic.

Say that $A$ is \defterm{hypercancellative} if, for $a, b , a', b' \in A$ with $\partial^+_i a = \partial^+_i a' = \partial^-_{i} b = \partial^-_{i} b'$, we have that $(b \circ_i a = b' \circ_i a') \Rightarrow a = a', b= b'$.
\end{Def}

\begin{lem}\label{lem:globe-hyper-theta}
Every $\globe$-regular, hypercancellative strict $\omega$-category is $\Theta$-regular.
\end{lem}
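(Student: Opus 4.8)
The plan is to prove this by structural induction on $\theta \in \Theta$, exploiting the description of $\Theta$ as an iterated wreath product: $\Theta = \bigcup_N \Theta_N$ with $\Theta_0 = \{\globe_0\}$ and $\Theta_{N+1} = \Delta \wr \Theta_N$, so that every $\theta \in \Theta$ has the form $[m](\theta_1, \dots, \theta_m)$ with $[m] \in \Delta$ and $\theta_i \in \Theta_N$, and, as a strict $\omega$-category, $\theta = \Sigma\theta_1 \ast_{[0]} \cdots \ast_{[0]} \Sigma\theta_m$, where $\Sigma\theta' = [1](\theta')$ is the suspension and $\ast_{[0]}$ pastes the last $0$-cell of one factor to the first $0$-cell of the next. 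Thus $\theta$ is either $\globe_0$ (when $m = 0$), a suspension $\Sigma\theta_1$ (when $m = 1$), or a pasting $[m-1](\theta_1, \dots, \theta_{m-1}) \ast_x \Sigma\theta_m$ along a $0$-cell $x$, in which case each factor is an object of $\Theta$ with the same or smaller dimension and strictly fewer $0$-cells; in the suspension case $\dim \theta_1 = \dim\theta - 1$. Since $\dim\theta = 0$ precisely for $\theta = \globe_0$, I would induct on $(\dim\theta, |\theta_0|)$ ordered lexicographically, $|\theta_0|$ being the number of $0$-cells. I will take "$f \colon \theta \to A$ nondegenerate" to mean $f$ does not factor through a non-invertible split epimorphism of $\Theta$ (equivalently, $f$ sends each generating cell of $\theta$ to a nondegenerate cell of $A$), and will use freely that each object of $\Theta$ is itself $\globe$-regular. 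The base case $\globe_0 \to A$ is immediate.

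For the suspension step, given a nondegenerate $f \colon \Sigma\theta' \to A$ with $s, t$ the two $0$-cells of $\Sigma\theta'$, I would first check $f(s) \ne f(t)$: otherwise each generating cell of $\Sigma\theta'$ (a suspended generator, of positive dimension with $0$-dimensional source $s$ and target $t$) would map to a positive-dimensional $\circ_0$-endomorphism of $f(s)$, hence to an identity by $\globe$-regularity, making $f$ degenerate. Setting $a = f(s)$, $b = f(t)$, the functor $f$ restricts on hom-$\omega$-categories to $g \colon \theta' = (\Sigma\theta')(s,t) \to A(a,b)$; this $g$ is again nondegenerate (a split-epi factorization would suspend to one of $f$), and $A(a,b)$ is again $\globe$-regular and \hcanc{} by routine dimension-shift arguments. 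By the inductive hypothesis $g$ is monic, and since the cells of $\Sigma\theta'$ are exactly $s$, $t$, and the suspensions of the cells of $\theta'$ (the latter of positive dimension), $f$ itself is monic.

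For the pasting step, given a nondegenerate $f \colon \theta_1 \ast_x \theta_2 \to A$, I would first show $f_i := f|_{\theta_i}$ is nondegenerate: a non-invertible split epimorphism of $\Theta$ preserves the extreme $0$-cells (true of codegeneracies in $\Delta$, hence in $\Theta$), so a split-epi factorization of $f_i$ extends by the identity on the other factor to one of $f$; thus $f_i$ is monic by induction. Then $f$ is injective on $0$-cells, since a collision $f(p) = f(q)$ with $p \in (\theta_1)_0 \setminus \{x\}$, $q \in (\theta_2)_0 \setminus \{x\}$ would, via the composite of a $1$-cell $p \to x$ in $\theta_1$ with a $1$-cell $x \to q$ in $\theta_2$, produce a positive-dimensional $\circ_0$-endomorphism of $f(p)$, contradicting $\globe$-regularity. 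Finally, for positive-dimensional cells I would use that, because $x$ is the last $0$-cell of $\theta_1$ and first of $\theta_2$ and both factors are $\globe$-regular, every cell of $\theta_1 \ast_x \theta_2$ either lies in $\theta_1$, or lies in $\theta_2$ with $0$-dimensional source $\ne x$, or is a composite $\gamma_2 \circ_0 \gamma_1$ with $\gamma_1$ a cell of $\theta_1$ whose $0$-dimensional target is $x$ but source is not, and $\gamma_2$ a cell of $\theta_2$ whose $0$-dimensional source is $x$ but target is not. A coincidence under $f$ of two cells in the same family is resolved by monicity of $f_1$, resp.\ $f_2$ (using \hcanc{} in the third family, where the four relevant $0$-dimensional boundaries of the images all equal $f(x)$); a coincidence across families is impossible, since then a suitable $0$-dimensional source or target of the images would lie both in $f((\theta_1)_0)$ and in $f((\theta_2)_0 \setminus \{x\})$, which are disjoint by injectivity on $0$-cells. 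This completes the induction.

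I expect the main obstacle to be the cell description invoked in the pasting step — the three-family normal form for the cells of a pushout of strict $\omega$-categories along a single $0$-cell, together with the exhaustive verification of cross-family non-coincidence — since pushouts in $\sCat_\omega$ are delicate in general and one must genuinely use both the $\globe$-regularity of the two factors and the fact that the gluing $0$-cell is extremal in each. The minor nuisances are checking that "nondegeneracy" is inherited by the restrictions (to the suspension-hom and to the two pasting factors) and that $A(a,b)$ inherits $\globe$-regularity and hypercancellativity, both of which are routine but require some care with the degenerate and low-dimensional edge cases.
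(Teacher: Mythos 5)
Your proposal follows essentially the same route as the paper's proof: a structural induction over $\Theta$ as generated from the point by suspension and wedge sum, with the suspension case handled by passing to hom-$\omega$-categories (after ruling out $f(s)=f(t)$ via $\globe$-regularity) and the pasting case handled by first establishing injectivity on $0$-cells and then using hypercancellativity to split coincidences of cells that cross the wedge point, falling back on monicity of the two restrictions. The differences are purely organizational — you make the induction on $(\dim\theta,|\theta_0|)$ explicit where the paper cites a generation lemma for $\Theta$ and treats a single binary wedge, and your three-family normal form is exactly the paper's case split written out in more detail (both arguments also lean on the same assertion that identifying $0$-cells would introduce a non-identity endomorphism).
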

\begin{proof}
Let $\calC \subseteq \Theta$ be the collection of $\theta \in \Theta$ such that for every $\globe$-regular, hypercancellative $A$, every nondegenerate $F : \theta \to A$ is monic. By \cite[Lemma 1.10]{campion-dense}, it will suffice to show that $\ast \in \calC$ (which is trivial) and that $\calC$ is closed under suspension and wedge sums. 

Suppose first that $\theta = \Sigma \zeta$ with $\zeta \in \calC$. Then the induced map $F' : \zeta \to A(F(-),F(+))$ is nondegenerate. For if $F'$ factors through $\zeta \twoheadrightarrow \zeta'$, then $F$ factors through $\Sigma \zeta \twoheadrightarrow \Sigma \zeta'$; by nondegeneracy of $F$, this is an isomorphism, so $\zeta \to \zeta'$ is an isomorphism. Since $A$ is $\globe$-regular and hypercancellative, so is $A(F(-),F(+))$. Since $\zeta \in \calC$, it follows that $F'$ is monic. Therefore, $F$ is monic as soon as we check that $F(-) \neq F(+)$. This holds because otherwise there would be a nontrivial endomorphism in $A$, by $\globe$-regularity. Thus $\theta \in \calC$.

Suppose now that $\theta = \zeta \vee \eta$ with $\zeta,\eta \in \calC$. Then it is again clear that the composite maps $F_0: \zeta \to A$, $F_1 : \eta \to A$ are nondegenerate. As $\zeta,\eta \in \calC$, they are monic. So if $F$ fails to be monic, then we either have an identification of objects from $\zeta,\eta$ other than the wedge point (which contradicts $\globe$-regularity by introducing endomorphisms), or else we have $F(n \circ_0 z) = F(n' \circ_0 z')$ for some cells $z,z' \in \zeta$ and $n,n' \in \eta$, where the composite is over the wedge object. By hypercancellativity of $A$, $F(z) = F(z')$ and $F(n) = F(n')$. Because $F_0,F_1$ are monic, this implies that $z = z'$ and $n = n'$, so that $z \circ_0 n = z' \circ_0 n'$ after all. Thus $\theta \in \calC$.
\end{proof}

\begin{prop}\label{prop:theta-reg}
Let $P$ be a torsion-free complex. Then $\Free P$ is $\Theta$-regular and hypercancellative.
\end{prop}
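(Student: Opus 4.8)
The plan is to invoke \cref{lem:globe-hyper-theta}, so that it suffices to show $\Free P$ is $\globe$-regular and hypercancellative; both facts will be read off from the combinatorial description of cells in \cref{def:tf} together with the acyclicity axiom. For $\globe$-regularity I would use the equivalent form from \cref{def:regularity}: every $\circ_i$-endomorphism has dimension $\leq i$. A cell $c = (c_{-0},c_0,\dots,c_n)$ is a $\circ_i$-endomorphism exactly when $\partial_i c = \partial_{-i} c$, i.e.\ when $c_i = c_{-i}$, and I claim this forces $c_j = \emptyset$ for all $j > i$. Indeed, if $c_i = c_{-i}$ then the cell axioms force the fork-free set $S := c_{i+1}$ to move $c_i$ to $c_i$, whence $S^+ = S^-$; but if $S \neq \emptyset$, then since fork-freeness makes $S^- = \bigsqcup_{u \in S} u^-$ and $S^+ = \bigsqcup_{u \in S} u^+$ disjoint unions and each $u^-, u^+$ is nonempty ($u$ having positive dimension), the inclusion $u^+ \subseteq S^+ = S^- = \bigsqcup_{v} v^-$ lets us choose, for each $u \in S$, some $\sigma(u) \in S$ with $u \triangleleft_i \sigma(u)$; iterating $\sigma$ inside the finite set $S$ produces a $\triangleleft_i$-cycle, contradicting acyclicity. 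Hence $c_{i+1} = \emptyset$, and likewise $c_{-(i+1)} = \emptyset$; running the same argument one dimension higher, repeatedly, gives $c_j = \emptyset$ for all $j > i$.

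For hypercancellativity, suppose $b \circ_i a = b' \circ_i a' =: e$ with $\partial^+_i a = \partial^+_i a' = \partial^-_i b = \partial^-_i b' =: C$. Reading off the composition formula of \cref{def:tf}: the four cells agree with $e$ in all levels $< i$; at level $i$ one has $a_i = a'_i = b_{-i} = b'_{-i} = C$ by hypothesis, $a_{-i} = a'_{-i} = e_{-i}$, and $b_i = b'_i = e_i$; and for each $j > i$ the formula gives disjoint decompositions $e_{\epsilon j} = a_{\epsilon j} \sqcup b_{\epsilon j} = a'_{\epsilon j} \sqcup b'_{\epsilon j}$. So it remains to show that this ``cut'' of $e$ in the levels above $i$ is determined by $C$ alone. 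I would do this by induction on $j$ from $i+1$ to $n$: given the cut at level $j-1$ (and hence the level-$(j-1)$ source and target of $a$), the summand $a_j \subseteq e_j$ must be $\triangleleft_{j-1}$-downward closed in $e_j$ (if $u \triangleleft_{j-1} v$ with $v \in a_j$, then $u$ precedes $v$ in any step-by-step realization of the movement witnessing $a$, so $u \in a_j$), and among $\triangleleft_{j-1}$-downward-closed subsets of the fork-free set $e_j$ whose movement is compatible with the already-determined level-$(j-1)$ boundary there is exactly one; uniqueness again follows by examining a $\triangleleft_{j-1}$-minimal atom of the symmetric difference of two candidate cuts and extracting a $\triangleleft$-cycle.

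I expect this last uniqueness-of-cut step to be the main obstacle. Turning ``$S$ moves $X$ to $Y$'' into an actual linear order on $S$ realizing the movement one atom at a time, and tracking which atoms have already fired (hence which source-faces are currently ``available''), is the technical heart of the matter; this is exactly the analysis of finite well-formed sets carried out in \cite{forest}, and I would lean on the lemmas there rather than redevelop it. The $\globe$-regularity half, by contrast, is a short self-contained consequence of acyclicity.
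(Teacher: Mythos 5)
Your top-level strategy coincides with the paper's: reduce via \cref{lem:globe-hyper-theta} to showing $\Free P$ is $\globe$-regular and hypercancellative. For $\globe$-regularity the paper simply cites Forest's nonemptiness axiom (T0); your argument is a correct unwinding of why that works (note that you are implicitly invoking (T0) when you assert that $u^+$ is nonempty for a positive-dimensional atom, and you also need acyclicity, as you use), so this half is fine and in fact more informative than the paper's one line. For hypercancellativity the paper does \emph{not} give an argument at all --- it cites \cite{forest-hyp} --- and your instinct to ``lean on the lemmas in \cite{forest} rather than redevelop it'' is therefore the same move. The caveat is that the mechanism you sketch should not be presented as routine: the claim that $a_j$ is $\triangleleft_{j-1}$-downward closed in $e_j$ is only doing work at level $j=i+1$; for $j>i+1$ there are typically \emph{no} $\triangleleft_{j-1}$-relations across the cut at all (e.g.\ for a $\circ_0$-composite of $2$-categorical data, the $1$-dimensional faces of atoms of $a_2$ and of $b_2$ live over disjoint object-spans), so downward-closedness is vacuous there and the uniqueness must come entirely from compatibility with the already-determined lower boundary. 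Establishing that uniqueness --- in effect, linearizing a ``movement'' into a firing order and showing the cut is forced --- is precisely the content of the result being cited, so your proposal is acceptable only insofar as it is read as a reduction to Forest's work, which is exactly what the paper does.
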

\begin{proof}
By \cref{lem:globe-hyper-theta}, it suffices to show that $\Free P$ is $\globe$-regular and hypercancellative. $\globe$-regularity follows from Forest's nonemptiness axiom (T0) (\cite[Section 1.7]{forest}). Hypercancellativity is shown in \cite{forest-hyp}.
\end{proof}

% \begin{lem}\label{lem:blah}
%     Let $P$ be a torsion-free complex, and let $c,d$ be cells in $P$ such that $d \circ_j c$ exists and is distinct from $c, d$. Then for some $i \leq j$ there exists $x \in c_{i+1}$, $y \in d_{i+1}$ such that $x \triangleleft_i y$ or $y \triangleleft_i x$.
% \end{lem}
% \begin{proof}
%     \problematic{to add}
% \end{proof}

\begin{Def}\label{def:pos}
    Let $P$ be a torsion-free complex with an $n$-cell $\mu$. Let $\myPoset_k(\mu)$ be the preorder of elements of $(\partial_k \mu)_k$, whose equivalence classes are generated by declaring two elements $a,b \in (\partial_k \mu)_k$ to be \defterm{equivalent} if they are both in $(\partial_k c)_k$ for some atom $c \in \overline{\{\mu\}}$ of dimension $\geq k+1$, and whose ordering relation is generated by declaring that $a \leq b$ if $a \triangleleft_{k-1} b$.
\end{Def}

    We will be particularly interested in the poset $\lin (\myPoset_k(\mu))$ of non-codiscrete linear preorders refining $\myPoset_k(\mu)$, ordered by reverse containment (\cref{def:linpre}).

% \begin{lem}\label{lem:ugh}
%     Let $P$ be a parity complex and $\mu$ an $n$-cell in $P$. If $\mu = b \circ_i a$ is a nontrivial decomposition, then for some $j \leq i$ there is $x \in a_j$, $y \in b_j$ such that $x \triangleleft_{j-1} y$ or $y \triangleleft_{j-1} x$. 
% \end{lem}
% \begin{proof}
%     We induct on $n$. If $n = 0$, the result is trivial because there are no nontrivial decompositions. So we may assume that $n \geq 1$. If $i \leq n-2$, then we have a (nontrivial, by $\globe$-regularity of $\Free P$) decomposition $\partial_{n-1} \mu = \partial_{n-1} b \circ_i \partial_{n-1} a$, and the result follows by induction. If the preorder $\myPoset_{n-1}(\mu)$ is not an equivalence relation, then the result is immediate. Otherwise, by iterating \cite[Lem 3.3.2.2]{forest-thesis} we may in fact write a nontrivial decomposition $\mu = b \circ_{n-2} a$, reducing to the previous case.
% \end{proof}

\begin{lem}\label{lem:exists-min}
    Let $P$ be a torsion-free complex, and let $\mu$ be an $n$-cell in $P$. If $\mu$ is not an atom, then there exists $1 \leq k \leq n$ such that the preorder $\myPoset_k(\mu)$ is not an equivalence relation. 
\end{lem}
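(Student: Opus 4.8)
The plan is to argue by contradiction: suppose that $\myPoset_k(\mu)$ is an equivalence relation for every $1 \leq k \leq n$, and derive that $\mu$ must be an atom. The key is to unwind what "being an equivalence relation" means. Recall that the ordering on $\myPoset_k(\mu)$ is generated by $a \leq b$ whenever $a \triangleleft_{k-1} b$, i.e.\ whenever $a^+ \cap b^- \neq \emptyset$. For this generated preorder to be symmetric (which is what being an equivalence relation adds, beyond the already-present reflexivity and transitivity), the relation $\triangleleft_{k-1}$ restricted to $(\partial_k \mu)_k$ must be "confined within equivalence classes" — any $\triangleleft_{k-1}$-step can be reversed up to the equivalence. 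But the equivalence classes are generated by the relation "$a,b$ both lie in $(\partial_k c)_k$ for a common higher-dimensional atom $c \in \overline{\{\mu\}}$". So if $\mu$ is not itself an atom, then $\overline{\{\mu\}}$ genuinely has higher cells, and one must rule out the interaction between the acyclicity of $\triangleleft_{k-1}$ (axiom built into torsion-free complexes) and the collapsing forced by the equivalence relation hypothesis.

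Concretely, I would proceed as follows. First, since $\mu$ is a cell but not an atom, the associated finite closed well-formed set $\overline{\{\mu\}}$ is not of the form $\overline{\{a\}}$ for a single atom $a$; pick the largest dimension $m$ in which $\mu_m$ has more than one element, or in which $\mu_m$ is a singleton $\{a\}$ but $\overline{\{a\}} \neq \overline{\{\mu\}}$ — in any case there is a top dimension $m \geq 1$ where "something nontrivial happens". Set $k = m$. I would then show that $\myPoset_k(\mu)$ fails to be an equivalence relation. At dimension $k = m$ chosen maximally, by construction there are no atoms of dimension $> k$ in $\overline{\{\mu\}}$ contributing new identifications beyond... — wait, this needs care, since atoms of dimension exactly $k+1$ are exactly the ones generating the equivalence classes on $(\partial_k \mu)_k = \mu_k$ (when $k \leq n-1$) or on $(\partial_n \mu)_n = \mu_n$ (when $k = n$). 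So the right choice is: let $k$ be maximal such that $\mu_k$ properly contains a "boundary piece," equivalently such that $\partial_k \mu$ is a proper cell; then the atoms of dimension $k+1$ in $\overline{\{\mu\}}$ are precisely what move $(\partial_{-k}\mu)_k$ to $(\partial_k \mu)_k$, and the equivalence classes on $\mu_k$ are the connected components of the "parallel" relation these induce.

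The main step — and the main obstacle — is the following dichotomy. Either (i) there are two distinct elements $a, b \in (\partial_k\mu)_k$ with $a \triangleleft_{k-1} b$ but $a \not\equiv b$, in which case we are done immediately since the preorder has a strict inequality $a < b$ and hence is not symmetric; or (ii) every $\triangleleft_{k-1}$-related pair in $(\partial_k \mu)_k$ is already equivalent, in which case I must derive a contradiction with $\mu$ not being an atom. In case (ii), the point is that the equivalence classes are generated by higher atoms, and if every pair connected by $\triangleleft_{k-1}$ lies in a single class, one can run a "collapsing" argument: the closed well-formed set $\overline{\{\mu\}}$ in dimensions $\leq k$ must then reduce — using the movement axioms (each $c_{i+1}$ moves $c_{-i}$ to $c_i$) together with fork-freeness and Forest's acyclicity axiom (the transitive closure of $\triangleleft_{k-1}$ is irreflexive) — to something with a single $k$-atom and so, inductively down the dimensions, to a single atom overall, contradicting the hypothesis. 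I expect case (ii) to require the most care: one has to use acyclicity to ensure the equivalence classes are "convex" with respect to $\triangleleft_{k-1}$, then use the fact that $\mu$ is a well-formed cell (so the $k$-atoms in $\mu_k$ are glued along a spanning tree pattern by fork-freeness) to conclude that a single equivalence class forces a single atom. Throughout, the facts from \cref{sec:tf} that $\Free P$ is $\Theta$-regular and hypercancellative (\cref{prop:theta-reg}) are available, though I anticipate using mostly the combinatorial axioms (T0)--(T4) and the correspondence (\cite[Theorem 3.1.21]{forest}) between cells of $\Free P$ and finite closed well-formed sets directly.
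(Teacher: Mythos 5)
There is a genuine gap: the heart of the lemma is precisely your ``case (ii)'' --- showing that if every $\triangleleft_{k-1}$-related pair is already identified (i.e.\ all the preorders $\myPoset_k(\mu)$ are equivalence relations), then $\mu$ must be an atom --- and you leave this as a sketch (``one can run a collapsing argument\dots I expect case (ii) to require the most care'') without supplying the argument. The paper's proof does exactly this step, and the mechanism it uses is Forest's decomposition lemma \cite[Lem 3.3.2.2]{forest-thesis}: if $\mu_n$ were not a singleton, the discreteness of $\myPoset_n(\mu)$ would let one apply that lemma inductively downward in dimension to produce a decomposition $\mu = b \circ_{-1} a$ with $a,b$ both $n$-dimensional --- absurd, since there is no $\circ_{-1}$ operation --- so $\mu_n = \{a\}$; one then repeats the same trick at level $n-1$ to force $\mu_{n-1} = a^+$ and $\mu_{-(n-1)} = a^-$, and continues downward to conclude $\mu = \langle a\rangle$. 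Without identifying this lemma (or a concrete substitute for it), your proposal does not close the argument; fork-freeness and acyclicity alone do not obviously yield the ``single equivalence class forces a single atom'' conclusion you assert.

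A secondary issue: your ``case (i)'' is not immediate as stated. The preorder $\myPoset_k(\mu)$ is generated jointly by the atom-induced identifications \emph{and} the $\triangleleft_{k-1}$-steps, so finding $a \triangleleft_{k-1} b$ with $a \not\equiv b$ in the atom-generated equivalence does not by itself show the preorder fails to be an equivalence relation: $b \leq a$ could still hold via a longer chain mixing identifications and $\triangleleft_{k-1}$-steps, and ruling that out requires an additional argument (e.g.\ some convexity of the classes with respect to acyclicity). The paper sidesteps this entirely by proving the contrapositive and never needing to exhibit a strict inequality; it only uses the hypothesis that the preorders \emph{are} equivalence relations as input to Forest's decomposition lemma. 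Your choice of the dimension $k$ is also left unresolved in the proposal, whereas the paper simply works down through all dimensions from $n$.
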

\begin{proof}
    We prove the contrapositive. Suppose that the preorder $\myPoset_k(\mu)$ is an equivalence relation for all $1 \leq k \leq n$. If $\mu_n$ is not a singleton, then the fact that the preorder $\myPoset_n(\mu)$ is a (necessarily discrete) equivalence relation allows us to apply \cite[Lem 3.3.2.2]{forest-thesis} inductively (downward on $k$), we find that there is a $\circ_{-1}$ -decomposition $\mu = b \circ_{-1} a$ where $a$ and $b$ are each of dimension $n$. This is absurd (there is no composition operation $\circ_{-1}$!), so $\mu_n = \{a\}$ is a singleton. Now the equivalence classes of $\myPoset_{n-1}(\mu)$ are all singletons except for the class $a^+$, which generates the cell $\partial_{n-1} \langle a\rangle$. If there are any equivalence classes other than $a^+$, then the lack of nontrivial relations in $\myPoset_{n-1}(\mu)$ again allows us to apply \cite[Lem 3.3.2.2]{forest-thesis} repeatedly to find an absurd $\circ_{-1}$ decomposition of $\partial_{n-1} \mu$. Therefore $\mu_{n-1} = a^+$. Similarly, $\mu_{-(n-1)} = a^-$. Continuing downward in this manner, we find that $\mu = \langle a \rangle$ is an atom.
\end{proof}

\section{Background on weak $\omega$-category theory}\label{sec:weak}
In this section we fix some notation surrounding weak $(\infty,N)$-categories. Along the way, we will verify some routine properties of suspensions (\cref{subsec:susp}) and wedge sums (\cref{subsec:wedge}) using the $\Theta$-space model for $(\infty,N)$-categories \cite{rezk}. These results are similar to the results of \cite{ozornova-rovelli-fundamental}, but they are much easier because we work in the $\Theta$-space model, which very well adapted to the study of suspensions and wedges, rather than the complicial model. We also prove \cref{lem:dist-lat}, a sort of ``locality principle" for $L_{\Cat_N^\inc}$-acyclic maps.

\begin{Def}\label{def:wedge}
Let $\ast \xrightarrow a A$, $\ast \xrightarrow b B$ be pointed objects in an $\infty$-category $\calC$. We write $A \tensor[_a]{\vee}{^\calC_b} B$, or $A \vee^\calC B$ for short, for the coproduct of $(A,a)$ and $(B,b)$ as pointed objects, i.e. the pushout $A \cup_\ast B$ taken with respect to the maps $a$ and $b$.
\end{Def}

\begin{Def}\label{def:susp-strict}
For $n \in \nats \cup \{\omega\}$, let $\Sigma : \sCat_n \to \sCat_{1+n}$ denote the suspension functor. Here $\Sigma C$ is the category with two objects, $0$ and $1$, and $\Hom(0,0) = \Hom(1,1) = [0]$, $\Hom(0,1) = C$, and $\Hom(1,0) = \emptyset$.
\end{Def}

\begin{Def}\label{def:theta}
    We denote by $\Theta$ Joyal's category $\Theta$ -- see e.g. \cite{berger}. This is the smallest full subcategory of $\sCat_\omega$ containing the terminal category $[0]$, closed under suspension, and closed under wedge sums which glue a sink to a source. Then for $N \leq \omega$ we write $\Theta_N = \Theta \cap \sCat_N$. We have also $\Theta = \Theta_\omega = \cup_{N \in \nats} \Theta_N$.
\end{Def}

\begin{Def}\label{def:susp}
Let $N \in \nats \cup \{\omega\}$. Let $\Sigma_/ : \Theta_N \to (\Theta_{1+N})_{/[1]}$ be the suspension functor. Note that $\Sigma_{/}$ admits two canonical lifts $\Sigma^0_/,\Sigma^1_/ : \Theta_N \rightrightarrows (\Theta_{1+N})_{\ast //[1]}$ where $\ast \in \Theta_0$ is the terminal object. Let $\bar \Sigma_/ = \yo_{(\Theta_{1+N})_{/[1]}} \Sigma_/ : \Theta_N \to \Psh((\Theta_{1+N})_{/[1]})$. As the Yoneda embedding preserves the terminal object and presheaves commute with slicing, we have also two lifts $\bar \Sigma^0_/ , \bar \Sigma^1_/ : \Theta_N \rightrightarrows \Psh(\Theta_{1+N})_{\ast//[1]}$. Taking the pullback, we obtain a functor $\bar \Sigma_{//} : \Theta_N \to \Psh(\Theta_{1+N})_{\ast//[1]} \times_{\Psh(\Theta_{1+N})_{/[1]})} \Psh(\Theta_{1+N})_{\ast//[1]} \simeq \Psh(\Theta_{1+N})_{\partial [1]//[1]}$ to bipointed $\Theta_{1+N}$-spaces over $[1]$. Moreover, this functor is fully faithful. We let $\tilde \Sigma_{//} : \Psh(\Theta_N) \to \Psh(\Theta_{1+N})_{\partial [1] //[1]}$ denote the unique colimit-preserving extension (which is again fully faithful), and $\tilde \Hom_{/} : \Psh(\Theta_{1+N})_{\partial [1] //[1]} \to \Psh(\Theta_N)$ its right adjoint. We let $\tilde \Sigma : \Psh(\Theta_N) \to \Psh(\Theta_{1+N})$ denote the composite of $\tilde \Sigma_{//}$ with the forgetful functor.
\end{Def}

\begin{Def}
Let $N \in \nats \cup \{\omega\}$. Let $A$ be a strict $N$-category, and let $a \in A$ be an object. We say that $a$ is a \defterm{source} in $A$ if $\Hom_A(a,b)$ is a point for $a = b$ and empty otherwise. Dually, we say that $a$ is a \defterm{sink} in $A$ if $\Hom_A(b,a)$ is a point for $a = b$ and empty otherwise.
\end{Def}

\begin{eg}
Let $N \in \nats \cup \{\omega\}$, and let $\theta \in \Theta_N$. Then $\theta$ has a unique sink and a unique source. Moreover, if $\theta,\zeta \in \Theta_N$, and if we regard $\theta$ as pointed by its sink and $\zeta$ as pointed by its source, then $\theta \vee^{\sCat_N} \zeta \in \Theta$ as well. When writing wedge sums of objects of $\Theta_N$, we will always assume that the pointings have been chosen in this way.
\end{eg}

\begin{Def}\label{def:spine}
Let $N \in \nats \cup \{\omega\}$.
% For each $\theta \in \Theta$, we write $\theta = \Sigma \theta_1 \vee \cdots \vee \Sigma \theta_r$ with $\theta_1, \dots, \theta_r \in \Theta_{n-1}$. The \defterm{fundamental spine inclusion} associated to $\theta$ is the map $s_\theta : \yo(\Sigma \theta_1) \vee \cdots \vee \yo(\Sigma \theta_r) \to \yo(\theta)$ in $\Psh(\Theta)$. The \defterm{derived spine inclusions} associated to $\theta$ are the maps $(\tilde \Sigma)^m s_\theta$ for $m \geq 1$. The \defterm{spine inclusions} are the collection of fundamental spine inclusions and derived spine inclusions associated to each $\theta \in \Theta$.
The set of \defterm{basic wedge inclusions} \cite[Notation 12.1]{bsp} is the following set of morphisms in $\Psh(\Theta_N)$: 
\begin{align*}
    \tilde \Sigma^k(\yo(\theta) \vee^{\Psh(\Theta_N)} \yo(\zeta)) \to \yo(\Sigma^k(\theta \vee^{\sCat_N} \zeta)) \qquad \text{for $\theta, \zeta \in \Theta_{N-k}$}
\end{align*}

Here by convention, $\omega - k = \omega$. The set of \defterm{basic spine inclusions} \cite[Section 5]{rezk} is the following set of morphisms in $\Psh(\Theta_N)$:
\begin{align*}
    \tilde \Sigma^k(\yo(\Sigma \theta_1) \vee^{\Psh(\Theta_N)} \cdots \vee^{\Psh(\Theta_N)} \yo(\Sigma \theta_r)) \to \yo(\Sigma^k(\Sigma \theta_1 \vee^{\sCat_N} \vee \cdots \vee^{\sCat_N} \Sigma \theta_r)) \qquad \text{for $\theta = \Sigma \theta_1 \vee \cdots \vee \Sigma \theta_r \in \Theta_{N-k}$}
\end{align*}
\end{Def}

\begin{lem}[{\cite[Section 13, footnote 2]{bsp}}]\label{lem:same-loc}
Let $N \in \nats \cup \{\omega\}$. The localization of $\Psh(\Theta_N)$ at the basic wedge inclusions coincides with the localization of $\Psh(\Theta_N)$ at the basic spine inclusions.
\end{lem}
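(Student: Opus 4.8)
The plan is to prove the two localizations agree by showing that each generating class is inverted by the localization at the other. Recall that the localization of a presentable $\infty$-category at a set of maps $W$ inverts exactly the maps in the strongly saturated class $\overline{W}$ generated by $W$, so it suffices to show $\overline{W_{\mathrm{w}}}=\overline{W_{\mathrm{s}}}$, where $W_{\mathrm{w}}$ and $W_{\mathrm{s}}$ denote the sets of basic wedge inclusions and of basic spine inclusions in $\Psh(\Theta_N)$. I will use freely that $\overline{W}$ is closed under composition, cobase change, and two-out-of-three, and that for a colimit-preserving functor $F$ one has $F(\overline{W})\subseteq\overline{T}$ whenever $F(W)\subseteq\overline{T}$.

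The first step is a reduction to ``parameter $0$''. By \cref{def:spine}, a parameter-$k$ basic wedge (resp.\ spine) inclusion is the image under $\tilde\Sigma^k\colon\Psh(\Theta_{N-k})\to\Psh(\Theta_N)$ of the corresponding parameter-$0$ inclusion, since $\tilde\Sigma$ carries representables to representables ($\tilde\Sigma\,\yo\theta=\yo\,\Sigma\theta$) and a parameter-$j$ inclusion of either type to a parameter-$(j{+}1)$ one of the same type. As $\tilde\Sigma$ preserves colimits, it is therefore enough to show, in each $\Psh(\Theta_M)$ with $M\le\omega$: (a) every parameter-$0$ basic spine inclusion lies in $\overline{W_{\mathrm{w}}}$; and (b) every parameter-$0$ basic wedge inclusion lies in $\overline{W_{\mathrm{s}}}$.

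For (a) I would induct on the number $r$ of terms of the spine; the cases $r\le 1$ are identities. For $r\ge 2$, factor the parameter-$0$ spine inclusion $\yo(\Sigma\theta_1)\vee\dots\vee\yo(\Sigma\theta_r)\to\yo(\Sigma\theta_1\vee\dots\vee\Sigma\theta_r)$ (all wedges taken at sink/source basepoints) as the spine inclusion for $\theta_1,\dots,\theta_{r-1}$ wedged with $\id_{\yo(\Sigma\theta_r)}$, followed by the parameter-$0$ basic wedge inclusion for the pair $(\Sigma\theta_1\vee\dots\vee\Sigma\theta_{r-1},\,\Sigma\theta_r)\in\Theta_M\times\Theta_M$. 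The second factor is a generator, and the first factor is a cobase change of the $(r-1)$-term spine inclusion (wedging with a fixed pointed object is cobase change, by associativity of pushouts) and so lies in $\overline{W_{\mathrm{w}}}$ by induction. For (b) I would use the normal form: every object of $\Theta_M$ other than $[0]$ is an iterated wedge of suspensions. Writing $\theta=\Sigma\theta_1\vee\dots\vee\Sigma\theta_p$ and $\zeta=\Sigma\zeta_1\vee\dots\vee\Sigma\zeta_q$ (the wedge inclusion being an identity if $\theta$ or $\zeta$ is $[0]$), consider the commuting triangle on the three objects $\yo(\Sigma\theta_1)\vee\dots\vee\yo(\Sigma\theta_p)\vee\yo(\Sigma\zeta_1)\vee\dots\vee\yo(\Sigma\zeta_q)$, then $\yo\theta\vee\yo\zeta$, then $\yo(\theta\vee^{\sCat_M}\zeta)$: the long edge is the basic spine inclusion for $\theta_1,\dots,\theta_p,\zeta_1,\dots,\zeta_q$ (using that $\theta\vee^{\sCat_M}\zeta=\Sigma\theta_1\vee^{\sCat_M}\dots\vee^{\sCat_M}\Sigma\zeta_q$), the first short edge is the wedge of the spine inclusions for the $\theta_i$ and for the $\zeta_j$ — a composite of cobase changes of basic spine inclusions — and the second short edge is precisely the basic wedge inclusion for $(\theta,\zeta)$. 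Two-out-of-three for $\overline{W_{\mathrm{s}}}$ then yields (b).

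The formal skeleton above is short; the content is in the bookkeeping, which I expect to be the main obstacle. One must choose the sink/source pointings carefully enough that the iterated wedges are unambiguous and the triangle in (b) genuinely commutes — identifying its long edge with the spine inclusion amounts to checking that the canonical comparison ``presheaf-level wedge $\to$ strict wedge'' is compatible with regrouping a wedge into sub-wedges. A related subtlety is that $\tilde\Sigma$ does not preserve wedges over a point but sends $A\vee B=A\cup_{\ast}B$ to $\tilde\Sigma A\cup_{\yo[1]}\tilde\Sigma B$, so the inductions should be run in terms of pushouts and cobase change rather than the wedge operation itself. Finally, one should check the case $N=\omega$ goes through uniformly, which it does under the convention $\omega-k=\omega$.
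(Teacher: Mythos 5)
Your proposal is correct and follows essentially the same route as the paper: both arguments run the three-object factorization (disjoint spine) $\to$ (wedge of two representable chunks) $\to$ (full representable), using cobase-change and induction to get spine inclusions from wedge inclusions, and two-out-of-three on the same triangle to get wedge inclusions from spine inclusions, before closing under $\tilde\Sigma$ to handle all suspension parameters. The only cosmetic difference is that you fix the split point at $s=r-1$ for one direction and invoke the normal form $\theta=\Sigma\theta_1\vee\cdots\vee\Sigma\theta_p$ explicitly for the other, whereas the paper treats a general split $s$ uniformly.
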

\begin{proof}
We factor a basic spine inclusion as 
\begin{align*}
    &\yo(\Sigma \theta_1) \vee^{\Psh(\Theta_N)} \cdots \vee^{\Psh(\Theta_N)} \yo(\Sigma \theta_r) \\
\to& \yo(\Sigma \theta_1 \vee^{\sCat_N} \vee \cdots \vee^{\sCat_N} \Sigma \theta_{s}) \vee^{\Psh(\Theta_N)} \yo(\Sigma \theta_{s+1} \vee^{\sCat_N} \vee \cdots \vee^{\sCat_N} \Sigma \theta_{r}) \\
\to& \yo(\Sigma \theta_1 \vee^{\sCat_N} \vee \cdots \vee^{\sCat_N} \Sigma \theta_r)
\end{align*}
On the one hand, by induction on $r$ and closure under cobase-change, the first map is in the closure of the basic wedge inclusions under cobase-change and composition. Composing with the second map, which is a basic wedge inclusion, it results inductively that the composite map is too. Conversely, the first map is a cobase-change of a basic spine inclusion and so is the composite map, so by two-for-three, the second map (which is a general basic wedge inclusion) is in the closure of the basic spine inclusions under cobase-change and two-for-three. Closing under $\tilde \Sigma$ (which preserves cobase-changes and composition), the statement results.
\end{proof}

\begin{Def}\label{def:theta-space}
Let $N \in \nats \cup \{\omega\}$. We let $\Cat_N^\inc$ denote the localization of $\Psh(\Theta_N)$ at the basic spine inclusions. These are the \defterm{flagged $(\infty,N)$-categories} in the sense of \cite{ayala-francis}. We let $\sCat_N$ denote the localization of $\Cat_N^\inc$ at the maps $\globe_m \times S^d \to \globe_m$ for each $m \leq N$. We let $\Cat_N$ denote the localization of $\Cat_N^\inc$ at the Rezk maps. We let $\Gaunt_N = \sCat_N \cap \Cat_N$ denote the intersection of these two, localized at both classes of maps.

We also denote by $\nerve : \sCat_N \to \Cat_N^\inc \to \Psh(\Theta_N)$ the inclusion functor (this is the ``naive nerve" of strict $N$-categories).
\end{Def}

\begin{rmk}\label{rmk:to-infty}
The notation ``$\sCat_N$" of \cref{def:theta-space} is consistent with the usual definition of strict $N$-categories; the equivalence is induced by the restricted Yoneda embedding. The notation ``$\Gaunt_N$" of \cref{def:theta-space} is consistent with the usage of \cite{bsp}. The notation ``$\Cat_N$" is likewise equivalent with any of the equivalent models considered in \cite{bsp}. When $N= \omega$, we have limit expressions $\Psh(\Theta_\omega) = \varprojlim_{N \in \nats} \Psh(\Theta_N)$, $\Cat_\omega^\inc = \varprojlim_{N \in \nats} \Cat_N^\inc$, $\sCat_\omega = \varprojlim_{N \in \nats} \sCat_N$, $\Cat_\omega = \varprojlim_{N \in \nats} \Cat_N$, and $\Gaunt_\omega = \varprojlim_{N \in \nats} \Gaunt_N$ (cf. \cite[Rmk 2.8]{campion-dense}), where the limits are taken along the forgetful functors.
\end{rmk}

\begin{Def}\label{def:hpo}
Let $N \in \nats \cup \{\omega\}$. We also say that a commutative square of presheaves on $\Theta_N$ is a \defterm{homotopy pushout} if it is preserved by the localization functor $\Psh(\Theta_N) \to \Cat_N^\inc$ (hence also by the localization to $\Cat_N$). We say that a commutative square in $\sCat_N$ is a \defterm{homotopy pushout} if its nerve is a homotopy pushout in $\Psh(\Theta_N)$.
\end{Def}

\begin{rmk}
For $M \leq N \leq \omega$, there is a fully faithful inclusion $\Cat_M^\inc \to \Cat_N^\inc$, which has both left and right adjoints, just as the inclusion $\sCat_M \to \sCat_N$ is fully faithful with both adjoints. In particular, a homotopy pushout of strict $M$-categories is the same thing as a homotopy pushout of strict $N$-categories which happen to be degenerate in dimension $\geq M+1$.
\end{rmk}

\begin{lem}\label{lem:hty-paste}
Let $N \in \nats \cup \{\omega\}$. Consider a commutative diagram as follows in $\sCat_N$ or $\Psh(\Theta_N)$:
\begin{equation*}
\begin{tikzcd}
A \ar[r] \ar[d] & B \ar[r] \ar[d] & C \ar[d] \\
D \ar[r] & E \ar[r] & F
\end{tikzcd}
\end{equation*}
If the first and second square are homotopy pushouts, then so is the composite rectangle. If the first square and the composite rectangle are homotopy pushouts, then so is the second square.
\end{lem}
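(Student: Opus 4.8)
The plan is to transport the whole statement into the $\infty$-category $\Cat_N^\inc$ and there invoke the standard pasting law for pushout squares. First I would reduce the $\sCat_N$ case to the $\Psh(\Theta_N)$ case: since the naive nerve $\nerve \colon \sCat_N \to \Psh(\Theta_N)$ of \cref{def:theta-space} is a functor, it carries the given $2 \times 3$ diagram in $\sCat_N$ to a $2 \times 3$ commutative diagram in $\Psh(\Theta_N)$, and it carries the first square, the second square and the composite rectangle of the former to the first square, the second square and the composite rectangle of the latter. By \cref{def:hpo} a square in $\sCat_N$ is a homotopy pushout exactly when its nerve is one, so the $\sCat_N$ assertion is literally an instance of the $\Psh(\Theta_N)$ assertion.

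So it remains to treat diagrams in $\Psh(\Theta_N)$. Write $L \colon \Psh(\Theta_N) \to \Cat_N^\inc$ for the localization functor of \cref{def:theta-space}. By \cref{def:hpo}, a commutative square $Q$ in $\Psh(\Theta_N)$ is a homotopy pushout precisely when $L(Q)$ is a pushout square in $\Cat_N^\inc$. Applying the functor $L$ to the given $2 \times 3$ diagram yields a $2 \times 3$ commutative diagram in $\Cat_N^\inc$ whose first square, second square and composite rectangle are $L$ applied to the corresponding parts of the original diagram; this is pure functoriality, and in particular does not even use that $L$ preserves colimits.

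Finally I would quote the pasting law for pushout squares valid in any $\infty$-category: (i) a horizontal composite of two pushout squares is again a pushout square, and (ii) if the left-hand square and the outer rectangle are pushout squares, then so is the right-hand square. Applying (i) in $\Cat_N^\inc$ proves the first claim of the lemma, and applying (ii) proves the second. I do not anticipate any substantive obstacle here; the only point that needs attention is the bookkeeping that ``homotopy pushout'' has been defined through the reflector $L$, so that the entire content of the statement is inherited, via the functoriality of $L$, from the $\infty$-categorical pasting law. (If one wished to avoid treating that pasting law as a black box, one could instead present $\Cat_N^\inc$ by a combinatorial model structure on $\Psh(\Theta_N)$ and run the classical model-categorical pasting argument with cofibrant replacements, but the reduction above seems cleanest.)
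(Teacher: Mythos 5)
Your proof is correct and follows the same route as the paper, which simply observes that the statement reduces, via the definition of homotopy pushout through the localization functor, to the pasting law for pushout squares in an $\infty$-category; you have merely spelled out the reduction steps (nerve, then localization) that the paper leaves implicit.
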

\begin{proof}
This follows from the fact that pushout squares have the same properties.
\end{proof}

\begin{rmk}\label{rmk:flat}
Let $N \in \nats \cup \{\omega\}$. The $\infty$-categories $\Psh(\Theta_N)$, $\Cat^\inc_N$, $\sCat_N$, $\Cat_N$, and $\Gaunt_N$ may be presented by localizing the injective model structure on simplicial presheaves on $\Theta_N$, as in \cite{rezk}. In these model structures, the cofibrations are the monomorphisms of simplicial presheaves on $\Theta_N$, and every object is cofibrant so that the model structure is left proper. Thus the monomorphisms of simplicial presheaves are \defterm{flat} -- cobase-change along monomorphisms of simplicial presheaves is preserved by the localization functor from simplicial presheaves to $\Cat_N^\inc$, etc. A good supply of monomorphisms of simplicial presheaves (and thus of flat maps) is supplied by the monomorphisms of strict $N$-categories (as the restricted Yoneda embedding into simplicial presheaves preserves monomorphisms).
\end{rmk}

\begin{obs}\label{obs:topos-po}
    Let $N  \in \nats \cup \{\omega\}$. Consider a commutative square of monomorphisms in $\Psh(\Theta_N)$:
    \begin{equation*}
    \begin{tikzcd}
        A \ar[r, tail] \ar[d, tail] & B \ar[d, tail] \\
        C  \ar[r, tail] & D
    \end{tikzcd}
    \end{equation*}
    If the square is a pushout, then  it is  also a pullback. Conversely, if the  square is a pushout, then it is a pullback iff the maps $B \to D \leftarrow C$ are jointly surjective. This observation is true in any 1-topos.
\end{obs}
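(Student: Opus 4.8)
The plan is to deduce both claims from two standard facts about a $1$-topos and then observe that $\Psh(\Theta_N)$ is covered. Since the statement involves only monomorphisms and finite (co)limits -- all computed objectwise in $\Psh(\Theta_N)$ -- it suffices to prove the corresponding statement in an arbitrary topos $\calE$; in $\Psh(\Theta_N)$ one may alternatively just read everything off objectwise, at the level of sets, or (in the simplicial model) of $\infty$-groupoids, where a monomorphism is up to equivalence the inclusion of a union of connected components. The two inputs I would use are: \emph{(i) adhesivity} -- in a topos, monomorphisms are stable under pushout and any pushout square one of whose legs is a monomorphism is also a pullback (this is the statement that toposes are adhesive categories, in the sense of Lack and Soboci\'nski, and it is a routine consequence of the subobject classifier); and \emph{(ii) effective unions} -- for subobjects $B, C \hookrightarrow D$, with intersection $B \wedge C = B \times_D C$ and union $B \vee C \hookrightarrow D$ formed in the lattice $\Sub(D)$, the square with corners $B \wedge C$, $B$, $C$, $B \vee C$ is a pushout.

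For the forward direction, I would take monomorphisms $A \hookrightarrow B$ and $A \hookrightarrow C$ with pushout $D$. By (i), the coprojections $B \to D$ and $C \to D$ are monic and the square is a pullback; in particular $A = B \times_D C$. That is precisely the first assertion (together with the fact that all four maps in such a pushout are monic). Moreover $D$ is a quotient of the coproduct $B \sqcup C$, so $B \sqcup C \to D$ is an epimorphism, which for a pair of monomorphisms into $D$ is the same as the equality $B \vee C = D$ in $\Sub(D)$ -- since the image of $B \sqcup C \to D$ is exactly $B \vee C$ -- i.e.\ the joint surjectivity of $B \to D \leftarrow C$. This handles the ``iff'' in the case when the square is already a pushout.

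For the converse, suppose the square of monomorphisms is a pullback, so $A = B \wedge C$ as a subobject of $D$. By (ii) the pushout of $B \hookleftarrow A \hookrightarrow C$ is the subobject $B \vee C \hookrightarrow D$, and the canonical comparison map from this pushout to $D$ is the inclusion $B \vee C \hookrightarrow D$; hence the original square is a pushout if and only if $B \vee C = D$, that is, if and only if $B \to D \leftarrow C$ are jointly surjective. Together with the forward direction this yields the statement as phrased. I do not anticipate a genuine obstacle: the entire content is the import of (i) and (ii) from topos theory, and for $\Psh(\Theta_N)$ these both collapse to the obvious objectwise computation -- which is of course why the result is recorded as an observation.
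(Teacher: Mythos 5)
Your proof is correct, and it supplies exactly the justification the paper leaves implicit: the observation is stated without proof, with the remark that it holds in any 1-topos, and your appeal to adhesivity (pushouts along monos are pullbacks and preserve monos) together with effective unions (the union of two subobjects is the pushout over their intersection) — or equivalently the objectwise check in $\Set$ for a presheaf category — is the standard way to see it. You also correctly read the second sentence's ``Conversely, if the square is a pushout'' as the evident typo for ``pullback,'' and your argument covers both readings.
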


\begin{lem}\label{lem:flat}
    Let $N  \in  \nats \cup \{\omega\}$. Let $A, B \subseteq X$ be subobjects in $\Psh(\Theta)$. Then the square
    \begin{equation*}
        \begin{tikzcd}
            A \cap  B \ar[r, tail] \ar[d, tail] & A \ar[d, tail] \\
            B \ar[r, tail] & A \cup B
        \end{tikzcd}
    \end{equation*}
    is a pushout preserved by $L_{\Cat_N^\inc}$. Thus, if $A \cap  B \rightarrowtail B$ is an $L_{\Cat^\inc_N}$-equivalence, then $A \rightarrowtail A \cup B$ is likewise an $L_{\Cat^\inc_N}$-equivalence.
\end{lem}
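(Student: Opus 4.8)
The plan is to reduce the statement to the flatness of monomorphisms of simplicial presheaves recorded in \cref{rmk:flat}, once we observe that the displayed square is already a strict (i.e.\ $1$-categorical) pushout. Concretely, it suffices to check: (i) the square is a pushout in the $\infty$-category $\Psh(\Theta_N)$; (ii) $L_{\Cat_N^\inc}$ carries it to a pushout in $\Cat_N^\inc$; and then the final ``Thus'' is a formality.

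For (i) and (ii) I would work in the injective model structure on simplicial presheaves on $\Theta_N$ presenting $\Psh(\Theta_N)$ (\cref{rmk:flat}), representing $A \rightarrowtail X$ and $B \rightarrowtail X$ by levelwise monomorphisms, so that $A \cap B = A \times_X B$ and $A \cup B$ are their levelwise intersection and union. Since colimits of simplicial presheaves are computed objectwise and levelwise in $\Set$, and for subsets $A_0,B_0 \subseteq X_0$ one has $A_0 \sqcup_{A_0 \cap B_0} B_0 = A_0 \cup B_0$, the $1$-categorical pushout of $A \hookleftarrow A \cap B \hookrightarrow B$ is exactly $A \cup B$; thus the displayed square is a pushout of simplicial presheaves all of whose edges are cofibrations. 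Because $A \cap B \rightarrowtail A$ is a cofibration, this strict pushout is a homotopy pushout, which gives (i); and by \cref{rmk:flat} the localization functor from simplicial presheaves to $\Cat_N^\inc$ preserves this cobase-change along a monomorphism, which gives (ii). (One could also invoke \cref{obs:topos-po} to see directly that the square of monomorphisms, being a pushout, is a pullback, confirming that $A \cap B$ is genuinely the intersection.)

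For the final sentence, apply $L_{\Cat_N^\inc}$ to obtain a pushout square in $\Cat_N^\inc$ whose left-hand vertical edge is $L_{\Cat_N^\inc}(A \cap B) \to L_{\Cat_N^\inc}(B)$. If $A \cap B \rightarrowtail B$ is an $L_{\Cat_N^\inc}$-equivalence this edge is an equivalence, so its cobase-change $L_{\Cat_N^\inc}(A) \to L_{\Cat_N^\inc}(A \cup B)$ is an equivalence as well --- cobase-change of an equivalence is an equivalence in any $\infty$-category --- i.e.\ $A \rightarrowtail A \cup B$ is an $L_{\Cat_N^\inc}$-equivalence.

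The only subtlety I anticipate is the $\infty$-versus-$1$-categorical bookkeeping in the middle paragraph: making sure the evident set-theoretic pushout really computes the $\infty$-categorical colimit in $\Psh(\Theta_N)$ and survives the Bousfield localization to $\Cat_N^\inc$. But this is exactly the content packaged into \cref{rmk:flat}, so there is no substantive obstacle beyond unwinding that remark.
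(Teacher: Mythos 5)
Your argument is correct and is essentially the paper's own proof: the paper simply combines \cref{obs:topos-po} (the intersection/union square of subobjects is a pushout of monomorphisms, which you verify levelwise instead) with \cref{rmk:flat} (flatness of monomorphisms), and then the final claim follows by cobase-change of an equivalence exactly as you say.
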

\begin{proof}
    This results by combining \cref{rmk:flat} and \cref{obs:topos-po}.
\end{proof}

\begin{lem}\label{lem:hty-retract}
Let $N \in \nats \cup \{\omega\}$. 
% Let $B \leftarrow A \to C$ be a span in $\sCat_n$, which is a retract of another span $B' \leftarrow A' \to C'$. If the pushout $B' \ast_{A'} C'$ is a homotopy pushout, then so is the pushout $B \ast_A C$.
Homotopy pushout squares in $\sCat_N$ are closed under retracts.
\end{lem}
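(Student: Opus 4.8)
The plan is to reduce this to a purely $\infty$-categorical fact: in any $\infty$-category, a retract of a pushout square is again a pushout square. Granting that, the lemma follows by transporting along the functor $L_{\Cat_N^\inc}\circ\nerve\colon \sCat_N\to\Cat_N^\inc$. Indeed, by \cref{def:hpo} a commutative square $S$ in $\sCat_N$ is a homotopy pushout precisely when $L_{\Cat_N^\inc}(\nerve(S))$ is a pushout square in $\Cat_N^\inc$; and if $S$ is a retract of a homotopy pushout square $T$ in $\Fun(\Delta^1\times\Delta^1,\sCat_N)$, then postcomposing the retract datum $i\colon S\to T$, $r\colon T\to S$ (with $ri=\id_S$) with $L_{\Cat_N^\inc}\circ\nerve$ exhibits $L_{\Cat_N^\inc}\nerve(S)$ as a retract of the pushout square $L_{\Cat_N^\inc}\nerve(T)$ in $\Fun(\Delta^1\times\Delta^1,\Cat_N^\inc)$. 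I would then conclude that $L_{\Cat_N^\inc}\nerve(S)$ is a pushout square, hence that $S$ is a homotopy pushout.

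To prove the $\infty$-categorical fact, I would use the standard characterization: a square $\sigma$ in an $\infty$-category $\calE$ is a pushout iff for every object $e\in\calE$ the induced square of mapping spaces $\Map_\calE(\sigma(-),e)$ is a pullback square of spaces; and a square of spaces is a pullback iff the canonical map from its initial vertex to the fibre product of its terminal cospan is an equivalence. Since forming this comparison map is functorial in the square, a retract of squares of spaces yields a retract of comparison maps, and a retract of an equivalence is always an equivalence (pass to the homotopy category, where a retract of an isomorphism is an isomorphism). Hence a retract of a pullback square of spaces is a pullback square; applying $\Map_\calE(-,e)$ to a retract $\sigma$ of a pushout square shows $\Map_\calE(\sigma(-),e)$ is a retract of a pullback square, hence a pullback, for every $e$, so $\sigma$ is a pushout square.

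I do not expect any genuine obstacle here: this is the abstract form of the remark ``pushout squares have the same properties'' used in the proof of \cref{lem:hty-paste}, and nothing in the argument is sensitive to whether $N$ is finite or $\omega$ (all that is used is that $\Cat_N^\inc$ is an $\infty$-category and that $\nerve$ and $L_{\Cat_N^\inc}$ are functors). If one prefers, one may instead invoke presentability of $\Cat_N^\inc$, so that the span underlying $\sigma$ has a colimit, and run the same retract argument with the comparison map $\colim(\text{span of }\sigma)\to(\text{terminal vertex of }\sigma)$ in place of the mapping-space comparison maps.
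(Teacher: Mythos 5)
Your proposal is correct and matches the paper's proof, which simply invokes the fact that pushout squares in any $\infty$-category are closed under retracts; you additionally spell out a standard proof of that fact via mapping spaces and the retract-of-an-equivalence argument. The transport along $L_{\Cat_N^\inc}\circ\nerve$ is exactly how \cref{def:hpo} is meant to be unwound, so there is nothing to add.
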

\begin{proof}
This follows from the fact that pushout squares in any $\infty$-category are closed under retracts.
\end{proof}

Before studying homotopy pushouts in more detail, here is a technical lemma which permits an inductive approach to constructing weak equivalences of $\Theta_N$-spaces.

\begin{lem}\label{lem:dist-lat}
Consider a presheaf category with a Cisinski model structure where weak equivalences are stable under filtered colimits. Let $A \to B$ be a monomorphism of presheaves. Let $\calS$ be a collection of intermediate subobjects $A \subseteq X \subseteq B$, and let $\bar \calS$ denote the closure of $\calS$ under unions. Suppose that the following conditions hold:
\begin{enumerate}
    \item $\bar \calS$ is closed under intersections. (In other words, $B = \cup \calS$ and for $X,Y \in \calS$, we have that $X \cap Y$ is a union of elements in $\calS$.)
    \item For every $X \in \calS$, the map $A \to X$ is an equivalence.
\end{enumerate}
Then the map $A \to U$ is an equivalence for every $U \in \bar \calS$. In particular, the map $A \to B$ is an equivalence.
\end{lem}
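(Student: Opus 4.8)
The plan is to induct on the size of $U \in \bar\calS$ in a suitable sense, using the ``flat'' behaviour of monomorphisms recorded in \cref{lem:flat} together with the stability of weak equivalences under filtered colimits. First I would observe that every $U \in \bar\calS$ is a union $U = \bigcup_{i} X_i$ of elements $X_i \in \calS$; since weak equivalences are stable under filtered colimits and a union is the filtered colimit of its finite subunions, it suffices to treat finite unions $U = X_1 \cup \cdots \cup X_m$ with each $X_j \in \calS$. So I would argue by induction on $m$. The base case $m = 1$ is exactly hypothesis (2): $A \to X_1$ is an equivalence.

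For the inductive step, write $U = X' \cup X_m$ where $X' = X_1 \cup \cdots \cup X_{m-1}$. By the inductive hypothesis $A \to X'$ is an $L_{\Cat_N^\inc}$-equivalence, and by hypothesis (2) $A \to X_m$ is an $L_{\Cat_N^\inc}$-equivalence. Now consider the intersection $X' \cap X_m = (X_1 \cap X_m) \cup \cdots \cup (X_{m-1} \cap X_m)$. By hypothesis (1), each $X_j \cap X_m$ is a union of elements of $\calS$, hence lies in $\bar\calS$, and therefore so does $X' \cap X_m$; moreover $X' \cap X_m$ is itself expressible as a finite union of elements of $\calS$ involving fewer than $m$ of the original sets in a way that lets the induction apply — more carefully, one wants to know $A \to X' \cap X_m$ is an equivalence, which follows because $X' \cap X_m \in \bar\calS$ and can be written as a finite union of $\calS$-elements, to which a (nested) induction applies. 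Granting that $A \to X' \cap X_m$ is an equivalence, \cref{lem:flat} applied to the subobjects $X', X_m \subseteq B$ gives a pushout square $X' \cap X_m \rightarrowtail X_m$, $X' \cap X_m \rightarrowtail X'$, $X' \rightarrowtail X' \cup X_m = U$ which is preserved by $L_{\Cat_N^\inc}$; since $X' \cap X_m \rightarrowtail X_m$ is an $L_{\Cat_N^\inc}$-equivalence (both maps from $A$ being equivalences, by two-out-of-three), its cobase-change $X' \rightarrowtail U$ is an $L_{\Cat_N^\inc}$-equivalence too, and composing with $A \to X'$ we conclude $A \to U$ is an equivalence. Finally, $B = \bigcup\calS \in \bar\calS$, so the statement $A \to B$ is an equivalence is the special case $U = B$.

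The main subtlety — and the step I would be most careful about — is the well-founded bookkeeping needed to conclude $A \to X' \cap X_m$ is an equivalence: a naive induction ``on $m$'' does not immediately apply to $X' \cap X_m$ because, although it lies in $\bar\calS$, it need not be a union of fewer than $m$ of the \emph{original} sets $X_j$. The clean fix is to induct on the pair $(m, \text{number of } \calS\text{-elements needed})$, or equivalently to prove first, by induction on $m$ alone, the auxiliary statement ``$A \to X_1 \cup \cdots \cup X_m$ is an equivalence for \emph{all} choices of $X_1,\dots,X_m \in \calS$'' — then the intersection $X' \cap X_m$, being a union of $m-1$ elements of $\calS$, is covered by the inductive hypothesis at level $m-1$. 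Everything else is a formal consequence of \cref{lem:flat} (i.e.\ \cref{rmk:flat} plus \cref{obs:topos-po}) and filtered-colimit stability of weak equivalences, exactly as in the statement's hypotheses.
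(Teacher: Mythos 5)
Your overall strategy --- reduce to finite unions via filtered-colimit stability, then run a Mayer--Vietoris argument gluing along intersections using the fact that pushouts of monomorphisms are homotopy pushouts --- is exactly the paper's, but your induction scheme has a genuine gap, and it sits precisely at the step you flagged as the main subtlety. Your proposed fix rests on the claim that $X' \cap X_m = \bigcup_{j<m}(X_j \cap X_m)$ is ``a union of $m-1$ elements of $\calS$.'' That is false in general: hypothesis (1) only guarantees that each $X_j \cap X_m$ is a union of \emph{some} (possibly many) elements of $\calS$, so $X' \cap X_m$ may require far more than $m-1$ generators from $\calS$. Already at $m=2$, if $X_1 \cap X_2$ is a union of $100$ elements of $\calS$, your auxiliary statement at level $1$ does not cover it, and appealing to level $100$ would be circular. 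So the induction on ``number of $\calS$-generators'' does not close.

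The repair is to change the induction variable. After reducing to finite $\calS$ (which you do correctly), $\bar\calS$ is a finite distributive sublattice of $\Sub(B)$, and one inducts on this finite poset under inclusion --- equivalently, on the number of elements of $\calS$ contained in $U$. In the inductive step one writes a $\cup$-reducible $U$ as $U = V \cup W$ with $V, W \in \bar\calS$ and $V, W \subsetneq U$; then $V \cap W$ also lies in $\bar\calS$ by hypothesis (1) and is contained in $V \subsetneq U$, so all three of $V$, $W$, $V \cap W$ are strictly below $U$ and the inductive hypothesis applies to each. The base case is $U$ being $\cup$-irreducible, i.e.\ $U \in \calS$, which is hypothesis (2). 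The rest of your argument (the pushout square along the intersection being a homotopy pushout, and two-out-of-three) then goes through verbatim. A minor further point: the lemma is stated for an arbitrary presheaf category with a Cisinski model structure, so you should justify the homotopy-pushout step by left properness plus cofibrancy of all objects in that generality, rather than by citing \cref{lem:flat}, which is stated only for $\Psh(\Theta_N)$ and $L_{\Cat_N^\inc}$.
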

\begin{proof}
Because weak equivalences are stable under filtered colimits, it will suffice to treat the case where $\calS$ is finite. Note that $\bar \calS$ is a finite distributive sublattice of the subobject lattice of $B$. We show, by induction on the structure of the lattice $\bar S$, that the map $A \to U$ is an equivalence for every $U \in \bar \calS$. This will suffice since $B \in \bar \calS$. In the base case, $U$ is $\cup$-irreducible, i.e. $U \in \calS$. In this case the map $A \to U$ is an equivalence by hypothesis. Otherwise, we may write $U = V \cup W$ where $V,W \subsetneq U$. We have a commutative diagram as follows:
\begin{equation}\label{eq:dist-diag}
    % \begin{tikzcd}
    %     A \ar[dr,tail,"gh"] \ar[r,tail,"h"] &
    %     V \cap W \ar[r,tail,"k"] \ar[d,tail,"g"] \ar[dr,phantom,"\ulcorner"{description, very near end}] &
    %     V \ar[d,tail,"f"] \\
    %     &
    %     W \ar[r,tail] &
    %     U
    % \end{tikzcd}
    \begin{tikzcd}
        A \ar[dr,tail] \ar[r,tail] \ar[rr,tail,bend left] &
        V \cap W \ar[r,tail] \ar[d,tail] \ar[dr,phantom,"\ulcorner"{description, very near end}] &
        V \ar[d,tail] \\
        &
        W \ar[r,tail] &
        U
    \end{tikzcd}
\end{equation}
The square is a pushout of monomorphsims of presheaves, hence a homotopy pushout. 
% We would like to show that $fkh$ is an equivlance. By induction, $kh$ is an equivalence. So it will suffice to show that $f$ is an equivalence. By cobase-change it will suffice to show that $g$ is an equivalence. By the base case, $gh$ is an equivalence. By two-for-three, it will suffice to show that $h$ is an equivalence. Because $V \cap X \subseteq V \subsetneq U$, this holds by induction.
By induction, the maps $A \to V,W, V \cap W$ are equivalences. It follows that the map from $A$ to the homotopy pushout $U$ is likewise a homotopy equivalence.
\end{proof}

\begin{obs}\label{obs:filtered}
Let $N \in \nats \cup \{\omega\}$. As $\Cat_N^\inc$ (resp. $\sCat_N$) is a localization of $\Psh(\Theta_N)$ at compact objects, the $L_{\Cat_N^\inc}$-acyclic (resp. $L_{\sCat_N}$-acyclic) morphisms are stable under filtered colimits, and the inclusions $\sCat_N \to \Cat_N^\inc \to \Psh(\Theta_N)$ preserve filtered colimits.
\end{obs}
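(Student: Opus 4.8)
The plan is to derive both halves of the statement from the single standard principle that a localization of a presentable $\infty$-category at a small set of morphisms \emph{between compact objects} is compatible with filtered colimits, so the first task is to verify that hypothesis. By definition $\Cat_N^\inc$ is the localization of $\Psh(\Theta_N)$ at the basic spine inclusions, and $\sCat_N$ is the further localization at the maps $\globe_m \times S^d \to \globe_m$, i.e.\ the localization of $\Psh(\Theta_N)$ at the union of these two families. I would check that every map in either family has compact source and target: representables in $\Psh(\Theta_N)$ are compact; the targets of the basic spine inclusions are representable; their sources are obtained from the objects $\tilde\Sigma^k(\yo\theta)$ by finitely many wedge sums (pushouts along the point), and since $\tilde\Sigma$ preserves colimits and, by construction (\cref{def:susp}), carries $\yo\theta$ to the representable $\yo(\Sigma\theta)$, these sources are finite colimits of representables, hence compact; and choosing $S^d$ to be a finite simplicial model of the sphere, $\globe_m \times S^d$ is a finite colimit of copies of the representable $\globe_m$, hence compact. (For $N=\omega$ one also notes that $\Theta_\omega = \bigcup_N \Theta_N$ is essentially small, so $\Psh(\Theta_\omega)$ is presentable and the localizing set is small.)

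Granting this, write $\calX = \Psh(\Theta_N)$, let $S$ be the relevant set of maps between compact objects and $L$ the corresponding localization. The key observation is that the $S$-local objects are closed under filtered colimits in $\calX$: an object $Y$ is $S$-local iff $\Map_\calX(g,Y)$ is an equivalence for every $g \in S$, and since $\dom g$ and $\cod g$ are compact, both $\Map_\calX(\dom g,-)$ and $\Map_\calX(\cod g,-)$ commute with filtered colimits, so a filtered colimit of $S$-local objects is again $S$-local. Since a reflective subcategory closed under a class of colimits has those colimits computed as in the ambient category, this shows at once that $\Cat_N^\inc \hookrightarrow \Psh(\Theta_N)$ and $\sCat_N \hookrightarrow \Psh(\Theta_N)$ preserve filtered colimits; and because $\sCat_N \subseteq \Cat_N^\inc$ sit compatibly inside $\Psh(\Theta_N)$, the inclusion $\sCat_N \hookrightarrow \Cat_N^\inc$ then preserves them as well (a filtered colimit computed in $\sCat_N$ is computed in $\Psh(\Theta_N)$, hence in $\Cat_N^\inc$).

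For the statement about acyclic maps: $L$ is a left adjoint, hence preserves all colimits, so for a filtered diagram $(f_i)_{i\in I}$ of $L$-acyclic morphisms one has $L(\colim_i f_i) \simeq \colim_i L(f_i)$, a filtered colimit of equivalences, hence an equivalence; thus $\colim_i f_i$ is $L$-acyclic. (In fact this argument gives stability under arbitrary colimits, but only the filtered case is used later.)

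I do not anticipate a real obstacle: the entire content lies in the first paragraph — confirming that the inverted maps genuinely have compact source and target — and everything after that is a formal consequence of ``maps out of a compact object commute with filtered colimits'' together with the fact that $L$ is a left adjoint. The only point requiring a moment's care is the compactness of the sources of the basic spine inclusions (which rests on $\tilde\Sigma$ being colimit-preserving and sending representables to representables) and, in the $N=\omega$ case, the smallness of $\Theta_\omega$; neither presents any difficulty.
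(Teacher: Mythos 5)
Your proof is correct and is exactly the unpacking of the paper's own (one-line) justification: the Observation is stated with its reason built in, namely that the localizing maps run between compact objects, so local objects are closed under filtered colimits and the left adjoint $L$ preserves colimits of acyclic maps. Your verification that the basic spine inclusions and the maps $\globe_m \times S^d \to \globe_m$ have compact source and target, and the smallness remark for $N=\omega$, are the right details to supply.
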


\subsection{Suspensions}\label{subsec:susp}
One main ingredient needed to understand $\Cat_N^\inc$ as a localization of $\Psh(\Theta_N)$ is an understanding of suspension, to which we now turn.

\begin{lem}\label{lem:point}
Let $N \in \nats \cup \{\omega\}$. Let $X \in \Psh(\Theta_N)$. The unique map $X \to [0]$ induces a map $\tilde \Sigma X \to \tilde \Sigma [0] = \Sigma [0] = [1]$. For any $Y \in \Psh(\Theta_{1+N})$ and either map $d_i : [0] \rightrightarrows [1]$, we let $d_i t : Y \rightrightarrows [1]$ be the composite maps. Then the space $\Nat(Y, \tilde \Sigma X) \times_{\Nat(Y, [1])} \{d_i t\}$ is contractible.
\end{lem}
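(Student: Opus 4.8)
\emph{Proof plan.} The plan is to rewrite this fibre product of mapping spaces as a single mapping space into a pullback of presheaves, and then to identify that pullback with the terminal presheaf. Since $\Nat(Y,-)$ preserves limits and $\Nat(Y,[0])\simeq\ast$ maps to the point $d_i t\in\Nat(Y,[1])$ under postcomposition with $d_i$, there is a natural identification
\[
\Nat(Y,\tilde\Sigma X)\times_{\Nat(Y,[1])}\{d_i t\}\ \simeq\ \Nat(Y,\tilde\Sigma X)\times_{\Nat(Y,[1])}\Nat(Y,[0])\ \simeq\ \Nat\!\bigl(Y,\ \tilde\Sigma X\times_{[1]}[0]\bigr),
\]
the fibre products on the right being taken along the map $\tilde\Sigma X\to[1]$ of the statement and along $d_i\colon[0]\to[1]$. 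So it suffices to show $\tilde\Sigma X\times_{[1]}[0]\simeq[0]$.

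For this I would proceed functorially. Let $G_i\colon\Psh(\Theta_{1+N})_{\partial[1]//[1]}\to\Psh(\Theta_{1+N})_{[0]/}$ be the functor sending a bipointed object $(Z\to[1],a_0,a_1)$ to the base change $d_i^\ast Z=Z\times_{[1]}[0]$, pointed by the map induced from $a_i$ (which lands over the object $i$ since $a_i$ lies over $d_i$). The composite $G_i\circ\tilde\Sigma_{//}$ is colimit-preserving, since $\tilde\Sigma_{//}$ is (\cref{def:susp}) and $G_i$ is (see below). On a representable $\theta\in\Theta_N$ we have $\tilde\Sigma_{//}\yo(\theta)=\yo(\Sigma\theta)$ with its canonical structure, and since the nerve $\Nerve\colon\sCat_{1+N}\to\Psh(\Theta_{1+N})$ is a restricted Yoneda embedding and hence preserves limits,
\[
G_i\tilde\Sigma_{//}\yo(\theta)\ \simeq\ \Nerve\!\bigl(\Sigma\theta\times_{[1]}[0]\bigr)\ \simeq\ \Nerve([0]),
\]
the middle equivalence being the elementary observation that the fibre of the collapse functor $\Sigma\theta\to[1]$ over the object $i$ is the terminal strict category, because $\Hom_{\Sigma\theta}(i,i)=[0]$. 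Thus $G_i\tilde\Sigma_{//}$ carries every representable to the object $([0],\mathrm{id})$, which is a zero object of $\Psh(\Theta_{1+N})_{[0]/}$ (as $[0]$ is terminal in $\Psh(\Theta_{1+N})$). Writing $X$ as the colimit of representables over its category of elements and using that a colimit of the constant diagram at a zero object is again a zero object, we obtain $G_i\tilde\Sigma_{//}X\simeq([0],\mathrm{id})$, i.e.\ $\tilde\Sigma X\times_{[1]}[0]\simeq[0]$, and therefore $\Nat(Y,\tilde\Sigma X\times_{[1]}[0])\simeq\Nat(Y,[0])\simeq\ast$.

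The step I expect to be the main obstacle is the colimit-preservation of $G_i$: forgetting part of the coslice structure over $\partial[1]$ does not preserve colimits in general, since coslice forgetful functors preserve only connected colimits and $\partial[1]=\{0\}\sqcup\{1\}$ is disconnected. The resolution is that base change along $d_i$ annihilates the $(1-i)$-summand of $\partial[1]$ — the two objects of $[1]$ are disjoint subobjects, so $d_i^\ast\{1-i\}=\emptyset$ — so that $G_i$ factors through base change $d_i^\ast\colon\Psh(\Theta_{1+N})_{/[1]}\to\Psh(\Theta_{1+N})$, which preserves all colimits (topos base change), followed by a coslice-base-change functor that now reads as a mere pointing over $[0]$. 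One then verifies directly that for a diagram $D\colon I\to\Psh(\Theta_{1+N})_{\partial[1]//[1]}$ the underlying presheaves of $G_i(\colim_I D)$ and of $\colim_I(G_i D)$ coincide — both equal $\colim_I(d_i^\ast Z_\bullet)\sqcup_{\colim_I\const_{[0]}}[0]$ with the same gluing maps — using only that $d_i^\ast$ preserves pushouts and colimits; alternatively, one exhibits a right adjoint to $G_i$ explicitly. Once this is settled, the argument above closes.
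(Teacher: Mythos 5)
Your proof is correct. It takes a somewhat different route from the paper's: you prove the stronger presheaf-level statement that $\tilde\Sigma X\times_{[1]}[0]\simeq[0]$, which yields the lemma for all $Y$ at once, whereas the paper first reduces to representable $Y$ (using that $\Nat(-,\tilde\Sigma X)$ carries colimits to limits) and then works with the mapping spaces as functors of $X$. The essential mechanism is nevertheless the same in both arguments: one must pass to a pointed (coslice) setting to obtain genuine colimit-preservation in the variable $X$. The paper does this by observing that the functor $X\mapsto\Nat(\theta,\tilde\Sigma X)\times_{\Nat(\theta,[1])}\{d_it\}$ preserves contractible colimits and sends $\emptyset$ to a point, hence lifts to a colimit-preserving functor to $\Spaces_\ast$; you do it by observing that $d_i^\ast$ annihilates the $(1-i)$-summand of $\partial[1]$, so that $G_i$ factors through topos base change and lands in $\Psh(\Theta_{1+N})_{[0]/}$, where it preserves colimits. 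Both arguments then conclude with the same trivial computation for representable $X=\zeta$, namely that the fiber of $\Sigma\zeta\to[1]$ over either endpoint is $[0]$. Your version is arguably cleaner in that it isolates ``the $i$-th fiber of $\tilde\Sigma X\to[1]$ is terminal'' as the real content and dispenses with the reduction over $Y$; you also correctly flag the colimit-preservation of $G_i$ as the one delicate point, and your resolution of it (the cone-point of the coslice colimit formula contributes $d_i^\ast\partial[1]=[0]$, matching the pointing) is sound. One small remark: your final step uses that the colimit of the constant diagram at the zero object of a pointed category is again the zero object, which is exactly the role played by the lift to $\Spaces_\ast$ in the paper's proof; it is worth noting that this is also what handles the edge case $X=\emptyset$, where $\tilde\Sigma\emptyset=\partial[1]$ still has terminal $i$-th fiber.
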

\begin{proof}
First observe that as a functor of $Y$, the space $\Nat(Y, \tilde \Sigma X) \times_{\Nat(Y, [1])} \{d_i t\}$ carries colimits to limits. This reduces us to the case where $Y = \theta$ is representable. In this case, $\Nat(\theta, \tilde \Sigma X) \times_{\Nat(\theta, [1])} \{d_i t\}$ preserves contractible colimits in $X$ as a functor $\Psh(\Theta_N) \to \Spaces$. Moreover, this functor carries the empty presheaf to a point. So when we lift to a functor $\Psh(\Theta_N) \to \Spaces_\ast$, we obtain a colimit-preserving functor of $X$. This allows us to reduce to the case where $X = \zeta$ is also representable. In this case, we have $\tilde \Sigma \zeta = \Sigma \zeta$, and we are reduced to a computation in the category $\Theta_{1+N}$, where it is obviously true that $\Nat(\theta, \Sigma \zeta) \times_{\Nat(\theta, [1])} \{d_i t\}$ is contractible.
\end{proof}

\begin{lem}\label{lem:sig-sig}
Let $N \in \nats \cup \{\omega\}$. For $X,Y \in \Psh(\Theta_N)$, the natural map $\amalg_{\phi : \partial[1] \to [1]} \Nat(\tilde \Sigma X, \tilde \Sigma Y) \times_{\Nat(\Sigma \emptyset, \Sigma [0])} \{\phi\} \to \Nat(\tilde \Sigma X, \tilde \Sigma Y)$ is an equivalence. Moreover, when $\phi$ is natural inclusion, corresponding summand is $\Nat(\tilde \Sigma X, \tilde \Sigma Y) \times_{\Nat(\Sigma \emptyset, \Sigma [0])} \{\phi\} = \Nat_{//}(\tilde \Sigma_{//} X, \tilde \Sigma_{//} Y) \cong \Nat(X,Y)$. When $\phi = td_i$ is constant at an endpoint, the summand is contractible. When $\phi$ swaps the order of the endpoints, the summand is contractible if $X = \emptyset$ and empty otherwise.
\end{lem}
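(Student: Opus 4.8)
The plan is to first dispose of the coproduct decomposition, which is essentially formal, and then to compute each of the four summands by reducing both arguments to representables, where everything becomes the trivial description of $\Hom_{\Theta_{1+N}}(\Sigma\zeta,\Sigma\eta)$. Here is the formal step: since $\Sigma\emptyset := \tilde\Sigma\emptyset = \partial[1]$ and $\Sigma[0] := \tilde\Sigma[0] = [1]$, and since $\partial[1]$ is the coproduct $\yo[0]\amalg\yo[0]$ while $[1]$ is representable, $\Nat(\Sigma\emptyset,\Sigma[0])$ is the discrete set $\bigl([1]([0])\bigr)^{2} = \{0,1\}^{2}$, whose four elements are the inclusion $\partial[1]\hookrightarrow[1]$, the order-reversing bijection, and the two constant maps. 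The map $\Nat(\tilde\Sigma X,\tilde\Sigma Y)\to\Nat(\Sigma\emptyset,\Sigma[0])$ — precomposition with $\tilde\Sigma(\emptyset\to X)$ followed by postcomposition with $\tilde\Sigma(Y\to[0])$ — then exhibits its source as the coproduct of its fibers, giving the first assertion.

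Next I would reduce to the case where $X$ and $Y$ are representable. The structural input is that $\tilde\Sigma$, being the composite of the cocontinuous functor $\tilde\Sigma_{//}$ with the forgetful functor $\Psh(\Theta_{1+N})_{\partial[1]//[1]}\to\Psh(\Theta_{1+N})$ (which preserves connected colimits and nonempty coproducts), preserves all colimits of \emph{nonempty} diagrams, even though $\tilde\Sigma\emptyset = \partial[1]\neq\emptyset$. Hence for $X\neq\emptyset$, writing $X$ as the colimit of the representables mapping to it turns $\Nat(\tilde\Sigma X,\tilde\Sigma Y)$ into the corresponding limit, and — since $\Nat(\Sigma\emptyset,\Sigma[0])$ is discrete and independent of $X$ — the same limit formula passes to each $\phi$-summand. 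So I may assume $X = \yo\zeta$ with $\zeta\in\Theta_N$; then $\Sigma\zeta$ is representable, hence atomic in $\Psh(\Theta_{1+N})$, so $\Nat(\Sigma\zeta,\tilde\Sigma-)$ preserves colimits of nonempty diagrams and the same argument reduces $Y$ to a representable $\yo\eta$, provided $Y\neq\emptyset$.

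With both arguments representable, $\Nat(\Sigma\zeta,\tilde\Sigma\yo\eta) = \Hom_{\Theta_{1+N}}(\Sigma\zeta,\Sigma\eta)$ is a discrete set, and a functor $F\colon\Sigma\zeta\to\Sigma\eta$ consists of its map of object-sets $\{0,1\}\to\{0,1\}$ (which, via the inclusion $\partial[1]\hookrightarrow\Sigma\zeta$ of the two objects and the projection $\Sigma\eta\to[1]$, is precisely the associated $\phi$) together with a functor $\zeta = \Hom_{\Sigma\zeta}(0,1)\to\Hom_{\Sigma\eta}(F0,F1)$. When $\phi$ is the inclusion the target is $\Hom_{\Sigma\eta}(0,1) = \eta$, so this summand is $\Hom_{\Theta_N}(\zeta,\eta) = \Nat(\yo\zeta,\yo\eta)$, which by full faithfulness of $\tilde\Sigma_{//}$ (\cref{def:susp}) also equals $\Nat_{//}(\tilde\Sigma_{//}\yo\zeta,\tilde\Sigma_{//}\yo\eta)$. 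When $\phi$ is constant the target hom-category is $[0]$, so the summand is a single point. When $\phi$ is the swap the target is $\Hom_{\Sigma\eta}(1,0) = \emptyset$, so the summand is empty, using that $\zeta\neq\emptyset$ since $\emptyset\notin\Theta_N$.

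Finally I would reassemble and treat the degenerate cases. Taking the limits over categories of elements: the inclusion-summand becomes $\Nat(X,Y)$ — equivalently $\Nat_{//}(\tilde\Sigma_{//}X,\tilde\Sigma_{//}Y)$, since $\tilde\Sigma_{//}$ is cocontinuous and the unit of its adjunction with $\tilde\Hom_{/}$ is an equivalence; the two constant summands are limits of a point over a nonempty category, hence points; the swap summand is a limit of $\emptyset$ over a nonempty category, hence empty. For $X = \emptyset$ we have $\tilde\Sigma X = \partial[1]$ and $\Nat(\partial[1],\tilde\Sigma Y) = \bigl((\tilde\Sigma Y)([0])\bigr)^{2}$; by \cref{lem:point} the projection $(\tilde\Sigma Y)([0])\to[1]([0]) = \{0,1\}$ has contractible fibers, and is split surjective via the canonical $\partial[1]\to\tilde\Sigma Y$, so $(\tilde\Sigma Y)([0])\simeq\{0,1\}$ and each $\phi$-summand is a single point, consistent with the statement since $\Nat(\emptyset,Y)\simeq\ast$ and the swap summand may be contractible when $X=\emptyset$. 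For $Y=\emptyset$ with $X\neq\emptyset$, after reducing $X$ to $\yo\zeta$ the only maps $\Sigma\zeta\to\partial[1]$ are the two constants (there is a morphism $0\to1$ in $\Sigma\zeta$ since $\zeta\neq\emptyset$, forcing the image into one component), which account for the two constant summands, the inclusion- and swap-summands being empty as $\Nat(\yo\zeta,\emptyset)=\emptyset$. The one point requiring care is precisely that $\tilde\Sigma$ does not preserve the initial object, so the reduction to representables is valid only away from $\emptyset$ and these degenerate cases must be handled by hand via \cref{lem:point}; everything else is bookkeeping around the elementary description of functors between suspensions in $\Theta_{1+N}$.
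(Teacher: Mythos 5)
The formal coproduct decomposition and the computations for representables are fine, but your reduction to representables rests on a false claim: neither $\tilde\Sigma$ nor the forgetful functor $\Psh(\Theta_{1+N})_{\partial[1]//[1]} \to \Psh(\Theta_{1+N})$ preserves nonempty coproducts. The coproduct in the under-over category is the amalgamated sum over $\partial[1]$, so $\tilde\Sigma(X_1 \amalg X_2) = \tilde\Sigma X_1 \cup_{\partial[1]} \tilde\Sigma X_2$, not $\tilde\Sigma X_1 \amalg \tilde\Sigma X_2$; hence $\tilde\Sigma$ preserves connected colimits but not colimits of arbitrary nonempty diagrams, and the formulas $\Nat(\tilde\Sigma X,\tilde\Sigma Y) \simeq \lim_{\zeta\to X}\Nat(\Sigma\zeta,\tilde\Sigma Y)$ and $\Nat(\Sigma\zeta,\tilde\Sigma Y)\simeq \colim_{\eta\to Y}\Nat(\Sigma\zeta,\Sigma\eta)$ fail whenever the relevant category of elements is disconnected. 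Concretely, for $X=Y=[0]\amalg[0]$ one has $\tilde\Sigma X = [1]\cup_{\partial[1]}[1]$ and $\Nat(\tilde\Sigma X,\tilde\Sigma Y)$ has $5$ points ($2+1+1+0$ over the four $\phi$'s, as the lemma predicts), whereas the plain limit over the two-object discrete category of elements of $X$ has $4^2=16$ points. Your reassembly step inherits the problem: for disconnected $Y$ the constant summand would be a \emph{colimit} of points over $\mathrm{elts}(Y)$, i.e.\ $|{\mathrm{elts}(Y)}|$, which is not contractible — not the "limit of a point" you assert. So as written the argument fails outside the case where $X$ and $Y$ have connected categories of elements.

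The gap is repairable, and the repair is essentially \cref{lem:point}: the correct decomposition replaces the plain limit over $\mathrm{elts}(X)$ by the limit over $\mathrm{elts}(X)^{\triangleleft}$ with cone point $\Sigma\emptyset=\partial[1]$, and the correction term $\Nat(\partial[1],\tilde\Sigma Y)\to\Nat(\partial[1],[1])$ has contractible fibers by \cref{lem:point}, so after fixing $\phi$ the discrepancy disappears (this is also exactly the "lift to $\Spaces_\ast$" device the paper uses in the proof of \cref{lem:point} itself). Note that the paper's own proof of this lemma sidesteps the issue entirely: it identifies the inclusion fiber directly from the definition of $\tilde\Sigma_{//}$, quotes \cref{lem:point} verbatim for the constant fibers and for the swap fiber when $X=\emptyset$, and for the swap fiber with $X\neq\emptyset$ applies levelwise $\pi_0$ to produce a map of strict categories $\Sigma_s X\to\Sigma_s Y$ reversing the endpoints, which cannot exist since $\Hom_{\Sigma_s Y}(1,0)=\emptyset$. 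Your representable computation $\Hom_{\Sigma\eta}(1,0)=\emptyset$ is a reasonable substitute for that last step, but only once the reduction to representables has been made legitimate.
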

\begin{proof}
The first statement follows from the fact that $\Nat(\Sigma \emptyset, \Sigma [0])$ is discrete. The identification of the fiber when $\phi$ is the natural inclusion follows by definition. The identification when $\phi$ is constant follows from \cref{lem:point}. When $\phi$ is the swap map, the case where $X = \emptyset$ likewise follows from \cref{lem:point}, after decomposing $\partial [1] = [0] \amalg [0]$ and observing that the composite maps from $[0]$ are constant. When $\phi$ is the swap map and $X$ is nonempty, any map would induce a similar map in $\sCat_N$ by applying $\pi_0$ levelwise to all the presheaves involved. But in $\sCat_N$ we know that $\Hom_{\Sigma_s X}(0,1) = X$ is nonempty when $X$ is nonempty whereas $\Hom_{\Sigma_s Y}(1,0) = \emptyset$. So there can be no map from the former to the latter, and this fiber is empty as desired.
\end{proof}

\begin{lem}\label{lem:base}
Let $N \in \nats \cup \{\omega\}$. Let $X \in \Psh(\Theta_N)$. Then $\tilde \Sigma X$ is right-orthogonal to the 1-dimensional spine inclusions $[1] \vee \cdots \vee [1] \to [n]$, and also to the maps $S^d \to [0]$.
\end{lem}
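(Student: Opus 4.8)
The plan is to factor the verification through the canonical map $\pi \colon \tilde\Sigma X \to [1]$ of \cref{lem:point} and check the required right‑orthogonality \emph{fibrewise over $\pi$}. First I would observe that the presheaf $[1]$ (the nerve of the poset $[1]$, equal to $\tilde\Sigma[0] = \Sigma[0]$) is itself right‑orthogonal to every map in the statement: to the $1$-dimensional spine inclusions $\mathrm{Sp}[n] := \yo[1] \cup_{\yo[0]} \cdots \cup_{\yo[0]} \yo[1] \to \yo[n]$ because the nerve of a $1$-category satisfies the Segal condition on the nose, and to $S^d \to [0]$ (for $d \geq 1$) because $[1]$ is $0$-truncated while $S^d$ is connected, so every map $S^d \to [1]$ is constant and $\Nat(S^d,[1]) = \{0,1\} = \Nat([0],[1])$. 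Consequently, since the projection $\Nat(-, \tilde\Sigma X) \to \Nat(-, [1])$ is a natural transformation whose target already carries each such $f \colon A \to B$ to an equivalence, it suffices to show that for every point $b$ of $\Nat(B,[1]) \cong \Nat(A,[1])$ the induced map on fibres $\Nat(B, \tilde\Sigma X)_b \to \Nat(A, \tilde\Sigma X)_{b|_A}$ is an equivalence — a map of right fibrations over an equivalence of bases being an equivalence exactly when it is a fibrewise equivalence.

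For the maps $S^d \to [0]$: each point of $\Nat([0],[1]) = \{0,1\}$, pulled back along $S^d \to [0] \to [1]$ to a point of $\Nat(S^d,[1])$, is one of the constant maps $d_i t$ of \cref{lem:point}, and since $S^d$ is connected these are \emph{all} the points of $\Nat(S^d,[1])$. Applying \cref{lem:point} with $Y = [0]$ and with $Y = S^d$ shows that all fibres of $\Nat([0],\tilde\Sigma X) \to \Nat([0],[1])$ and of $\Nat(S^d,\tilde\Sigma X) \to \Nat(S^d,[1])$ are contractible; as the restriction map covers the bijection $\Nat([0],[1]) \cong \Nat(S^d,[1])$ and is a fibrewise equivalence, it is an equivalence. (For $d = 0$ the source is disconnected and the statement is false; this is why $d=0$ is absent from the localizing family.)

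For a $1$-dimensional spine inclusion $\mathrm{Sp}[n] \to \yo[n]$, a point $b \in \Nat(\yo[n],[1]) = [1]([n])$ is a monotone map $\phi \colon [n] \to [1]$. If $\phi$ is constant, then so is its restriction to every edge and vertex of $\mathrm{Sp}[n]$, and repeated use of \cref{lem:point} — including on the (contractible) fibres of $\Nat(\yo[0],\tilde\Sigma X) \to \Nat(\yo[0],[1])$ over which the fibre product defining $\Nat(\mathrm{Sp}[n],\tilde\Sigma X)$ is formed — makes every fibre in sight contractible, so the comparison is $\mathrm{pt} \to \mathrm{pt}$. If $\phi$ is non‑constant it has a unique "jump": a unique edge $\delta \colon [1]\hookrightarrow[n]$ on which $\phi$ restricts to $\id_{[1]}$, while $\phi$ collapses every other edge. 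Applying \cref{lem:point} to the collapsed edges and to the intervening vertices, restriction along $\delta$ identifies $\Nat(\mathrm{Sp}[n],\tilde\Sigma X)_{b|_A}$ with $\Nat(\yo[1],\tilde\Sigma X)_{\id}$; and a map $\yo[1] \to \tilde\Sigma X$ over $\id_{[1]}$ is automatically a bipointed map over $[1]$, so by \cref{lem:sig-sig} (full faithfulness of $\tilde\Sigma_{//}$, applied to $\tilde\Sigma_{//}[0]$) this is $\Nat_{//}(\tilde\Sigma_{//}[0], \tilde\Sigma_{//}X) \cong \Nat([0],X)$. Thus the problem reduces to showing that restriction along $\delta$ likewise identifies $\Nat(\yo[n],\tilde\Sigma X)_b$ with $\Nat([0],X)$, compatibly.

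This final identification is the crux, and I expect it to be the only real obstacle: it says that a map $\yo[n] \to \tilde\Sigma X$ lying over a non‑constant $\phi$ is exactly the datum of its restriction to the jumping edge, everything else being forced up to contractible ambiguity. Everything before it is formal manipulation of fibrations together with \cref{lem:point} and \cref{lem:sig-sig}; this step instead requires unwinding \cref{def:susp}. Concretely, under the equivalence $\Psh(\Theta_{1+N})_{/[1]} \simeq \Psh((\Theta_{1+N})_{/[1]})$ in which $\tilde\Sigma_{//}$ is the colimit‑preserving extension of $\zeta \mapsto \Sigma_/\zeta$, the functor $X \mapsto \Nat(\yo[n],\tilde\Sigma X)_b$ preserves connected colimits, so after disposing of the initial object ($\tilde\Sigma\emptyset = \partial[1]$) and of coproducts one reduces to $X = \yo\zeta$ representable, where it is the elementary fact that a strict functor $[n] \to \Sigma\zeta$ lying over $\Sigma\zeta \to [1]$ above a single‑jump $\phi$ is precisely an object of $\zeta$, namely the image of the jumping edge — and restriction along $\delta$ reads off that object. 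Granting this, all the fibrewise comparisons are equivalences, so $\tilde\Sigma X$ is right‑orthogonal to the $1$-dimensional spine inclusions and to the maps $S^d \to [0]$, as claimed.
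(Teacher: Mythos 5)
Your proposal is correct and follows essentially the same route as the paper's proof: reduce to fibres over maps to $[1]$, dispose of constant fibres via \cref{lem:point}, identify the spine-side fibre over a non-constant $\phi$ with $\Nat([1],\tilde\Sigma X)_{\id}$ by interchanging limits, and reduce to representable $X$ using preservation of contractible colimits plus $\tilde\Sigma\emptyset=\partial[1]$. The only cosmetic difference is at the very end, where you compute the representable case by hand (functors $[n]\to\Sigma\zeta$ over a single-jump $\phi$ are objects of $\zeta$) while the paper simply invokes that $\Sigma\zeta\in\Theta_{1+N}$ is already local in $\Cat_N^\inc$.
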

\begin{proof}
Let $f : A \to B$ be one of the maps in question. First observe that if $C \in \sCat_N$, then $C$ is right orthogonal to $A \to B$. The unique map $X \to [0]$ induces a map $\tilde \Sigma X \to [1]$, and we have a commutative square as follows:
\begin{equation*}
    \begin{tikzcd}
        \Nat(B, \tilde \Sigma X) \ar[r] \ar[d] & \Nat(A, \tilde \Sigma X) \ar[d] \\
        \Nat(B, [1]) \ar[r] & \Nat(A, [1])
    \end{tikzcd}
\end{equation*}
We would like to show that the top rightward map is an equivalence. Because $[1] \in \sCat_N$, the bottom rightward map is an equivalence. So it will suffice to check that, for each map $d : B \to [1]$, the induced map of fibers 
\begin{equation}\label{eqn:map}
    \Nat(B, \tilde \Sigma X) \times_{\Nat(B, [1])} \{d\} \to \Nat(A, \tilde \Sigma X) \times_{\Nat(A, [1])} \{fd\}
\end{equation}
is an equivalence.

Consider first the case where $f : A \to B$ is the map $S^d \to [0]$. In this case, there are two maps $d_i t: B \rightrightarrows [1]$, each of which factors through one of the maps $d_i : [0] \rightrightarrows [1]$. By \cref{lem:point}, both sides of (\ref{eqn:map}) are contractible, so we have an equivalence as desired.

Consider now the case where $f : A \to B$ is a spine inclusion $[1]^{\vee n} \to [n]$. In this case, the maps $[1]^{\vee n} \to [1]$ fall into two classes. There are two constant maps, which factor through the face maps $[0] \rightrightarrows [1]$. In this case, \cref{lem:point} shows that both sides of (\ref{eqn:map}) are contractible just as before. The other maps are non-constant. Each such map $\phi$ is the identity on one of the wedge summands (the $j$th summand, say) and constant on the others. So the codomain of (\ref{eqn:map}) decomposes as
\begin{align*}
    &(\Nat([1], \tilde \Sigma X) \times_{\Nat([0], \tilde \Sigma X)} \cdots \times_{\Nat([0], \tilde \Sigma X)} \Nat([1], \tilde \Sigma X)) \times_{\Nat([1], [1]) \times_{\Nat([0], [1])} \cdots \times_{\Nat([0], [1])} \Nat([1], [1])} \{\phi\} \\
    \cong
    & (\Nat([1], \tilde \Sigma X) \times_{\Nat([1], [1])} \{d_1 t\} \times_{\Nat([0], [1]) \times_{\Nat([0], [1]} \{d_1\}} \cdots \\
    &\cdots \times_{\Nat([0], [1]) \times_{\Nat([0], [1]} \{d_1\}} (\Nat([1], \tilde \Sigma X) \times_{\Nat([1], [1])} \{\id_{[1]}\} \times_{\Nat([0], [1]) \times_{\Nat([0], [1]} \{d_0\}} \cdots \\
    &\cdots \times_{\Nat([0], [1]) \times_{\Nat([0], [1]} \{d_0\}} (\Nat([1], \tilde \Sigma X) \times_{\Nat([0], [1])} \{d_0 t\} \\
    \cong 
    & (\Nat([1], \tilde \Sigma X) \times_{\Nat([1], [1])} \{\id_{[1]}\} 
\end{align*}
Here we have first interchanged limits, and then observed by \cref{lem:point} that each of the other terms appearing in the middle expression is contractible. Thus we are asking whether a certain map
\begin{equation*}
    (\Nat([n], \tilde \Sigma X) \times_{\Nat([n], [1])} \{\phi \} \to (\Nat([1], \tilde \Sigma X) \times_{\Nat([1], [1])} \{\id_{[1]}\} 
\end{equation*}
is an equivalence. Both the domain and codomain manifestly preserve contractible colimits in $X$. Moreover, as $\tilde \Sigma \emptyset = \partial [1]$, we see that both the domain and codomain preserve the initial object. Thus both the domain and codomain preserve all colimits in $X$. So we are reduced to the case where $X = \zeta$ is representable. In this case $\tilde \Sigma X = \Sigma \zeta$ lies in $\Cat_N^\inc$, so the result is true.
\end{proof}

\begin{thm}\label{thm:susp-pres}
Let $N  \in \nats \cup \{\omega\}$. The suspension functor $\tilde \Sigma : \Psh(\Theta_N) \to \Psh(\Theta_{1+N})$ carries $\Cat_N^\inc$ to $\Cat_{1+N}^\inc$ and $\sCat_N$ to $\sCat_{1+N}$.
\end{thm}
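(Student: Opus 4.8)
The plan is to check directly that $\tilde\Sigma$ sends local objects to local objects. Fixing $X\in\Psh(\Theta_N)$, I would verify: (i) if $X$ is local with respect to the basic spine inclusions of $\Psh(\Theta_N)$, then $\tilde\Sigma X$ is local with respect to those of $\Psh(\Theta_{1+N})$; and (ii) if moreover $X$ is local with respect to the maps $\globe_m\times S^d\to\globe_m$ for $m\le N$, then $\tilde\Sigma X$ is local with respect to these for $m\le 1+N$. In each case the localizing maps split into a ``suspended'' family — literally $\tilde\Sigma$ of the corresponding family for $\Psh(\Theta_N)$, handled by \cref{lem:sig-sig} — together with a small ``ground-level'' family handled by \cref{lem:base}.

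For (i): since $\tilde\Sigma\yo(\zeta)=\yo(\Sigma\zeta)$, applying $\tilde\Sigma$ to the parameter-$(k-1)$ basic spine inclusion of $\Psh(\Theta_N)$ produces exactly the parameter-$k$ basic spine inclusion of $\Psh(\Theta_{1+N})$, for $k\ge 1$. Given such a map $\tilde\Sigma f$ (whose source and target are nonempty, $\tilde\Sigma$ of any presheaf having two objects), I would invoke \cref{lem:sig-sig}: it identifies $\Nat(\tilde\Sigma(\cod f),\tilde\Sigma X)\to\Nat(\tilde\Sigma(\dom f),\tilde\Sigma X)$, summand by summand over $\phi:\partial[1]\to[1]$, with $\Nat(\cod f,X)\to\Nat(\dom f,X)$ (an equivalence since $X$ is $f$-local), an equivalence of contractible spaces, and $\emptyset\to\emptyset$ — so $\tilde\Sigma X$ is $\tilde\Sigma f$-local. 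The parameter-$0$ basic spine inclusions have the form $S_\theta:\yo(\Sigma\theta_1)\vee\cdots\vee\yo(\Sigma\theta_r)\to\yo(\theta)$ with $\theta=\Sigma\theta_1\vee^{\sCat}\cdots\vee^{\sCat}\Sigma\theta_r$ and $\theta_i\in\Theta_N$; when all $\theta_i$ are terminal this is a $1$-dimensional spine inclusion, so \cref{lem:base} applies. For general $\theta_i$ I would rerun the argument of \cref{lem:base}: using the map $\tilde\Sigma X\to[1]$ induced by $X\to[0]$, reduce to showing that for each functor $\theta\to[1]$ the induced map of fibers is an equivalence; over the two constant functors both fibers are contractible by \cref{lem:point}, while a non-constant functor has a unique ``jump point'' $m$, over which the fiber on each side is naturally $\Nat(\yo(\theta_m),X)=X(\theta_m)$ with the restriction map the identity. (This uses only that $\tilde\Sigma X$ has two objects, so a map into it out of the cell $\theta$ is the datum of where $\theta$'s chain of objects jumps together with a map of the corresponding hom into $X$; in particular no hypothesis on $X$ is needed for the parameter-$0$ inclusions.) Hence $\tilde\Sigma X\in\Cat_{1+N}^\inc$.

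For (ii), assume $X\in\sCat_N$, so $\tilde\Sigma X\in\Cat_{1+N}^\inc$ by (i); it remains to check right-orthogonality of $\tilde\Sigma X$ to $\globe_m\times S^d\to\globe_m$ for $m\le 1+N$. The case $m=0$ is the map $S^d\to[0]$, handled directly by \cref{lem:base}. For $1\le m\le 1+N$ we have $\globe_m=\tilde\Sigma\globe_{m-1}$ and $\Nat(\globe_m\times S^d,\tilde\Sigma X)\simeq\Map(S^d,(\tilde\Sigma X)(\globe_m))$; by \cref{lem:sig-sig}, $(\tilde\Sigma X)(\globe_m)=X(\globe_{m-1})\amalg\ast\amalg\ast$, and $X(\globe_{m-1})$ is $S^d$-local because $m-1\le N$ and $X\in\sCat_N$, hence so is its disjoint union with two points ($S^d$ being connected). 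Thus $\tilde\Sigma X\in\sCat_{1+N}$.

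I expect the main obstacle to be the general case ($\theta_i$ not all terminal) of the parameter-$0$ basic spine inclusions in (i): this is the one family of localizing maps that \cref{lem:base,lem:sig-sig} do not hand us outright, so it must be treated by a hands-on analysis of maps into $\tilde\Sigma X$. The decisive simplification is that $\tilde\Sigma X$ has exactly two objects, so its value on any $\Theta_{1+N}$-cell decomposes cleanly according to where that cell's linearly ordered set of objects jumps from one object of $\tilde\Sigma X$ to the other — the same fact that makes both the \cref{lem:sig-sig} bookkeeping and this fiberwise analysis go through.
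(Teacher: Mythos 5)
Your proposal follows essentially the same route as the paper's proof: split the localizing maps into the suspended family (parameter $k\geq 1$), identified via \cref{lem:sig-sig} with the corresponding unsuspended mapping spaces summand-by-summand over $\phi:\partial[1]\to[1]$, and a ground-level family handled by \cref{lem:base}; the strictness maps $\globe_m\times S^d\to\globe_m$ are likewise dispatched by $S^d\to[0]$ at level $0$ and by \cref{lem:sig-sig} above it. The one place you go beyond the written proof is the parameter-$0$ basic spine inclusions with non-terminal $\theta_i$: the paper attributes all the $m'=0$ cases to \cref{lem:base}, although that lemma as stated only covers the $1$-dimensional spine inclusions $[1]^{\vee r}\to[r]$ and $S^d\to[0]$, so your fiberwise ``jump-point'' analysis --- run through the same reduction as \cref{lem:base} (fiber over each $\theta\to[1]$, contractible fibers over the constant maps by \cref{lem:point}, and for a non-constant map a fiber naturally identified with $X(\theta_m)$ on both sides, made rigorous by reducing to representable $X$ using that $\tilde\Sigma\emptyset=\partial[1]$ forces both fibers to preserve all colimits in $X$) --- is exactly the argument needed to make that step complete, and your remark that no hypothesis on $X$ is needed there is correct. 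Your handling of strictness via $(\tilde\Sigma X)(\globe_m)\simeq X(\globe_{m-1})\amalg\ast\amalg\ast$ and connectedness of $S^d$ is a correct specialization of the paper's uniform $m'\geq 1$ argument.
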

\begin{proof}
All of the required statements are of the form ``If $C$ is right-orthogonal to the maps $\tilde \Sigma^m A \to \tilde \Sigma^m B$ for all $m \geq 0$, then $\tilde \Sigma C$ is right-orthogonal to the maps $\tilde \Sigma^{m'} A \to \tilde \Sigma^{m'} B$ for $m' \geq 0$." The cases $m' = 0$ follow from \cref{lem:base}. For the cases $m' \geq 1$, observe that by \cref{lem:sig-sig}, we have identifications $\Nat(\tilde \Sigma^{m'} A, \tilde \Sigma C) \cong \Nat_{//}(\tilde \Sigma_{//}(\Sigma^{m'-1} A), \Sigma_{//} C)_{++} = \Nat(\Sigma^{m'-1} A, C)_{++}$ and similarly for $B$. So the desired orthogonality follows from the hypothesis where $m = m'-1$.
\end{proof}

\begin{cor}\label{cor:susp-res}
Let $N \in \nats \cup \{\omega\}$. The suspension adjunction 
\[\begin{tikzcd} 
    \tilde \Sigma_{//} \ar[r, shift left]: \Psh(\Theta_N) & \Psh(\Theta_{1+N})_{\partial[1] / /[1]} : \tilde \Hom_{/} \ar[l, shift left] 
\end{tikzcd}\] 
restricts to suspension adjunctions
\[\begin{tikzcd}[row sep = 0.5]
    \Sigma_{//} \ar[r, shift left]: \Cat_N^\inc & (\Cat_{1+N}^\inc)_{\partial[1] / /[1]} : \Hom_{/} \ar[l, shift left] \\
    \Sigma_{//}^s \ar[r, shift left]: \sCat_N & (\sCat_{1+N})_{\partial[1] / /[1]} : \Hom^s_{/} \ar[l, shift left]
\end{tikzcd}\]
which are left Kan extended from $\Theta$. In particular, $\Sigma^s_{//}$ agrees with the usual suspension of strict $N$-categories, and $\Sigma_{//}$ agrees with $\Sigma^s_{//}$ when restricted to strict $N$-categories.
\end{cor}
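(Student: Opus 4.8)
The plan is to deduce everything from \cref{thm:susp-pres} by a formal adjunction argument. Recall that \cref{def:susp} gives an adjunction $\tilde\Sigma_{//} \dashv \tilde\Hom_{/}$ between $\Psh(\Theta_N)$ and $\Psh(\Theta_{1+N})_{\partial[1]//[1]}$, with $\tilde\Sigma_{//}$ fully faithful and colimit-preserving. The first step is to observe that $\tilde\Sigma_{//}$ carries $\Cat_N^\inc$ into $(\Cat_{1+N}^\inc)_{\partial[1]//[1]}$: an object of the latter is a map $\partial[1] \to Y \to [1]$ with $Y \in \Cat_{1+N}^\inc$, and since the slice $\Cat_{1+N}^\inc$ is just computed on underlying presheaves, this amounts to saying $\tilde\Sigma X \in \Cat_{1+N}^\inc$ whenever $X \in \Cat_N^\inc$, which is exactly \cref{thm:susp-pres}. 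The same remark applies verbatim to $\sCat$. So the essential image of the left adjoint lands in the appropriate reflective subcategory on each side.

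The second step is to check that the right adjoint $\tilde\Hom_{/}$ also preserves the relevant subcategories, i.e. that $\tilde\Hom_{/}(Z) \in \Cat_N^\inc$ whenever $Z$ lies over $[1]$ and its underlying presheaf is in $\Cat_{1+N}^\inc$ (and likewise for $\sCat$). This follows because $\tilde\Hom_{/}$ is right adjoint to $\tilde\Sigma_{//}$ and $\tilde\Sigma_{//}$ takes the localizing maps on the $\Psh(\Theta_N)$-side (basic spine inclusions, respectively the maps $\globe_m \times S^d \to \globe_m$) to maps built from localizing maps on the $\Psh(\Theta_{1+N})$-side; concretely, $\tilde\Sigma_{//}$ of a basic spine or wedge inclusion is again (up to the bipointing) a basic spine or wedge inclusion, essentially by \cref{lem:same-loc} and the very definition of the basic inclusions in \cref{def:spine} as $\tilde\Sigma^k$ of wedge inclusions. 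Hence $\tilde\Hom_{/}$ sends local objects to local objects, which is the orthogonality statement we need. Combined with the first step, the adjunction $\tilde\Sigma_{//} \dashv \tilde\Hom_{/}$ restricts to an adjunction $\Sigma_{//} \dashv \Hom_{/}$ between $\Cat_N^\inc$ and $(\Cat_{1+N}^\inc)_{\partial[1]//[1]}$, and to $\Sigma^s_{//} \dashv \Hom^s_{/}$ on the strict side.

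The third step records that these restricted functors are still left Kan extended from $\Theta$: $\Sigma_{//}$ is a colimit-preserving functor out of $\Cat_N^\inc$ (colimits in a reflective localization are computed by applying the localization to the ambient colimit, and $\tilde\Sigma_{//}$ preserves colimits, so $\Sigma_{//} = L \circ \tilde\Sigma_{//} \circ (\text{inclusion})$ is a composite of colimit-preserving functors), whose restriction along $\yo : \Theta_N \to \Cat_N^\inc$ is $\Sigma_/$ with its canonical bipointing; so $\Sigma_{//}$ is the left Kan extension of $\Sigma_/$ along $\yo$. The final sentence is then immediate: $\Sigma^s_{//}$ restricted to $\Theta$ is the strict suspension of \cref{def:susp-strict}, so by uniqueness of colimit-preserving extensions it \emph{is} the usual suspension of strict $N$-categories; and $\Sigma_{//}$ and $\Sigma^s_{//}$ agree on $\Theta$ (both being $\Sigma_/$ there) and both preserve colimits, so they agree on all of $\sCat_N \subseteq \Cat_N^\inc$.

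The main obstacle is the second step — verifying that $\tilde\Hom_{/}$ preserves the subcategories, equivalently that $\tilde\Sigma_{//}$ sends the generating acyclic maps on the source side to ($\partial[1]//[1]$-relative versions of) acyclic maps on the target side. This is where one must unwind \cref{def:spine}: one needs that $\tilde\Sigma_{//}$ applied to a basic wedge/spine inclusion $\tilde\Sigma^k(\cdots) \to \yo(\Sigma^k(\cdots))$ is again such an inclusion (now with $k$ increased by one), and that $\tilde\Sigma_{//}$ applied to $\globe_m \times S^d \to \globe_m$ is (the bipointed version of) $\globe_{1+m} \times S^{d} \to \globe_{1+m}$ — using that $\tilde\Sigma$ commutes with products with $S^d$ in the relevant sense, or alternatively invoking \cref{lem:base}'s orthogonality directly. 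Once that bookkeeping is in place the rest is formal.
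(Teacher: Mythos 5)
Your proposal follows the paper's argument: \cref{thm:susp-pres} handles the restriction of the left adjoint, the observation that $\tilde \Sigma$ carries the generating localizing maps (spine inclusions, and the maps witnessing strictness) to localizing maps handles the right adjoint, and the identification of $\Sigma^s_{//}$ with the usual suspension via density of $\Theta_N$ and uniqueness of colimit-preserving extensions is also the paper's. One small slip at the end: to see that $\Sigma_{//}$ agrees with $\Sigma^s_{//}$ on strict $N$-categories you argue that both preserve colimits and agree on $\Theta$ --- but $\Sigma_{//}$ preserves colimits of $\Cat_N^\inc$, whereas the canonical presentation of a strict $N$-category as a colimit of objects of $\Theta_N$ is a colimit in the further localization $\sCat_N$, which the (right adjoint) inclusion $\sCat_N \to \Cat_N^\inc$ need not preserve, so this does not determine $\Sigma_{//}$ on $\sCat_N$ from its values on $\Theta_N$. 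The intended (and simpler) argument is the paper's: both restricted functors are literally $\tilde \Sigma_{//}$ applied to the given object, so they agree tautologically on $\sCat_N$.
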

\begin{proof}
By \cref{thm:susp-pres}, $\tilde \Sigma$ preserves $\Cat_N^\inc$ and $\sCat_N$, so that it descends to the categories indicated. Moreover, $\tilde \Sigma$ carries spine inclusions to spine inclusions, and maps $\Sigma^m S^d \to \Sigma^m [0]$ to maps of the same form, so that $\Hom_{/}$ likewise descends to the categories indicated. Thus $\Sigma_{//}$ agrees with $\Sigma^s_{//}$ on strict $N$-categories, since they both agree with $\tilde \Sigma_{//}$.

To see that $\Sigma^s_{//}$ agrees with the usual suspension of strict $N$-categories, observe that both functors preserve colimits (admitting right adjoints given by $\Hom^s_/$ and the usual hom functor respectively) and agree on $\Theta_N$, which is dense in $\sCat_N$.
\end{proof}

\begin{obs}\label{obs:hty-susp}
Let $N \in \nats \cup \{\omega\}$. By construction, the suspension functors $\Sigma : \Cat_N^\inc \to \Cat_{1+N}^\inc$ and $\Sigma^s : \sCat_N \to \sCat_{1+N}$ of \cref{cor:susp-res} preserve contractible colimits, and in particular pushouts. 
Moreover they agree on strict $N$-categories. Hence $\Sigma^s$ preserves homotopy pushouts.
\end{obs}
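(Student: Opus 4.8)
My plan is to treat the three assertions in turn: the first two are essentially restatements of \cref{cor:susp-res}, and the third is a short formal deduction from them using \cref{def:hpo}.

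First I would observe that $\Sigma$ and $\Sigma^s$ preserve contractible colimits. By \cref{cor:susp-res} the functors $\Sigma_{//}$ and $\Sigma^s_{//}$ are left adjoints (to $\Hom_{/}$ and $\Hom^s_{/}$ respectively), hence preserve all colimits; and $\Sigma$ (resp.\ $\Sigma^s$) is this functor postcomposed with the forgetful functor from the doubly-pointed slice $(\Cat_{1+N}^\inc)_{\partial[1]//[1]}$ (resp.\ $(\sCat_{1+N})_{\partial[1]//[1]}$) down to $\Cat_{1+N}^\inc$ (resp.\ $\sCat_{1+N}$). That forgetful functor is a composite of a slice projection, which creates all colimits, with a coslice projection, which creates connected colimits; hence it preserves connected colimits, in particular contractible colimits, in particular pushouts (the span shape being contractible). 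That $\Sigma$ and $\Sigma^s$ agree on strict $N$-categories is the final sentence of \cref{cor:susp-res}.

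Next I would deduce that $\Sigma^s$ preserves homotopy pushouts. The key preliminary remark is that $\sCat_N \subseteq \Cat_N^\inc \subseteq \Psh(\Theta_N)$ are nested full reflective subcategories (\cref{def:theta-space}), with $\nerve$ the bottom inclusion, so that a nerve of a strict $N$-category is already $\Cat_N^\inc$-local and $L_{\Cat_N^\inc}\circ\nerve$ restricts on $\sCat_N$ to the inclusion $\sCat_N \hookrightarrow \Cat_N^\inc$. Hence, unwinding \cref{def:hpo}, a square in $\sCat_N$ is a homotopy pushout precisely when it is a pushout once regarded as a square in $\Cat_N^\inc$ (and likewise at category number $1+N$). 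So given a homotopy pushout $S$ in $\sCat_N$, i.e.\ a pushout square in $\Cat_N^\inc$, I apply $\Sigma : \Cat_N^\inc \to \Cat_{1+N}^\inc$, which preserves pushouts by the previous step; since $\Sigma$ agrees with $\Sigma^s$ on strict $N$-categories, $\Sigma(S) = \Sigma^s(S)$ is a pushout square in $\Cat_{1+N}^\inc$ with all vertices in $\sCat_{1+N}$, which is exactly to say that $\Sigma^s(S)$ is a homotopy pushout in $\sCat_{1+N}$.

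I do not expect any genuine obstacle here: the substance is already contained in \cref{thm:susp-pres} (that $\tilde\Sigma$ respects the relevant localizations) and \cref{cor:susp-res} (that the suspensions at the levels of $\Psh(\Theta)$, $\Cat^\inc$, and $\sCat$ are mutually compatible, colimit-preserving functors). The only point demanding a little care is the translation used in the third step — that \cref{def:hpo}'s nerve-based definition of homotopy pushout coincides, for a square already living in $\sCat_N$, with being a pushout in $\Cat_N^\inc$ — and this reduces to the fact that nerves of strict $N$-categories are $\Cat_N^\inc$-local.
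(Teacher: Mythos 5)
Your proposal is correct and follows essentially the same route the paper intends: read off colimit-preservation of $\Sigma_{//}$ and $\Sigma^s_{//}$ from the adjunctions of \cref{cor:susp-res}, note the forgetful functor from the bipointed slice preserves the relevant colimits, and translate \cref{def:hpo} using the fact that nerves of strict $N$-categories are already $\Cat_N^\inc$-local. The only nitpick is that in the $\infty$-categorical setting the coslice projection preserves colimits indexed by \emph{weakly contractible} (rather than all connected) diagrams, but since you only invoke contractible colimits and pushouts this does not affect the argument.
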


\subsection{Wedge sums}\label{subsec:wedge}
Besides suspensions, the other main ingredient needed to understand $\Cat_N^\inc$ as a localization of $\Psh(\Theta_N)$ is an understanding of wedge sums, to which we now turn.

\begin{fact}\label{fact:wedge}
Let $N \in \nats \cup \{\omega\}$. Let $A^-, A^+$ be $\Theta$-regular strict $N$-categories, and let $a^-_+ \in A^-$ be a sink and $a^+_- \in A^+$ a source. Then $A^- \vee A^+ = A^- \tensor[_{a^-}]{\vee}{_{a^+}}{^{\sCat_N}} A^+$ has object set $\Ob A^- {}_{a^-_+} \vee^{\Set}_{a^+_-} A^+$, the disjoint union of $\Ob A^+$ and $\Ob A^-$ modulo the identification of $a^-_+$ with $a^+_-$ as the \defterm{wedge point}. Moreover, the inclusions $A^-, A^+ \rightrightarrows A^- \vee A^+$ are full subcategory inclusions. For $a^- \in A^-, a^+ \in A^+$, we have $\Hom_{A^- \vee A^+}(a^-, a^+) = \Hom_{A^-}(a^-, a^-_+) \times \Hom_{A^+}(a^+_-, a^+)$, and, so long as $a^-,a^+$ are not both equal to the wedge point, $\Hom_{A^- \vee A^+}(a^+,a^-) = \emptyset$.
\end{fact}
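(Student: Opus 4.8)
The plan is to identify the pushout $A^- \vee A^+ = A^- \cup_\ast A^+$ in $\sCat_N$ by writing down an explicit candidate $B$ and checking its universal property. First, the object set: the forgetful functor $\Ob \colon \sCat_N \to \Set$ has a right adjoint --- send a set $X$ to the $N$-category with object set $X$ and all hom-objects terminal --- hence preserves colimits, so $\Ob(A^- \vee A^+)$ is $\Ob A^- \cup_{\{\ast\}} \Ob A^+$, the claimed quotient identifying $a^-_+$ with $a^+_-$.

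Next I would build $B$ on that object set: $\Hom_B$ restricts to $\Hom_{A^-}$ on $A^-$ and to $\Hom_{A^+}$ on $A^+$; on a crossing pair $a^- \in A^-$, $a^+ \in A^+$ it is $\Hom_{A^-}(a^-, a^-_+) \times \Hom_{A^+}(a^+_-, a^+)$; and on a reverse crossing pair of objects both distinct from the wedge point it is empty. Composition is inherited within $A^-$ and within $A^+$, while a crossing morphism composes with an $A^-$-morphism on its source side, or with an $A^+$-morphism on its target side, by composing the appropriate factor. The source and sink hypotheses are exactly what make this consistent: they force $\Hom_{A^\pm}$ at the wedge point to be the terminal $(N-1)$-category --- so the cases in which an object equals the wedge point agree with the neighbouring cases --- and they force that no chain of morphisms runs from the $A^+$-side back into the $A^-$-side, so there is nothing further to specify. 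I would then verify that $B$ is a strict $N$-category, associativity and unitality for $B$ reducing to those for $A^\pm$. To keep the coherence checks from becoming a long case analysis --- especially when $N \geq 2$, where one must also check that the composition maps are $\sCat_{N-1}$-functors --- I would organise $B$ either as the collage of the rank-one $\sCat_{N-1}$-enriched profunctor $\Hom_{A^-}(-, a^-_+) \times \Hom_{A^+}(a^+_-, -)$ from $A^-$ to $A^+$, with the canonical morphism from $a^-_+$ to $a^+_-$ in it collapsed to an identity, or by induction on $N$, using that a strict $N$-category is a category enriched in strict $(N-1)$-categories (with $\omega - 1 := \omega$) and applying the inductive hypothesis to the hom-objects.

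Finally, the universal property: given $\sCat_N$-functors $F^\pm \colon A^\pm \to C$ agreeing at the wedge point, with common value $c_0 \in C$, any functor $F \colon B \to C$ restricting to $F^\pm$ must send a crossing morphism $(f, g)$ to $F^+(g) \circ F^-(f)$, since $(f, g)$ factors in $B$ as the morphism $g$ of $A^+$ after the morphism $f$ of $A^-$, composed through the wedge point; this pins $F$ down, and conversely the rule $(f, g) \mapsto F^+(g) \circ F^-(f)$ assembles with $F^\pm$ into a functor $B \to C$, by functoriality of $F^\pm$ and associativity in $C$. Hence $B \cong A^- \vee A^+$, and its hom-objects are exactly the claimed ones.

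I expect the only real work to be the middle step: arranging $B$ so that the degenerate cases --- one or both objects equal to the wedge point, and iterated crossing composites --- are subsumed uniformly, and, for $N \geq 2$, checking strict associativity and $\sCat_{N-1}$-functoriality of composition. None of this is deep, and the collage packaging (or the induction on $N$) is what keeps it short. Incidentally, $\Theta$-regularity plays no real role here --- what is used is only that $a^-_+$ is a sink and $a^+_-$ a source --- though it is harmless and matches the applications in the paper.
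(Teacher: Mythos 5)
The paper records this statement as a \emph{Fact} and gives no proof of it, so there is no in-paper argument to compare yours against; on its own terms, your proof is correct and complete. Computing the object set via the right adjoint to $\Ob$ (the codiscrete $N$-category on a set), building the candidate $B$ as the collage of the profunctor $\Hom_{A^-}(-,a^-_+)\times\Hom_{A^+}(a^+_-,-)$ with the wedge points identified (or by induction on $N$ through the enriched description), and then verifying the universal property directly is exactly the standard route. The essential point, which you isolate correctly, is that the sink/source hypotheses make the hom at the wedge point terminal and rule out any alternating composite with more than one crossing, so the morphisms freely generated by the pushout are precisely the pairs $(f,g)$, with uniqueness of $F$ on crossing morphisms forced by the factorization through the wedge point. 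Your remark that $\Theta$-regularity is not needed is also accurate; it is a standing hypothesis used in \cref{lem:refl-for-wedge} and \cref{lem:wedge}, not here. One caveat worth flagging: the statement (and your proof) uses the usual reading of ``sink'' and ``source'' --- $a^-_+$ admits no nonidentity cells \emph{out} of it and $a^+_-$ none \emph{into} it --- whereas the paper's formal definition of these terms appears to have the two conditions interchanged; under the literal definition the crossing homs would point the other way, so your argument correctly tracks the intended convention rather than the written one.
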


\begin{lem}\label{lem:refl-for-wedge}
Let $N \in \nats \cup \{\omega\}$. Let $A^-, A^+$ be $\Theta$-regular strict $N$-categories, and let $a^-_+ \in A^-$ be a sink and $a^+_- \in A^+$ a source. Let $A = A^- \tensor[_{a^-}]{\vee}{_{a^+}} A^+$. Let $\calC \subseteq \Theta_N \downarrow A$ be the full subcategory of those nondegenerate $\theta \to A$ which either factor through $A^-$ or $A^+$, or else hit the wedge point $a$. Then $\calC$ is reflective in $\Theta_N \downarrow A$.
\end{lem}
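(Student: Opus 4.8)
The plan is to verify the universal-arrow criterion for reflectivity: I will produce, for every object $F\colon\theta\to A$ of $\Theta_N\downarrow A$, an initial object of the comma category $F\downarrow\calC$. First I would reduce to the case where $F$ is nondegenerate. Since $\Theta_N$ is an Eilenberg--Zilber category its degeneracies are split epimorphisms, so $F$ factors uniquely as $\theta\xrightarrow{\sigma}\theta_0\xrightarrow{F_0}A$ with $\sigma$ a degeneracy and $F_0$ nondegenerate; recalling that the wedge $A=A^-\vee A^+$ is again $\Theta$-regular (by \cref{fact:wedge} and $\Theta$-regularity of $A^\pm$), nondegenerate maps into $A$ -- in particular $F_0$ and every object of $\calC$ -- are monomorphisms. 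Then for $G\in\calC$ and any $v\colon F\to G$ over $A$, precomposing $v$ with a section of $\sigma$ yields the unique map $F_0\to G$ over $A$ that restricts along $\sigma$ to $v$ (here one uses that $G$ is monic and $\sigma$ a split epimorphism), so precomposition with $\sigma$ identifies $F_0\downarrow\calC$ with $F\downarrow\calC$. We may thus assume $F$ nondegenerate, hence monic; and if $\mathrm{im}(F)\subseteq A^-$, or $\mathrm{im}(F)\subseteq A^+$, or $a\in\mathrm{im}(F)$, then $F\in\calC$ and $\mathrm{id}_F$ is initial in $F\downarrow\calC$ by fullness. So I may assume none of these holds.

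Next I would construct the reflection in this remaining ``straddling'' case. Writing $\theta=\Sigma\theta_1\vee\cdots\vee\Sigma\theta_m$ and using that, by \cref{fact:wedge}, $A$ has no morphisms from $A^+\setminus\{a\}$ into $A^-$ and none out of $a$ within $A^-$, the objects $F(0),\dots,F(m)$ (joined consecutively by $1$-cells) must cross from $A^-\setminus\{a\}$ to $A^+\setminus\{a\}$ at a single index $j$: the summands $\Sigma\theta_1,\dots,\Sigma\theta_j$ land in $A^-$, the summands $\Sigma\theta_{j+2},\dots,\Sigma\theta_m$ in $A^+$, and the straddling summand gives (again by \cref{fact:wedge}) a map $g=(g^-,g^+)\colon\theta_{j+1}\to\Hom_{A^-}(F(j),a)\times\Hom_{A^+}(a,F(j+1))=\Hom_A(F(j),F(j+1))$. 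I would then route the straddle through $a$: let $\hat\alpha\hookrightarrow A^-$ be the nondegenerate part of the map $\Sigma\theta_1\vee\cdots\vee\Sigma\theta_j\vee\Sigma\theta_{j+1}\to A^-$ realizing $F(0)\to\cdots\to F(j)\xrightarrow{g^-}a$, and let $\hat\beta\hookrightarrow A^+$ be the nondegenerate part of $\Sigma\theta_{j+1}\vee\Sigma\theta_{j+2}\vee\cdots\vee\Sigma\theta_m\to A^+$ realizing $a\xrightarrow{g^+}F(j+1)\to\cdots\to F(m)$. Both lie in $\Theta_N$, are monic, and have $a$ as their final resp.\ initial object, so $\theta^\sharp:=\hat\alpha\vee\hat\beta$ (glued along $a$) lies in $\Theta_N$; I take $r(F)\colon\theta^\sharp\to A$ to be the induced map, with unit $q\colon F\to r(F)$ over $A$ assembled from the nondegenerate-part quotient maps on each summand. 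The map $r(F)$ is monic -- on each of $\hat\alpha,\hat\beta$ by construction, and on a hom of $\theta^\sharp$ spanning $a$ because \cref{fact:wedge} presents it as a product of a hom of $\hat\alpha$ and a hom of $\hat\beta$ -- so $r(F)$ belongs to $\calC$, as it contains $a$.

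To finish I would check that $q$ is initial in $F\downarrow\calC$. Since nondegenerate maps into $A$ are precisely the monomorphisms, $F\downarrow\calC$ is equivalent to the poset of subcategories $T\subseteq A$ that are isomorphic to an object of $\Theta_N$, contain $\mathrm{im}(F)$, and contain $a$ (ordered by inclusion; the options $T\subseteq A^\pm$ are excluded because $\mathrm{im}(F)$ straddles), and $q$ corresponds to $\mathrm{im}(r(F))$. Given such a $T$: because $T$ is a $\Theta_N$-object containing $a$, the object $a$ separates the $A^-$-objects of $T$ from its $A^+$-objects in the linear order, so each $T$-morphism from some $F(p)$ with $p\le j$ to some $F(q)$ with $q>j$ factors through $a$, with $A^-$-component in $T\cap A^-$. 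Applied to the $\mathrm{im}(F)$-morphisms $F(j)\to F(j+1)$, whose $A^-$-components run over exactly $\mathrm{im}(g^-)$, this gives $\mathrm{im}(g^-)\subseteq T\cap A^-$; as $T\cap A^-$ moreover contains $\mathrm{im}(F|_{\Sigma\theta_1\vee\cdots\vee\Sigma\theta_j})$ and is closed under composition, and $\mathrm{im}(\hat\alpha)$ is by definition the subcategory of $A^-$ generated by these morphisms, we get $\mathrm{im}(\hat\alpha)\subseteq T$; symmetrically $\mathrm{im}(\hat\beta)\subseteq T$, whence $\mathrm{im}(r(F))\subseteq T$, as needed.

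I expect the main obstacle to be identifying the correct $r(F)$. The naive attempt -- duplicate the summand $\Sigma\theta_{j+1}$ and map the two copies diagonally through $a$ via $g^-$ and $g^+$ -- need not produce a monomorphism, because the components $g^\pm$ of $g$ can fail to be monic even though $g$ is. Passing instead to the nondegenerate parts of the whole $A^-$- and $A^+$-sides (straddle included) repairs this, and the legitimacy of that move -- namely that forming the nondegenerate part is a genuine reflection into the monomorphisms of $\Theta_N\downarrow A^\pm$, and of $\Theta_N\downarrow\Hom_{A^\pm}(x,y)$ -- relies precisely on $A^-,A^+$, and hence their hom-categories, being $\Theta$-regular. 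The remaining work is bookkeeping: verifying that $q$ really is a morphism $F\to r(F)$ over $A$, and that the nondegenerate-part reflection interacts well with the suspension and wedge decompositions.
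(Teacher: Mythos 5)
Your construction of the reflection is the same as the paper's: in the straddling case, use the product decomposition of $\Hom_{A}(F(j),F(j+1))$ from \cref{fact:wedge} to split the straddling hom into its $A^-$- and $A^+$-components, take Eilenberg--Zilber (nondegenerate) parts of each side separately, and reglue as a wedge through $a$; you also correctly identify that the whole point is that the components $f^\pm$ of a nondegenerate map into a product can be degenerate, which is exactly why the paper passes to $\theta^\pm$. The main divergence is in verifying the universal property: the paper chases uniqueness of Eilenberg--Zilber factorizations in the hom-categories directly, whereas you convert $F \downarrow \calC$ into the poset of $\Theta_N$-subcategories of $A$ containing $\operatorname{im}(F)$ and $a$, and show $\operatorname{im}(r(F))$ is the minimum by a generation argument; your version is arguably cleaner and makes the ``smallest $\Theta$-subcategory containing $\operatorname{im}(F)$ and $a$'' picture explicit, at the cost of leaning harder on monicity of the objects of $\calC$. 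One caveat: your blanket assertion that \emph{all} nondegenerate maps into $A$ are monic (i.e.\ that $A^- \vee A^+$ is $\Theta$-regular) does not follow from \cref{fact:wedge} and $\Theta$-regularity of $A^\pm$ alone --- the paper's own \cref{lem:globe-hyper-theta} needs hypercancellativity for precisely the wedge step --- but the only instances you actually use are maps that factor through $A^\pm$ or hit $a$, and for those monicity does follow from \cref{fact:wedge} by splitting at the wedge point, so the argument stands (the paper makes the same unproved parenthetical assertion).
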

\begin{proof}
Let $\calB \subseteq \Theta_N \downarrow A$ comprise those $\theta \to A$ which are injective on objects. Note that $\calB$ is reflective and hence final in $\Theta_N \downarrow A$ (using that $A$ is $\Theta$-regular). So it will suffice to show that $\calC$ is reflective in $\calB$.

To see this, first observe that if $F : \theta \to A$ factors through $A^-$ or $A^+$, then the Eilenberg-Zilber factorization of $F$ is its reflection in $\calC$.

Now suppose that $F : \Sigma \theta \to A$ carries the first object $0$ into $a^- \in A^- \setminus \{a\}$ and the second object $1$ into $a^+ \in A^+ \setminus \{a\}$. Then $F$ corresponds to a functor $f : \theta \to \Hom_A(a^-,a^+) = \Hom_{A^-}(a^-, a) \times \Hom_{A^+}(a, a^+)$, by \cref{fact:wedge}. Writing $f^- : \theta \to \Hom_A(a^-,a)$ and $f^+ : \theta \to \Hom_A(a, a^+)$ for the coordinates of $f$, we take Eilenberg-Zilber factorizations $f^\sigma = g^\sigma h^\sigma$, where $h^\sigma : \theta \to \theta^\sigma$ are degeneracies. We obtain functors $h = \langle h^-, h^+ \rangle  : \theta \to \theta^- \times \theta^+$ and $g = g^- \times g^+: \theta^- \times \theta^+ \to \Hom_{A^-}(a^-,a) \times \Hom_{A^+}(a,a^+) = \Hom_A(a^-,a^+)$, and hence functors  $\Sigma \theta \xrightarrow H \Sigma \theta^- \vee^{\sCat_N} \Sigma \theta^+ \xrightarrow G A$. We claim that $H$ exhibits $G \in \calC$ as a reflection of $F$ in $\calC$. Indeed, suppose that $K : \Sigma \zeta^- \vee \Sigma \zeta^+ \to A$ lies in $\calC$. If $K$ factors through $A^-$ or $A^+$, then there can be no map $F \Rightarrow K$ or $G \Rightarrow K$. Otherwise, $K$ hits the wedge point, and so decomposes as $K^- \vee K^+ : \Sigma \zeta^- \vee \Sigma \zeta^+ \to A^- \vee A^+$ where $K^\sigma : \Sigma \zeta^\sigma \to A^\sigma$ hits the wedge point and is nondegenerate, corresponding to $k^- : \zeta^- \to \Hom_{A^-}(a^-,a)$ and $k^+ : \zeta^+ \to \Hom_{A^+}(a,a^+)$, which are nondegenerate because $K \in \calC$. If $L : \Sigma \theta \to \Sigma \zeta^- \vee \Sigma \zeta^+$ is such that $(K^- \vee K^+)L = F$, then it corresponds to maps $l^\sigma : \theta \to \zeta^\sigma$ with $k^\sigma l^\sigma = f^\sigma$. By uniqueness of Eilenberg-Zilber factorizations, there are unique maps $m^\sigma : \zeta^\sigma \to \theta^\sigma$ such that $m^\sigma l^\sigma  = h^\sigma$ and $k^\sigma = g^\sigma m^\sigma$. These assemble into a map $M : \Sigma \zeta^- \vee \Sigma \zeta^+ \to \Sigma \theta^- \vee \Sigma \theta^+$ which factors $L$ through $H$ as desired. For uniqueness of this map, note that it must carry the wedge point to the wedge point, and then the action on $\theta^\sigma$ is determined separately by the uniqueness of Eilenberg-Zilber decompositions.

Checking the universal property when the domain of $K$ has more objects is similar, as is the construction when the domain of $F$ has more objects.
\end{proof}

\begin{lem}\label{lem:wedge-lke}
Let $N \in \nats \cup \{\omega\}$. Let $A^-, A^+$ be $\Theta$-regular strict $N$-categories, and let $a^-_+ \in A^-$ be a sink and $a^+_- \in A^+$ a source. Let $A = A^- \tensor[_{a^-}]{\vee}{_{a^+}} A^+$. Let $\calC \subseteq \Theta_N \downarrow A$ and $W : \calC \to \Psh(\Theta_N)$ be as in \cref{lem:refl-for-wedge}. Let $\calD \subseteq \calC$ comprise the (nondegenerate) maps which either factor through $A^-$ or through $A^+$, and let $X : \calD \to \Psh(\Theta_N)$ be the restriction of $W$. Then the identity transformation exhibits $W$ as the left Kan extension of $X$ along the inclusion $i : \calC \to \calD$.
\end{lem}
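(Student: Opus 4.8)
The plan is to use the pointwise formula $\Lan_i X(F) \simeq \colim_{(G,\alpha)\in i\downarrow F} X(\dom G)$ and to check that the canonical comparison to $W(F)$ is an isomorphism, treating $F\in\calD$ and $F\in\calC\setminus\calD$ separately. For $F\in\calD$ this is formal: the inclusion $i\colon\calD\hookrightarrow\calC$ is fully faithful, so $i\downarrow F$ has a terminal object $(F,\mathrm{id})$ and $\Lan_i X(F)=X(F)=W(F)$, the comparison being the identity. This is the ``identity transformation'' of the statement.

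The content is the case $F\colon\theta\to A$ with $F\in\calC\setminus\calD$, i.e.\ $F$ nondegenerate, meeting the wedge point $a$, but factoring through neither $A^-$ nor $A^+$. I would first record a structural description of such an $F$. Write $\theta=\Sigma\theta_1\vee\cdots\vee\Sigma\theta_m$ with objects linearly ordered $0<\cdots<m$. Using that $\Hom_A(x,y)=\emptyset$ whenever $x$ is an object of $A^+$ and $y$ an object of $A^-$ not both equal to the wedge point (\cref{fact:wedge}), one sees that the objects $j$ with $F(j)\in A^-\setminus\{a\}$ form an initial segment, those with $F(j)\in A^+\setminus\{a\}$ a final segment, and those with $F(j)=a$ the block in between. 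Both outer segments are nonempty because $F$ factors through neither $A^\pm$; and the middle block is a single object $p$, since otherwise $F$ would factor through the proper degeneracy of $\theta$ identifying the objects of that block (i.e.\ collapsing the corresponding summands $\Sigma\theta_i$ to $[0]$), contradicting nondegeneracy. Hence $\theta=\theta^-\vee_{[0]}\theta^+$ and $F=F^-\vee F^+$, where $\theta^-$ (resp.\ $\theta^+$) is the full subcategory on the objects $\le p$ (resp.\ $\ge p$) and $F^\pm\colon\theta^\pm\to A^\pm$ is nondegenerate; in particular $F^-$, $F^+$, and the object $(a\colon[0]\to A)$ all lie in $\calD$.

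The heart of the proof is to show that the cospan $\Lambda=\bigl(F^-\leftarrow(a\colon[0]\to A)\to F^+\bigr)$, included in $i\downarrow F$ via the canonical maps $\theta^\pm\hookrightarrow\theta$ and $p\colon[0]\hookrightarrow\theta$, is final. Given an object $(G\colon\zeta\to A,\ \alpha\colon\zeta\to\theta)$ of $i\downarrow F$ with $G$ factoring through $A^-$ (the case of $A^+$ being symmetric), every object in the image of $\alpha$ has $F$-value in $A^-$, hence lies in $\theta^-$; since $\theta^-\hookrightarrow\theta$ is a full (hence monic) subcategory inclusion in $\Theta_N$, $\alpha$ factors uniquely through it, producing a morphism $(G,\alpha)\to F^-$ in $i\downarrow F$. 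If moreover the image of $G$ is exactly $\{a\}$ then $\zeta=[0]$ and $\alpha=p$, so $(G,\alpha)$ maps to all three objects of $\Lambda$; whereas if the image of $G$ meets $A^-\setminus\{a\}$ then $\alpha$ factors through neither $\theta^+$ nor $[0]$. A short case analysis, using uniqueness of each such factorization (via monicity of the wedge inclusions and $\Theta$-regularity of $A$), shows $(G,\alpha)\downarrow\Lambda$ is nonempty and connected in every case, giving finality. Consequently $\Lan_i X(F)=\colim_\Lambda X(\dom)=\yo\theta^-\cup_{\yo[0]}\yo\theta^+$, and it remains to identify this pushout with $W(F)=\yo\theta$: the canonical map $\yo\theta^-\cup_{\yo[0]}\yo\theta^+\to\yo(\theta^-\vee\theta^+)=\yo\theta$ is assembled from the one-dimensional spine inclusions of $\theta$, $\theta^-$ and $\theta^+$, so it lies in the saturation of the basic spine inclusions of \cref{def:spine} (cf.\ \cref{lem:same-loc}); this is precisely the feature of $W$ inherited from \cref{lem:refl-for-wedge} that makes the comparison $\Lan_i X(F)\to W(F)$ an isomorphism, and finishes the proof.

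I expect the finality claim to be the main obstacle: establishing connectedness — not merely nonemptiness — of the comma categories $(G,\alpha)\downarrow\Lambda$ uniformly in $(G,\alpha)$, which requires pinning down exactly when $\alpha$ factors through each of $\theta^-$, $\theta^+$ and $[0]$ and leans on the fullness of the wedge inclusions and on $\Theta$-regularity of $A$. The structural step — that there is a single wedge-point object — is the other delicate input, resting on nondegeneracy of $F$ and the emptiness of the crossing hom-sets in $A^-\vee A^+$.
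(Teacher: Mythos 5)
Your computation of the Kan extension itself is correct and is essentially the paper's argument in different clothing: the paper decomposes the comma category $i\downarrow F$ as the union $(\calD\downarrow F^-)\cup_{\calD\downarrow a}(\calD\downarrow F^+)$ of two full subcategories, each of which (like their intersection) has a terminal object, namely $F^-$, $F^+$ and $a$; saying that the cospan $\Lambda=(F^-\leftarrow a\to F^+)$ of these terminal objects is final is the same observation. Your structural analysis of $F$ (initial segment in $A^-\setminus\{a\}$, a single wedge-point object by nondegeneracy, final segment in $A^+\setminus\{a\}$) and your verification that each $(G,\alpha)\downarrow\Lambda$ is a point or a cospan are both sound; note only that for a colimit valued in $\Psh(\Theta_N)$ (space-valued presheaves) you should record contractibility of these comma categories, not merely nonemptiness and connectedness — which you in fact have, since a point and a cospan are contractible.

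The genuine error is the last step. For $F\in\calC\setminus\calD$ the functor $W$ is \emph{defined} (in the proof of \cref{lem:wedge}; the statement's pointer to \cref{lem:refl-for-wedge} is a referencing slip in the paper) by $W(F)=\yo\theta^-\vee^{\Psh(\Theta_N)}\yo\theta^+=\yo\theta^-\cup_{\yo[0]}\yo\theta^+$, \emph{not} by $\yo\theta$; the latter is the value of the other functor $V$. So your computation $\Lan_i X(F)=\yo\theta^-\cup_{\yo[0]}\yo\theta^+$ already equals $W(F)$ and the proof ends there. The additional identification you attempt, $\yo\theta^-\cup_{\yo[0]}\yo\theta^+\cong\yo\theta$, is false as an isomorphism of presheaves (take $\theta=[2]$: the spine $[1]\cup_{[0]}[1]\to[2]$ misses the nondegenerate $2$-cell). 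That map is a basic wedge inclusion, i.e.\ precisely one of the maps that $L_{\Cat_N^\inc}$ inverts; it becomes an equivalence only after localization, which is the content of the subsequent step in the proof of \cref{lem:wedge} ($W\to V$ is levelwise $L_{\Cat_N^\inc}$-acyclic), not of the present lemma. Membership in the saturation of the spine inclusions could in any case never yield an isomorphism, only an $L_{\Cat_N^\inc}$-equivalence, whereas the lemma asserts an actual pointwise identification of $W$ with $\Lan_i X$. Delete the final identification and the proof is complete.
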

\begin{proof}
Let $(\theta \to A) \in \calC \setminus \calD$. The formula for pointwise left Kan extensions tells us that $\Lan_i X(\theta \to A) = \varinjlim^{\Psh(\Theta_N)}((i \downarrow (\theta \to A)) \to \calD \xrightarrow X \Psh(\Theta_N))$. The category $i \downarrow (\theta \to A)$ decomposes as a union of simplicial sets $(\calD \downarrow \theta^-) \vee (\calD \downarrow \theta^+)$. Since this is a pushout of monomorphisms of simplicial sets, it induces an equivalence 
\begin{align*}
    \Lan_i X(\theta \to A) 
    &= \varinjlim((\calD \downarrow \theta^-) \to \calD \to \Psh(\Theta_N)) \vee \varinjlim((\calD \downarrow \theta^+) \to \calD \to \Psh(\Theta_N)) \\
    &= \yo \theta^- \vee^{\Psh(\Theta_N)} \yo \theta^+ \\
    &= W(\theta \to A)
\end{align*}
as desired.
\end{proof}

\begin{thm}\label{lem:wedge}
Let $N \in \nats \cup \{\omega\}$. Let $A^-, A^+$ be $\Theta$-regular strict $N$-categories, and let $a^-_+ \in A^-$ be a sink and $a^+_- \in A^+$ a source. Let $A = A^- \tensor[_{a^-}]{\vee}{_{a^+}} A^+$ be the wedge sum. Then the strict pushout
\begin{equation*}
    \begin{tikzcd}
        \ast \ar[r,"a^-"] \ar[d,"a^+"] \ar[dr,phantom,"\ulcorner"{description, very near end}] & 
        A^- \ar[d] \\
        A^+ \ar[r] & A
    \end{tikzcd}
\end{equation*}
is a homotopy pushout.
\end{thm}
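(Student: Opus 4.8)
The plan is to prove the equivalent statement that the canonical comparison map
$c \colon \nerve A^- \cup_{\nerve \ast} \nerve A^+ \to \nerve A$,
from the pushout of presheaves on $\Theta_N$ to the nerve of the strict pushout $A$, is carried to an equivalence by the localization $L_{\Cat_N^\inc}\colon \Psh(\Theta_N)\to\Cat_N^\inc$; by \cref{def:hpo} this is exactly what it means for the strict square to be a homotopy pushout, since $L_{\Cat_N^\inc}$ preserves pushouts of presheaves. The strategy is to exhibit \emph{both} the source and the target of $c$ as colimits of representables indexed by the categories $\calC \subseteq \Theta_N \downarrow A$ and $\calD \subseteq \calC$ of \cref{lem:refl-for-wedge} and \cref{lem:wedge-lke}, and then to recognise $c$ itself as the map induced by a natural transformation between the two diagrams which is objectwise a basic wedge inclusion.

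First I would handle the target. Let $V \colon \calC \to \Psh(\Theta_N)$ be the functor sending $(\theta \to A)$ to $\yo \theta$. The tautological presentation of a presheaf as the colimit of representables over its category of elements gives $\nerve A = \colim_{\Theta_N \downarrow A} \yo$; since $\calC$ is reflective in $\Theta_N \downarrow A$ by \cref{lem:refl-for-wedge}, it is in particular final (each comma category $e \downarrow \calC$ has an initial object, namely the reflection of $e$), so the colimit restricts to $\nerve A = \colim_\calC V$.

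Next the source. By \cref{lem:wedge-lke} the functor $W$ of \cref{lem:refl-for-wedge} is the left Kan extension of its restriction $X = W|_\calD$ along $i \colon \calD \hookrightarrow \calC$, whence $\colim_\calC W = \colim_\calD X$; and on $\calD$ the functor $X$ is simply $\yo(-)$, since every cell there factors through $A^-$ or $A^+$. Now $\calD$ is the homotopy pushout of categories $\calD^- \cup_{\{a\}} \calD^+$ glued along the full subcategory on the wedge point $a = (\ast \to A)$: there are no morphisms of $\calD$ from $\calD^\pm \setminus \{a\}$ to the opposite side, nor into $a$ from anything but $a$ — such a morphism would force its source cell to factor through $\ast = A^- \cap A^+$, contradicting nondegeneracy — so even the nerve of $\calD$ is literally $\Nerve \calD^- \cup_{\Delta[0]} \Nerve \calD^+$. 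Because $A^\pm$ is $\Theta$-regular, its nondegenerate maps coincide with its monic maps (\cref{def:regularity}), and these form a final subcategory of $\Theta_N \downarrow A^\pm$ via Eilenberg-Zilber factorization (exactly the argument for the subcategory $\calB$ in the proof of \cref{lem:refl-for-wedge}, applied to $A^\pm$ on its own); hence $\colim_{\calD^\pm} X = \nerve A^\pm$. This is the point at which the $\Theta$-regularity hypothesis enters. Gluing colimits over the pushout $\calD = \calD^- \cup_{\{a\}} \calD^+$ of index categories then identifies $\colim_\calD X$ with the presheaf pushout $\nerve A^- \cup_{\nerve \ast} \nerve A^+$, which is the source of $c$.

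Finally I would compare $\colim_\calC W$ with $\colim_\calC V$. There is a natural transformation $W \Rightarrow V$ that is the identity on cells factoring through a side and, on a straddling cell $\theta = \theta^- \vee^{\sCat_N} \theta^+$ (one whose image meets the wedge point of $A$, with $\theta^\pm$ mapping into $A^\pm$), is the basic wedge inclusion $\yo \theta^- \vee^{\Psh(\Theta_N)} \yo \theta^+ \to \yo(\theta^- \vee^{\sCat_N} \theta^+)$. By \cref{lem:same-loc} and the definition of $\Cat_N^\inc$, each of these maps is an $L_{\Cat_N^\inc}$-equivalence, so $W \Rightarrow V$ is objectwise an $L_{\Cat_N^\inc}$-equivalence. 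Since $L_{\Cat_N^\inc}$ is a left adjoint it commutes with colimits, and an objectwise equivalence of $\calC$-diagrams in $\Cat_N^\inc$ induces an equivalence on colimits, so $\colim_\calC W \to \colim_\calC V$ is an $L_{\Cat_N^\inc}$-equivalence; tracing through the identifications above shows that this map is $c$. I expect the main obstacle to be bookkeeping rather than anything conceptual: checking that $W \Rightarrow V$ is genuinely natural and that the induced map on colimits really is the canonical comparison $c$, together with the "gluing colimits over a homotopy pushout of index categories" step and the finality of the nondegenerate maps into a $\Theta$-regular category. The substantive inputs — the reflectivity of $\calC$ and the Kan-extension description of $W$ — are already in hand from \cref{lem:refl-for-wedge} and \cref{lem:wedge-lke}.
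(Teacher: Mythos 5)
Your proposal is correct and follows essentially the same route as the paper's proof: present $\nerve A$ as $\colim_\calC V$ via the reflectivity of $\calC$, use \cref{lem:wedge-lke} to rewrite $\colim_\calC W$ as $\colim_\calD X$, split $\calD$ at the wedge point to identify this with $\nerve A^- \cup_{\nerve\ast}\nerve A^+$, and conclude via the objectwise basic wedge inclusions $W\Rightarrow V$. The only cosmetic difference is that you phrase the final step as a comparison of two colimit presentations of the map $c$, whereas the paper chains the identifications into a single composite; the substance is identical.
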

\begin{proof}
Let $A = A^- \vee^{\sCat_N} A^+$, and let $a$ denote the wedge point. By density, the functor $U : \Theta_N \downarrow A \to \Psh(\Theta_N)$, $(\theta \to A) \mapsto \yo \theta$, has colimit $\nerve A$.

As in \cref{lem:refl-for-wedge}, let $\calC \subseteq \Theta_N \downarrow A$ denote the full subcategory of those maps $\theta \to A$ which are nondegenerate and are either contained in $A^-$ or $A^+$, or else hit the wedge point. By \cref{lem:refl-for-wedge}, the inclusion $\calC \to \Theta_N \downarrow A$ is reflective and hence final, so the functor $V : \calC \to \Psh(\Theta_N)$, obtained by restriction of $U$, likewise has colimit $\nerve A$.

Let $W : \calC \to \Psh(\Theta_N)$ carry $F : \theta \to A$ to $\yo \theta$, if $f$ factors through $A^-$ or $A^+$, and to $\yo \theta^- \vee^{\Psh(\Theta_N)} \yo \theta^+$ otherwise, where $\theta = \theta^- \vee \theta^+$ is the decomposition of $\theta$ at the wedge point. Then there is a natural inclusion $W \to V$, which is levelwise $L_{\Cat_N^\inc}$-acyclic because it is in fact one of the basic wedge inclusion maps generating the localization $L_{\Cat_N^\inc}$ (\cref{def:spine}). Therefore, the induced map $\varinjlim^{\Psh(\Theta_N)} W \to \nerve A$ is $L_{\Cat_N^\inc}$-acyclic.

Let $\calD \subseteq \calC$ be defined as in \cref{lem:wedge-lke}, as well as $X: \calD \to \Psh(\Theta_N)$. By \cref{lem:wedge-lke}, we have an equivalence $\varinjlim^{\Psh(\Theta_N)} W = \varinjlim^{\Psh(\Theta_N)} X$.

Now, the category $\calD$ admits a decomposition $\calD = \calD^- \vee \calD^+$, where $\calD^\sigma \subseteq \Theta_N \downarrow \nerve A^\sigma$ comprises the nondegenerate maps. This is a pushout of monomorphisms of simplicial sets, and so induces an equivalence $\varinjlim^{\Psh(\Theta_N)} X = \varinjlim^{\Psh(\Theta_N)} X^- \vee^{\Psh(\Theta_N)} \varinjlim^{\Psh(\Theta_N)} X^+$, where $X^\sigma$ denotes the restriction of $X$ to $\calD^\sigma$.

Finally, $\calD^\sigma$ is reflective in $\Theta_N \downarrow \nerve A^\sigma$. As $\Theta_N$ is dense in $\Psh(\Theta_N)$, this implies that $\varinjlim^{\Psh(\Theta_N)} X^\sigma = \nerve A^\sigma$.

Chaining everything together, we have that the canonical map $\nerve A^- \vee^{\Psh(\Theta_N)} \nerve A^+ \to \nerve A$ is $L_{\Cat_N^\inc}$-acyclic. Since $L_{\Cat_N^\inc}$ preserves colimits, this yields an equivalence $A^- \vee^{\Cat_N^\inc} A^+ \to A$ as desired.
\end{proof}

\section{Some miscellaneous generalities}\label{sec:misc}
In this short subsection we collect a few generalities on posets which will be used to analyze the poset $\Decomp(\mu)$ of composites in \cref{sec:cs}. The discussion in this section is independent of any considerations about torsion-free complexes or $N$-categories. \cref{subsec:posets} feeds into \cref{cor:forest-sd-ideal} below, and \cref{subsec:a+} feeds directly into \cref{thm:sc-main} below.

\subsection{Some generalities on posets}\label{subsec:posets}

In this subsection, we analyze the homotopy type of the poset $\lin(P)$ (\cref{def:linpre}) of linear preorders refining a given preorder $P$. We show that it is contractible so long as $P$ is not an equivalence relation. The application we have in mind is \cref{cor:forest-sd-ideal} below, where we take $P$ to be the preorder $\myPoset_k(\mu)$ of \cref{def:pos} above.

\begin{Def}\label{def:linpre}
    Let $L$ be a preorder. We say that $L$ is a \defterm{linear preorder} if for all $a, b \in L$, $a \leq b$ or $b \leq a$ (or both). Thus a linear order is a linear preorder which is also a partial order. We say that $L$ is \defterm{codiscrete} if for all $a,b$ we have $a \leq b$. So there is a unique codiscrete preorder $\bot$ on any set, and it is linear. We say that $L$ is \defterm{discrete} if $a \leq b \Leftrightarrow a = b$. Note that $L$ is an equivalence relation if and only if its posetal reflection is discrete.

    Let $P,P'$ be preorders on the same underlying set $S$. We say that $P'$ \defterm{refines} $P$ if $a \leq^P b \Rightarrow a \leq^{P'} b$. The set of preorders on a set $S$ form a partial order under reverse refinement, and the codiscrete preorder is at the bottom.
    
    Let $P$ be a preorder. We let $\linbot(P)$ denote the poset of linear preorders refining $P$, ordered by reverse refinement. We let $\linbot(P) = \lin(P) \setminus \{\bot\}$ denote the subposet of noncodiscrete linear preorders refining $P$.

    Let $P$ be a preorder. We let $\DCbot(P)$ denote the poset of downward-closed subsets of $P$, ordered by inclusion. We let $\DC(P) = \DCbot(P) \setminus \{\emptyset\}$ denote the subposet of nonempty downward-closed subsets of $P$.

    We denote by $\sd P$ the barycentric subdivision of a preorder (or category, or simplicial set) $P$. Recall that this is the poset of nonempty subsets $p_0 < \cdots < p_n$ of $P$ which are linearly ordered by $P$, ordered by containment. Recall also that $|\sd P| \simeq |P|$ for all posets $P$.
\end{Def}

So if $L,L' \in \lin(P)$, we have $L \leq L'$ if $a \leq^{L'} b \Rightarrow a \leq^{L} b$. Since $L,L'$ are linear preorders, this amounts to saying that the $L'$-isomorphism classes partition $P$ more finely than the $L$-isomorphism classes.

\begin{lem}\label{lem:iso-sd}
    Let $P$ be a poset. Then $\lin(P)$ is isomorphic to $\sd (\DC(P))$.
\end{lem}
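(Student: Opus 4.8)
I want to exhibit an explicit, order-preserving bijection between $\lin(P)$ and $\sd(\DC(P))$, the barycentric subdivision of the poset of nonempty downward-closed subsets of $P$. An element of $\sd(\DC(P))$ is a chain $D_0 \subsetneq D_1 \subsetneq \cdots \subsetneq D_n$ of nonempty downward-closed subsets of $P$, ordered by containment of chains. An element of $\lin(P)$ is a linear preorder $L$ refining $P$. The correspondence is the familiar one: a linear preorder $L$ on the underlying set of $P$ is the same data as a linearly ordered partition of $P$ into "$L$-isomorphism classes" $B_1 < B_2 < \cdots < B_{n+1}$ (the blocks listed in increasing $L$-order), and to this I associate the chain of initial segments $D_i = B_1 \cup \cdots \cup B_i$ for $1 \le i \le n$. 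Conversely, given a chain $D_0 \subsetneq \cdots \subsetneq D_n$ in $\DC(P)$, I recover the blocks as the successive differences $D_0,\ D_1 \setminus D_0,\ \ldots,\ D_n \setminus D_{n-1},\ P \setminus D_n$ (discarding $P \setminus D_n$ if it is empty, i.e. if $D_n = P$), linearly ordered in that sequence, which defines a linear preorder on $P$.

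**Key steps, in order.** First I would check that this assignment lands in the right place: if $L$ refines $P$, then each initial segment $B_1 \cup \cdots \cup B_i$ of blocks is downward-closed in $P$ — this is precisely the refinement condition ($a \le^P b$ implies $a \le^L b$, so no $P$-predecessor of an element in an earlier block can lie in a later block), and it is nonempty since blocks are nonempty and $i \ge 1$. Conversely, I must check the successive differences $D_{i}\setminus D_{i-1}$ of a chain in $\DC(P)$ do assemble into a linear preorder refining $P$; again refinement follows because each $D_j$ is downward-closed. Second, I would verify the two constructions are mutually inverse — straightforward bookkeeping: the blocks of $L$ are exactly the differences of its chain of initial segments, and the chain of initial segments of the differences of a chain is the chain back again. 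Third, and this is the only point requiring a moment's care about conventions, I would check order-compatibility. Recall from the discussion after \cref{def:linpre} that in $\lin(P)$ we have $L \le L'$ when the $L'$-classes refine the $L$-classes, i.e. $L'$ has more blocks / a longer chain of initial segments that contains the chain of $L$; and in $\sd$, a shorter chain is $\le$ a longer one it is contained in. So "$L \le L'$ in $\lin(P)$" $\iff$ "chain of $L$ $\subseteq$ chain of $L'$" $\iff$ "chain of $L$ $\le$ chain of $L'$ in $\sd(\DC(P))$", matching up the two orders on the nose. (One sanity check: the codiscrete preorder $\bot$ — which was excluded from $\lin(P)$ — is the single-block partition, whose chain of proper nonempty initial segments is empty; the empty chain is excluded from $\sd(\DC(P))$ as well, so the two exclusions correspond, and excluding the empty downward-closed set from $\DC(P)$ matches the convention $D_0 \ne \emptyset$.)

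**Expected main obstacle.** There is no deep obstacle here; the lemma is essentially a dictionary between two standard encodings of "ordered partition." The one place to be careful is getting all the off-by-one/exclusion conventions to line up simultaneously: the exclusion of $\bot$ from $\lin(P)$, the exclusion of $\emptyset$ from $\DC(P)$, and whether $P$ itself is allowed as the top of a chain (it is, and corresponds to the last block $P\setminus D_n$ being empty — so $\DC(P)$ does include $P$, and a chain ending at $P$ has its last block vanish). As long as I state the block-to-initial-segments dictionary cleanly and note that "refining $P$" $\Leftrightarrow$ "all blocks' initial segments are downward-closed," the verification is mechanical. I would therefore present it compactly: define the bijection in one direction, state the inverse, and remark that mutual inverseness and order-compatibility are immediate from the block description, spending a sentence only on why initial segments of blocks are downward-closed exactly under the refinement hypothesis.
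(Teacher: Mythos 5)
Your dictionary is exactly the one the paper uses---blocks of $L$ correspond to successive differences of a chain of initial segments, downward-closedness of those segments is equivalent to $L$ refining $P$, and the two orders match under containment of chains---so the core of the argument is the same. There is, however, one genuine error, located precisely at the convention point you flagged and resolved the wrong way: you allow $P$ itself as the top of a chain and compensate by ``discarding $P\setminus D_n$ if it is empty.'' With that convention your two maps are not mutually inverse. Your forward map (as your own sanity check notes) only ever outputs chains of \emph{proper} initial segments, so a chain ending at $P$ is never in its image, while your backward map sends the chain $D_0\subsetneq\cdots\subsetneq D_{n-1}\subsetneq P$ and the chain $D_0\subsetneq\cdots\subsetneq D_{n-1}$ to the same linear preorder. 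Concretely, for $P$ a two-element antichain $\{a,b\}$, the linear order $a<b$ is hit by both $\{a\}$ and $\{a\}\subsetneq\{a,b\}$, so no isomorphism is possible: $\sd$ of the poset of \emph{all} nonempty downward-closed subsets is contractible (that poset has a maximum), whereas $\lin(P)\simeq S^0$ here, as \cref{rmk:disc-sphere} requires.

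The fix is that $\DC(P)$ must be read as the nonempty \emph{proper} downward-closed subsets of $P$. The paper's own proof extracts a ``chain of nonempty proper ideals,'' and \cref{rmk:disc-sphere} (the $d$-cube with top \emph{and} bottom removed) confirms this is the intended meaning, even though the formula $\DC(P)=\DCbot(P)\setminus\{\emptyset\}$ printed in \cref{def:linpre} omits the removal of $P$; you inherited that slip rather than correcting it. Once $P$ is excluded from $\DC(P)$, the last block $P\setminus D_n$ is automatically nonempty, no discarding is needed, your two maps are literally inverse to each other, and the rest of your verification (refinement $\Leftrightarrow$ downward-closed initial segments, order-compatibility, and the matching exclusions of $\bot$ and of the empty chain) goes through verbatim and agrees with the paper's proof.
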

\begin{proof}
    An element $L \in \lin(P)$ induces an equivalence relation on $P$ and then linearly orders the equivalence classes $E_1 < E_2 < \cdots < E_n$ in such a way as to refine the ordering on $P$. From $L$ we may extract a chain of nonempty proper ideals in $P$ given by $E_1 \subset E_1 \cup E_2 \subset \cdots \subset E_1 \cup \cdots \cup E_{n-1}$. In other words, we have extracted a proper chain in $\DC(P)$, which is nonempty if and only if $L$ is not codiscrete. This construction is an isomorphism of posets.
\end{proof}

We thank Gregory Arone for the proof of the following lemma \cite{arone}. As pointed out by Sam Hopkins and Benjamin Steinberg in the comments at \cite{arone}, the proof in fact applies to any finite non-boolean distributive lattice (not just to $\DC(P)$), and is closely related to a classic application of Rota's crosscut theorem in poset combinatorics.
\begin{lem}\label{lem:dist-lat'}
   Let $P$ be a preorder which is not an equivalence relation. Then $\DC(P)$ is contractible.
\end{lem}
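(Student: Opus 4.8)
\emph{Plan.} The idea is to build an explicit contracting homotopy of the poset $\DC(P)$ out of a single element of $P$, using only the monotone-homotopy principle that two order-preserving maps related by a pointwise inequality induce homotopic maps of classifying spaces. The one input I take from the hypothesis is this: a preorder is an equivalence relation exactly when it is symmetric, and symmetry of $P$ is precisely the assertion that every $m \in P$ is \emph{maximal} in the weak sense that $m \le b \Rightarrow b \le m$. So, since $P$ is not an equivalence relation, I may fix elements $m, b \in P$ with $m \le b$ and $b \not\le m$. Writing ${\downarrow}m = \{x \in P : x \le m\}$, this is a downward-closed set which is nonempty (it contains $m$) and proper (it omits $b$), so ${\downarrow}m \in \DC(P)$.

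Next I would consider the self-map $f \colon \DC(P) \to \DC(P)$ given by $f(S) = S \cup {\downarrow}m$. It is plainly order-preserving, and $f(S)$ is downward-closed and nonempty; the only point needing an argument is that $f(S)$ is \emph{proper}, and this is exactly where non-maximality of $m$ enters. If $S \cup {\downarrow}m = P$, then, picking any $y$ in the set $P \setminus S$ (nonempty because $S$ is proper), the complement $P\setminus S$ is an up-set contained in ${\downarrow}m$, hence ${\uparrow}y \subseteq {\downarrow}m$; but $y \le m \le b$ places $b$ in ${\uparrow}y$ and therefore in ${\downarrow}m$, i.e.\ $b \le m$, contradicting our choice. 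So $f$ is well defined.

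Finally I would observe the two pointwise inequalities of order-preserving maps $\DC(P) \to \DC(P)$: $\id_{\DC(P)} \le f$ (since $S \subseteq S \cup {\downarrow}m$) and $\const_{{\downarrow}m} \le f$ (since ${\downarrow}m \subseteq S \cup {\downarrow}m$, using that ${\downarrow}m \in \DC(P)$). Each inequality is a natural transformation, hence induces a homotopy between the corresponding maps of classifying spaces; concatenating them shows $\id_{\DC(P)}$ is homotopic to the constant map at ${\downarrow}m$, so $\DC(P)$ is contractible. I do not anticipate a real obstacle here: the whole argument hinges on checking that $S \mapsto S \cup {\downarrow}m$ stays among the proper downward-closed sets, and unwinding that check is precisely what pins down the role of the hypothesis. (This is a concrete shadow of the homotopy version of Rota's crosscut theorem: $\DC(P)\cup\{\emptyset\}$ is a distributive lattice, non-Boolean exactly because $P$ is not an equivalence relation, and the crosscut of its atoms is a full simplex; the same argument applies to any finite non-Boolean distributive lattice.)
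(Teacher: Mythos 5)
Your proof is correct, and it takes a genuinely different route from the paper's. You pick a single witness $m \le b$, $b \not\le m$ of the failure of symmetry and contract $\DC(P)$ onto ${\downarrow}m$ via the zig-zag of pointwise inequalities $\id \le \bigl(- \cup {\downarrow}m\bigr) \ge \const_{{\downarrow}m}$ of monotone self-maps; the one nontrivial verification --- that $S \cup {\downarrow}m$ remains a \emph{proper} down-set --- is exactly where the hypothesis enters, and your up-set argument for it is right. The paper instead passes to the posetal reflection, takes $M$ to be the set of all minimal elements, and exhibits the subposet $\DC(M) \cup \{M\}$ (which has terminal object $M$) as coreflective in $\DC(P)$ via $I \mapsto M \cap I$. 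The two arguments are comparably short, but yours is slightly more robust: the coreflection argument needs every element of $P$ to dominate a minimal element, i.e.\ some finiteness or well-foundedness (harmless here, since the preorder to which the lemma is applied, namely $\myPoset_k(\mu)$, is finite), whereas your argument uses no finiteness and works directly with preorders without reducing to posets. One housekeeping point: you correctly read $\DC(P)$ as the poset of nonempty \emph{proper} down-sets; the formula in \cref{def:linpre} omits ``proper'', but that is evidently a typo, since otherwise $P$ itself would be a terminal object, the hypothesis would be unnecessary, and \cref{rmk:disc-sphere} would be false.
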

\begin{proof}
    Homotopy types are invariant under equivalence of categories, so we may assume for simplicity that $P$ is a poset.
    Let $M$ be the set of minimal elements in $P$ (a discrete subposet of $P$). Since $P$ is not discrete, we have $M \subsetneq P$, and so $\DC(M) \cup \{M\} \subseteq \DC(P)$. As $\{M\}$ gives a terminal object in $\DC(M) \cup \{M\}$, this poset is contractible. So it will suffice to show that $\DC(M) \cup \{M\}$ is coreflective in $\DC(P)$. To see this, note that for any $p \in P$, there is some $m \in M$ with $m \leq p$. So for $I \in \DC(P)$, the set $M \cap I$ is nonempty and gives the coreflection of $I$ in $\DC(M) \cup \{M\}$.
\end{proof}

\begin{cor}\label{cor:non-disc-contr}
    Let $P$ be a preorder which is not an equivalence relation. Then $\lin(P)$ is contractible.
\end{cor}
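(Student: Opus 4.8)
The plan is to read this off from the two preceding lemmas with only a routine preliminary reduction. First I would reduce to the case where $P$ is a poset. If $\pi \colon P \to \bar P$ denotes the posetal reflection (the quotient by the equivalence relation $a \leq b \leq a$), then any linear preorder $L$ refining $P$ is automatically constant on the fibers of $\pi$: if $a$ and $b$ lie in a common $\pi$-fiber then $a \leq^P b$ and $b \leq^P a$, so by refinement $a \leq^L b$ and $b \leq^L a$. Hence $L$ descends to a linear preorder $\bar L$ on $\bar P$ refining $\bar P$, and pulling back along $\pi$ is inverse to this assignment, so $\pi$ induces an isomorphism of posets $\lin(P) \cong \lin(\bar P)$ (which restricts to an isomorphism on the noncodiscrete parts, since $\bot$ corresponds to $\bot$). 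Moreover $P$ is an equivalence relation exactly when $\bar P$ is discrete, by the remark in \cref{def:linpre}. So we may assume $P$ is a poset which is not discrete; equivalently, $P$ is a preorder which is not an equivalence relation, but now antisymmetric. (Alternatively one can simply invoke homotopy-invariance under equivalence of categories, exactly as in the proof of \cref{lem:dist-lat'}.)

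With $P$ a poset, \cref{lem:iso-sd} supplies an isomorphism of posets $\lin(P) \cong \sd(\DC(P))$. Since the classifying space of the barycentric subdivision of any poset is homotopy equivalent to that of the poset itself, this gives $|\lin(P)| \simeq |\DC(P)|$. By \cref{lem:dist-lat'}, the poset $\DC(P)$ is contractible, because $P$ is not an equivalence relation. Therefore $\lin(P)$ is contractible, as claimed.

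I do not expect any real obstacle here: the content is entirely carried by \cref{lem:iso-sd} and \cref{lem:dist-lat'}, and the only step needing a moment's attention is the reduction to the antisymmetric case, which amounts to the observation that whether a relation refines a preorder does not see the preorder's failure of antisymmetry.
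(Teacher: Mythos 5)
Your proof is correct and follows essentially the same route as the paper's: combine the isomorphism $\lin(P) \cong \sd(\DC(P))$ from \cref{lem:iso-sd} with the contractibility of $\DC(P)$ from \cref{lem:dist-lat'}. Your explicit reduction from preorders to posets is a welcome bit of extra care, since \cref{lem:iso-sd} is stated only for posets while the corollary concerns preorders — the paper's proof glosses over this point.
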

\begin{proof}
    By \cref{lem:iso-sd}, $\lin(P)$ is isomorphic to $\sd (\DC (P))$. By \cref{lem:dist-lat'}, $\DC(P)$, and hence also $\sd(\DC(P))$, is contractible. Therefore $\lin(P)$ is contractible as well.
\end{proof}

\begin{rmk}\label{rmk:disc-sphere}
    Let $P$ be an equivalence relation. Then $\lin (P)$ has the homotopy type of $S^{d-2}$, where $d$ is the number of equivalence classes. For $|\lin (P)| \simeq |\sd(\DC(P))| \simeq |\DC(P)|$, and $\DC(P)$ is the poset of nonempty proper subsets of $P$-equivalence classes. So $\DC(P)$ is isomorphic to the $d$-cube with top and bottom removed, and so has the homotopy type of $S^{d-2}$.
\end{rmk}

\subsection{A variant of Quillen's Theorem A}\label{subsec:a+}
In this subsection, we give a variant of Quillen's Theorem A which will be used to relate the homotopy type of $\Decompat{[k-1,n-1]}(\mu)$ to that of $\Decompat{\{k\}}(\mu)$ (see \cref{def:decomp'}) in the proof of \cref{thm:sc-main} below.

\begin{rmk}
    One key tool we use is Quillen's Theorem A \cite{quillen}. We will in fact only apply it to posets, in which setting it usually goes by the name \emph{the Quillen fiber lemma}. Our use of Quillen's Theorem A is further restricted to the case of a cocartesian fibration, where the hypotheses become simpler to state. That is, we use the following statement: If $F: \calC \to \calD$ is a cocartesian fibration (of $\infty$-categories), and if the classifying spaces of the fibers $|F\inv(D)|$ are contractible for each $D \in \calD$, then the induced map on classifying spaces $|F| : |\calC| \to |\calD|$ is a homotopy equivalence. 
\end{rmk}

\begin{lem}\label{lem:init-cofinal}
    Let $I$ be a contractible $\infty$-category and let $I^\triangleleft$ be $I$ with a new initial object adjoined. Then the inclusion $I \to I^\triangleleft$ is cofinal.
\end{lem}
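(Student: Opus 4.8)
The plan is to verify cofinality directly from the standard criterion for cofinal functors of quasicategories (a form of Quillen's Theorem A \cite{quillen}): the inclusion $\iota\colon I \to I^\triangleleft$ is cofinal if and only if, for every object $b \in I^\triangleleft$, the fiber product $I \times_{I^\triangleleft} (I^\triangleleft)_{b/}$ is weakly contractible. Writing $I^\triangleleft = \Delta^0 \star I$, with $\bot$ the adjoined initial object, there are two kinds of objects to check: $b = \bot$, and $b = c$ for $c$ an object of $I$.

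First I would treat $b = \bot$. Since $\bot$ is initial in $I^\triangleleft$, the forgetful map $(I^\triangleleft)_{\bot/} \to I^\triangleleft$ is a trivial Kan fibration; as trivial fibrations are stable under pullback, $I \times_{I^\triangleleft} (I^\triangleleft)_{\bot/} \to I$ is also a trivial fibration, so $I \times_{I^\triangleleft} (I^\triangleleft)_{\bot/}$ is equivalent to $I$, which is contractible by hypothesis. This is the only point at which contractibility of $I$ is used (and it is genuinely needed: e.g.\ if $I$ were empty, $I \to I^\triangleleft = \Delta^0$ would not be cofinal). Next, for $b = c$ with $c$ an object of $I$: in $I^\triangleleft = \Delta^0 \star I$ the object $c$ admits no morphisms to $\bot$, so every morphism out of $c$ lands in $I$ and $(I^\triangleleft)_{c/}$ agrees with $I_{c/}$; in particular the structure map $(I^\triangleleft)_{c/} \to I^\triangleleft$ factors through $I$, whence $I \times_{I^\triangleleft} (I^\triangleleft)_{c/} \cong I_{c/}$. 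The under-category $I_{c/}$ has an initial object, namely $\mathrm{id}_c$, and is therefore weakly contractible. Having verified the criterion for every $b$, we conclude that $\iota$ is cofinal.

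There is no serious obstacle here; the content is routine once the right form of Quillen's Theorem A is invoked. The only points requiring care are: getting the variance of the comma-category criterion correct — it is the \emph{cofinal} (colimit-preserving) version with coslices $(I^\triangleleft)_{b/}$, not the coinitial one, and indeed $\iota$ is generally \emph{not} coinitial since $(I^\triangleleft)_{/\bot} = \Delta^0$ pulls back to the empty category along $\iota$ — and the computation of the $b = \bot$ fiber, which is exactly where the hypothesis that $I$ is contractible enters; everything else is formal.
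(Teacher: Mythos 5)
Your proof is correct and follows essentially the same route as the paper: both invoke the Joyal/Lurie form of Quillen's Theorem A and check the two cases, observing that the comma category over an object $c \in I$ has the initial object $\mathrm{id}_c$ and that the comma category over the adjoined initial object $\bot$ is equivalent to $I$ itself, which is where contractibility of $I$ is used. Your write-up is a bit more detailed (e.g.\ the trivial-fibration argument for the $\bot$ fiber and the remark on variance), but the substance is identical.
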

\begin{proof}
    By the Joyal/Lurie Quillen’s Theorem A \cite[Theorem 4.1.3.1]{htt}, it suffices to show that for each $i \in I^\triangleleft$, the slice category $i/I$ is contractible. When $i \in I$, this category has an initial object given by $i$ itself. When $i$ is the new initial object, this category is isomorphic to $I$, which is contractible by hypothesis.
\end{proof}

\begin{lem}\label{lem:special}
    Let $U : J \to I$ be a cocartesian fibration. Suppose that
    \begin{enumerate}
        \item $I$ has a strict initial object $\bot$;
        \item The classifying space $|I \setminus \{\bot\}|$ is contractible;
        \item For each $i \in I \setminus \{\bot\}$, the classifying space $|U\inv(i)|$ of the fiber is contractible.
    \end{enumerate}
    Then the classifying space $|J|$ is contractible.
\end{lem}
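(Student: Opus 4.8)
The plan is to compare $J$ with the full subcategory $J' := U^{-1}(I \setminus \{\bot\}) \subseteq J$ spanned by the objects that do not lie over $\bot$; write $I' = I \setminus \{\bot\}$. I would prove two things: that $|J'|$ is contractible, and that the inclusion $\iota : J' \hookrightarrow J$ is cofinal. Together these give $|J| \simeq |J'| \simeq *$, as desired.

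For the first point: the restriction $U' : J' \to I'$ is a coCartesian fibration, since it is the base change of the coCartesian fibration $U$ along the full inclusion $I' \hookrightarrow I$, and coCartesian fibrations are stable under base change. Its fibre over $i \in I'$ is $U^{-1}(i)$, which is contractible by hypothesis (3). Hence the coCartesian form of Quillen's Theorem A recalled in the remark preceding \cref{lem:init-cofinal} applies and gives $|J'| \simeq |I'|$, which is contractible by hypothesis (2).

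For the second (and main) point, by the Joyal--Lurie form of Quillen's Theorem A \cite[Theorem 4.1.3.1]{htt} --- applied exactly as in the proof of \cref{lem:init-cofinal} --- it suffices to show that for every $j \in J$ the comma $\infty$-category $j \downarrow J'$ is weakly contractible. If $j \in J'$ this is immediate, since $\mathrm{id}_j$ is then an initial object of $j \downarrow J'$. So suppose $j \in U^{-1}(\bot)$. Here the key move is to identify $j \downarrow J'$ with the fibre product $J_{j/} \times_I I'$ (an object of either is a morphism $j \to k$ in $J$ with $U(k) \neq \bot$). Since $\bot$ is (strictly) initial in $I$, the projection $I_{\bot/} \to I$ is a trivial fibration, and by the standard compatibility of coCartesian fibrations with slices \cite{htt} the functor $J_{j/} \to I_{\bot/} \simeq I$ induced by $U$ is again a coCartesian fibration; its fibre over $i \in I$ is the coslice $\big(U^{-1}(i)\big)_{\phi_! j/}$, where $\phi : \bot \to i$ is the essentially unique morphism and $j \to \phi_! j$ a chosen coCartesian lift of $\phi$. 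This coslice has the initial object $\mathrm{id}_{\phi_! j}$, hence is contractible. Consequently $j \downarrow J' = J_{j/} \times_I I' \to I'$ is a coCartesian fibration with contractible fibres, so a second application of the coCartesian form of Quillen's Theorem A gives $|j \downarrow J'| \simeq |I'| \simeq *$. This completes the proof that $\iota$ is cofinal, and therefore $|J| \simeq |J'| \simeq *$.

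The step I expect to be the main obstacle --- the only one that is not pure bookkeeping with base change, slices, and two invocations of Quillen's Theorem A --- is the identification of the fibres of $J_{j/} \to I$ with coslices of the fibres of $U$; this is the point where the coCartesian (rather than merely Cartesian) hypothesis on $U$ and the initiality of $\bot$ genuinely enter.
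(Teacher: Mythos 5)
Your argument is correct, but it takes a genuinely different route from the paper's. The paper works in the \emph{base}: by the straightening/unstraightening results of Gepner--Haugseng--Nikolaus, $J$ is the lax colimit of the functor $\phi : I \to \Cat_\infty$ classified by $U$, so $|J| \simeq \colim_I |\phi|$; then \cref{lem:init-cofinal} (using (1) and (2)) shows $I \setminus \{\bot\} \hookrightarrow I$ is cofinal, and on $I \setminus \{\bot\}$ the functor $|\phi|$ is constant at a point by (3), so the colimit is $|I \setminus \{\bot\}| \simeq \ast$. You instead work in the \emph{total category}: you prove cofinality of $J' = U^{-1}(I \setminus \{\bot\}) \hookrightarrow J$ directly via Quillen's Theorem A, which forces you into the slice analysis identifying the fibers of $J_{j/} \to I$ with the coslices $(U^{-1}(i))_{\phi_! j/}$ --- you correctly flag this as the crux, and it is where cocartesianness enters for you; the remaining steps (base change of cocartesian fibrations, $I_{\bot/} \to I$ a trivial fibration, two applications of the cocartesian Quillen A) all check out. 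The trade-off: the paper's proof is shorter and pushes all cofinality reasoning down to the poset $I$, at the cost of invoking the lax colimit formula as a black box; yours avoids that input and uses only Quillen's Theorem A (twice) plus standard facts about slices of cocartesian fibrations, at the cost of the extra fiberwise coslice computation. As a side remark, your argument never actually uses \emph{strictness} of the initial object (only initiality, to make $I_{\bot/} \to I$ a trivial fibration), so it proves a marginally more general statement; the paper's route does use strictness, implicitly, to recognize $I$ as $(I\setminus\{\bot\})^\triangleleft$ in \cref{lem:init-cofinal}.
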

\begin{proof}
    By \cite{gepner-haugseng-nikolaus}, $J$ is the lax colimit of the functor $\phi: I \to \Cat_\infty$ which $U$ classifies. Hence the classifying space $|J|$ is the colimit of the functor $|\phi| : I \to \Cat \to \Spaces$ obtained by postcomposing the ``classifying space” functor. 
    The inclusion $I \setminus \{\bot\} \to I$ is cofinal by (1) and \cref{lem:init-cofinal}. Therefore $|J| = \colim |\phi| = \colim |\phi|_{I \setminus \{\bot\}}|$. The functor $\phi|_{I \setminus \{\bot\}}$ is constant at the one-point space by (3), and so its colimit is the geometric realization $|I \setminus \{\bot\}|$, which is contractible by (2).
\end{proof}

\section{Spaces of composites}\label{sec:cs}

In this section, we work with torsion-free complexes of dimension $\leq N = \omega$. Let $P$ be a torsion-free complex and $\mu$ an $n$-cell in $P$. In this section, we define (\cref{def:decomp}) a poset $\Decompbot(\mu) = \mu \downarrow \Theta \downarrow_{\nd} \Free P$ whose elements are ``$\Theta$-shaped decompositions of the cell $\mu$ as a composite of cells". This poset has a bottom element given by $\mu$, trivially subdivided as the composite of itself. The main goal of this section is to show (Theorem A / \cref{thm:sc-main}) that when we delete the bottom element, the remaining poset $\Decomp(\mu) := \Decompbot(\mu) \setminus \{\mu\}$ is contractible. This result will be a key input to \cref{prop:level}, and thus to \cref{thm:mainthm} below. It may also be regarded as an analog of the main result of \cite{columbus} or \cite{horr} in the case $N = 2$.

The proof of \cref{thm:sc-main} proceeds by projecting $\Decomp(\mu)$ down to a smaller poset $\Decompatbot{[0,k-1]}(\mu)$ (\cref{def:decomp}), roughly given by projecting onto the positive $k$-boundary of $\mu$, and using Quillen's Theorem A to reduce to a study of the fibers of this projection.\footnote{We do point out that in order to reduce to the study of the fibers rather than the lax fibers of this projection, we need to know that the projection is a cocartesian fibration (\cref{lem:cocart''}), and for this we already need to use (\cref{lem:cocart}) some details coming from $P$ being a torsion-free complex.} Only the fiber over $\mu$ itself is not trivially contractible, and this fiber is given by another sub-poset $\Decompat{[k-1,n-1]}(\mu)$. The choice of $k$ is given by \cref{lem:exists-min}. The intuition comes from the case when $k = n$ is maximal. In this case, $\Decompat{\{n-1\}}(\mu)$ consists of those $\Theta$-decompositions of $\mu$ which are ``linear" in the sense that they only use $\circ_{n-1}$ composition, and not $\circ_i$ composition for $i \leq n-2$. This poset can be completely analyzed using the techniques of \cite{forest-thesis} (\cref{prop:pos-iso}), plus some elementary manipulations  (\cref{subsec:posets}) and the result (\cref{cor:forest-sd-ideal}) is that it is contractible so long as there is at least one source-target dependency among the $n$-atoms of $P$ (i.e. the relation $\triangleleft_{n-1}$ is not discrete). Thus in most cases, we take $k = n-1$ and we are done. But when $\triangleleft_{n-1}$ is discrete, we find that $\Decompat{\{n-1\}}(\mu)$ is in fact not contractible (\cref{rmk:eq-sphere}). There is a workaround, though -- we keep decreasing $k$ until (roughly) we find $k$ such that $\triangleleft_{k-1}$ is not discrete (more precisely, we need the preorder $\myPoset_k(\mu)$ of \cref{def:pos} to not be an equivalence relation; such a $k$ exists by \cref{lem:exists-min}). An variant of Quillen's Theorem A (\cref{subsec:a+}) reduces us to the study of the poset $\Decompat{\{k\}}(\mu)$ (\cref{def:decomp}), which can be shown to be contractible by similar methods (\cref{cor:forest-sd-ideal}). 

\subsection{Posets of composites}
In this subsection, we define (\cref{def:decomp}) the poset $\Decompbot(\mu)$ (resp. $\Decomp(\mu)$) of ``decompositions (resp. proper decompositions) of $\mu$" and some auxiliary posets $\Decompat{S}(\mu), \Decompatbot{S}(\mu)$ (\cref{def:decomp'}), as well as certain functors between them (\cref{obs:u}). The auxiliary categories $\Decompat S (\mu)$ and $\Decompatbot S (\mu)$ require a discussion of auxiliary categories $\Theta_S$ (\cref{def:theta-s}).

\begin{Def}\label{def:decomp}
    Let $P$ be a torsion-free complex with an $n$-cell $\mu$. Recall (\cref{def:pos}) that $\overline{\{\mu\}} \subseteq P$ denotes the smallest subcomplex containing $\mu$.

    Let $\Decompbot(\mu)$ denote the poset of subcategories $\theta \subseteq \Free (\overline{\{\mu\}})$ such that $\mu \in \theta$ and $\theta$ is isomorphic to a category in $\Theta_n$. Let $\Decomp(\mu) = \Decompbot(\mu) \setminus \{\mu\}$ be obtained by deleting the bottom element, which is $\mu$ itself. 
\end{Def}

We regard $\Decompbot(\mu)$ as the ``poset of ways to decompose $\mu$ as a composite of cells", and $\Decomp(\mu) \subset \Decompbot(\mu)$ as the ``poset of ways to nontrivially decompose $\mu$ as a composite of smaller cells". The classifying space $|\Decompbot(\mu)|$ (resp. $|\Decomp(\mu)|$) is correspondingly the ``space of decompositions (resp. nontrivial decompositions) of $\mu$". Obviously $|\Decompbot(\mu)| \simeq \ast$ because of the initial object $\mu$. We will show that $|\Decomp(\mu)| \simeq \ast$ in \cref{thm:sc-main}. The proof will involve fibering $\Decompbot(\mu)$ over various subposets $\Decompatbot{[k,l]}(\mu)$, to whose description we now turn.

\begin{Def}\label{def:theta-s}
    Let $S \subseteq \nats$ and $n \in \nats \cup \{\omega\}$. Then let $\Theta_{n,S} \subseteq \Theta_n$ be the full subcategory whose objects are assembled from their generating cells using $\circ_i$ composition only for $i \in S$. 
\end{Def}

\begin{eg}
    Let $n \in \nats \cup \{\omega\}$. Then $\Theta_{n,[0,n-1]} = \Theta_n$. See \cref{fig:theta-s} for more examples.
\end{eg}

\begin{rmk}
    We will only use \cref{def:theta-s} in the case where $S = [k, l] \subseteq [0,n-1]$ is a finite interval.
\end{rmk}

\begin{rmk}
    For $S$ finite we may describe $\Theta_{n,S}$ inductively as follows. In the base case, $\Theta_{n,\emptyset} = \globe_{\leq n} := \Theta_n \cap \globe = \{\globe_k \mid k \leq n\}$ is the globe category, since globes do not have any nontrivial composition defined among their atoms. Inductively, let $S^- = \{i \in \nats \mid i+1 \in S\}$ and $S^+ = \{i+1 \mid i \in S\}$. Then either $S = (S^-)^+$ or $S = (S^-)^+ \cup \{0\}$. In the first case, $\Theta_{n,(S^-)^+}$ consists of the suspensions of elements of $\Theta_{n-1,S^-}$. In the second case, $\Theta_{n,(S^-)^+ \cup \{0\}}$ is the closure of $\Theta_{n,(S^-)^+}$ under wedge sums.

    Equivalently, in the wreath product perspective, we have $\Theta_{n,S^+} = \Theta_{n-1,S} \wr \Delta_{\leq 1}$ and $\Theta_{n, \{0\} \cup S^+} = \Theta_{n-1,S} \wr \Delta$.
    
    Equally, in the usual notation for objects of $\Theta_n$ (as in \cref{fig:theta-s}), the subcategory $\Theta_{n,S}$ constrains in which dimensions numbers other than $0$ or $1$ may appear -- for dimensions not in $S$, only $0$ or $1$ is allowed, while for dimensions in $S$ all numbers are allowed.
\end{rmk}

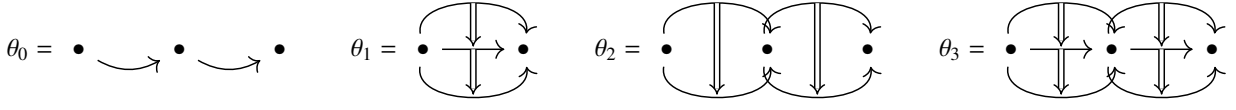
\begin{figure}
    \centering
\begin{equation*}
\theta_0 = 
\begin{tikzcd}
    \bullet \ar[r,bend right] & \bullet \ar[r, bend right] & \bullet
\end{tikzcd}
\qquad
\theta_1 = 
\begin{tikzcd}
    \bullet \ar[r,bend left=10em,""{name=A,inner sep = 0, outer sep = 0}] \ar[r,""{name=B,inner sep = 0, outer sep = 0}] \ar[r,bend right=10em,""{name=C,inner sep = 0, outer sep = 0}] & \bullet \ar[from=A,to=B,Rightarrow] \ar[from=B,to=C,Rightarrow]
\end{tikzcd}
\qquad 
\theta_2 = 
\begin{tikzcd}
    \bullet \ar[r,bend left=10em,""{name=A,inner sep = 0, outer sep = 0}] 
    % \ar[r,""{name=B,inner sep = 0, outer sep = 0}] 
    \ar[r,bend right=10em,""{name=C,inner sep = 0, outer sep = 0}] & 
    \bullet \ar[r,bend left=10em,""{name=D,inner sep = 0, outer sep = 0}] 
    % \ar[r,""{name=E,inner sep = 0, outer sep = 0}] 
    \ar[r,bend right=10em,""{name=F,inner sep = 0, outer sep = 0}] &
    \bullet
    \ar[from=A,to=C,Rightarrow] \ar[from=D,to=F,Rightarrow]
\end{tikzcd}
\qquad
\theta_3 = 
\begin{tikzcd}
    \bullet \ar[r,bend left=10em,""{name=A,inner sep = 0, outer sep = 0}] 
    \ar[r,""{name=B,inner sep = 0, outer sep = 0}] 
    \ar[r,bend right=10em,""{name=C,inner sep = 0, outer sep = 0}] & 
    \bullet \ar[r,bend left=10em,""{name=D,inner sep = 0, outer sep = 0}] 
    \ar[r,""{name=E,inner sep = 0, outer sep = 0}] 
    \ar[r,bend right=10em,""{name=F,inner sep = 0, outer sep = 0}] &
    \bullet
    \ar[from=A,to=B,Rightarrow] \ar[from=B,to=C,Rightarrow] \ar[from=D,to=E,Rightarrow] \ar[from=E,to=F,Rightarrow]
\end{tikzcd}
\end{equation*}
    \caption{$\theta_0 = [2]\in \Theta_{1,\{0\}} = \Theta_1$, $\theta_1 = [1 | 2] \in \Theta_{2,\{1\}}$, $\theta_2 = [2 | 1,1] \in \Theta_{2,\{0\}}$, $\theta_3 = [2 | 2,2] \in \Theta_{2,\{0,1\}} = \Theta_2$}
    \label{fig:theta-s}
\end{figure}

\begin{obs}\label{obs:pre-u}
    Let $S \subseteq \nats$ and $n \in \nats \cup \{\omega\}$. 
    \begin{enumerate}
        \item If $S \subseteq T$, then there is an inclusion $i_{n,S}^{n,T} : \Theta_{n,S} \to \Theta_{n,T}$. If $S$ is an initial segment of $T \subseteq \nats$, then there is also a right adjoint $R_{n,T}^{n,S} : \Theta_{n,T} \to \Theta_{n,S}$, given by ``merging" all $\circ_i$ composites for $i \in T \setminus S$. For example, in \cref{fig:theta-s}, $R_{2,\{0,1\}}^{2,\{0\}}$ carries $\theta_3$ to $\theta_2$, and the counit inclusion $\theta_3 \to \theta_2$ is the identity on objects, carrying the generating 2-cells to composite 2-cells.
        \item If $m \leq n$, then the inclusion $i_{m,S}^{n,S} : \Theta_{m,S} \to \Theta_{n,S}$ admits a \defterm{deformation retraction} $\partial_{n,S}^{m,S}$, which takes the $\partial_m$ boundary of a cell $\theta$. Recall from \cref{notation:strict} that $\partial_m$ means the \emph{forward} $m$-dimensional boundary (although the choice of forward versus backward is arbitrary, we will use it consistently in the following). By ``deformation rertraction", we mean that $\partial_{n,S}^{m,S} \circ i_{m,S}^{n,S}$ is the identity and that we have a natural transformation $i_{m,S}^{n,S} \circ \partial_{n,S}^{m,S} \Rightarrow \id$ which restricts to the identity when we precompose $i_{m,S}^{n,S}$. For example, in \cref{fig:theta-s}, $\partial_{2,\{0\}}^{1,\{0\}}(\theta_2) = \partial_{2,\{0,1\}}^{1,\{0\}}(\theta_3) = \theta_0$; the inclusion is the identity on objects and picks out the final 1-cells in the hom-categories.
    \end{enumerate}
\end{obs}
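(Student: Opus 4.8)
The plan is to build both functors from the inductive wreath-product description of $\Theta_{n,S}$ recorded just above and then verify the stated compatibilities by induction on $n$. The inclusion $i_{n,S}^{n,T}$ needs no work: by definition $\Theta_{n,S}$ is the \emph{full} subcategory of $\Theta_n$ on the objects assembled using only $\circ_i$ for $i\in S$, and such an object is a fortiori assembled using only $\circ_i$ for $i\in T\supseteq S$, so $\Theta_{n,S}$ is a full subcategory of $\Theta_{n,T}$ and $i_{n,S}^{n,T}$ is that inclusion.

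For the right adjoint I would first reduce, using the remark preceding \cref{fig:theta-s}, to the case $S=[a,b]\subseteq T=[a,c]$ of intervals with a common least element, so that $T\setminus S=[b+1,c]$ is a terminal segment of the active range. Writing $\Theta_{n,S}=\Theta_{n-1,S'}\wr A$ and $\Theta_{n,T}=\Theta_{n-1,T'}\wr A$ with the same outer $A\in\{\Delta,\Delta_{\le 1}\}$ (as $0\in S\iff 0\in T$ since $a=\min$), the construction of $R_{n,T}^{n,S}$ reduces by induction on $n$ to the case where only the single top dimension is merged, where it is the operation ``replace each $\circ_{\max(T\setminus S)}$-composable string of cells inside a hom-object by its composite.'' Unwinding the induction, $R_{n,T}^{n,S}\theta$ is obtained from $\theta$ by, in every dimension $i\in T\setminus S$, merging each $\circ_i$-composable string of cells to a single cell, and the counit $i_{n,S}^{n,T}R_{n,T}^{n,S}\theta\to\theta$ sends each merged cell to the corresponding composite in $\theta$ and is an isomorphism below dimension $\min(T\setminus S)$; the triangle identities are then immediate from this description. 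The step I expect to be the crux is the universal property: one must check that every morphism from a $\Theta_{n,S}$-object into $\theta$ factors uniquely through this merge, which is a bookkeeping argument tracking the behaviour of such a morphism on the $\star$-decompositions of the hom-objects of $\theta$ (and, downstream, the compatibility of this construction with the description of cells in a torsion-free complex as closed well-formed sets, needed to see that $R$ induces the claimed functor between the posets $\Decompat{S}(\mu)$).

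For the deformation retraction, define $\partial_{n,S}^{m,S}\colon\Theta_{n,S}\to\Theta_{m,S}$ by $\theta\mapsto\partial_m\theta$, the forward $m$-dimensional boundary, which combinatorially just keeps the wreath-product data through dimension $m$ (so $\partial_1[m_0|q_1,\dots,q_{m_0}]=[m_0]$, etc.); functoriality is clear since a $\Theta$-morphism restricts to its $m$-boundaries. Because the $m$-boundary of an object of dimension $\le m$ is that object, $\partial_{n,S}^{m,S}\circ i_{m,S}^{n,S}=\id$. For the natural transformation $i_{m,S}^{n,S}\circ\partial_{n,S}^{m,S}\Rightarrow\id$ I would take, at $\theta$, the inclusion $\partial_m\theta\hookrightarrow\theta$ of the forward $m$-boundary, which on each hom-object of too-high dimension selects the forward (terminal) cell; naturality and the identity $\eta\, i_{m,S}^{n,S}=\id$ are read off from the compatibility of the forward boundary with $\star$ and with composition. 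Since taking the $m$-boundary amounts to the part-(1) merge in all dimensions above $m$ followed by discarding the cells that thereby become degenerate, this part should require only minor extra bookkeeping once the right-adjoint case is settled.
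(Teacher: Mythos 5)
The paper records this as an unproven Observation, so the only question is whether your argument is sound --- and the two steps you defer are exactly where it breaks, because the statement read literally (for the \emph{full} subcategories $\Theta_{n,S}\subseteq\Theta_n$ and all $\Theta$-morphisms) is false. For part~(1), the universal property you call ``the crux'' fails: take $n=2$, $S=\{0\}$, $T=\{0,1\}$ and $\theta=\Sigma[2]=[1\,|\,2]$, so the merge gives $R\theta=\Sigma[1]$ with counit $\Sigma[1]\to\Sigma[2]$ hitting the composite $2$-cell. The monomorphism $\globe_1\to\Sigma[2]$ selecting the \emph{middle} $1$-cell is a map from an object of $\Theta_{2,\{0\}}$ into $\theta$ that does not factor through this counit; more strongly, no object of $\Theta_{2,\{0\}}$ maps onto $\Sigma[2]$ (a single hom-category $[q_i]$ with $q_i\le 1$ cannot hit both indecomposable $2$-cells of the hom $[2]$), so the comma category $\Theta_{2,\{0\}}\downarrow\Sigma[2]$ has no terminal object and no coreflection exists. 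For part~(2), your claim that ``a $\Theta$-morphism restricts to its $m$-boundaries'' is likewise false: the map $\Sigma[1]\to\Sigma[2]$ picking out the \emph{first} generating $2$-cell carries the forward $1$-boundary of $\Sigma[1]$ to the middle $1$-cell of $\Sigma[2]$ rather than to its forward $1$-boundary, so the naturality square for $i_{m,S}^{n,S}\circ\partial_{n,S}^{m,S}\Rightarrow\id$ fails at this arrow.

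What survives --- and all that is actually used downstream, in \cref{obs:u} and beyond --- is the object-level merge $R$ with its counit inclusions $iR\theta\to\theta$, and the object-level boundary $\partial_m$ with the inclusions $\partial_m\theta\to\theta$. These induce an honest adjunction and deformation retraction once one restricts to the morphisms occurring in $\Decompatbot{S}(\mu)$, namely inclusions of subcategories of $\Free(\overline{\{\mu\}})$ all containing the fixed cell $\mu$: anchoring to $\mu$ rules out maps like $\globe_1\to\Sigma[2]$ above (the subcategory generated by the middle $1$-cell does not contain $\mu$), and forces the factorizations your ``bookkeeping'' step needs. So your induction sets up the right construction on objects and counits, but it cannot close the universal-property and naturality steps at the stated level of generality; you should either restrict the class of morphisms as in \cref{obs:u} or prove that poset-level statement directly.
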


\begin{Def}\label{def:decomp'}
    Let $P$ be a torsion-free complex with an $n$-cell $\mu$, and let $S \subseteq \nats$. We let $\Decompatbot S (\mu) \subseteq \Decompbot(\mu)$ be the full subposet of $\theta \subseteq \Free (\overline{\{\mu\}})$ containing $\mu$, such that $\theta$ is isomorphic to a category in $\Theta_{n,S}$. We let $\Decompat S (\mu) = \Decompatbot S (\mu) \setminus \{\mu\}$ be obtained by deleting the bottom element $\mu$.
\end{Def}

\begin{obs}\label{obs:u}
    Let $P$ be a torsion-free complex with an $n$-cell $\mu$, and let $S \subseteq \nats$. From \cref{obs:pre-u}, obtain the following:
    \begin{enumerate}
        \item If $S$ is an initial segment of $T \subseteq \nats$, then the adjunction $i_{n,S}^{n,T} : \Theta_S {}^\to_\leftarrow \Theta_T : R_{n,T}^{n,S}$ induces an adjunction $j_{S}^{T} : \Decompatbot S (\mu) {}^\to_\leftarrow \Decompatbot T (\mu) : U_{T}^{S}$ where $j_{S}^{T}$ is the inclusion functor. The functor $U_T^S$ composes together all of the $\circ_i$ composites in $\theta$ for $i \in T \setminus S$.
        \item If $m \leq n$, then the deformation retraction $\partial_{n,S}^{m,S} : \Theta_{n,S} {}^\to_\leftarrow \Theta_{m,S}: i_{m,S}^{n,S}$ induces a functor $D_{n,S}^{m,S} : \Decompatbot S (\mu) \to \Decompatbot S (\partial_m \mu)$, which takes the $\partial_m$ boundary of a $\Theta$-cell.
    \end{enumerate}
\end{obs}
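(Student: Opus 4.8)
The plan is to pass to the equivalent description of $\Decompbot(\mu)$ as the category $\mu\downarrow\Theta_{\nd}\downarrow\Free(\overline{\{\mu\}})$ of factorizations $\globe_n\xrightarrow{g}\theta\xrightarrow{f}\Free(\overline{\{\mu\}})$ of $\mu$ with $\theta\in\Theta_n$ and $f$ nondegenerate (equivalently, since $\overline{\{\mu\}}$ is again a torsion-free complex so that $\Free(\overline{\{\mu\}})$ is $\Theta$-regular by \cref{prop:theta-reg}, with $f$ monic); the identification with the poset of subcategories of \cref{def:decomp} uses that objects of $\Theta_n$ are rigid, and under it $\Decompatbot{S}(\mu)$ is the full subposet on those factorizations with $\theta\in\Theta_{n,S}$. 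In this language both claims are instances of a single mechanism: if the inclusion part of one of the adjunctions (resp.\ deformation retractions) of \cref{obs:pre-u} has monic counit $\varepsilon$, then sending a factorization's classifying map to its composite with $\varepsilon$ (and $g$ to its unique lift through $\varepsilon$) carries the poset for the larger index set to the one for the smaller.

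For (1): the inclusion $j_S^T$ is induced by $i_{n,S}^{n,T}\colon\Theta_{n,S}\hookrightarrow\Theta_{n,T}$ and is the inclusion of a full subposet, hence fully faithful. For the right adjoint, write $\varepsilon\colon i_{n,S}^{n,T}R_{n,T}^{n,S}\Rightarrow\id$ for the counit of $i_{n,S}^{n,T}\dashv R_{n,T}^{n,S}$; since ``merging all $\circ_i$-composites with $i\in T\setminus S$'' picks out a subobject (as in the example after \cref{obs:pre-u}), each $\varepsilon_\theta\colon i_{n,S}^{n,T}R_{n,T}^{n,S}\theta\to\theta$ is monic. Define $U_T^S(\globe_n\xrightarrow{g}\theta\xrightarrow{f}\Free(\overline{\{\mu\}}))=(\globe_n\xrightarrow{g'}R_{n,T}^{n,S}\theta\xrightarrow{f\varepsilon_\theta}\Free(\overline{\{\mu\}}))$, where $g'$ is the unique lift of $g$ through $\varepsilon_\theta$ given by the coreflection's universal property (applicable since $\globe_n\in\Theta_{n,S}$, as globes involve no composition among atoms). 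Then $f\varepsilon_\theta$ is monic, hence nondegenerate, and $f\varepsilon_\theta g'=fg=\mu$, so this is a valid object of $\Decompatbot{S}(\mu)$; functoriality of $U_T^S$ is naturality of $\varepsilon$, and the adjunction $j_S^T\dashv U_T^S$ is the universal property of $\varepsilon$ transported into the comma category. Unwinding, $U_T^S(\theta)$ is the subcategory of $\Free(\overline{\{\mu\}})$ obtained by merging those composites, as the statement describes.

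For (2): run the same recipe on the deformation retraction $i_{m,S}^{n,S}\dashv\partial_{n,S}^{m,S}$, whose counit $\eta_\theta\colon i_{m,S}^{n,S}\partial_{n,S}^{m,S}\theta\to\theta$ is the inclusion of the $\partial_m$-boundary subcategory and hence monic; so set $D_{n,S}^{m,S}$ to be post-composition with $\eta$, together with $\partial_{n,S}^{m,S}$ applied to the chosen $\globe_n\to\theta$. The one point that is not formal is that the image of $f\eta_\theta$ lands in $\Free(\overline{\{\partial_m\mu\}})$, not merely in $\Free(\overline{\{\mu\}})$; note $\Free(\overline{\{\partial_m\mu\}})\subseteq\Free(\overline{\{\mu\}})$ by functoriality of $\Free$ in subcomplex inclusions (\cref{def:tf}). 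This holds because, under the bijection between cells of $\Free$ and finite closed well-formed sets (\cite[Theorem 3.1.21]{forest}), the cells occurring in the $\partial_m$-boundary of the decomposition $\mathrm{im}(f)$ of $\mu$ are composites of atoms lying in the closed well-formed set of $\partial_m\mu$, i.e.\ of atoms of $\overline{\{\partial_m\mu\}}$, while $\partial_m\mu$ itself --- being the $\partial_m$-boundary of the big cell --- occurs in this image. Functoriality of $D_{n,S}^{m,S}$ is again naturality of $\eta$, and it manifestly takes $\partial_m$-boundaries of $\Theta$-cells.

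The genuinely non-formal steps, and the ones I expect to need the most care, are: (a) that the counits $\varepsilon$ and $\eta$ are monomorphisms --- equivalently, that ``merging'' composites and ``taking the $\partial_m$-boundary'' of a $\Theta$-cell really produce subcategories, which is part of the content of \cref{obs:pre-u} and rests on the explicit combinatorics of \cite{forest-thesis}; and (b) the support computation in (2) placing the $\partial_m$-boundary of a decomposition of $\mu$ inside $\Free(\overline{\{\partial_m\mu\}})$, which is bookkeeping with closed well-formed sets in the sense of \cite[Section 1.5]{forest}. Granting these, the rest is the formal observation that restricting a comma category along a coreflective (resp.\ deformation-retract) inclusion again yields such a structure; in particular $U_T^S$ and $D_{n,S}^{m,S}$ are well-defined functors with the asserted properties.
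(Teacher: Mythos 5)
Your proposal is correct and fills in exactly the argument the paper intends: the paper states this as an unproved Observation following from \cref{obs:pre-u}, and your construction --- postcomposing with the monic counit $iR\theta\to\theta$ (resp.\ the boundary inclusion $i\partial_m\theta\to\theta$) and transporting the counit's universal property (resp.\ naturality) into the comma-category description of $\Decompatbot{S}(\mu)$ --- is the natural elaboration, with the support computation placing $\partial_m\theta$ inside $\Free(\overline{\{\partial_m\mu\}})$ correctly identified as the only non-formal point. (Minor quibbles: in (2) the paper only supplies a deformation retraction, not an adjunction $i_{m,S}^{n,S}\dashv\partial_{n,S}^{m,S}$, but your construction uses only the natural transformation $i\partial_m\Rightarrow\id$, so nothing is lost.)
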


\subsection{Posets of composites in a torsion-free complex}

In this subsection, we analyze the isomorphism type of the poset $\Decompat{\{k\}}(\mu)$ (\cref{def:decomp'}) using the theory of \cite{forest-thesis}. We show that (\cref{lem:forest-order}), when the preorder $\myPoset_i(\mu)$ is an equivalence relation for $i > k$, the poset $\Decompat{\{k-1\}}(\mu)$ is is isomorphic to the poset $\lin (\myPoset_k(\mu))$ of linear preorders refining $\myPoset_k(\mu)$ (see \cref{def:pos} in this subsection). This will tie in nicely with the work of \cref{subsec:posets} in \cref{cor:forest-sd-ideal} below, to show that if in addition the preorder $\myPoset_k(\mu)$ is \emph{not} an equivalence relation, then this poset is contractible. Finding a $k$ satisfying both conditions is always (except in trivial cases) possible by \cref{lem:exists-min}, which we also prove in this subsection. Finally, this subsection contains results (\cref{lem:cocart}, \cref{lem:cocart'}, \cref{lem:cocart''}) stating that certain functors $U$ from the previous subsection (\cref{obs:u}) are cocartesian fibrations, another important input to the proof of \cref{thm:sc-main} below.

\begin{Def}\label{def:extract-preorder}
    Let $P$ be a torsion-free complex with an $n$-cell $\mu$, and let $1 \leq k \leq n$.
    If $\theta \in \Decompat{[0,k-1]}(\mu)$, then write $L_k^\mu(\theta)$ for the preorder on $(\partial_k \mu)_k$ generated by saying that $a$ is equivalent to $b$ if $a,b$ are in the support of the same generator $\bar a = \bar b$ of $\theta$, and $a < b$ if $\bar b \circ_{k-1} \bar a$ is defined. 
\end{Def}

\begin{rmk}
    In \cref{def:extract-preorder}, $L_k^\mu(\theta)$ does not relate $a,b \in (\partial_k \mu)_k$ unless they are in the support of the same generating $k$-cell $c$ of $U_{[0,k-1]}^{[0,k-2]} \theta$. Thus $L_k^\mu(\theta)$ may be regarded as a disjoint union of linear preorders on $c_k$ for each such $c$ -- a \defterm{sublinear preorder}.
\end{rmk}

\begin{prop}\label{prop:pos-iso}
    Let $P$ be a torsion-free complex with an $n$-cell $\mu$. 
    Then $L_n^\mu : \Decompat{\{n-1\}}(\mu) \to \lin (\myPoset_{n}(\mu))$ is an isomorphism of posets.
\end{prop}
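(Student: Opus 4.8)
The plan is to identify both sides with the combinatorial data of a $\circ_{n-1}$-decomposition of the cell $\mu$, and to check directly that $L_n^\mu$ realizes that identification. Since $\overline{\{\mu\}}$ has dimension $\leq n$ (it is generated by $\mu_n$), it has no atoms of dimension $\geq n+1$; hence the preorder $\myPoset_n(\mu)$ of \cref{def:pos} has only singleton equivalence classes, and by the acyclicity axiom it is just the partial order on $\mu_n$ generated by $\triangleleft_{n-1}$. First I would unwind the source side: an element $\theta \in \Decompat{\{n-1\}}(\mu)$ is, up to isomorphism, a copy of $\Sigma^{n-1}[m] \in \Theta_{n,\{n-1\}}$ inside $\Free(\overline{\{\mu\}})$ containing $\mu$, with $m \geq 2$ (the case $m \leq 1$ being excluded: $m = 0$ would give an $(n-1)$-dimensional object, and $m = 1$ gives the deleted bottom element $\mu$ itself). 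Writing $c_1, \dots, c_m$ for the generating $n$-cells of $\theta$ in order, one has $\mu = c_m \circ_{n-1} \cdots \circ_{n-1} c_1$, and by the composition formula of \cref{def:tf} the supports $(c_i)_n$ form an ordered partition of $\mu_n$ into nonempty blocks. Directly from \cref{def:extract-preorder}, $L_n^\mu(\theta)$ is then the linear preorder whose blocks, in order, are $(c_1)_n < \cdots < (c_m)_n$, and it is non-codiscrete precisely because $m \geq 2$.

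The substantive input is the combinatorics of torsion-free complexes from \cite{forest-thesis}: I would quote that $\circ_{n-1}$-decompositions of the cell $\mu$ correspond bijectively to chains of proper nonempty $\myPoset_n(\mu)$-down-closed subsets of $\mu_n$. In one direction, given $\theta$ as above, the partial sums $F_i := (c_1)_n \cup \cdots \cup (c_i)_n$ form such a chain; the fact that each $F_i$ is $\myPoset_n(\mu)$-down-closed is precisely the statement that $L_n^\mu(\theta)$ refines $\myPoset_n(\mu)$ — i.e.\ $a \triangleleft_{n-1} b$ with $a \in (c_i)_n$, $b \in (c_j)_n$ forces $i \leq j$ — which follows from the acyclicity axiom together with the \emph{moves} structure of the $c_i$, as in \cite[Section 3.3]{forest-thesis}. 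In the other direction, the key lemma to invoke is \cite[Lem 3.3.2.2]{forest-thesis} and its iterates (the same tool used in \cref{lem:exists-min}): given a chain $\emptyset = F_0 \subsetneq F_1 \subsetneq \cdots \subsetneq F_m = \mu_n$ of $\myPoset_n(\mu)$-down-closed subsets, one sets $c_i$ to be the cell with $(c_i)_{\pm n} = F_i \setminus F_{i-1}$ and $(n-1)$-dimensional source and target read off from the intermediate moves, verifies via the torsion-free axioms that each $c_i$ is a genuine cell of $\Free(\overline{\{\mu\}})$ with $c_m \circ_{n-1} \cdots \circ_{n-1} c_1 = \mu$, and takes $\theta$ to be the subcategory they generate — isomorphic to $\Sigma^{n-1}[m]$ and containing $\mu$, hence in $\Decompat{\{n-1\}}(\mu)$. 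I expect this to be the main obstacle: extracting from \cite{forest-thesis} the precise equivalence between $\circ_{n-1}$-decompositions of $\mu$ and down-closed chains in $\mu_n$, and in particular the verification that an abstract down-closed chain really yields valid cells composing to $\mu$.

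Granting this, I would finish by matching with $\lin(\myPoset_n(\mu))$. As in the proof of \cref{lem:iso-sd}, a non-codiscrete linear preorder $L$ refining $\myPoset_n(\mu)$ is exactly an ordered partition $E_1 < \cdots < E_m$ (with $m \geq 2$) of $\mu_n$ refining $\myPoset_n(\mu)$, equivalently the chain $E_1 \subsetneq E_1 \cup E_2 \subsetneq \cdots \subsetneq E_1 \cup \cdots \cup E_{m-1}$ of proper nonempty down-closed subsets. So $L$ mapsto this chain mapsto the $\theta$ it determines is a well-defined map $\lin(\myPoset_n(\mu)) \to \Decompat{\{n-1\}}(\mu)$, and by the previous paragraph it is a two-sided inverse to $L_n^\mu$ on underlying sets. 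Finally I would check both maps are order-preserving: if $\theta \subseteq \theta'$ in $\Decompat{\{n-1\}}(\mu)$, then each generator of $\theta$ is a $\circ_{n-1}$-composite of a consecutive run of generators of $\theta'$, so the chain associated to $\theta$ is a subchain of that of $\theta'$, i.e.\ the partition underlying $L_n^\mu(\theta)$ is coarser than the one underlying $L_n^\mu(\theta')$ — which by \cref{def:linpre} means exactly $L_n^\mu(\theta) \leq L_n^\mu(\theta')$; conversely, passing to a subchain of down-closed sets corresponds to composing together consecutive generators, hence to an inclusion of subcategories, so the inverse map is order-preserving as well. An order-preserving bijection with order-preserving inverse is an isomorphism of posets, which is the claim.
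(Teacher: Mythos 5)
Your proposal is correct and its core is the same as the paper's: identify the generators of a $\theta \in \Decompat{\{n-1\}}(\mu)$ with the blocks of a linear preorder on $\mu_n$ refining $\triangleleft_{n-1}$, ordered by composability, and match coarsening of the preorder with inclusion of subcategories; your direction-of-ordering checks against \cref{def:linpre} are right. The one place you diverge is surjectivity. You propose to construct a decomposition directly from an arbitrary chain of $\myPoset_n(\mu)$-down-closed subsets of $\mu_n$, and you correctly flag that extracting this general statement from \cite{forest-thesis} (building the intermediate cells $c_i$ with $(c_i)_{\pm n} = F_i \setminus F_{i-1}$ and verifying the axioms) is the main burden. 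The paper factors this into two easier steps: it quotes \cite[Lem 3.3.1.6]{forest-thesis} only for the extreme case, namely that every linear \emph{order} refining $\myPoset_n(\mu)$ is realized by a decomposition into single atoms, and then obtains every coarser linear preorder by observing that merging two adjacent equivalence classes corresponds to composing two adjacent, composable generators of a $\theta$ already known to be in the image --- an operation that stays inside $\Decompat{\{n-1\}}(\mu)$. This sidesteps the need to verify that an abstract down-closed chain yields valid cells, since composition of existing cells is already licensed by \cref{def:tf}; the same merging observation also gives fullness, which together with surjectivity and order-preservation yields the isomorphism. If you keep your route, you should either prove the chain-to-decomposition construction in detail or reduce it, as the paper does, to the linear-order case plus composition.
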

\begin{proof}
    First observe that for any $\theta \in \Decompat{\{n\}}(\mu)$, the preorder $L_n^\mu(\theta)$ is in fact linear, since $a \leq^{L_n^\mu(\theta)} b$ if and only if $\bar b = c_j \circ_{n-1} \cdots \circ_{n-1} c_0 = \bar a$ is defined in $\theta$ for some $j$ and sequence of generating cells $c_i$, and the generating cells of $\theta$ are linearly ordered by this composability relation. Moreover, $L_n^\mu(\theta)$ refines $\triangleleft_{n-1}$: since $L_n^\mu(\theta)$ is linear, to see this it suffices to show that if $a \triangleleft_{n-1} b$, then $b \not \leq^{L_n^\mu(\theta)} a$. This follows from acyclicity of $P$. Thus $L_n^\mu$ does indeed map $\Decompat{\{n-1\}}(\mu)$ to $\myPoset_{n}(\mu)$. To see that $L_n^\mu$ is order-preserving, let $\theta' \to \theta$ be an indecomposable morphism in $\Decompat{\{n-1\}}(\mu)$, merging two adjacent generators of $\theta$ into one generator in $\theta'$. Then $L_n^\mu(\theta')$ refines $L_n^\mu(\theta)$ by setting those two equivalence classes to be equal, so that $L_n^\mu(\theta') \leq L_n^\mu(\theta)$.
    
    To see that $L_n^\mu$ is surjective, first observe that \cite[Lem 3.3.1.6]{forest-thesis} shows that every linear order in $\myPoset_n(\mu)$ is in the image of $L_n^\mu$. So it will suffice to show that if $L' \to L$ is an indecomposable morphism in $\lin (\myPoset_n(\mu))$, merging two adjacent equivalence classes $E,F$ of $L$ into one equivalence class $E \cup F$ in $L'$, and if $L = L_n^\mu(\theta)$ is in the image of $L_n^\mu$, then $L'$ is also in the image of $L$. Indeed, by hypothesis $E$ and $F$ correspond to generators in $\theta$ which are composable. Merging these by composition, we obtain $\theta' \in \calC$ with $\theta' \subseteq \theta$. So $L_n^\mu$ is surjective. This argument also shows that $L_n^\mu$ is full. Thus $L_n^\mu$ is an isomorphism of posets.
\end{proof}

\begin{lem}\label{lem:cocart}
    Let $P$ be a torsion-free complex, and $\mu$ be an $n$-cell in $P$. Then the forgetful map 
    \[U^{[0,n-2]}_{[0,n-1]} : \Decompbot(\mu) = \Decompatbot{[0,n-1]}(\mu) \to \Decompatbot{[0,n-2]}(\mu)\]
    is a cocartesian fibration.
\end{lem}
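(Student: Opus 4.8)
The plan is to work with the poset reformulation of the notion of cocartesian fibration. Recall that a monotone map $U\colon\mathcal E\to\mathcal B$ of posets is a cocartesian fibration if and only if for every $\theta\in\mathcal E$ and every $\bar\phi\in\mathcal B$ with $U\theta\le\bar\phi$ there is an element $\theta'\in\mathcal E$ with $\theta\le\theta'$ and $U\theta'=\bar\phi$ which is moreover least among all $\psi\in\mathcal E$ satisfying $\theta\le\psi$ and $U\psi\ge\bar\phi$. By \cref{obs:u}(1), $U=U^{[0,n-2]}_{[0,n-1]}$ is right adjoint to the full inclusion $j=j^{[0,n-1]}_{[0,n-2]}\colon\Decompatbot{[0,n-2]}(\mu)\hookrightarrow\Decompbot(\mu)$; in particular $U$ is monotone, and the counit exhibits $U\theta\subseteq\theta$ as a subcategory, with $U\theta$ obtained from $\theta$ by composing away every maximal $\circ_{n-1}$-chain of generating $n$-cells. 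Given this adjunction, once we produce, for fixed $\theta$ and $\bar\phi\ge U\theta$, an element $\theta'$ realizing the join $\theta\vee j\bar\phi$ in $\Decompbot(\mu)$ and verify $U\theta'=\bar\phi$, cocartesianness is formal: for any $\psi$ with $\theta\le\psi$ and $U\psi\ge\bar\phi$ we get $j\bar\phi\le jU\psi\le\psi$ from the counit, so $\psi\ge\theta\vee j\bar\phi=\theta'$.

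So the content is the construction of $\theta'$, which I would carry out as an ``overlay''. Write $\bar\theta=U\theta$. For each generating $n$-cell $c$ of $\bar\theta$, let $c_1,\dots,c_r$ (in $\circ_{n-1}$-order) be the generating $n$-cells of $\theta$ whose $\circ_{n-1}$-composite is $c$, and let $d_1,\dots,d_s$ be the generating $n$-cells of $\bar\phi$ into which $\bar\phi$ subdivides $c$. Using the identification of cells of $\Free(\overline{\{\mu\}})$ with finite well-formed closed subsets of $\overline{\{\mu\}}$ (\cite[Theorem 3.1.21]{forest}), define the generating $n$-cells of $\theta'$ lying over $c$ to be the restrictions $c_i\wedge d_j$ --- the cells cut out by the intersections of the corresponding well-formed closed sets --- and take the evident lower-dimensional cells. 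The verification that this is legitimate is where the torsion-free axioms enter: one checks (i) each $c_i\wedge d_j$ is again a well-formed closed set, hence a genuine cell; (ii) the unions assembling $c$ (along $\circ_{n-1}$ within a fixed column $d_j$, and along $\circ_{\le n-2}$ across columns) remain disjoint, so that $\theta'$ is a well-defined subcategory of $\Free(\overline{\{\mu\}})$; (iii) acyclicity of $P$ makes the $\circ_{n-1}$-order of the $c_i\wedge d_j$ within each column unambiguous; and (iv) the resulting pasting scheme is isomorphic to an object of $\Theta_n$ and still contains $\mu$, so that $\theta'\in\Decompbot(\mu)$.

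It then remains to identify $\theta'$ with $\theta\vee j\bar\phi$ and to check $U\theta'=\bar\phi$. By construction each $c_i$ is the $\circ_{\le n-2}$-composite of the cells $c_i\wedge d_j$ (as the $d_j$ subdivide $c$), so $\theta\subseteq\theta'$; dually each $d_j$ is the $\circ_{n-1}$-composite of the $c_i\wedge d_j$ over $i$, so composing away the $\circ_{n-1}$-chains of $\theta'$ recovers exactly the $d_j$, i.e. $U\theta'=\bar\phi$. Finally, if $\chi\in\Decompbot(\mu)$ satisfies $\chi\ge\theta$ and $\chi\ge j\bar\phi$, then the support of each generating cell of $\chi$ lies in the support of some $c_i$ (since $\chi$ refines $\theta$) and in the support of some $d_j$ (since $\chi$ refines $\bar\phi$), hence in the support of $c_i\wedge d_j$; so $\chi$ refines $\theta'$, showing $\theta'=\theta\vee j\bar\phi$ and completing the argument.

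The main obstacle is step (2): confirming that the overlay $\theta'$ is a genuine object of $\Decompbot(\mu)$. The coarsest common refinement of two pasting subdivisions of a cell need not be a valid globular pasting scheme for general computads, and it is precisely well-formedness, the disjointness clauses in the composition axioms, and acyclicity --- i.e. the axioms (T0)--(T4) of a torsion-free complex --- that rescue the construction here (and that also force $U\theta'$ to equal $\bar\phi$ exactly, rather than some coarser decomposition). This is the sense in which, as anticipated in the introduction, the proof of this lemma already uses some details coming from $P$ being a torsion-free complex; the subsequent \cref{lem:cocart'} and \cref{lem:cocart''} should then be bootstrapped from this case by rearranging and iterating the forgetful and boundary functors of \cref{obs:u}.
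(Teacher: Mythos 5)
Your formal reduction is fine: for posets, a cocartesian lift of $U\theta \leq \bar\phi$ is exactly a least element $\theta'$ of $\{\psi : \theta \leq \psi,\ \bar\phi \leq U\psi\}$ with $U\theta' = \bar\phi$, and via the adjunction $j \dashv U$ of \cref{obs:u}(1) this set is the set of upper bounds of $\{\theta, j\bar\phi\}$, so it suffices to exhibit the join $\theta \vee j\bar\phi$ and compute its image under $U$. The problem is that everything after that reduction --- your steps (i)--(iv) --- is asserted rather than proved, and that is where the entire content of the lemma lives. It is not automatic that the intersection of two well-formed closed sets is well-formed (closedness survives intersection, but fork-freeness and the moving conditions do not in general); it is not automatic that for fixed $j$ the set of $i$ with $c_i \wedge d_j \neq \emptyset$ is an interval, nor that these pieces are $\circ_{n-1}$-composable with composite exactly $d_j$; and it is not automatic that the resulting shape is an object of $\Theta_n$ rather than some non-globular overlay. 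You correctly identify that the torsion-free axioms are what would have to rescue each of these claims, but identifying the obstacle is not the same as clearing it: as written, the proof of the existence of the common refinement is missing, and carrying it out would amount to re-proving a substantial portion of Forest's segment/merging machinery.

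The paper avoids constructing arbitrary common refinements altogether. It reduces to locally cocartesian lifts over \emph{indecomposable} morphisms of the base, i.e.\ a single splitting of one generator $c = b \circ_i a$ (with $i \leq n-2$) into $a$ and $b$. By \cref{prop:pos-iso}, an object of $\Decompatbot{[0,n-1]}(\mu)$ over $\theta \in \Decompatbot{[0,n-2]}(\mu)$ is precisely $\theta$ together with, for each generator $d$, a linear preorder on $d_n$ refining $\triangleleft_{n-1}$; the lift is then just the restriction of the linear preorder on $c_n$ to $a_n$ and $b_n$. The only combinatorial input needed is that $c_n = a_n \amalg b_n$ with $a_n, b_n$ unions of $\triangleleft_{n-1}$-connected components (so the restricted preorders are still valid decompositions of $a$ and $b$), after which local cocartesianness follows from \cref{prop:pos-iso} and one checks closure of these lifts under composition. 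If you want to salvage your approach, the most economical fix is the same one: do not try to refine against all of $\bar\phi$ at once, but only against one indecomposable splitting at a time, where the linear-preorder description makes the lift explicit and the well-formedness questions disappear.
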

\begin{proof}
    Let $\theta \to \theta'$ be an indecomposable morphism in $\Decompatbot{[0,n-2]}(\mu)$, so that two adjacent generators $a,b$ in $\theta'$ are merged to a single generator $c = b \circ_i a$ in $\theta$. Suppose that $\zeta \in \Decompatbot{[0,n-1]}(\mu)$ has $U^{[0,n-2]}_{[0,n-1]}(\zeta) = \theta$.
    Note that the data of $\zeta$ consists of the data of $\theta$ along with, for each generator $d$ in $\theta$, an element of $\Decompatbot{\{n-1\}}(d)$, which by \cref{prop:pos-iso} amounts to the choice of a linear preorder on $d_n$ refining $\triangleleft_{n-1}$. The map $\theta \to \theta'$ partitions $c_n = a_n \amalg b_n$ in such a way that $a_n, b_n$ are each unions of connected components under $\triangleleft_{n-1}$. A cocartesian lift $\zeta \to \zeta'$ is obtained from $\zeta$ by taking the same linear preorder on $d_n$ for $d \neq a,b$, and restricting the linear preorder given by $\zeta$ on $c_n$ to linear preorders on $a_n$ and $b_n$. For \cref{prop:pos-iso} implies that this is a locally cocartesian lift, and the collection of composites of such lifts is closed under composition, so it is a cocartesian lift.
\end{proof}

% \begin{lem}\label{lem:supp-blah}
%     Let $P$ be a loop-free torsion-free complex with an $n$-cell $\mu$. Suppose $k < n$ is such that for $k < i \leq n$, the preorder $\myPoset_i(\mu)$ is an equivalence relation. Then for $k < i \leq n$ we have $(\mu_i)_i \cup (\mu_{-i})_i = (\bar \mu)_i$.
% \end{lem}
% \begin{proof}
%     In order to have an $i$-cell in the support of $\mu$ but not in the $\pm i$ -boundary of $\mu$, a $\circ_{\geq i}$ composition must have taken place when assembling the atoms of $\bar \mu$ to form $\mu$.
% \end{proof}

\begin{lem}\label{lem:foo}
    Let $P$ be a torsion-free complex with an $n$-cell $\mu$. Let $1 \leq k \leq n$. Suppose that for every $k < i \leq n$, the preorder $\myPoset_i(\mu)$ is an equivalence relation. Then 
    \begin{enumerate}
        \item For fixed $i$, $k < i \leq n$, and distinct $i$-atoms $a,b \in \mu_i$, the $k$-boundaries $\langle a \rangle_k, \langle b \rangle_k$ are disjoint. 
        \item If $k < i \leq j \leq n$ and $a \in \mu_j$, the $k$-boundary $\langle a \rangle_k$ is a union of $k$-boundaries of $i$-atoms $b \in \mu_i$. Namely, it is the union of $\langle b \rangle_k$ for $b \in a_i$.
        \item For $a \in \mu_i$, $k < i \leq n$, we have $\langle a \rangle_k \subseteq \mu_k$.
        \item $\mu$ may be assembled from its atoms using only $\circ_i$ for $i \leq k-1$.
    \end{enumerate}
\end{lem}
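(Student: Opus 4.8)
The plan is to prove assertions (1)--(3) by a simultaneous downward induction on $i$, packaging the inductive hypothesis as a single \emph{rigidity} statement: for $k\le i\le n$ both components $\mu_i$ and $\mu_{-i}$ of the cell $\mu$ decompose as \emph{disjoint} unions $\coprod_{a\in\mu_n}\langle a\rangle_i$ and $\coprod_{a\in\mu_n}\langle a\rangle_{-i}$ indexed by the top atoms of $\mu$. Part (4) is then read off at the end. An auxiliary induction on the number of atoms runs in parallel: it dispatches the ``internal'' cases of (1)--(3) in which the atoms under consideration all lie in the boundary of a single higher atom $a$, since $\langle a\rangle$ is again a (smaller) torsion-free complex satisfying the hypotheses in every dimension, so we may invoke the already-known instances of the lemma for it.

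The base $i=n$ of the rigidity statement is trivial, as $\langle a\rangle_{\pm n}=\{a\}$; and the base $i=k+1$ of (1)--(3) records only that the components of a cell are fork-free (\cref{def:tf}). For the inductive step I would assume rigidity in dimension $i+1$ and deduce it in dimension $i$. Here the hypothesis that $\myPoset_{i+1}(\mu)$ is an equivalence relation enters first, via Forest's decomposition lemma \cite[Lem 3.3.2.2]{forest-thesis} --- exactly in the manner of the proof of \cref{lem:exists-min} --- to see that the blocks $\langle a\rangle_{i+1}$ not only partition $\mu_{i+1}$ but also have pairwise disjoint \emph{full} $i$-boundaries; then the defining identities $\langle a\rangle_i=(\langle a\rangle_{i+1})^{\pm}$ and $\langle a\rangle_{-i}=(\langle a\rangle_{i+1})^{\mp}$, together with the cell axioms (``$\mu_{i+1}$ moves $\mu_{-i}$ to $\mu_i$''), let one compute $\mu_i$ and $\mu_{-i}$ blockwise, the disjointness being preserved because there is no cancellation between blocks; the acyclicity axiom and $\globe$-regularity (\cref{prop:theta-reg}) rule out the cyclic and endomorphic configurations that would otherwise let the forward and backward boundaries overlap. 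Granting rigidity in dimensions $\ge i$, assertions (2) and (3) in dimension $i$ follow by iterating the globular identity $\langle a\rangle_k=\bigl(\partial_k\,\partial_{i-1}\langle a\rangle\bigr)_k$ (again, the absence of cancellation is what makes the forward $k$-boundary of a boundary cell the honest union of the forward $k$-boundaries of its atoms), and (1) in dimension $i$ follows since any two distinct $i$-atoms $a,b$ of $\mu$ lie in blocks $\langle c_a\rangle_i,\langle c_b\rangle_i$: if $c_a\neq c_b$ then $\langle a\rangle_k\subseteq\langle c_a\rangle_k$ and $\langle b\rangle_k\subseteq\langle c_b\rangle_k$ are separated by the disjointness of $\mu_k=\coprod_c\langle c\rangle_k$, while if $c_a=c_b$ one invokes the atom case of (1) supplied by the auxiliary induction.

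Finally, (4) is a consequence of rigidity: in dimensions $\ge k$ the finite closed well-formed set underlying $\mu$ is the disjoint union over $a\in\mu_n$ of those underlying the atoms $\langle a\rangle$, so for $j\ge k$ no $\circ_j$ admits a nontrivial splitting of $\mu_{j+1}$ (a splitting would either cut an atom, which is impossible, or separate the blocks, which is impossible because distinct blocks have disjoint full $j$-boundaries), whence by Forest's decomposition lemma $\mu$ admits no $\circ_j$-decomposition for $j\ge k$; applying this recursively --- along the auxiliary induction on the number of atoms --- to the sub-cells produced by the successive $\circ_j$-decompositions with $j\le k-1$ that do exist, one concludes that $\mu$ can be built up from its atoms using the operations $\circ_j$ only for $j\le k-1$. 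I expect the crux to be the inductive step of the second paragraph: identifying the precise form of \cite[Lem 3.3.2.2]{forest-thesis} that converts the ``$\myPoset$ is an equivalence relation'' hypothesis into the no-cancellation statement, and keeping the forward/backward boundary bookkeeping straight throughout.
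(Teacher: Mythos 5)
Your central inductive invariant --- that for $k\le i\le n$ the sets $\mu_i$ and $\mu_{-i}$ decompose as \emph{disjoint unions} $\coprod_{a\in\mu_n}\langle a\rangle_{\pm i}$ indexed by the top-dimensional atoms --- is false under the hypotheses of the lemma, and the argument collapses with it. The obstruction is whiskering: a cell satisfying the hypotheses may contain atoms of intermediate dimension $i$ with $k<i<n$ that lie in the boundary of no $n$-atom. Concretely, let $P$ be the torsion-free complex generated by a $3$-atom $A$ from $\alpha$ to $\alpha'$, where $\alpha,\alpha'\colon p\Rightarrow q$ with $p,q\colon v\to w$, together with a $2$-atom $\beta\colon f\Rightarrow g$ with $f,g\colon u\to v$ (a loop-free Steiner complex, hence torsion-free), and let $\mu=\langle A\rangle\circ_0\langle\beta\rangle$. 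Then $\mu_3=\{A\}$, $\mu_2=\{\beta,\alpha'\}$, $\mu_{-1}=\{f,p\}$. Here $\myPoset_3(\mu)$ is trivially an equivalence relation, and $\myPoset_2(\mu)$ is an equivalence relation with classes $\{\alpha'\}$ and $\{\beta\}$: no $\triangleleft_1$-relation crosses the classes, since $\beta^{+}=\{g\}$ and $(\alpha')^{-}=\{p\}$ are disjoint, as are $(\alpha')^{+}=\{q\}$ and $\beta^{-}=\{f\}$. So the hypotheses hold with $k=1$, $n=3$, yet $\coprod_{a\in\mu_3}\langle a\rangle_2=\{\alpha'\}\subsetneq\mu_2$ and $\coprod_{a\in\mu_3}\langle a\rangle_{-1}=\{p\}\subsetneq\mu_{-1}$. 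This breaks your deduction of (1) (the atom $\beta\in\mu_2$ lies in no block $\langle c\rangle_2$ with $c\in\mu_3$, so the dichotomy ``$c_a\ne c_b$ or $c_a=c_b$'' is unavailable) and your deduction of (4) (the closed well-formed set underlying $\mu$ is \emph{not}, in dimensions $\ge k$, the disjoint union of those underlying the $\langle a\rangle$, $a\in\mu_n$). The conclusions of the lemma do hold in this example, but not for the reasons you give; the correct blocks would have to be indexed by the $\myPoset_i(\mu)$-equivalence classes (equivalently, by maximal atoms of $\overline{\{\mu\}}$ of \emph{all} dimensions $>i$ together with leftover $i$-atoms), and controlling those leftovers is essentially the whole content of (4).

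The paper's proof runs in a different order and never invokes a global block decomposition. Statement (2) concerns only the iterated forward boundary of a single atom and holds with no hypothesis on $\myPoset$. Statement (1) then follows by induction on $i-k$: if $x\in\langle a\rangle_k\cap\langle b\rangle_k$, then by (2) one has $x\in u^+$ and $x\in v^+$ for some $u\in\langle a\rangle_{k+1}$, $v\in\langle b\rangle_{k+1}$; both lie in the fork-free set $\mu_{k+1}$ (this is where (3) at level $k+1$ is used), forcing $u=v$ and contradicting the disjointness of $\langle a\rangle_{k+1}$ and $\langle b\rangle_{k+1}$ supplied by the inductive hypothesis. The equivalence-relation hypothesis enters only through (4), which is obtained by iterating Forest's decomposition lemma \cite[Lem 3.3.2.2]{forest-thesis} exactly as in \cref{lem:exists-min} --- you correctly identified that lemma as the crux, but it is used to produce the $\circ_j$-decompositions for $j\le k-1$ directly, not to establish a no-cancellation/rigidity statement --- and (3) is then read off from (4) via the composition formula of \cref{def:tf}. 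If you want to repair your write-up along the paper's lines, the essential reorganization is to prove (2) first and unconditionally, derive (4) from Forest's lemma, and let (3) and (1) follow, rather than front-loading a decomposition of $\mu$ over $\mu_n$ that the hypotheses do not support.
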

\begin{proof}
    (2) holds without any ``equivalence relation" hypothesis. So then too does (1): if $a,b \in \mu_i$ and $x \in \langle a \rangle_k \cap \langle b \rangle_k$, then by induction on $i-k$ we have that $\langle a \rangle_{k+1}$ and $\langle b \rangle_{k+1}$ are disjoint, but by fork-freeness there must be $y$ in their intersection with $x \in y^+$. 

    Note that (4) implies (3) by the composition formula of \cref{def:tf}. And (4) follows by iteratively applying \cite[Lemma 3.3.2.2]{forest-thesis}.
\end{proof}

\begin{lem}\label{lem:bdy-iso}
    Let $P$ be a torsion-free complex with an $n$-cell $\mu$. Then for $k < l \leq m \leq n$, and $S \subseteq [0,k-1]$, if the preorder $\myPoset_j(\partial_m \mu)$ is an equivalence relation for $j \geq l$, then the map 
    \[D_{m,S}^{l,S} : \Decompatbot{S}(\partial_m \mu) \to \Decompatbot{S}(\partial_l \mu)\]
    is fully faithful. Its image comprises those cells $\theta$ such that for every $l < i \leq m$ and every $a \in \mu_i$, there exists a generating $l$-cell $c$ of $\theta$ such that $\langle a \rangle_l \subseteq c_l$.
\end{lem}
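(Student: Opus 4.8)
The plan is to show that $D := D_{m,S}^{l,S}$ is a poset embedding by producing an explicit inverse on its image, the construction being governed by the rigidity of $\partial_m\mu$ in dimensions $>l$; if one prefers, one may reduce to the case $m=l+1$ by factoring $D_{m,S}^{l,S}=D_{l+1,S}^{l,S}\circ D_{m,S}^{l+1,S}$ and inducting on $m-l$ (checking first that the equivalence-relation hypothesis descends to $\partial_{l+1}\mu$), but I will argue directly for general $m$. The first step is to extract the rigidity: apply \cref{lem:foo} to the $m$-cell $\partial_m\mu$, with its parameter ``$k$'' taken to be $l$ -- this is legitimate since $\myPoset_j(\partial_m\mu)$ is an equivalence relation for every $l\le j\le m$. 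One obtains: for $l<i\le j\le m$ and $a\in\mu_j$ the $l$-boundary $\langle a\rangle_l$ is the \emph{disjoint} union $\bigcup_{b\in a_i}\langle b\rangle_l$, contained in $\mu_l$, and $\partial_m\mu$ is assembled from its atoms using only $\circ_i$ with $i\le l-1$. Consequently the family $\{\,\langle a\rangle_l\mid a\text{ an atom of }\partial_m\mu,\ \dim a>l\,\}$ is laminar: each such atom sits in a well-defined ``globular tower'' over $\mu_l$, and the condition of the statement need only be imposed on the maximal towers.

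Next, for $\theta$ lying in the set described in the statement I would build a lift $E(\theta)$. Since $\max S\le k-1<l-1$, the composition $\circ_{l-1}$ is unavailable in $\Theta_{l,S}$, so the generating $l$-cells of $\theta$ have pairwise disjoint supports which partition $\mu_l$; by the hypothesis on $\theta$, each atom $a$ of $\partial_m\mu$ with $\dim a>l$ has $\langle a\rangle_l$ contained in the support $c_l$ of a \emph{unique} such generating $l$-cell $c$. For each such $c$ let $\tilde c$ be the subcell of $\Free(\overline{\{\partial_m\mu\}})$ supported on the atoms of $c$ together with every atom $a$ of $\partial_m\mu$ with $\langle a\rangle_l\subseteq c_l$; using the composition formula of \cref{def:tf} and \cref{lem:foo}(4) one checks that the $\tilde c$ assemble, with exactly the low-dimensional composition pattern of $\theta$ (over $\circ_i$, $i\in S$), into a subcategory $E(\theta)\subseteq\Free(\overline{\{\partial_m\mu\}})$ which contains $\partial_m\mu$ and is isomorphic to an object of $\Theta_{m,S}$. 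The point is that the higher atoms live in disjoint globular towers above the $l$-cells while all composition occurs in dimensions $<k<l$, so the two structures are independent.

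Finally, I would check that $D$ and $E$ are mutually inverse monotone maps, so that $D$ is a poset isomorphism onto the stated image -- equivalently, ``fully faithful''. Monotonicity of $D$ is automatic (\cref{obs:u}(2)), and monotonicity of $E$ follows from the disjointness in \cref{lem:foo} together with the composition formula of \cref{def:tf}. The identity $D(E(\theta))=\theta$ holds since $\partial_l\tilde c=c$, and $E(D(\tilde\theta))=\tilde\theta$ holds because a generating cell $g$ of $\tilde\theta$ of dimension $>l$ is the closure of its atoms (cells in a torsion-free complex are well-formed closed sets, \cite{forest}), so the rigidity above forces $g$ to equal the subcell of $\partial_m\mu$ sitting above $g_l=(\partial_l g)_l$. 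For the image, the inclusion ``image $\subseteq$ claimed set'' is easy: if $\theta=D(\tilde\theta)$ and $a\in\mu_i$ with $l<i\le m$, then $a$ lies in the support of some generating cell $g$ of $\tilde\theta$; closure of $g$ gives $\langle a\rangle\subseteq g$, whence $\langle a\rangle_l\subseteq g_l=(\partial_l g)_l$, and since the generating $l$-cells of $\theta$ partition $\mu_l$ the whole of $\langle a\rangle_l$ lies inside one of them. The reverse inclusion is precisely the well-definedness of $E$, and this is the main obstacle: one must show that reattaching the higher-dimensional atoms of $\partial_m\mu$ over the generating $l$-cells of any $\theta$ satisfying the condition produces a genuine $\Theta_{m,S}$-shaped decomposition, which is exactly where the combinatorics of torsion-free complexes (\cref{lem:foo}, \cref{def:tf}) is needed.
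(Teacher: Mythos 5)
Your proposal is correct and follows essentially the same route as the paper: the forward inclusion via \cref{lem:foo}, and the converse via an explicit section that reattaches the atoms of dimension $>l$ over the generating $l$-cells singled out by the containment condition, then checking the section is inverse and order-preserving. The only difference is presentational -- the paper builds the lift one dimension at a time by induction on $m-l$ (with an explicit formula $c'_{l+1}=C$, $c'_{-l}=C^-\cup c_{-l}\setminus C^+$ for the new cells), whereas you construct $\tilde c$ in all dimensions at once; the well-formedness check you flag as the main obstacle is exactly the step the paper discharges with that formula and fork-freeness.
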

\begin{proof} 
    First, if $\theta \in \Decompatbot S (\partial_m \mu)$ and if $a \in \mu_i$ for $l < i \leq m$, then $a \in d_i$ for some generating $i$-cell $d$ of $\theta$. Because the preorder $\myPoset_l(\partial_m \mu)$ is an equivalence relation, by \cref{lem:foo}(3) we have $\langle a \rangle_l \subseteq d_l = (\partial_l d)_l$.
    % we have $\langle a \rangle_l \subseteq d_l = (\partial_l d)_l$. For if there is $x \in \langle a \rangle_l \setminus d_l$, then when $\partial_{l+1} d$ is built as a composite of its atoms, by the formula in \cref{def:tf} there must be a step where a $\circ_l$ composition is performed, and by fork-freeness there must be $y \in \langle a \rangle_{l+1}$ with $x \in y^+$ and some $z \in d_{l+1}$ with $x \in z^-$. So $y \triangleleft_l z$. \problematic{need to finish argument here} 
    Moreover, $\partial_l d$ is a generating cell in $D_{m,S}^{l,S}(\theta)$, so the stated condition is satisfied by the image of $D_{m,S}^{l,S}(\theta)$.

    Conversely, if $\zeta \in \Decompatbot{S}(\partial_l \mu)$ satisfies this condition, then consider $b \in \mu_{l+1}$. By hypothesis, $\langle b \rangle_l \subseteq c_l$ for a (necessarily unique) generating $l$-cell $c$ of $\zeta$. In this way we can partition the $(l+1)$-atoms of $\partial_{l+1} \mu$. By fork-freeness of $\mu_{l+1}$, the sets $\langle b \rangle_l$ are also disjoint for the various $b \in \mu_{l+1}$. So for each generating $l$-cell $c$ of $\zeta$, there is an $(l+1)$-cell $c'$ of $\overline{\partial_{l+1} \mu}$ formed by taking $c'_{l+1}$ to be the set $C$ of all $b \in \mu_{l+1}$ with $\langle b \rangle_l \subseteq c_l$, taking $c'_{-l} = C^- \cup c_{-l} \setminus C^+$, and $c'_i = c_i$ for all other $i$. These various $c'$ fit together to form a cell $\theta \in \Decompatbot{S}(\partial_l \mu)$ with $D_{l+1,S}^{l,S}(\theta) = \zeta$. Moreover, if $a \in \mu_i$ for $l+1 < i \leq m$, there is a generating $l$-cell $c$ of $\zeta$ such that $\langle a \rangle_l \subseteq c_l$, and we in fact have $\langle a \rangle_{l+1} \subseteq c'_{l+1}$ because if $x \in \langle a \rangle_{l+1} \setminus c'_{l+1}$, then $\partial_l x \not \in c_l$. Thus by induction on $m-l$, we have that $\theta$ lifts to $\Decompatbot S (\partial_m \mu)$. We conclude that $\Decompatbot S (\partial_l \mu)$ is surjective onto the indicated image.
    
    Moreover, the section we have constructed of $D_{m,S}^{l,S}$ is in fact an inverse because every $\theta \in \Decompatbot S (\partial_m \mu)$ must partition the $(l+1)$-cells as we have done. It is also order-preserving, since if $\zeta_1 \subseteq \zeta_2$ then the cells $c'$ constructed for $\zeta_1$ are just unions of the cells $c'$ constructed for $\zeta_2$, so that $\theta_1 \subseteq \theta_2$. Thus we have an order isomorphism as desired.
\end{proof}

\begin{prop}\label{lem:forest-order}
    Let $P$ be a torsion-free complex with an $n$-cell $\mu$ and let $1 \leq k \leq n$ be such that the preorder $\myPoset_i(\mu)$ is an equivalence relation for $i \geq k+1$.
    Then $L_k^\mu : \Decompat{\{k-1\}}(\mu) \to \lin (\myPoset_k(\mu))$ is an isomorphism of posets.
\end{prop}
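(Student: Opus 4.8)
The plan is to reduce to the $k$-dimensional cell $\partial_k\mu$, where \cref{prop:pos-iso} already supplies the desired isomorphism, by stripping off the structure in dimensions $>k$ one dimension at a time. If $k=n$ the statement is exactly \cref{prop:pos-iso} (the hypothesis being vacuous), so assume $k<n$, and note that \cref{lem:foo} applies with this $k$.

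First I would observe that $L_k^\mu$ factors as $L_k^{\partial_k\mu}\circ D_{n,\{k-1\}}^{k,\{k-1\}}$: by \cref{def:extract-preorder} the preorder $L_k^\mu(\theta)$ on $(\partial_k\mu)_k$ sees only the level-$k$ supports of the generators of $\theta$ and their $\circ_{k-1}$-composability, neither of which changes on passing to $k$-boundaries. Here \cref{lem:foo}(4) is used to see that $D_{n,\{k-1\}}^{k,\{k-1\}}$, which sends the bottom element $\mu$ to the bottom element $\partial_k\mu$, sends non-bottom elements to non-bottom elements: under the hypothesis $\mu$ is assembled using only $\circ_i$ with $i\le k-1$, so a nontrivial $\circ_{k-1}$-decomposition of $\mu$ remains nontrivial after taking $k$-boundaries. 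By \cref{prop:pos-iso} applied to $\partial_k\mu$, the map $L_k^{\partial_k\mu}$ is an isomorphism onto $\lin(\myPoset_k(\partial_k\mu))$; moreover $\myPoset_k(\partial_k\mu)$ has the same carrier $(\partial_k\mu)_k$ and the same generating relation $\triangleleft_{k-1}$ as $\myPoset_k(\mu)$, but with all equivalence classes singletons, so that $\lin(\myPoset_k(\mu))$ is precisely the full subposet of $\lin(\myPoset_k(\partial_k\mu))$ consisting of those linear preorders keeping each set $\langle a\rangle_k$ (for $a$ an atom of $\overline{\{\mu\}}$ of dimension $>k$) within a single equivalence class. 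It thus remains to identify the image of $D_{n,\{k-1\}}^{k,\{k-1\}}$ with exactly this subposet.

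I would do this by factoring $D_{n,\{k-1\}}^{k,\{k-1\}}=D_{k+1,\{k-1\}}^{k,\{k-1\}}\circ D_{n,\{k-1\}}^{k+1,\{k-1\}}$. The first factor is handled by \cref{lem:bdy-iso} (with $m=n$, $l=k+1$, $S=\{k-1\}$, whose hypothesis is exactly ours): it is fully faithful, with image those $\theta$ for which every higher atom $a\in\mu_i$, $i>k+1$, has $\langle a\rangle_{k+1}$ inside a single generating $(k+1)$-cell. The second factor is the one descent step falling just outside the range of \cref{lem:bdy-iso}; I would prove directly, re-running a simplified form of that lemma's argument, that it too is fully faithful, with image those $\zeta$ for which every $a\in\mu_{k+1}$ has $\langle a\rangle_k$ inside a single generating $k$-cell of $\zeta$ — the point being that by \cref{lem:foo}(1),(2) (which need no equivalence-relation hypothesis) the sets $\langle a\rangle_k$, $a\in\mu_{k+1}$, are pairwise disjoint, so a lift of $\zeta$ to a $\Theta_{n,\{k-1\}}$-decomposition of $\partial_{k+1}\mu$ is forced and exists iff this condition holds. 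Finally, \cref{lem:foo}(2) lets me telescope: since the $k$-boundary of any higher atom is the union of the $k$-boundaries of the $(k+1)$-atoms below it, the combined image conditions reduce to the single condition that each $\langle a\rangle_k$ ($a\in\mu_{k+1}$), equivalently each equivalence class of $\myPoset_k(\mu)$, lie within a single generating $k$-cell of $\zeta$ — which, under $L_k^{\partial_k\mu}$, is exactly membership in $\lin(\myPoset_k(\mu))$, so $L_k^\mu$ is an isomorphism onto $\lin(\myPoset_k(\mu))$.

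The main obstacle I anticipate is the second factor $D_{k+1,\{k-1\}}^{k,\{k-1\}}$: the descent from $\partial_{k+1}\mu$ to $\partial_k\mu$ is not covered by \cref{lem:bdy-iso} and must be argued by hand, after which one still has to verify carefully that the telescoping of the various image conditions collapses to membership in $\lin(\myPoset_k(\mu))$ and not to something coarser or finer.
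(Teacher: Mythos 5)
Your proposal is correct and is essentially the paper's own argument: the paper phrases it as a pullback square over $\operatorname{Lin}(\myPoset_k(\partial_k\mu))$ with right-hand vertical $L_k^{\partial_k\mu}$ an isomorphism by \cref{prop:pos-iso}, which amounts to exactly your factorization of $L_k^\mu$ through $D_{n,\{k-1\}}^{k,\{k-1\}}$ together with the identification of its image via \cref{lem:bdy-iso}. You are right that the final descent step from $k+1$ to $k$ falls outside the literal hypotheses of \cref{lem:bdy-iso} (both the constraint $S\subseteq[0,k-1]$ with $k<l$ and the equivalence-relation hypothesis at level $k$ fail); the paper compresses this into the phrase ``and the definition of $\myPoset_{k+1}(\mu)$,'' whereas you correctly flag it and supply the one-step argument by hand using \cref{lem:foo}.
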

\begin{proof}
    From \cref{lem:bdy-iso} and the definition of $\myPoset_{k+1}(\mu)$, we see that the following square is a pullback:
    \begin{equation*}
        \begin{tikzcd}
            \Decompat{\{k-1\}} (\mu) \ar[r,"D_{n,\{k-1\}}^{k,\{k-1\}}"] \ar[d, "L_k^\mu"] & \Decompat{\{k-1\}} (\partial_{k} \mu) \ar[d,"L_k^{\partial_{k} \mu}"] \\
            \lin (\myPoset_{k}(\mu)) \ar[r] & \lin (\myPoset_{k}(\partial_{k} \mu))
        \end{tikzcd}
    \end{equation*}
    By the case $k = n$ (\cref{prop:pos-iso}), the downward arrow on the right is an isomorphism, so the downward arrow on the left is as well.
\end{proof}

\begin{lem}\label{lem:cocart'}
    Let $P$ be a torsion-free complex with an $n$-cell $\mu$ and let $1 \leq k \leq n$ be such that the preorder $\myPoset_i(\mu)$ is an equivalence relation for $i \geq k+1$. Then the functor $U^{[0,k-1]}_{[0,k]} : \Decompatbot{[0,k]}(\mu) \to \Decompatbot{[0,k-1]}(\mu)$ is a cocartesian fibration.
\end{lem}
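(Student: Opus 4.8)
The plan is to reduce to \cref{lem:cocart} by a base-change argument. We may assume $k<n$, since for $k=n$ the functor $U^{[0,k-1]}_{[0,k]}$ is the identity (there are no $(n+1)$-cells). Write $\mu' := \partial_{k+1}\mu$, a $(k+1)$-cell. Using the boundary functors of \cref{obs:u}(2) for the horizontal arrows and the forgetful functors of \cref{obs:u}(1) for the vertical arrows, one has a commuting square
\begin{equation*}
\begin{tikzcd}
\Decompatbot{[0,k]}(\mu) \ar[r, "D"] \ar[d, "U^{[0,k-1]}_{[0,k]}"'] & \Decompatbot{[0,k]}(\mu') \ar[d, "U^{[0,k-1]}_{[0,k]}"] \\
\Decompatbot{[0,k-1]}(\mu) \ar[r, "D"'] & \Decompatbot{[0,k-1]}(\mu')
\end{tikzcd}
\end{equation*}
(commutativity being clear, since merging $\circ_k$-composites and passing to the $\partial_{k+1}$-boundary commute). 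By \cref{lem:cocart} applied to the $(k+1)$-cell $\mu'$ in place of $\mu$, the right-hand vertical map is a cocartesian fibration; note that \cref{lem:cocart} requires no hypothesis of the ``equivalence relation'' kind. Since cocartesian fibrations are stable under base change (\cite{htt}), it suffices to show the square above is a pullback: then $U^{[0,k-1]}_{[0,k]}\colon \Decompatbot{[0,k]}(\mu)\to\Decompatbot{[0,k-1]}(\mu)$ is, up to equivalence, the base change of the right-hand map, and we are done.

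To identify the pullback, recall as in \cref{lem:cocart} that an object $\zeta\in\Decompatbot{[0,k]}(\mu)$ is the data of its image $\theta := U^{[0,k-1]}_{[0,k]}(\zeta)\in\Decompatbot{[0,k-1]}(\mu)$ together with, for each generator $d$ of $\theta$, a $\circ_k$-decomposition of $d$, i.e.\ an element of $\Decompatbot{\{k\}}(d)$; under this description $D$ replaces each $\Decompatbot{\{k\}}(d)$ by $\Decompatbot{\{k\}}(\partial_{k+1}d)$ and $U^{[0,k-1]}_{[0,k]}$ discards it. Hence the square is a pullback precisely when, for every generator $d$ arising in this way, the boundary functor $\Decompatbot{\{k\}}(d)\to\Decompatbot{\{k\}}(\partial_{k+1}d)$ is an isomorphism of posets. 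When $\dim d\le k+1$ this is immediate, as then $\partial_{k+1}d=d$; in particular when $\dim d=k+1$ both sides are $\lin(\myPoset_{k+1}(d))$ by \cref{prop:pos-iso}.

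The remaining, genuinely delicate, point is the case $\dim d>k+1$: one must show that the $\circ_k$-pasting data of such a generator is entirely concentrated in dimension $k+1$, so that it is faithfully recorded by the $\circ_k$-decomposition of $\partial_{k+1}d$. This is where the standing hypothesis enters: since $\myPoset_i(\mu)$ is an equivalence relation for every $i\ge k+1$, \cref{lem:foo} (which may be invoked with its parameter taken to be any value $\ge k$) forces the $(k+1)$-boundaries $\langle x\rangle_{k+1}$ of the higher atoms of $\mu$ to sit disjointly inside the relevant $(k+1)$-skeleton, and a reconstruction argument in the style of the proof of \cref{lem:bdy-iso} then produces the required isomorphism $\Decompatbot{\{k\}}(d)\cong\Decompatbot{\{k\}}(\partial_{k+1}d)$. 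I expect this reconstruction — ruling out genuinely higher-dimensional $\circ_k$-pasting data in a generator $d$ of a $\circ_{\le k-1}$-decomposition of $\mu$ — to be the main obstacle; by contrast, the interaction of the splitting $c=b\circ_i a$ ($i\le k-1$) with $\circ_k$-pastings, which is all that is needed to upgrade the per-generator isomorphisms to the pullback square, is formal, precisely because the $\circ_i$-cut has dimension $i<k$ and is therefore transverse to any $\circ_k$-pasting.
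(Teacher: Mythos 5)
Your overall strategy is exactly the paper's: the same square (with $D_{n,[0,k]}^{k+1,[0,k]}$ and $D_{n,[0,k-1]}^{k+1,[0,k-1]}$ horizontally and the two forgetful functors vertically), the same application of \cref{lem:cocart} to the $(k+1)$-cell $\partial_{k+1}\mu$ to handle the right-hand vertical map, and the same conclusion by stability of cocartesian fibrations under base change. The problem is that the paper's entire proof consists of the sentence ``From \cref{lem:bdy-iso}, we see that the following square is a pullback,'' followed by the base-change step you give. So the one thing you leave open --- the pullback property, which you defer to ``a reconstruction argument in the style of the proof of \cref{lem:bdy-iso}'' and explicitly flag as the main obstacle --- is the only substantive content of the lemma. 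That is a genuine gap, not a routine verification.

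Worse, the specific statement you propose to prove there is too strong, and I believe it is false. You reduce the pullback property to the assertion that $\Decompatbot{\{k\}}(d)\to\Decompatbot{\{k\}}(\partial_{k+1}d)$ is an isomorphism of posets for each generator $d$. But \cref{lem:bdy-iso} itself tells you this boundary map is in general only an order-embedding onto a \emph{proper} sub-poset: its image consists of those $\circ_k$-decompositions whose generating $(k+1)$-cells absorb the supports $\langle a\rangle_{k+1}$ of the atoms $a$ of $d$ of dimension $>k+1$. Whenever $d$ contains such an atom $a$ with $|\langle a\rangle_{k+1}|\geq 2$ (e.g.\ $d$ a $(k+2)$-atom whose target is a $\circ_k$-composite of two $(k+1)$-atoms), the target $\Decompatbot{\{k\}}(\partial_{k+1}d)$ contains decompositions that split $\langle a\rangle_{k+1}$ between distinct generators, and these manifestly do not lift to decompositions of $d$; the ``equivalence relation'' hypothesis does not exclude this, since $\triangleleft_k$-relations \emph{within} a single class of $\myPoset_{k+1}(\mu)$ are permitted. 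So no reconstruction can produce the full isomorphism you ask for. What the pullback actually requires is the weaker compatibility that every $\zeta'$ with $U^{[0,k-1]}_{[0,k]}(\zeta')$ in the image of the bottom embedding already lies in the image of the top embedding, i.e.\ a matching of the two image conditions of \cref{lem:bdy-iso} under $U$; this is where the hypothesis and \cref{lem:foo} must be brought to bear, and it is precisely the step your per-generator reduction obscures rather than resolves. As written, the proposal therefore both misidentifies the form of the needed statement and does not prove it.
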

\begin{proof}
    From \cref{lem:bdy-iso}, we see that the following square is a pullback:
    \begin{equation*}
        \begin{tikzcd}
            \Decompatbot{[0,k]}(\mu) \ar[r,"D_{n,[0,k]}^{k+1,[0,k]}"] \ar[d,"U^{[0,k-1]}_{[0,k]}"] & \Decompatbot{[0,k]}(\partial_{k+1} \mu) \ar[d,"U^{[0,k-1]}_{[0,k]}"] \\
            \Decompatbot{[0,k-1]}(\mu) \ar[r,"D_{n,[0,k-1]}^{k+1,[0,k-1]}"] & \Decompatbot{[0,k-1]}(\partial_{k+1} \mu)
        \end{tikzcd}
    \end{equation*}
    By the case $k = n$ (\cref{lem:cocart}), 
    % \problematic{But if $\myPoset_k(\mu)$ is an equivalence relation, that doesn't imply $\myPoset_k(\partial_{k+1} \mu)$ is an equivalence relation!} 
    the downward arrow on the right is a cocartesian fibration, so the downward arrow on the left is as well.
\end{proof}

\begin{lem}\label{lem:cocart''}
    Let $P$ be a torsion-free complex with an $n$-cell $\mu$ and let $1 \leq k \leq n$ be such that the preorder $\myPoset_i(\mu)$ is an equivalence relation for $i \geq k+1$. Then the functor $U^{[0,k-1]}_{[0,n-1]} : \Decompatbot{[0,n-1]}(\mu) \to \Decompatbot{[0,k-1]}(\mu)$ is a cocartesian fibration.
\end{lem}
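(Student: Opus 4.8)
The plan is to realize $U^{[0,k-1]}_{[0,n-1]}$ as a composite of the one-step forgetful functors already treated in \cref{lem:cocart} and \cref{lem:cocart'}. First I would dispatch the degenerate case $k = n$, in which $U^{[0,n-1]}_{[0,n-1]} = \id$ is trivially a cocartesian fibration, and then assume $1 \leq k \leq n-1$. Since $[0,k-1] \subseteq [0,k] \subseteq \cdots \subseteq [0,n-1]$ is a chain of nested initial segments of $\nats$, the inclusions $j$ of \cref{obs:u}(1) compose, and hence so do their right adjoints; this gives the factorization
\[
    U^{[0,k-1]}_{[0,n-1]} = U^{[0,k-1]}_{[0,k]} \circ U^{[0,k]}_{[0,k+1]} \circ \cdots \circ U^{[0,n-2]}_{[0,n-1]} \colon \Decompatbot{[0,n-1]}(\mu) \longrightarrow \Decompatbot{[0,k-1]}(\mu).
\]

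Because a composite of cocartesian fibrations of $\infty$-categories is again a cocartesian fibration, it then suffices to show that each factor $U^{[0,i-1]}_{[0,i]} \colon \Decompatbot{[0,i]}(\mu) \to \Decompatbot{[0,i-1]}(\mu)$ is a cocartesian fibration for $k \leq i \leq n-1$. The top factor $i = n-1$ is precisely \cref{lem:cocart}, applied unconditionally. For $k \leq i \leq n-2$ I would apply \cref{lem:cocart'} with its parameter ``$k$'' replaced by $i$: this is legitimate since $1 \leq i \leq n$, and because $i \geq k$ gives $i + 1 \geq k+1$, so the standing assumption that $\myPoset_j(\mu)$ is an equivalence relation for all $j \geq k+1$ furnishes the hypothesis of \cref{lem:cocart'} that $\myPoset_j(\mu)$ is an equivalence relation for all $j \geq i+1$.

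I do not expect a genuine obstacle here: all the combinatorial content has already been carried out in \cref{lem:cocart} and \cref{lem:cocart'}, and the only thing to watch is the bookkeeping of the initial-segment and ``equivalence relation'' hypotheses so that \cref{obs:u}(1) and \cref{lem:cocart'} apply at every stage of the factorization, which reduces to the trivial inequality $i+1 \geq k+1$ recorded above.
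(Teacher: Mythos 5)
Your proposal is correct and is essentially identical to the paper's proof: the paper also factors $U^{[0,k-1]}_{[0,n-1]}$ as the composite $U^{[0,k-1]}_{[0,k]} \circ \cdots \circ U^{[0,n-2]}_{[0,n-1]}$ and invokes stability of cocartesian fibrations under composition, with each factor handled by \cref{lem:cocart'} (your use of \cref{lem:cocart} for the top factor is an equivalent, unconditional variant). The extra bookkeeping you record ($i+1 \geq k+1$) is exactly the check the paper leaves implicit.
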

\begin{proof}
    This follows from \cref{lem:cocart'} because $U^{[0,k-1]}_{[0,n-1]} = U^{[0,k-1]}_{[0,k]} \circ \cdots \circ U^{[0,n-2]}_{[0,n-1]}$ and cocartesian fibrations are stable under composition.
\end{proof}

% \section{Main poset theorem}
\subsection{Contractibility of the space of composites}

In this subsection, we prove \cref{thm:sc-main}, showing that the poset $\Decomp(\mu)$ is contractible. The proof involves reducing to showing that $\Decompat{[k,n-1]}(\mu)$ is contractible, and thence to showing that $\Decompat{\{k\}}(\mu)$ is contractible. For this we use the isomorphism and fibration results of this section and the contractibility results of the previous section.

\begin{lem}\label{cor:forest-sd-ideal}
    Let $P$ be a torsion-free complex, and let $\mu \in P$ be an $n$-cell, and let $1 \leq k \leq n$ be such that the preorder $\myPoset_i(\mu)$ is an equivalence relation for $i \geq k+1$ but not an equivalence relation for $i = k$. Then $\Decompat{\{k\}}(\mu)$ is contractible.
\end{lem}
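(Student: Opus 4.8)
The plan is to obtain the statement as an immediate combination of the order-isomorphism of \cref{lem:forest-order} with the contractibility criterion of \cref{cor:non-disc-contr}; essentially all of the real work has already been carried out in establishing those two facts, and here they just need to be plugged together. Concretely, the first step is to invoke \cref{lem:forest-order}: one half of the hypothesis of the present lemma — that the preorder $\myPoset_i(\mu)$ is an equivalence relation for every $i \geq k+1$ — is precisely the hypothesis of \cref{lem:forest-order}, so it applies and exhibits $L_k^\mu$ as an isomorphism of posets from the poset of composites under consideration onto $\lin(\myPoset_k(\mu))$, the poset of non-codiscrete linear preorders refining $\myPoset_k(\mu)$. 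Since an isomorphism of posets induces an isomorphism of nerves, hence a homeomorphism of classifying spaces, it then suffices to show that $\lin(\myPoset_k(\mu))$ is contractible.

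The second step handles exactly this. The remaining half of the hypothesis of the present lemma is that $\myPoset_k(\mu)$ is \emph{not} an equivalence relation, which is precisely the hypothesis under which \cref{cor:non-disc-contr} asserts that $\lin(-)$ of a preorder is contractible. Applying \cref{cor:non-disc-contr} with the preorder taken to be $\myPoset_k(\mu)$ therefore completes the argument.

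I do not expect any genuine obstacle here. The substantive steps — the combinatorial identification of $\Theta$-shaped composites with refining linear preorders (\cref{prop:pos-iso} and \cref{lem:forest-order}, resting on the structure theory of \cite{forest-thesis}) and the lattice-theoretic contractibility input (\cref{lem:dist-lat'} feeding \cref{cor:non-disc-contr}) — are already in place. The only point requiring attention is the bookkeeping: checking that the two clauses of the hypothesis line up exactly with what \cref{lem:forest-order} and \cref{cor:non-disc-contr} respectively demand, which they do. (For context, the existence of such a $k$ when $\mu$ is not an atom is guaranteed by \cref{lem:exists-min}, and when $\mu$ is an atom the hypotheses of the present lemma cannot be satisfied, so there is nothing to prove.)
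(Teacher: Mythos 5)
Your proof is correct and is essentially identical to the paper's own (two-line) proof: invoke \cref{lem:forest-order} for the isomorphism with $\lin(\myPoset_k(\mu))$ and then \cref{cor:non-disc-contr} for contractibility. (One small caveat, inherited equally by the paper: \cref{lem:forest-order} is literally stated for $\Decompat{\{k-1\}}(\mu)$ while the present lemma concerns $\Decompat{\{k\}}(\mu)$, an off-by-one indexing discrepancy in the source that neither your write-up nor the paper's proof resolves explicitly.)
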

\begin{proof}
    The isomorphism $\Decompat{\{k\}}(\mu) \cong \lin (\myPoset_k(\mu))$ is \cref{lem:forest-order}. The contractibility then follows from \cref{cor:non-disc-contr}.
\end{proof}

\begin{rmk}\label{rmk:eq-sphere}
    If $\myPoset_i(\mu)$ is an equivalence relation for all $i\geq k$, then we still have $\Decompat{\{k\}}(\mu) \cong \lin(\myPoset_k(\mu))$ by \cref{lem:forest-order}, but in this case this poset has the homotopy type of a sphere (cf. \cref{rmk:disc-sphere}).
\end{rmk}

\begin{rmk}\label{rmk:strict-init-cocart}
    Let $U : J \to I$ be a cocartesian fibration of $\infty$-categories, and suppose that $J$ has a strict initial object $\bot$. Let $\mathring J \subset J$ be the full subcategory obtained by deleting all objects isomorphic to $\bot$. Then the inclusion $\mathring J \to J$ is a cocartesian fibration, so that the composite $\mathring J \to J \to I$ is also a cocartesian fibration.
\end{rmk}

\begin{thm}\label{thm:sc-main}
    Let $P$ be a torsion-free complex and $\mu$ an $n$-cell in $P$. Suppose that $\mu$ is composite. Then the poset $\Decomp(\mu)$ is contractible.
\end{thm}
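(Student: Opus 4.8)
The plan is to realize $\Decomp(\mu)$ as the total space of a cocartesian fibration over a poset with contractible classifying space, and then to apply a variant of Quillen's Theorem A.

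Since $\mu$ is composite it is not an atom, so \cref{lem:exists-min} supplies some $1\le k\le n$ for which $\myPoset_k(\mu)$ is not an equivalence relation; I would fix $k$ \emph{maximal} with this property, so that $\myPoset_i(\mu)$ is an equivalence relation for all $i>k$. The combinatorial fact the argument revolves around is the contractibility of the poset $\Decompat{\{k-1\}}(\mu)$ of \emph{linear} composites of $\mu$: by \cref{lem:forest-order} (whose hypotheses are exactly this choice of $k$) this poset is isomorphic to $\lin(\myPoset_k(\mu))$, and since $\myPoset_k(\mu)$ is \emph{not} an equivalence relation, \cref{cor:non-disc-contr} makes $\lin(\myPoset_k(\mu))$ contractible; were $\myPoset_k(\mu)$ an equivalence relation this poset would instead be a sphere (\cref{rmk:eq-sphere}), which is why the maximal bad index $k$ must be chosen. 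This is recorded in \cref{cor:forest-sd-ideal}.

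To transport this to $\Decomp(\mu)$ itself, I would consider the forgetful functor $U:=U^{[0,k-1]}_{[0,n-1]} : \Decompbot(\mu)\to\Decompatbot{[0,k-1]}(\mu)$ of \cref{obs:u}, which composes away all $\circ_i$-composites for $k\le i\le n-1$. By \cref{lem:cocart''}, $U$ is a cocartesian fibration; this is the one place where torsion-freeness of $P$ is genuinely used, via the description of the fibers in terms of posets of linear preorders furnished by \cref{prop:pos-iso} together with the structural facts of \cref{lem:foo}. As $U$ sends the trivial decomposition (the strict initial object of $\Decompbot(\mu)$) to the strict initial object of $\Decompatbot{[0,k-1]}(\mu)$, \cref{rmk:strict-init-cocart} upgrades this to a cocartesian fibration $U : \Decomp(\mu)\to\Decompatbot{[0,k-1]}(\mu)$ after deleting the two bottom elements. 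Now \cref{lem:special} applies: the base has a strict initial object (its least element), and for $\theta\neq\mu$ the fiber $U^{-1}(\theta)$ has $\theta$ itself as a least element — every object of the fiber refines $\theta$ by $\circ_{\ge k}$-subdivisions — hence is contractible. So it remains only to prove that $\Decompat{[0,k-1]}(\mu)=\Decompatbot{[0,k-1]}(\mu)\setminus\{\mu\}$ is contractible, and for this I would observe that the full inclusion $\Decompatbot{\{k-1\}}(\mu)\hookrightarrow\Decompatbot{[0,k-1]}(\mu)$ admits the ``compose away the $\circ_i$-composites with $i\le k-2$'' functor as an adjoint, which restricts to an adjunction on the complements of the common least element; adjoint functors induce homotopy equivalences of classifying spaces, so $\Decompat{[0,k-1]}(\mu)\simeq\Decompat{\{k-1\}}(\mu)$, which is contractible by the previous paragraph. (In the extremal case $k=n$, where $U^{[0,n-1]}_{[0,n-1]}$ is the identity, one argues identically with $U^{[0,n-2]}_{[0,n-1]} : \Decomp(\mu)\to\Decompatbot{[0,n-2]}(\mu)$, a cocartesian fibration by \cref{lem:cocart} with no extra hypotheses: the base has a least element, every fiber but the one over $\mu$ has a least element and so is contractible, and the fiber over $\mu$ is $\Decompat{\{n-1\}}(\mu)\cong\lin(\myPoset_n(\mu))$, contractible by \cref{prop:pos-iso} and \cref{cor:non-disc-contr}; plain Quillen's Theorem A then concludes.)

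The main obstacle I anticipate is the cocartesian-fibration input \cref{lem:cocart''} (equivalently \cref{lem:cocart} and \cref{lem:cocart'}): both the cocartesian lifting property and the identification of the fibers depend on the combinatorics of torsion-free complexes, in particular on \cref{prop:pos-iso} and on \cref{lem:foo}, which guarantees that under the standing hypothesis $\mu$ is assembled using only $\circ_{\le k-1}$. A secondary subtlety is that \cref{lem:special} — rather than the naive Quillen fiber lemma — is needed, because for $k<n$ the fiber of $U$ over the trivial decomposition is empty, and only the remaining fibers together with the base-minus-bottom can be used.
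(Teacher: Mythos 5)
Your setup is sound and, in the extremal case $k=n$, your argument coincides with the paper's. In general the paper also chooses $k$ maximal with $\myPoset_k(\mu)$ not an equivalence relation, also uses cocartesian fibrations plus \cref{lem:special}, and also bottoms out in $\Decompat{\{k-1\}}(\mu)\cong\lin(\myPoset_k(\mu))$ via \cref{lem:forest-order} and \cref{cor:non-disc-contr}. But your middle reduction is different from the paper's, and it has a genuine gap.

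You fiber $\Decomp(\mu)$ over $\Decompatbot{[0,k-1]}(\mu)$ and invoke \cref{lem:special}, which requires contractibility of the base minus its bottom, i.e.\ of $\Decompat{[0,k-1]}(\mu)$ (decompositions using only the \emph{low} compositions $\circ_{\le k-1}$). To get this you claim the full inclusion $\Decompatbot{\{k-1\}}(\mu)\hookrightarrow\Decompatbot{[0,k-1]}(\mu)$ has an adjoint given by ``composing away the $\circ_{\le k-2}$-composites.'' No such adjoint exists, and \cref{obs:u}(1) does not supply one: it requires $S$ to be an \emph{initial} segment of $T$, whereas $\{k-1\}$ is a terminal segment of $[0,k-1]$. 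Composing away \emph{lower}-dimensional composites is not canonical because of interchange. Concretely, for $\theta=[2|2,2]$ with big cell $\mu$ and $k-1=1$, the maximal elements of $\Decompatbot{\{1\}}(\mu)$ contained in $\theta$ are the six shuffles of the two $\circ_1$-composable pairs of generators, which are pairwise incomparable, so there is no coreflection; and no object of $\Theta_{n,\{k-1\}}$ (two objects) can contain $\theta$ (three objects), so there is no reflection either. A fallback of fibering $\Decompat{[0,k-1]}(\mu)$ over $\Decompatbot{[0,k-2]}(\mu)$ is also blocked: the relevant instance of \cref{lem:cocart'} would need $\myPoset_k(\mu)$ to be an equivalence relation, which is exactly false by the choice of $k$.

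The paper routes around this by reducing to decompositions using only the \emph{high} compositions. It applies plain Quillen's Theorem~A to $U^{[0,k-2]}_{[0,n-1]}:\Decomp(\mu)\to\Decompatbot{[0,k-2]}(\mu)$ (the base keeps its bottom and is contractible outright), which leaves only the fiber over $\mu$, namely $\Decompat{[k-1,n-1]}(\mu)$; it then applies \cref{lem:special} to $U^{\{k-1\}}_{[k-1,n-1]}:\Decompat{[k-1,n-1]}(\mu)\to\Decompatbot{\{k-1\}}(\mu)$, which exists and is a cocartesian fibration precisely because $\{k-1\}$ \emph{is} an initial segment of $[k-1,n-1]$. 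If you want to keep your first step, you must replace the nonexistent adjunction by an actual proof that $\Decompat{[0,k-1]}(\mu)$ is contractible; the cleanest fix is simply to adopt the paper's two-stage reduction.
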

\begin{proof}
    Let $1 \leq k \leq n$ be such that the preorder $\myPoset_i(\mu)$ is an equivalence relation for $i > k$ and is not an equivalence relation for $i = k$; since $\mu$ is composite; $k$ exists by \cref{lem:exists-min}.
    
    We claim that the forgetful map 
    % \[ U_{k-1} : \Decomp(\mu) \to \Decompbot(\mu) \xrightarrow{U_{[0,n-1]}^{[0,n-2]}} \Decompatbot{\leq n-1}(\mu) \xrightarrow{U_{[0,n-2]}^{[0,n-3]}} \cdots \xrightarrow{U_{[0,k]}^{[0,k-1]}} \Decompatbot{\leq k-1}(\mu)\]
    \[ U_{k-2} : \Decomp(\mu) \to \Decompbot(\mu) \xrightarrow{U_{[0,n-1]}^{[0,k-2]}} \Decompatbot{[0,k-2]}(\mu)\]
    is a homotopy equivalence. Since $\Decompatbot{[0,k-2]}(\mu)$ has an initial object, this implies that $\Decomp(\mu)$ is contractible as desired. By \cref{lem:cocart} and \cref{rmk:strict-init-cocart}, $U_{k-2}$ is a cocartesian fibration. So by Quillen’s Theorem A, it will suffice to show that the fibers $U_{k-2}\inv(\theta)$ are contractible for $\theta \in \Decompatbot{[0,k-2]}(\mu)$. If $\theta \neq \mu$, then $U_{k-2}\inv(\theta)$ has an initial object given by $j_{[0,k-2]}^{[0,n-1]} (\theta)$. It remains to verify that $\Decompat{[k-1,n-1]}(\mu) = U_{k-1}\inv(\mu)$ is contractible.

    For this, we apply \cref{lem:special} to the forgetful functor 
    \[ U_{k-1}' : \Decompat{[k-1,n-1]}(\mu) \to \Decompatbot{[k-1,n-1]}(\mu) \xrightarrow{U_{[k-1,n-1]}^{\{k-1\}}} \Decompatbot{\{k-1\}}(\mu)\]
    This functor is the pullback of $U_{k-1}$ to $\Decompatbot{\{k-1\}}(\mu) = \Decompatbot{[k-1,n-1]}(\mu) \cap \Decompatbot{[0,k-1]}(\mu)$, so it is again a cocartesian fibration.
    Clearly $\Decompatbot{\{k-1\}}(\mu)$ has a strict initial object $\mu$. So by \cref{lem:special}, it will suffice to check that $\Decompat{\{k-1\}}(\mu) = \Decompatbot{\{k-1\}}(\mu) \setminus \{\mu\}$ is contractible, and that the fiber $(U_{k-1}')\inv(\theta)$ is contractible for each $\theta \in \Decompat{\{k-1\}}(\mu)$. 
    For the latter, observe that for $\theta \neq \mu$, the fiber $(U_{k-1}')\inv(\theta)$ has an initial object given by $j_{\{k-1\}}^{[k-1,n-1]} (\theta)$. For the former, 
    % by \cref{lem:forest-order}, $\Decompat{=k}(\mu)$ is isomorphic to the category of non-codiscrete linear preorders refining $\myPoset_k$. By \cref{lem:iso-sd}, this poset is in turn isomorphic to the barycentric subdivision $\sd \calI(\myPoset_k)$.
    by \cref{cor:forest-sd-ideal} this poset is contractible since the preorder $\myPoset_k(\mu)$ is not an equivalence relation.
\end{proof}

\begin{rmk}
    Let $P$ be a torsion-free complex and $\mu$ an $n$-cell in $P$. If $\mu$ is not composite (i.e. $\mu$ is an atom), then the poset $\Decomp(\mu)$ is empty. So in general, \cref{thm:sc-main} tells us that the classifying space $|\Decomp(\mu)|$ is a \defterm{proposition} -- it is empty or contractible. The proposition $|\Decomp(\mu)|$ may be read ``$\mu$ is composite".
\end{rmk}

\section{Weak pushouts strictly}\label{sec:ws}
In this section, let $N \in \nats \cup \{\omega\}$. This section is devoted to the proof of Theorems B and C (\cref{thm:mainthm} and \cref{cor:po}). Recall the statement of \cref{thm:mainthm} is that for any  torsion-free complex $P$ of dimension $n \leq N$, the following pushout in $\sCat_N$ is preserved by the inclusion functor $\sCat_N \to \Cat_N^\inc$ (and thence by the the localization to $\Cat_N$):

\begin{equation*}
    \begin{tikzcd}
        P_n \times \partial \globe_n \ar[r] \ar[d] & P_n \times \globe_n \ar[d] \\
        \Free(P_{\leq n-1}) \ar[r] & \Free(P)
    \end{tikzcd}
\end{equation*}

We begin by taking the above pushout in $\Psh(\Theta_N)$ and calling the resulting presheaf $\Free_{n-1} P$ (\cref{def:free}). As $L_{\Cat_N^\inc}$ preserves pushouts, \cref{thm:mainthm} is equivalent to showing that the canonical map $\Free_{n-1} P \to \Free P$ is $L_{\Cat_N^\inc}$-acyclic. In model-category-theoretic terms, $\Free_{n-1} P$ is a model for the pushout in $\Cat_N^\inc$ which is far from fibrant, and we are trying to show that the map $\Free_{n-1} P \to \Free P$ is a fibrant replacement.

To prove this, we factor the inclusion $\Free_{n-1} P \to \Free P$ as a composite of three maps, each of which we will show to be $L_{\Cat_N^\inc}$-acyclic:
% The method of proof is to introduce a certain ``pre-fibrant replacement" $\Free^+_\partial P$" (\cref{def:prefib}) of $\Free_\partial P := \Free_{n-1} P \cup \cup_{P' \subsetneq P} \Free P'$ (\cref{def:free}), and to show that that the natural maps 
\[\Free_{n-1} P \to \Free_\partial P \to \Free^+_\partial P \to \Free P\]

We define $\Free_\partial P$ in \cref{def:free}. It is obtained from $\Free_{n-1} P$ by gluing in all composites of cells which are contained in $\Free P'$ for $P' \subset P$ a proper subcomplex. Using induction on the size of $\Free P$, this map is easily seen to be $L_{\Cat_N^\inc}$-acyclic (\cref{lem:final-piece}). For many torsion-free complexes $P$, we in fact have $\Free_\partial P = \Free P$. The only exception is when $P$ has a (necessarily unique) big cell $\mu$ which is not atomic. In this case the fiber of $(\Free_\partial P)(\theta) \to (\Free P)(\theta)$ at $T : \theta \to \Free P$ is a point except when the big cell $\mu$ is contained in the image $T(\theta)$, in which case the fiber is empty.

To fill in these empty fibers, we define $\Free_\partial^+ P$ in \cref{def:prefib}. Although we give there a ``closed-form formula" for $\Free_\partial^+ P$, it may alternatively be thought of via the skeletal filtration 
\[\Free_\partial P \to \Free_\partial^0 P \to \Free_\partial^1 P \to \cdots \to \Free_\partial^n P = \Free_\partial^+ P\]
appearing in the proof of \cref{lem:fib-rep}. From this perspective, $\Free_\partial^0 P$ is obtained from $\Free_\partial P$ as follows. For every map $T : \theta \to \Free P$ not factoring through $\Free_\partial P$, we have $\theta \cap \Free_\partial P = \Free_\partial \theta$. Assuming that \cref{thm:mainthm} is true for $\theta$ in place of $P$ (which we check in \cref{lem:theta-main} and \cref{cor:theta-main}, using the background from \cref{sec:weak}), we thus have an $L_{\Cat_N^\inc}$-acyclic extension $\Free_\partial P \to \Free_\partial P \cup_{\Free_\partial \theta} \theta$. Let $[T]$ denote the inclusion $\theta \to \Free_\partial P \cup_{\Free_\partial \theta} \theta$. Taking the union of these pushouts over all such $\mu \in \theta \xrightarrow T \Free P$ gives us $\Free_\partial^0 P$.

At this point, the fibers of $\Free_\partial^0 P \to \Free P$ are now all nonempty and discrete, but not connected. For example, if there are two different cells $\zeta, \theta \to \Free P$ containing $\mu$, then pulling back along the inclusion of the ``big cells" $\globe_n \to \zeta$, $\globe_n \to \theta$ produces two distinct elements of $(\Free_\partial^+ P)(\globe_n)$ which both map to $\mu$ in $(\Free P)(\globe_n)$. We correct this ``over-production" inserting edges between such cells as follows. Whenever we have $\mu \in \theta_0 \subset \theta_1 \xrightarrow{T} \Free P$, we have two distinct elements of $(\Free_\partial^0 P)(\theta_0)$, namely $[T|_{\theta_0}]$ and $[T]|_{\theta_0}$, both of which map to $T|_{\theta_0} \in (\Free P)(\theta_0)$. This gives us two maps $\theta_0 \rightrightarrows \Free_\partial^0 P$, and the restrictions of these two maps to $\Free_\partial \theta_0$ factor through $\Free_\partial P$ and are equal. Thus we have a map $\Free_\partial \theta_0 \times \Delta[1] \cup_{\Free_\partial \theta_0 \times \partial \Delta[1]} \theta_0 \times \partial \Delta[1] \to \Free_\partial^0 P$. Pushing out along the $L_{\Cat_N^\inc}$-acyclic map $\Free_\partial \theta_0 \times \Delta[1] \cup_{\Free_\partial \theta_0 \times \partial \Delta[1]} \theta_0 \times \partial \Delta[1] \to \theta_0 \times \Delta[1]$ gives us $\Free_\partial^1 P$. That is, we have glued in paths between pairs of ``redundant" cells in $\Free_\partial^0 P$. 

The fibers of $\Free_\partial^1 P \to \Free P$ are now connected (because the composite $\mu$ is unique in $\Free P$ by the usual theory of torsion-free complexes), but not simply-connected. Continuing in this manner involves pushout-products of $\Free_\partial \theta_0 \to \theta_0$ against $\partial\Delta[k] \to \Delta[k]$ for larger and larger $k$ and corresponding to chains $\theta_0 \subset \cdots \subset \theta_k \subseteq \Free P$, and eventually results in a $\Theta$-space called $\Free_\partial^+ P$. At the end of the construction, we see that the $k$-simplices  of $(\Free_\partial^+ P)$ not contained in $(\Free_\partial P)(\zeta)$ correspond to chains $\mu \in \zeta \to \theta_0 \subseteq \theta_1 \subseteq \cdots \subseteq \theta_k \subseteq \Free P$ such that $\mu \neq \theta_0$. This is precisely the nerve of the slice category indicated in \cref{def:prefib}.

From this perspective, the map $\Free_\partial P \to \Free_\partial^+ P$ is $L_{\Cat_N^\inc}$-acyclic by construction. Finally, the third map $\Free_\partial^+ P \to \Free P$ is not only $L_{\Cat_N^\inc}$-acyclic, but in fact a levelwise equivalence (i.e. an equivalence of $\Theta_N$-spaces) (\cref{prop:level}). The observation is that the only nontrivial fiber of the map is precisely the nerve of the poset $\Decomp(\mu)$ from \cref{sec:cs}, which we showed to be contractible in \cref{thm:sc-main}.

We complement \cref{thm:mainthm} with \cref{cor:po}, which asserts that certain pushouts are preserved by the canonical functor $\sCat_N \to \Cat_N^\inc$ (and hence also by the composite $\sCat_N \to \Cat_N^\inc \to \Cat_N$).

\begin{Def}\label{def:free}
    Let $P$ be a torsion-free complex of dimension $n$. We denote by $\Free_{n-1} P = \Free (P_{\leq n-1}) \cup P_n \in \Psh_\Set(\Theta_N)$ (the union is taken in $\Psh_\Set(\Theta_N)$; see \cref{sec:weak} for conventions regarding presheaves on $\Theta_N$).
\end{Def}

Recall that \cref{thm:mainthm} will be the statement that the inclusion $\Free_{n-1} P \to \Free P$ is $L_{\Cat_N^\inc}$-acyclic. We begin by proving the special case of \cref{thm:mainthm} where the torsion-free complex $P$ (or rather the free strict $N$-category $\Free P$) is assumed to lie in $\Theta$:

\begin{prop}\label{lem:theta-main}
Let $\theta \in \Theta$ and let $T$ be the torsion-free complex with $\Free T = \theta$. Then the canonical map $\Free_{n-1} T \to \Free T$ is $L_{\Cat_N^\inc}$-acyclic. In other words, the following pushout square in $\sCat_N$:
    \begin{equation*}
        \begin{tikzcd}
            T_n \times \partial \globe_n \ar[r] \ar[d] & T_n \times \globe_n \ar[d] \\
            \Free(T_{\leq n-1}) \ar[r] & \Free(T)
        \end{tikzcd}
    \end{equation*}
is a homotopy pushout.
\end{prop}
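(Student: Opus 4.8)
The plan is to prove the equivalent statement that the canonical map $\Free_{n-1}T\to\nerve\theta$ (with $n=\dim\theta$) is $L_{\Cat_N^\inc}$-acyclic — this is equivalent to the displayed square being a homotopy pushout since $L_{\Cat_N^\inc}$ preserves pushouts and $\Free_{n-1}T$ is by definition the pushout of the diagram of nerves. By the standard compatibilities of the inclusions $\Cat_M^\inc\to\Cat_N^\inc$ (cf.\ \cref{rmk:to-infty}) it suffices to work at $N=\omega$ throughout, so $\Theta=\Theta_\omega$. I would prove the assertion $\Phi(\theta)$ ``$\Free_{n-1}T_\theta\to\nerve\theta$ is $L_{\Cat_\omega^\inc}$-acyclic'' by induction on the generation of $\Theta$ from $[0]$ under suspension and wedge sums, using \cite[Lemma 1.10]{campion-dense} exactly as in the proof of \cref{lem:globe-hyper-theta}. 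The base case $\Phi([0])$ is trivial: $T_{[0]}$ has a single $0$-atom and nothing else, so $\Free_{-1}T_{[0]}=\emptyset\cup\{\ast\}=\nerve[0]=\nerve\Free T_{[0]}$ with the canonical map the identity.

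For the suspension step, assume $\Phi(\zeta)$ with $\dim\zeta=n-1$, so $\dim\Sigma\zeta=n$. By uniqueness of the torsion-free complex underlying an object of $\Theta$, $T_{\Sigma\zeta}=\Sigma T_\zeta$, whence $(T_{\Sigma\zeta})_n=(T_\zeta)_{n-1}$ and $(T_{\Sigma\zeta})_{\leq n-1}=\Sigma\big((T_\zeta)_{\leq n-2}\big)$. The colimit-preserving suspension $\tilde\Sigma\colon\Psh(\Theta)\to\Psh(\Theta)$ satisfies $\tilde\Sigma\globe_k=\globe_{k+1}$ and $\tilde\Sigma\partial\globe_k=\partial\globe_{k+1}$ (an easy induction), commutes with the set-indexed coproducts $(-)\times(\text{cell})$, and restricts to $\nerve\circ\Sigma^s$ on strict categories by \cref{cor:susp-res}. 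Applying $\tilde\Sigma$ to the pushout defining $\Free_{n-2}T_\zeta$ therefore produces precisely the pushout defining $\Free_{n-1}T_{\Sigma\zeta}$, compatibly with the canonical maps, so that $\Free_{n-1}T_{\Sigma\zeta}\to\nerve\Sigma\zeta$ is $\tilde\Sigma$ applied to $\Free_{n-2}T_\zeta\to\nerve\zeta$. Since by \cref{cor:susp-res} the right adjoint $\tilde\Hom_{/}$ preserves local objects, the left adjoint $\tilde\Sigma$ carries $L_{\Cat_\omega^\inc}$-acyclic maps to $L_{\Cat_\omega^\inc}$-acyclic maps; hence $\Phi(\Sigma\zeta)$.

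For the wedge step, assume $\Phi(\zeta)$ and $\Phi(\eta)$ and set $n=\dim(\zeta\vee\eta)=\max(\dim\zeta,\dim\eta)$. Again by uniqueness, $T_{\zeta\vee\eta}=T_\zeta\vee T_\eta$, so $(T_{\zeta\vee\eta})_n=(T_\zeta)_n\amalg(T_\eta)_n$ and $\Free\big((T_{\zeta\vee\eta})_{\leq n-1}\big)=\Free\big((T_\zeta)_{\leq n-1}\big)\vee^{\sCat_\omega}\Free\big((T_\eta)_{\leq n-1}\big)$. Because the $\Theta$-cells attached to build $\Free_{n-1}T_\zeta$ (resp.\ $\Free_{n-1}T_\eta$) attach inside $\nerve\Free((T_\zeta)_{\leq n-1})$ (resp.\ $\nerve\Free((T_\eta)_{\leq n-1})$), and these skeleton inclusions have monic nerves (\cref{fact:wedge}), a cancellation of pushouts gives
\[\Free_{n-1}T_{\zeta\vee\eta}=\nerve\Free\big((T_{\zeta\vee\eta})_{\leq n-1}\big)\ \cup_{\nerve\Free((T_\zeta)_{\leq n-1})}\ \Free_{n-1}T_\zeta\ \cup_{\nerve\Free((T_\eta)_{\leq n-1})}\ \Free_{n-1}T_\eta.\]
The skeleta $\Free((T_\zeta)_{\leq n-1})$, $\Free((T_\eta)_{\leq n-1})$ are $\Theta$-regular by \cref{prop:theta-reg}, so \cref{lem:wedge} supplies an $L_{\Cat_\omega^\inc}$-acyclic map $\nerve\Free((T_\zeta)_{\leq n-1})\vee^{\Psh(\Theta)}\nerve\Free((T_\eta)_{\leq n-1})\to\nerve\Free((T_{\zeta\vee\eta})_{\leq n-1})$. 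All the pushouts above are along monomorphisms of presheaves, hence flat (\cref{rmk:flat}, \cref{lem:flat}), so this equivalence may be substituted in without changing the $L_{\Cat_\omega^\inc}$-type, yielding $\Free_{n-1}T_{\zeta\vee\eta}\to\Free_{n-1}T_\zeta\vee^{\Psh(\Theta)}\Free_{n-1}T_\eta$ an $L_{\Cat_\omega^\inc}$-equivalence. Using $\Phi(\zeta)$, $\Phi(\eta)$ and flatness once more, the target is $L_{\Cat_\omega^\inc}$-equivalent to $\nerve\zeta\vee^{\Psh(\Theta)}\nerve\eta$, which by \cref{lem:wedge} maps $L_{\Cat_\omega^\inc}$-acyclically to $\nerve(\zeta\vee^{\sCat_\omega}\eta)=\nerve(\zeta\vee\eta)$; a diagram chase confirms that the composite of these equivalences is the canonical map. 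When $\dim\zeta\neq\dim\eta$, say $\dim\eta<n$, one has $(T_\eta)_n=\emptyset$ and the $\eta$-factor above is just $\nerve\eta$, and the argument goes through verbatim. This closes the induction.

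The step I expect to be the main obstacle is the wedge step — specifically, writing $\Free_{n-1}T_{\zeta\vee\eta}$ as the iterated pushout displayed above along the nerves of the genuine (and in general non-$\Theta$) $(n-1)$-skeleta, and then checking that one may replace $\nerve\Free((T_{\zeta\vee\eta})_{\leq n-1})$ by the $L_{\Cat_\omega^\inc}$-equivalent wedge of nerves while keeping all remaining pushouts flat. The auxiliary identities $T_{\Sigma\zeta}=\Sigma T_\zeta$, $T_{\zeta\vee\eta}=T_\zeta\vee T_\eta$ and the computation of $\Free$ of these are routine consequences of \cref{def:tf} together with uniqueness of the torsion-free complex underlying an object of $\Theta$. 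By contrast, the suspension step is essentially formal once one has \cref{thm:susp-pres} and \cref{cor:susp-res} in hand.
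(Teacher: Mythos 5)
Your proposal is correct and follows essentially the same route as the paper: induction on the generation of $\theta$ from $[0]$ under suspension and wedge sums, with the suspension step handled by the compatibility of $\tilde\Sigma$ with the localization (\cref{cor:susp-res}/\cref{obs:hty-susp}) and the wedge step by \cref{lem:wedge}. The only difference is presentational — you spell out the pushout rearrangement and flatness bookkeeping in the wedge step that the paper compresses into ``commuting this homotopy pushout with the one from the wedge sum.''
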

\begin{proof}
The final statement is equivalent to the first statement because the following pushout diagram in $\Psh(\Theta_N)$:
    \begin{equation*}
        \begin{tikzcd}
            T_n \times \partial \globe_n \ar[r] \ar[d] & T_n \times \globe_n \ar[d] \\
            \Free(T_{\leq n-1}) \ar[r] & \Free_{n-1}(T)
        \end{tikzcd}
    \end{equation*}
is preserved by the localization $L_{\Cat_N^\inc}$ (and thence by the localization $L_{\Cat_N}$).

We induct on the structure of $\theta \in \Theta$. If $\theta \in \Theta_0$, the result is trivial. If there is a nontrivial wedge decomposition $\theta = \Free T_1 \vee \Free T_2$ for $\Free T_1,\Free T_2 \in \Theta$, then by induction $\Free T_i = \Free_{n-1} T_i \ast_{(T_i)_n \times \partial \globe_n} (T_i)_n \times \globe_n$ is a homotopy pushout. By commuting this homotopy pushout with the one from the wedge sum (\cref{lem:wedge}), it results that $\Free T = \Free_{n-1} T \ast_{T_n \times \partial \globe_n} \to T_n \times \globe_n$ is a homotopy pushout as desired. Otherwise, $\theta = \Sigma \Free T'$ for $\Free T' \in \Theta$. By induction, the pushout $\Free T' = \Free_{n-2} T' \ast_{(T')_{n-1} \times \partial \globe_{n-1}} (T')_{n-1} \times \globe_n$ is a homotopy pushout. As homotopy pushouts are stable under suspension (\cref{obs:hty-susp}), the lemma results.
\end{proof}

We now construct our first, ``more-fibrant" version of $\Free_{n-1} P$, called $\Free_\partial P$. 

\begin{Def}\label{def:free-partial}
    We denote by $\Free_\partial P = \Free_{n-1} P \cup \cup_{P' \subsetneqq P} \Free P' \in \Psh_\Set(\Theta_N)$ (again the union is taken in $\Psh_\Set(\Theta_N)$). Note that $\Free_{n-1} P \subseteq \Free_\partial P \subseteq \Free P$.
\end{Def}

As discussed above, $\Free_\partial P$ adjoins to $\Free_\partial P$ all $n$-cells which are contained in $\Free P'$ for a proper subcomplex $P' \subset P$, and all $\theta$-cells whose composite is such a cell. As mentioned in the introduction, if $P$ does not have a big cell $\mu$, or if its big cell $\mu$ is an atom, then $\Free_\partial P = P$.  More generally, the fiber of $\Free_\partial P \to \Free P$ at $\theta \subseteq \Free P$ is a point unless the composite of $\theta$ is the big cell $\mu$ of $\Free P$, in which case it is empty.

The fact that $\Free_{n-1} P \to \Free_\partial P$ is an equivalence uses induction:

\begin{lem}\label{lem:final-piece}
    Let $P$ be a torsion-free complex of dimension $n$. Assume (as we shall for induction in \cref{thm:mainthm} below) that for each proper subcomplex $P' \subset P$, the inclusion $\Free_{n-1} P' \to \Free P'$ is $L_{\Cat_N^\inc}$-acyclic. Then the map $\Free_{n-1} P \to \Free_\partial P$ is $L_{\Cat_N^\inc}$-acyclic.
\end{lem}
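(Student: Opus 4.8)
The plan is to apply \cref{lem:dist-lat} to the monomorphism $\Free_{n-1} P \hookrightarrow \Free_\partial P$ of presheaves on $\Theta_N$, working in the Cisinski model structure presenting $\Cat_N^\inc$ (whose weak equivalences are the $L_{\Cat_N^\inc}$-acyclic maps, and which are stable under filtered colimits by \cref{obs:filtered}, so that \cref{lem:dist-lat} applies even if $P$ has infinitely many subcomplexes). As the collection $\calS$ of intermediate subobjects, I would take
\[
    \calS := \{\, \Free_{n-1} P \cup \Free P' \ \mid\ P' \subsetneq P \text{ a proper subcomplex} \,\}.
\]
By \cref{def:free-partial} and distributivity, $\bigcup \calS = \Free_{n-1} P \cup \bigcup_{P' \subsetneq P} \Free P' = \Free_\partial P$, so once the two hypotheses of \cref{lem:dist-lat} are verified the lemma yields exactly the desired conclusion.

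For the first hypothesis (closure of $\calS$ under binary intersections), the key combinatorial input is that $\Free$ carries intersections of subcomplexes to intersections of sub-presheaves: an arbitrary simplex of $\nerve \Free P$ over $\theta$ is a functor $\theta \to \Free P$, i.e.\ a compatible family of cells, and such a family lies in $\Free Q$ iff every subset-of-atoms appearing in it is contained in $Q$ (immediate from the explicit description in \cref{def:tf}); hence it lies in $\Free Q_1 \cap \Free Q_2$ iff it lies in $\Free(Q_1 \cap Q_2)$. Since the subobject lattice of a presheaf is distributive, for $X = \Free_{n-1} P \cup \Free P_1'$ and $Y = \Free_{n-1} P \cup \Free P_2'$ we get $X \cap Y = \Free_{n-1} P \cup (\Free P_1' \cap \Free P_2') = \Free_{n-1} P \cup \Free(P_1' \cap P_2')$, and $P_1' \cap P_2' \subseteq P_1' \subsetneq P$, so $X \cap Y \in \calS$.

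For the second hypothesis, fix $X = \Free_{n-1} P \cup \Free P'$ with $P' \subsetneq P$; I must show $\Free_{n-1} P \to X$ is $L_{\Cat_N^\inc}$-acyclic. Since $\Free_{n-1} P \cap \Free P' \to \Free P'$ and $\Free_{n-1} P \to X$ sit in a pushout square of monomorphisms of presheaves, \cref{lem:flat} reduces this to showing $\Free_{n-1} P \cap \Free P' \to \Free P'$ is $L_{\Cat_N^\inc}$-acyclic. Here one identifies $\Free_{n-1} P \cap \Free P'$ with $\Free_{n-1} P'$: by the previous paragraph $\Free(P_{\leq n-1}) \cap \Free P' = \Free((P')_{\leq n-1})$, and intersecting the sub-presheaf of attached $n$-cells $\langle p \rangle$ ($p \in P_n$) with $\Free P'$ keeps exactly those $\langle p \rangle$ with $p \in (P')_n$, while the remaining ones ($p \notin P'$) meet $\Free P'$ only inside $\Free(P_{\leq n-1}) \cap \Free P' = \Free((P')_{\leq n-1})$ and so contribute nothing new — so the intersection is $\Free((P')_{\leq n-1})$ together with the cells coming from $(P')_n$, i.e.\ $\Free_{n-1} P'$ (with the convention that this is $\Free P'$ when $\dim P' < n$, in which case the map is an identity). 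The map $\Free_{n-1} P' \to \Free P'$ is $L_{\Cat_N^\inc}$-acyclic by the inductive hypothesis of the lemma. Applying \cref{lem:dist-lat} completes the proof. The main obstacle is precisely the bookkeeping in this last identification — pinning down which faces of the attached $n$-cells survive intersection with $\Free P'$ — which is elementary but must be checked carefully against the explicit description of $\Free P$ in \cref{def:tf}.
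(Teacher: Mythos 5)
Your proposal is correct and follows essentially the same route as the paper: apply \cref{lem:dist-lat} to the collection $\calS = \{\Free_{n-1}P \cup \Free P' \mid P' \subsetneq P\}$, using $\Free(P' \cap P'') = \Free P' \cap \Free P''$ for closure under intersections, and reduce the acyclicity of each $\Free_{n-1}P \to \Free_{n-1}P \cup \Free P'$ to the inductive hypothesis via a pushout of monomorphisms. The only difference is that you spell out the identification $\Free_{n-1}P \cap \Free P' = \Free_{n-1}P'$, which the paper leaves implicit when it asserts the map is a cobase-change of $\Free_{n-1}P' \to \Free P'$.
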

\begin{proof}
    Note that if $P', P'' \subseteq P$ are subcomplexes, then $P' \cap P''$ is a subcomplex and $\Free (P' \cap P'') = \Free (P') \cap \Free (P'')$. So by \cref{lem:dist-lat}, it will suffice to show that for each $P' \subset P$, the inclusion $\Free_{n-1} P \to \Free_{n-1} P \cup \Free P'$ is $L_{\Cat_N^\inc}$-acyclic. And indeed, this map is a cobase-change along a monomorphism of $\Free_{n-1} P' \to \Free P'$, which is $L_{\Cat_N^\inc}$-acyclic by hypothesis.
\end{proof}

We may already deduce a variation of the main theorem in the case where $\Free P \in \Theta$, which will be essential to the next step:

\begin{cor}\label{cor:theta-main}
    Let $\theta \in \Theta$ be of dimension $n$ and let $T$ be the torsion-free complex with $\Free T = \theta$. Assume (as we shall for induction in \cref{thm:mainthm} below) that for each proper subcomplex $T' \subset T$, the inclusion $\Free_{n-1} T' \to \Free T'$ is $L_{\Cat_N^\inc}$-acyclic. Then the map $\Free_\partial T \to \Free T$ is $L_{\Cat_N^\inc}$-acyclic.
\end{cor}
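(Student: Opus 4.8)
The plan is to factor the map of interest through the ``very non-fibrant'' model $\Free_{n-1} T$ and invoke two-out-of-three. Concretely, consider the composite
\begin{equation*}
    \Free_{n-1} T \longrightarrow \Free_\partial T \longrightarrow \Free T,
\end{equation*}
which exists because $\Free_{n-1} T \subseteq \Free_\partial T \subseteq \Free T$ as subpresheaves of $\Free T$ (\cref{def:free-partial}).

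First I would observe that the composite $\Free_{n-1} T \to \Free T$ is $L_{\Cat_N^\inc}$-acyclic: this is exactly \cref{lem:theta-main}, which applies since $\Free T = \theta \in \Theta$. Next, the first leg $\Free_{n-1} T \to \Free_\partial T$ is $L_{\Cat_N^\inc}$-acyclic by \cref{lem:final-piece} applied with $P = T$; its hypothesis --- that for every proper subcomplex $T' \subsetneq T$ the inclusion $\Free_{n-1} T' \to \Free T'$ is $L_{\Cat_N^\inc}$-acyclic --- is precisely the standing inductive assumption of the present corollary. Finally, since $L_{\Cat_N^\inc}$-equivalences satisfy two-out-of-three, the second leg $\Free_\partial T \to \Free T$ is $L_{\Cat_N^\inc}$-acyclic, which is the claim.

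There is really no obstacle here: the corollary is a bookkeeping consequence of the two preceding results together with closure of weak equivalences under two-out-of-three. The only point to be careful about is that the inductive hypothesis stated in the corollary matches verbatim the hypothesis needed to feed \cref{lem:final-piece}, so that no circularity is introduced --- both this corollary and \cref{lem:final-piece} will be applied inside the induction on the number of atoms carried out in the proof of \cref{thm:mainthm}, where the statement for all proper subcomplexes is available.
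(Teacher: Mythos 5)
Your proposal is correct and is essentially identical to the paper's own proof: both factor the map as $\Free_{n-1} T \to \Free_\partial T \to \Free T$, cite \cref{lem:final-piece} for the first leg and \cref{lem:theta-main} for the composite, and conclude by two-out-of-three. Your additional remark about the inductive hypothesis matching the input to \cref{lem:final-piece} is accurate and harmless.
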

\begin{proof}
    Consider the two maps $\Free_{n-1} T \to \Free_\partial T \to \Free T$. By \cref{lem:final-piece} the first is $L_{\Cat_N^\inc}$-acyclic, and by \cref{lem:theta-main} the composite is $L_{\Cat_N^\inc}$-acyclic. The result follows by two-for-three.
\end{proof}

We next build a canonical ``pre-fibrant replacement" $\Free^+_\partial P$ of $\Free_\partial P$ in $\Psh_{\sSet}(\Theta_N)$. It is a ``replacement" in the sense that $\Free_\partial P \to \Free^+_\partial P$ is $L_{\Cat_N^\inc}$-acyclic (\cref{lem:fib-rep}), and it is ``pre-fibrant" in the sense that 
it admits many more lifts along spine inclusions than $\Free_\partial P$ itself does. Indeed, it will fill in all the missing $\theta$-cells with composite the big cell $\mu$ many times over, and then correct the over-generation of cells with higher paths between them, and so forth.

\begin{Def}\label{def:prefib}
    Let $P$ be a torsion-free complex of dimension $n$. We let $\Free^+ P$ denote the $\Cat$-valued presheaf on $\Theta_N$ given by $\Free^+ P (\zeta) = \zeta \downarrow (\Theta_N \setminus \{\globe_n\}) \downarrow_\nd \Free P \setminus \zeta \downarrow \mu$, which we regard as a $\sSet$-valued presheaf by taking its nerve levelwise. Here, an object of $\Free^+ P(\zeta)$ consists of maps $\zeta \to \theta \to \Free P$ where $\theta \in \Theta_N$, $\theta \neq \globe_n$, and the second map $\theta \to \Free P$ is nondegenerate (hence monic, by $\Theta$-regularity of $\Free P$). A morphism $(\zeta \to \theta \to \Free P) \to (\zeta \to \theta' \to \Free P)$ is a map $\theta \to \theta'$, necessarily monic, making two triangles commute. There is a canonical ``collapse" map $\gamma : \Free^+ P \to \Free P$, which carries $\zeta \to \theta \to \Free P$ to the composite $\zeta \to \Free P$.

    We let $\Free^+_\partial P = \Free^+ P \cup_{\Free_{n-1} P \cup_{P' \subsetneq P} \Free^+ P'} \Free_\partial P$, where the pushout is taken in $\Psh_\sSet(\Theta_N)$. There is an inclusion $\Free_\partial P \to \Free^+_\partial P$, as well as an induced collapse map $\gamma_\partial : \Free^+_\partial P \to \Free P$. The composite is the canonical inclusion $\Free_\partial P \to \Free P$.
\end{Def}

Note that the pushout constructing $\Free^+_\partial P$ collapses to a point any simplex whose image in $\Free P$ is contained in $\Free_\partial P$. Thus the fiber of $(\Free_\partial^+ P)(\theta)$ over $T : \theta \to \Free P$ is a point if $T(\theta) \subseteq \Free_\partial P$. Otherwise, $\mu$ is in the image of $\theta$, and the fiber is the subcategory of $\theta \downarrow (\Theta \setminus \{\globe_n\}) \downarrow_\nd \Free P$ where the composite $\theta \to \Free P$ is required to be $T$. If $T$ is nondegenerate, then this subcategory has an initial object given by $T$ itself, except in the case where $\theta = \globe_n$ and $T = \mu$. In that case, we have the category $(\mu \downarrow \Theta \downarrow_\nd \Free P) \setminus \{\mu\}$, which is precisely the category $\Decomp(\mu)$ from \cref{def:decomp}. Thus we can see that $\Free_\partial^+ P \to \Free P$ is an equivalence:

\begin{prop}\label{prop:level}
    Let $P$ be a torsion-free complex of dimension $n$. Then the collapse map $\gamma_\partial : \Free^+_\partial P \to \Free P$ is a levelwise equivalence (i.e. an equivalence in $\Psh(\Theta_N)$).
\end{prop}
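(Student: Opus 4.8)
The plan is to prove that $\gamma_\partial$ induces a weak equivalence on every object $\zeta \in \Theta_N$ by computing its fibers over vertices. Since $\Free P$ is a presheaf of sets (a discrete simplicial presheaf), it suffices to show that for each $\zeta \in \Theta_N$ and each $T \in (\Free P)(\zeta)$ the simplicial set $F_T := (\Free^+_\partial P)(\zeta) \times_{(\Free P)(\zeta)} \{T\}$ is contractible; this forces $\gamma_\partial(\zeta)$ to be surjective on $\pi_0$ with contractible homotopy fibers, hence a weak equivalence onto a discrete target.

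First I would reduce to the case that $T$ is nondegenerate. By $\Theta$-regularity of $\Free P$ (\cref{prop:theta-reg}) and the Eilenberg-Zilber lemma, $T$ factors uniquely as $\zeta \twoheadrightarrow \zeta' \xrightarrow{T'} \Free P$ with $T'$ nondegenerate, hence monic; since any factorization $\zeta \to \theta \to \Free P$ of $T$ with monic second map factors uniquely through $\zeta'$, naturality of all the presheaves in sight identifies $F_T$ with $F_{T'}$. So from now on assume $T$ is monic.

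Next I would read $F_T$ off from the pushout presentation of $\Free^+_\partial P$ in \cref{def:prefib}: this pushout collapses to a point every simplex of $\Free^+ P$ whose image in $\Free P$ lies inside $\Free_\partial P$. Hence, if $T(\zeta) \subseteq \Free_\partial P$, the fiber $F_T$ is a single point. Otherwise $T(\zeta)$ is contained in no proper subcomplex and is not in $\Free_{n-1}P$, so $\Free P$ has a big cell $\mu$, which is not an atom, and $\mu$ lies in the subcategory $T(\zeta)$. In that case $F_T$ is the nerve of the full subcategory of $\zeta \downarrow (\Theta_N \setminus \{\globe_n\}) \downarrow_\nd \Free P$ spanned by the $\zeta \to \theta \to \Free P$ with composite $T$. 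If $\zeta \neq \globe_n$ this category has $(\zeta \xrightarrow{\id} \zeta \xrightarrow{T} \Free P)$ as an initial object, so $F_T$ is contractible. If $\zeta = \globe_n$, then since the support of the big cell $\mu$ is all of $P$ the only $n$-cell of $\Free P$ whose support contains $\mu$ is $\mu$ itself, so $T = \mu$; unwinding \cref{def:prefib} one finds that $F_\mu$ is the nerve of $\Decompbot(\mu) \setminus \{\mu\} = \Decomp(\mu)$ from \cref{def:decomp}.

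To conclude: since the big cell $\mu$ is not an atom it is composite, so \cref{thm:sc-main} gives that $\Decomp(\mu)$ is contractible, settling the last remaining case. I expect the only delicate point to be the bookkeeping — the reduction to nondegenerate $T$, and the precise identification $F_\mu \simeq \Decomp(\mu)$ (matching objects and morphisms of the two categories, and checking that the combined effect of excluding $\globe_n$ and of the ``$\setminus \zeta \downarrow \mu$'' clause in \cref{def:prefib} is exactly the deletion of the trivial decomposition over $\mu$). This is a routine unwinding of the definitions; the genuine mathematical content has already been discharged by Theorem A.
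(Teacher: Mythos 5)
Your proposal is correct and follows essentially the same route as the paper's proof: observe that fibers equal homotopy fibers over the discrete target, reduce to nondegenerate (hence monic, by \cref{prop:theta-reg}) $T$, note that the fiber is a point when $T$ lands in $\Free_\partial P$ and has an initial object given by the identity factorization otherwise unless $\zeta = \globe_n$ and $T = \mu$, in which case the fiber is $\Decomp(\mu)$ and \cref{thm:sc-main} applies. Your extra care about the Eilenberg--Zilber reduction and the identification $F_\mu \cong \Decomp(\mu)$ is reasonable but does not change the argument.
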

\begin{proof}
    If $P$ does not have a big cell, or has a big cell which is atomic, then $\gamma_\partial$ is an isomorphism. Otherwise, consider the fiber of $\gamma_\partial\inv(T : \theta \to \Free P)$ (note that fibers are the same as homotopy fibers since $\Free P(\theta)$ is discrete). We wish to show that $\gamma_\partial\inv(T)$ is contractible. It suffices to consider the case where $T$ is nondegenerate, and hence injective by hypercancellativity. If $T \subseteq \Free P'$ for some $P' \subsetneq P$, $\gamma_\partial\inv(T)$ is in fact a single point. Otherwise, we have $\mu \in T(\theta)$. If $\mu \subsetneq T(\theta)$, then $\gamma_\partial\inv(T) = T \downarrow (\Theta  \setminus \{\globe_n\}) \downarrow_\nd \Free P$ is a poset with an initial object given by $\theta = \theta \xrightarrow T \Free P$. Finally, if $T$ is the inclusion $\mu : \globe_n \to \Free P$, then $\gamma_\partial\inv(\mu)$ is the poset $\Decomp(\mu) = (\mu \downarrow \Theta \downarrow_\nd \Free P) \setminus \{\mu\}$ from \cref{sec:cs}. This was shown to be contractible in \cref{thm:sc-main}.
\end{proof}

It remains to show that $\Free_\partial P \to \Free_\partial^+ P$ is $L_{\Cat_N^\inc}$-acyclic. Our approach to making precise the outline given at the beginning of the section is to examine the skeletal filtration, and show that each step is $L_{\Cat_N^\inc}$-acyclic using \cref{cor:theta-main}:

\begin{lem}\label{lem:fib-rep}
Let $P$ be a torsion-free complex of dimension $n$. 
% Assume (as we shall for induction in the proof of \cref{thm:mainthm} below) that the map $\Free_{n-1} P' \to \Free P'$ is $L_{\Cat_N^\inc}$-acyclic for all proper subcomplexes $P' \subset P$. 
Then the inclusion $\Free_\partial P \to \Free^+_\partial P$ is $L_{\Cat_N^\inc}$-acyclic.
\end{lem}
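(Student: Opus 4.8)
The plan is to realize the inclusion $\Free_\partial P \to \Free^+_\partial P$ as a transfinite composite of cobase-changes of $L_{\Cat_N^\inc}$-acyclic cofibrations, following the skeletal-filtration argument outlined just before the statement. I would argue by induction on the number of atoms of $P$ (reducing to the case where $P$ is finite via \cref{obs:filtered} if necessary), so that \cref{thm:mainthm} — and hence the hypothesis of \cref{cor:theta-main} — is available for every torsion-free complex with fewer atoms than $P$. The instances of \cref{cor:theta-main} that will be needed concern the torsion-free complexes $T_\theta$ (with $\Free T_\theta = \theta$) attached to the $\Theta$-cells $\theta \subseteq \Free P$ that occur as non-trivial $\Theta$-decompositions of the big cell $\mu$; since such a $\theta$ partitions the top-dimensional atoms of $P$ among its generating cells, $T_\theta$ has no more atoms than $P$, so the induction does supply \cref{thm:mainthm} for every proper subcomplex of $T_\theta$.

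Concretely, set $\Free_\partial^{-1} P := \Free_\partial P$ and, for $k \ge 0$, let $\Free_\partial^{k} P \subseteq \Free^+_\partial P$ be the sub-presheaf generated by $\Free_\partial P$ together with all simplices of $\Free^+_\partial P$ of simplicial degree $\le k$, so that $\Free^+_\partial P = \varinjlim_k \Free_\partial^{k} P$. Unwinding the closed-form description of \cref{def:prefib}, the non-degenerate $k$-simplices of $\Free^+_\partial P$ not already lying in $\Free_\partial P$ are indexed, levelwise at $\zeta$, by the data of a chain $\theta_0 \subsetneq \theta_1 \subsetneq \cdots \subsetneq \theta_k \subseteq \Free P$ of non-trivial $\Theta$-decompositions together with a map $\zeta \to \theta_0$ whose composite to $\Free P$ hits $\mu$; for a fixed such chain $\vec\theta$, the presheaf of these maps $\zeta \to \theta_0$ is exactly the complement of $\Free_\partial \theta_0$ inside $\yo\theta_0$.

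I would then check that $\Free_\partial^{k-1} P \to \Free_\partial^{k} P$ is a cobase-change along the coproduct, over all chains $\vec\theta$ of length $k$, of the maps
\[
  \theta_0 \times \partial\Delta[k] \cup_{\Free_\partial\theta_0 \times \partial\Delta[k]} \Free_\partial\theta_0 \times \Delta[k] \longrightarrow \theta_0 \times \Delta[k].
\]
Indeed, the $\theta_0 \times \partial\Delta[k]$ piece maps into $\Free_\partial^{k-1} P$ because the faces of such a $k$-simplex are either simplices of the same type in lower degree or simplices of $\Free_\partial P$, while the $\Free_\partial\theta_0 \times \Delta[k]$ piece maps into $\Free_\partial P$ because precisely the simplices whose $\zeta$-component factors through $\Free_\partial\theta_0$ (equivalently, misses $\mu$) are collapsed in the construction of $\Free^+_\partial P$. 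Each displayed map is the pushout-product of the two cofibrations $\Free_\partial\theta_0 \hookrightarrow \yo\theta_0$ and $\partial\Delta[k] \hookrightarrow \Delta[k]$; by \cref{cor:theta-main} the first is an $L_{\Cat_N^\inc}$-acyclic cofibration, and since the localized model structure on presheaves of simplicial sets on $\Theta_N$ is simplicial and left proper (\cref{rmk:flat}), the pushout-product of a cofibration with an acyclic cofibration is an acyclic cofibration, as is any coproduct of such. Hence each $\Free_\partial^{k-1} P \to \Free_\partial^{k} P$ is $L_{\Cat_N^\inc}$-acyclic, and as $\Free^+_\partial P = \varinjlim_k \Free_\partial^{k} P$ is a filtered colimit, \cref{obs:filtered} gives that $\Free_\partial P \to \Free^+_\partial P$ is $L_{\Cat_N^\inc}$-acyclic.

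The main obstacle is the bookkeeping of the third paragraph: translating the closed-form formula of \cref{def:prefib} into the relative skeletal filtration, i.e. verifying exactly which simplices are new at stage $k$ and that the attaching maps are precisely the indicated pushout-products — in particular that the collapse built into $\Free^+_\partial P$ identifies the $\Free_\partial\theta_0 \times \Delta[k]$ locus with material already present in $\Free_\partial P$. The remaining ingredients are soft: the pushout-product (SM7) behaviour of a simplicial model structure, closure of acyclic cofibrations under coproducts and transfinite composition, and stability of $L_{\Cat_N^\inc}$-acyclic maps under filtered colimits (\cref{obs:filtered}). The one delicate point in the induction itself is the claim that the auxiliary $\Theta$-cells $\theta$ have $T_\theta$ with at most as many atoms as $P$, which reduces to the combinatorics of $\Theta$-decompositions of a cell in a torsion-free complex (the top atoms of $P$ are distributed among the generating cells of $\theta$).
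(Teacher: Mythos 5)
Your proposal is correct and follows essentially the same route as the paper: the relative skeletal filtration $\Free_\partial P = \Free_\partial^{-1}P \subseteq \Free_\partial^0 P \subseteq \cdots \subseteq \Free_\partial^+ P$, identification of each stage as a cobase-change of pushout-products of $\Free_\partial\theta_0 \hookrightarrow \theta_0$ with $\partial\Delta[k] \hookrightarrow \Delta[k]$ indexed by chains of $\Theta$-decompositions, and acyclicity via \cref{cor:theta-main} together with cartesianness of the model structure. The only organizational difference is that you make the inductive availability of \cref{cor:theta-main} explicit inside this lemma, whereas the paper leaves that hypothesis to be discharged in the induction of \cref{thm:mainthm}.
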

\begin{proof}
If $P$ does not have a big cell $\mu$, then the map is an isomorphism. Otherwise, consider the skeletal filtration:
\begin{equation*}
    \begin{tikzcd}
        \Free_\partial P \ar[r,phantom,"=:" description] \ar[drr] &
        \Free^{-1}_\partial P \subseteq \Free^0_\partial P \subseteq \ar[r,phantom,"\cdots" description]  &
        \subseteq \Free^n_\partial P = \Free^+_\partial P \ar[d] \\
        & & \Free P
    \end{tikzcd}
\end{equation*}
Here $(\Free^k_\partial P)(\theta) \subseteq (\Free^+_\partial P)(\theta)$ is the union of $(\Free_\partial P)(\theta)$ and the $k$-skeleton of $(\Free^+_\partial P)(\theta)$. We claim that for each $k$, the inclusion $\Free^{k-1}_\partial P \to \Free^k_\partial P$ is $L_{\Catinc}$-acyclic. Indeed, let $\sigma \in (\Free^k_\partial P)(\theta) \setminus (\Free^{k-1}_\partial P)(\theta)$ be nondegenerate in both the $\Theta$ and $\Delta$ directions. Note that we must have $\mu \in \theta$. The map $\sigma$ is classified by a map $s : \theta \times \Delta[k] \to \Free^k_\partial P$, which corresponds to a chain:
\begin{equation}\label{eqn:chain}
    \mu \subseteq \theta = \theta_{-1} \subseteq \theta_0 \subsetneq \cdots \subsetneq \theta_{k} \subseteq \Free P
\end{equation}
(The map $\theta_{-1} \to \theta_0$ is monic because $\sigma$ is nondegenerate in the $\Theta$ direction, the subsequent maps are monic by definition of $(-)^+$, and the subsequent maps are not identities because $\sigma$ is nondegenerate in the $\Delta$-direction.) We claim that 
\begin{equation}\label{eqn:hi}
    s\inv(\Free^{k-1}_\partial P) = \theta \times \partial \Delta[k] \cup_{\Free_\partial T \times \partial \Delta[k]} \Free_\partial T \times \Delta[k] \subset \theta \times \Delta[k]
\end{equation}
where $T$ is the torsion-free complex presenting $\theta$. By \cref{cor:theta-main} and cartesianness of the model structure, this map is $L_{\Catinc}$-acyclic. We claim also that $s$ is injective on the complement of $s\inv(\Free^{k-1}_\partial P)$. It follows that the extension $\Free^{k-1}_\partial \subset \Free^k_\partial P$ is obtained by pushout along the above inclusion for all such $s$ where $\theta_{-1} \to \theta_0$ is the identity, and the lemma results.

To see that \cref{eqn:hi} holds, first observe that $\theta \times \partial \Delta[k] \subseteq \Free^{k-1}_\partial P$ by definition of the skeletal filtration. Moreover, the image of $\Free_\partial T \times \Delta[k]$ in $\Free P$ is contained already in $\Free_\partial P$ (because each inert subobject of $\theta$ has image contained in some $\Free P' \subsetneq \Free P$), so that the image in $\Free_\partial^+ P$ is contained in $\Free_\partial P$, and in particular the image in $\Free^k_\partial P$ is contained in $\Free^{k-1}_\partial P$. Thus we have $s\inv(\Free^{k-1}_\partial P) \supseteq (\theta \times \partial \Delta[k]) \cup (\Free_\partial T \times \Delta[k])$. Conversely, if $Z \in (\theta \times \Delta[k])(\zeta)_k \setminus (\Free_\partial T \times \Delta[k])(\zeta)_k $ is a nondegenerate $k$-simplex, then $\zeta \to \theta$ must be active, so the chain in \cref{eqn:chain} remains nondegenerate and outside of $(\Free_\partial P)(\zeta)$ upon restriction to $\zeta$. Thus $s(Z)$ is not in $(\Free^{k-1}_\partial P)(\zeta)$. This argument also shows that $s$ is injective on such simplices, since there is no room for such chains to be identified with one another when $\zeta \to \theta$ is active.
\end{proof}

\begin{thm}\label{thm:mainthm}
    Let $P$ be a torsion-free complex of dimension $n \leq N \in \nats \cup \{\omega\}$. Then the map $\Free_{n-1} P \to \Free P$ is $L_{\Cat_N^\inc}$-acyclic (and hence also $L_{\Cat_N}$-acyclic).
    In other words, the following pushout in $\sCat_N$:
    \begin{equation*}
        \begin{tikzcd}
            P_n \times \partial \globe_n \ar[r] \ar[d] & P_n \times \globe_n \ar[d] \\
            \Free(P_{\leq n-1}) \ar[r] & \Free(P)
        \end{tikzcd}
    \end{equation*}
    is preserved by the inclusion $\sCat_N \to \Cat_N^\inc$ (and thence by the composite $\Cat_N \to \Cat_N^\inc \to \Cat_N$).
\end{thm}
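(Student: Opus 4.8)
The plan is to prove the equivalent statement that the canonical comparison map $\Free_{n-1}P \to \Free P$ in $\Psh(\Theta_N)$ is $L_{\Cat_N^\inc}$-acyclic, which suffices because $L_{\Cat_N^\inc}$ and $L_{\Cat_N}$ preserve colimits and hence pushouts. By \cref{rmk:to-infty} the category number $N$ is immaterial, so I would simply take $N = \omega$. Two harmless reductions come first: if $P_n = \emptyset$ then $\Free_{n-1}P = \Free(P_{\leq n-1}) = \Free P$ and there is nothing to prove, so assume $P$ is honestly $n$-dimensional; and the entire argument proceeds by induction on the (finite) number of atoms of $P$, the empty complex furnishing a trivial base case.

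For the inductive step I would run the three-term factorization
\[ \Free_{n-1}P \;\longrightarrow\; \Free_\partial P \;\longrightarrow\; \Free^+_\partial P \;\longrightarrow\; \Free P \]
of \cref{def:free-partial} and \cref{def:prefib} and show each arrow is $L_{\Cat_\omega^\inc}$-acyclic; since $L_{\Cat_\omega^\inc}$-acyclic maps are closed under composition this completes the induction, and $L_{\Cat_N^\inc}$-acyclicity in turn forces $L_{\Cat_N}$-acyclicity. The first arrow is $L_{\Cat_\omega^\inc}$-acyclic by \cref{lem:final-piece}, whose hypothesis — acyclicity of $\Free_{n-1}P' \to \Free P'$ for each proper subcomplex $P' \subsetneq P$ — is exactly the inductive hypothesis, since such a $P'$ has strictly fewer atoms. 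The last arrow is a levelwise equivalence of $\Theta_\omega$-presheaves, hence in particular $L_{\Cat_\omega^\inc}$-acyclic, by \cref{prop:level}; this is where the real content lies, namely the identification of the unique nontrivial fibre of the collapse map with the nerve of $\Decomp(\mu)$, which is contractible by Theorem A (\cref{thm:sc-main}), already established. For the middle arrow I would invoke \cref{lem:fib-rep}: its skeletal filtration attaches each stage along a pushout-product of the form $\theta \times \partial\Delta[k] \cup_{\Free_\partial T \times \partial\Delta[k]} \Free_\partial T \times \Delta[k] \to \theta \times \Delta[k]$, indexed by a chain $\mu \subseteq \theta = \theta_0 \subsetneq \cdots \subsetneq \theta_k \subseteq \Free P$, and such a map is $L_{\Cat_\omega^\inc}$-acyclic by cartesianness of the model structure together with the acyclicity of $\Free_\partial T \to \Free T = \theta$, which is the $\Theta$-case \cref{cor:theta-main} (itself resting on \cref{lem:theta-main}, \cref{lem:wedge} and \cref{obs:hty-susp}).

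The delicate point — and the one I would be most careful about — is the bookkeeping that keeps \cref{cor:theta-main} available inside the inductive step. That corollary, applied to the torsion-free complex $T$ presenting a $\Theta$-subcell $\theta$ with $\mu \in \theta \neq \globe_n$, requires acyclicity of $\Free_{n-1}T' \to \Free T'$ for every proper subcomplex $T' \subsetneq T$, so the induction must be arranged so that these $T'$ are all covered. The feature that makes this possible is that $\mu \in \theta \neq \globe_n$ forces $\theta$ to be a genuinely nontrivial $\Theta$-decomposition of $\mu$ lying inside $\Free(\overline{\{\mu\}})$; consequently $\theta$ has at least two generating $n$-cells, each generating cell of $\theta$ factors through a proper subcomplex of $P$, and one can run the induction on atom count (refining to a suitable lexicographic measure if needed) so that \cref{lem:final-piece} and \cref{cor:theta-main} are only ever applied strictly below the current stage. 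Once this is pinned down there is no remaining obstacle: all of the homotopy-theoretic work has been pushed into \cref{lem:theta-main}, \cref{prop:level} and ultimately \cref{thm:sc-main}, and the present theorem is their assembly.
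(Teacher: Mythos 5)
Your proposal is correct and follows the paper's proof essentially verbatim: the same three-step factorization $\Free_{n-1} P \to \Free_\partial P \to \Free^+_\partial P \to \Free P$, with the arrows handled by \cref{lem:final-piece}, \cref{lem:fib-rep} (resting on \cref{cor:theta-main}) and \cref{prop:level} respectively, inside an induction on dimension and cell count arranged so that the standing hypotheses of \cref{lem:final-piece} and \cref{cor:theta-main} are available strictly below the current stage. The only point you elide is the reduction to finite $P$: torsion-free complexes need not be finite, so one first invokes stability of $L_{\Cat_N^\inc}$-acyclic maps under filtered colimits (\cref{obs:filtered}) before inducting, whereas your parenthetical ``(finite)'' assumes this rather than proving it.
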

\begin{proof}
    Because weak equivalences are stable under filtered colimits, it suffices to treat the case where $P$ is finite. We induct on the dimension $n$ of $P$ and on the number of cells in $P$. When $P$ is empty, the result is trivial.
    The map $\Free_{n-1} P \to \Free P$ factors as
    \begin{equation*}
        \Free_{n-1} P \to \Free_\partial P \to \Free^+_\partial P \to \Free P
    \end{equation*}
    The first map is $L_{\Catinc}$-acyclic by \cref{lem:final-piece}. The second map is $L_{\Catinc}$-acyclic by \cref{lem:fib-rep}. The third map is an equivalence by \cref{prop:level}.

    The final statement follows because the following pushout diagram in $\Psh(\Theta_N)$:
    \begin{equation*}
        \begin{tikzcd}
            P_n \times \partial \globe_n \ar[r] \ar[d] & P_n \times \globe_n \ar[d] \\
            \Free(P_{\leq n-1}) \ar[r] & \Free_{n-1}(P)
        \end{tikzcd}
    \end{equation*}
    is preserved by the localization $L_{\Cat_N^\inc}$ (and thence by the localization $L_{\Cat_N}$).
\end{proof}

\begin{cor}\label{cor:po}
Let $n \leq N \in \nats \cup \{\omega\}$. Let
\begin{equation*}
    \begin{tikzcd}
        A \ar[r,tail] \ar[d,hook] \ar[dr,phantom, "\ulcorner"{description,very near end}] &
        B \ar[d, hook] \\
        C \ar[r,tail] & D
    \end{tikzcd}
\end{equation*}
be a pushout diagram of strict $n$-categories where the downward maps are folk cofibrations and the rightward maps are monic. Suppose that $A,B,C,D$ are free on torsion-free complexes. Then this pushout is preserved by the inclusion $\sCat_N \to \Cat_N^\inc $ (and hence also by the composite $\sCat_N \to \Cat_N^\inc \to \Cat_N$).
\end{cor}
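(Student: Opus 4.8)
The plan is to derive \cref{cor:po} from \cref{thm:mainthm} by a sequence of pushout rearrangements, which is what ``cancellation of pushouts'' refers to. First I would reduce to the case $N<\infty$ with $A,B,C,D$ finite: the inclusions $\sCat_M\to\sCat_N$ and $\Cat_M^\inc\to\Cat_N^\inc$ are fully faithful and preserve colimits, and $L_{\Cat_N^\inc}$-acyclic maps are closed under filtered colimits (\cref{obs:filtered}), so the general square is a filtered colimit of finite ones living at a finite category level.

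The step I expect to be the real obstacle is purely a statement about strict categories: \emph{because $A\hookrightarrow C$ is a folk cofibration and both $A$ and $C$ are free on torsion-free complexes, the presenting complexes can be chosen so that $A=\Free(P_A)$ and $C=\Free(P_C)$ with $P_A\subseteq P_C$ a subcomplex inclusion.} This would rest on the theory of folk cofibrations and relative polygraphs --- a folk cofibration exhibits $C$ as freely generated over $A$, and since $C$ is itself freely generated the relative basis must be compatible with the basis $P_A$ of $A$ --- together with the rigidity of free $\omega$-categories on torsion-free complexes; the cofibrancy hypothesis is essential here, since a mere monomorphism of such categories need not be a subcomplex inclusion. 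Granting this, a cobase change of a folk cofibration is again a folk cofibration, so the same applies to $B\hookrightarrow D$: one gets $B=\Free(P_B)$, $D=\Free(P_D)$ with $P_B\subseteq P_D$ a subcomplex, and, by computing the cells of a pushout along a monomorphism, a dimension-preserving bijection $P_C\setminus P_A\cong P_D\setminus P_B$ under which the canonical attaching maps correspond.

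Next I would record the one-cell-at-a-time version of \cref{thm:mainthm}: for a subcomplex inclusion $Q\subseteq Q\cup\{a\}$ of torsion-free complexes with $a$ a single atom of dimension $m$, the canonical square
\begin{equation*}
\begin{tikzcd}
\partial\globe_m\ar[r]\ar[d] & \globe_m\ar[d]\\
\Free(Q)\ar[r] & \Free(Q\cup\{a\})
\end{tikzcd}
\end{equation*}
(in which $\partial\globe_m\to\Free(Q)$ is the attaching map of \cref{thm:forest-free}) is a homotopy pushout. This should follow from \cref{thm:mainthm} by a bookkeeping argument: first restrict to $(\leq m)$-truncations, where \cref{thm:mainthm}, flatness of monomorphisms (\cref{rmk:flat}), and two-out-of-three identify the square with a presheaf pushout along a monomorphism; then remove the truncation one dimension at a time using \cref{thm:forest-free} and \cref{lem:hty-paste}, which is legitimate because $a$, being outside the subcomplex $Q$, is a face of no atom of $Q$.

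Finally I would assemble. Enumerate $P_C\setminus P_A=\{a_1,\dots,a_r\}$ so that each $P_A\cup\{a_1,\dots,a_i\}$ is a subcomplex of $P_C$ (e.g.\ in order of dimension), and set $C_i=\Free(P_A\cup\{a_1,\dots,a_i\})$ and, via the bijection above, $B_i=\Free(P_B\cup\{a_1,\dots,a_i\})$ for the corresponding subcomplexes of $P_D$; thus $C_0=A$, $C_r=C$, $B_0=B$, $B_r=D$, and $B_i=C_i\ast_A B$. By the previous step every rung $C_i\to C_{i+1}$ and every rung $B_i\to B_{i+1}$ is a homotopy pushout of $\partial\globe_{m_i}\to\globe_{m_i}$, compatibly: the attaching map of the $B$-rung is the image under $C_i\to B_i$ of that of the $C$-rung. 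One then shows by induction on $i$, using only that pushout squares compose and cancel (\cref{lem:hty-paste}), that $\nerve B_i$ computes the homotopy pushout $\nerve C_i\cup^{h}_{\nerve A}\nerve B$ --- the inductive step being the rearrangement $(\nerve C_i\cup^{h}_{\nerve A}\nerve B)\cup^{h}_{\partial\globe_{m_i}}\globe_{m_i}\simeq(\nerve C_i\cup^{h}_{\partial\globe_{m_i}}\globe_{m_i})\cup^{h}_{\nerve A}\nerve B$ of a double homotopy pushout, together with the two rung equivalences. The case $i=r$ says precisely that the square of \cref{cor:po} is a homotopy pushout, hence preserved by $\sCat_N\to\Cat_N^\inc$; it is then also preserved by the further localization to $\Cat_N$, which factors through $\Cat_N^\inc$.
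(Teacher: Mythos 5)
Your proposal is correct and follows essentially the same route as the paper: the paper's (three-sentence) proof likewise reduces to attaching a single cell $\partial\globe_d\hookrightarrow\globe_d$, arranges that cell to be the last one glued onto the computad $D$, and concludes from \cref{thm:mainthm} by cancelling pushout squares. You have simply made explicit the steps the paper leaves implicit — the decomposition of the folk cofibration into a subcomplex inclusion built one atom at a time, the single-cell form of \cref{thm:mainthm}, and the pushout rearrangement.
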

\begin{proof}
It suffices to consider the case when $A \hookrightarrow C$ is the inclusion $\partial \globe_d \hookrightarrow \globe_d$. We may assume that the cell being attached was the last cell to be glued on to build the computad $D$. The result follows from \cref{thm:mainthm} by cancelling pushout squares.
\end{proof}

\begin{rmk}
    The generality of torsion-free complexes is convenient, since for example (\cite[Thm 3.4.4.22]{forest-thesis}, cf. \cref{rmk:general}) they include all loop-free augmented directed complexes in the sense of Steiner \cite{steiner}. In particular, \cref{thm:mainthm} and \cref{cor:po} apply when the categories involved are objects of Joyal's category $\Theta$, when they are orientals in the sense of Street, when they are lax Gray cubes, and when they are subcomplexes of any of these. Compare \cite{maehara-orientals}.
\end{rmk}

\bibliographystyle{alpha}
\bibliography{pasting}

\begin{thebibliography}{HORR23}

\bibitem[AF18]{ayala-francis}
David Ayala and John Francis.
\newblock Flagged higher categories.
\newblock {\em Topology and quantum theory in interaction}, 718:137--173, 2018.

\bibitem[Aro]{arone}
Gregory Arone.
\newblock Is the poset $\mathrm{Idl}_{{\neq \emptyset, P}}({P})$ of nonempty, proper ideals in a finite connected poset ${P}$ (empty or) weakly contractible?
\newblock MathOverflow.
\newblock URL:https://mathoverflow.net/q/413565 (version: 2022-01-11).

\bibitem[Ber07]{berger}
Clemens Berger.
\newblock Iterated wreath product of the simplex category and iterated loop spaces.
\newblock {\em Advances in Mathematics}, 213(1):230--270, 2007.

\bibitem[BSP21]{bsp}
Clark Barwick and Christopher Schommer-Pries.
\newblock On the unicity of the theory of higher categories.
\newblock {\em Journal of the American Mathematical Society}, 34(4):1011--1058, 2021.

\bibitem[Bur93]{burroni}
Albert Burroni.
\newblock Higher-dimensional word problems with applications to equational logic.
\newblock {\em Theoretical computer science}, 115(1):43--62, 1993.

\bibitem[Cam22]{campion-dense}
Tim Campion.
\newblock Cubes are dense in $(\infty,\infty)$-categories.
\newblock {\em arXiv preprint arXiv:2209.09376}, 2022.

\bibitem[Cam23]{campion-gray}
Tim Campion.
\newblock The {Gray} tensor product of $(\infty,n)$-categories.
\newblock {\em In preparation}, 2023.

\bibitem[Col18]{columbus}
Tobias Columbus.
\newblock {\em 2-categorical aspects of quasi-categories}.
\newblock PhD thesis, Dissertation, Karlsruhe, Karlsruher Institut f{\"u}r Technologie (KIT), 2017, 2018.

\bibitem[DD21]{dorn-douglas}
Christoph Dorn and Christopher~L Douglas.
\newblock Framed combinatorial topology.
\newblock {\em arXiv preprint arXiv:2112.14700}, 2021.

\bibitem[For19]{forest}
Simon Forest.
\newblock Unifying notions of pasting diagrams.
\newblock {\em arXiv preprint arXiv:1903.00282}, 2019.

\bibitem[For21]{forest-thesis}
Simon Forest.
\newblock {\em Computational descriptions of higher categories}.
\newblock PhD thesis, Institut Polytechnique de Paris, 2021.

\bibitem[For23]{forest-hyp}
Simon Forest.
\newblock Torsion-free complexes, for themselves.
\newblock {\em In preparation}, 2023.

\bibitem[GHN17]{gepner-haugseng-nikolaus}
David Gepner, Rune Haugseng, and Thomas Nikolaus.
\newblock Lax colimits and free fibrations in $\infty$-categories.
\newblock {\em Documenta Mathematica}, 22:1225--1266, 2017.

\bibitem[Had20]{hadzihasanovic}
Amar Hadzihasanovic.
\newblock A combinatorial-topological shape category for polygraphs.
\newblock {\em Applied Categorical Structures}, 28(3):419--476, 2020.

\bibitem[Hen18]{henry-regular}
Simon Henry.
\newblock Regular polygraphs and the {S}impson conjecture.
\newblock {\em arXiv preprint arXiv:1807.02627}, 2018.

\bibitem[HORR23]{horr}
Philip Hackney, Viktoriya Ozornova, Emily Riehl, and Martina Rovelli.
\newblock An ($\infty$, 2)-categorical pasting theorem.
\newblock {\em Transactions of the American Mathematical Society}, 376(01):555--597, 2023.

\bibitem[Joh89]{johnson}
Michael Johnson.
\newblock The combinatorics of n-categorical pasting.
\newblock {\em Journal of Pure and Applied Algebra}, 62(3):211--225, 1989.

\bibitem[Lur09]{htt}
Jacob Lurie.
\newblock {\em Higher topos theory.}
\newblock Annals of Mathematics Studies, 170. Princeton University Press, Princeton, NJ, 2009.

\bibitem[Mae23]{maehara-orientals}
Yuki Maehara.
\newblock Orientals as free weak $\omega$-categories.
\newblock {\em Journal of Pure and Applied Algebra}, 227(3):107230, 2023.

\bibitem[Ngu22]{nguyen}
Christopher Nguyen.
\newblock {Parity structure on associahedra and other polytopes}.
\newblock 3 2022.

\bibitem[OR20]{ozornova-rovelli-fundamental}
Viktoriya Ozornova and Martina Rovelli.
\newblock Fundamental pushouts of n-complicial sets.
\newblock {\em arXiv preprint arXiv:2005.05844}, 2020.

\bibitem[Pow90]{power2}
A~John Power.
\newblock A 2-categorical pasting theorem.
\newblock {\em Journal of Algebra}, 129(2):439--445, 1990.

\bibitem[Pow91]{powern}
A~John Power.
\newblock An n-categorical pasting theorem.
\newblock In {\em Category theory (Como, 1990)}, volume 1488, pages 326--358, 1991.

\bibitem[Qui72]{quillen}
Daniel Quillen.
\newblock Higher algebraic k-theory: I.
\newblock In {\em Higher K-Theories: Proceedings of the Conference held at the Seattle Research Center of the Battelle Memorial Institute, from August 28 to September 8, 1972}, pages 85--147. Springer, 1972.

\bibitem[Rez10]{rezk}
Charles Rezk.
\newblock A cartesian presentation of weak n--categories.
\newblock {\em Geometry \& Topology}, 14(1):521--571, 2010.

\bibitem[Ste04]{steiner}
Richard Steiner.
\newblock Omega-categories and chain complexes.
\newblock {\em Homology, Homotopy and Applications}, 6(1):175--200, 2004.

\bibitem[Str76]{street-computad}
Ross Street.
\newblock Limits indexed by category-valued 2-functors.
\newblock {\em Journal of Pure and Applied Algebra}, 8(2):149--181, 1976.

\bibitem[Str91]{street}
Ross Street.
\newblock Parity complexes.
\newblock {\em Cahiers de topologie et g{\'e}om{\'e}trie diff{\'e}rentielle cat{\'e}goriques}, 32(4):315--343, 1991.

\end{thebibliography}
\end{document}